\documentclass[leqno,11pt]{amsart}

\usepackage{palatino}
\usepackage[mathcal]{euler}

\usepackage{xcolor}

\definecolor{DarkBlue}{rgb}{0.2,0.2,0.4}

\usepackage{graphicx}
\usepackage{epstopdf,epsfig}
\usepackage{amsmath}
\usepackage{pst-node}
\usepackage{tikz-cd} 
\usepackage{comment}
\usepackage{xy}
\xyoption{all}
\usepackage{palatino}
\usepackage{tikz}
\usepackage{comment}
\usepackage[mathcal]{euler}
\usepackage{extarrows}
\usepackage{amsmath,amsthm,amsfonts,amssymb,latexsym,amscd,enumerate}
\usepackage{extarrows}
\usepackage{mathtools}

\usepackage{color}
\usepackage{slashed}

\setcounter{tocdepth}{1}

\theoremstyle{plain}
    \newtheorem{theorem}[equation]{Theorem}
    \newtheorem{lemma}[equation]{Lemma}
    \newtheorem{corollary}[equation]{Corollary}

    \newtheorem*{theorem*}{Theorem}
    
	\newtheorem{construction}[equation]{Construction}

 \theoremstyle{definition}
    \newtheorem{definition}[equation]{Definition}
    
    \newtheorem{example}[equation]{Example}

    \newtheorem{remark}[equation]{Remark}
        \newtheorem*{remark*}{Remark}

\theoremstyle{remark}

\numberwithin{equation}{section}

\usepackage[colorlinks=true, linkcolor=DarkBlue, citecolor=DarkBlue]{hyperref}



\newcommand{\R}{{\mathbb{R}}}
\newcommand{\Z}{{\mathbb{Z}}}
\newcommand{\C}{{\mathbb{C}}}

\newcommand{\Sc}{\mathcal{S}}
\newcommand{\Uc}{\mathcal{U}}

\newcommand{\gf}{\mathfrak{g}}

\newcommand{\len}{{\rm len}}

\DeclareMathOperator{\lcm}{lcm}

\DeclareMathOperator{\dist}{dist}

\numberwithin{equation}{section}

\begin{document}

\title{Uniformity of Maximal Hypoellipticity on Graded Lie Groups: From Pointwise to Global} 

\author{Shiqi Liu} 
\author{Edward McDonald}
\author{Fedor Sukochev}
\author{Dmitriy Zanin} 
 
\dedicatory{Dedicated to Nigel Higson}

\begin{abstract}
	On graded Lie groups, we develop a mechanism that transfers the uniformity of maximal hypoellipcity from the frozen coefficients principal part of a differential operator to the full operator.
	Our approach brings the century-old "freeze-unfreeze" strategy into the hypoelliptic setting, and	 offers a transparent and flexible framework for lifting symbol-level hypoelliptic properties to global elliptic estimates, without relying on pseudodifferential calculus.
	In addition, we prove that symmetric operators of hypoelliptic type on a graded Lie group are self-adjoint.
\end{abstract}

\maketitle

\tableofcontents

\section{Introduction}\label{sec-Intro}

\subsection{Overview}

Hypoelliptic operators occupy a significant position in modern harmonic analysis and in the theory of partial differential equations. In 1967,  H\"{o}rmander's seminal work \cite{Hormander1967} provided an algebraic sufficient condition for hypoellipticity of second order operators built from vector fields. In 1976, Rothschild and Stein  \cite{RothschildStein1976} introduced a lifting and approximation scheme which reduces local questions for H\"{o}rmander systems on manifolds to model problems on nilpotent Lie groups. Along with other pioneering works of the same era \cite{Folland_Stein_complex_Heisenberg1974, Folland_Applications_Nilpotent_to_PDE_1977, FollandStein1982, HelfferNourrigat1985, Rockland1978}, the door was opened to the use of representation theoretic methods in analysis and complex geometry.

Among these developments, maximally hypoelliptic operators were singled out by Helffer and Nourrigat \cite{HelfferNourrigat1985}, as they obey good \emph{a priori} estimates. On nilpotent Lie groups, Helffer and Nourrigat  established a  representation theoretic criterion for maximal hypoellipticity, resolving a conjecture of Rockland \cite{Rockland1978}. They also conjectured an analogous result for general filtered manifolds. Recently, Helffer and Nourrigat's conjecture was resolved by Androulidakis, Mohsen, and Yuncken \cite{AMY-Helffer-Nourrigat-conjecture} through a  pseudodifferential calculus adapted to a filtered manifold. Their remarkable work established that injectivity  of an adapted principal symbol is equivalent to maximal hypoellipticity.

In this paper, we establish a pointwise-to-global principle for differential operators on graded Lie groups (Theorem \ref{gee_introduction} and \ref{general elliptic estimate theorem 2}): if the "frozen" principal part of such an operator is maximally hypoelliptic with uniform constants, then the operator satisfies global \emph{a priori} estimates, improving regularity by its full order with respect to appropriately defined Sobolev scales, which is a global version of maximal hypoellipticity. Further, we show that uniformly maximally hypoelliptic symmetric operators on graded Lie groups are self-adjoint (Theorem \ref{elliptic regularity theorem}).

The underlying idea is essentially the classic ``freeze-unfreeze" strategy which has a long history (see, e.g. \cite{Schauder1934}).
Concretely, we first obtain estimates for right invariant (constant coefficient) model operators on the graded group; next, we allow coefficients to vary within small open sets and prove stability of  local estimates; finally, a partition of unity patches these into global bounds.

Our strategy is inspired by classical ideas (see, e.g. \cite{Zimmer1990}) in the Euclidean setting, while the nature of the graded context makes our approach different at several fundamental points.
For instance, for symmetric translation invariant differential operators, hypoellipticity implies the dense-range property \cite[Proposition 4.1.15.]{FischerRuzhansky2016}, but it is not straightforward to generalizing this result to non translation invariant operators. In our approach, the dense-range property emerges naturally as a consequence of the forward (Theorem \ref{forward elliptic estimate}) and backward (Theorem \ref{backward elliptic estimate}) estimates, rather than serving as an intermediate step.

Hypoellipticity and maximal hypoellipticity are defined as local properties, while our work focuses on a global form of maximal hypoellipticity. We transfer maximal hypoellipticity from the pointwise principal part to the full operator. We hope that this viewpoint provides an additional perspective to the classical works of Rothschild, Stein, Helffer, and Nourrigat \cite{RothschildStein1976, Rothschild_Criterion_1979}, \cite{HelfferNourrigat1985}, the more recent innovative work of van Erp and Yuncken \cite{vanErpYuncken2019}, and profound geometric applications of hypoelliptic operators due to Bismut, Shen, and Wei \cite{Bismut-Orbit-Int2011, BismutShuWei-RRG23}

The established analysis of such operators proceeds through constructing a pseudodifferential calculus, building a parametrix, and finally obtaining estimates,  for instance, \cite{BealsGreiner1988}, \cite{ChristGellerGlowacki1992}, \cite{Goodman-lnm-1976}, \cite{FischerRuzhansky2016} and \cite{Street-maximal-subellipticity-2023}. During the preparation of this paper, we became aware of the work of Fermanian-Kammerer, Fischer, and Flynn \cite{Fischer2024calculus_filtered_manifolds}. We believe that their framework could also be adapted to obtain similar results  through this traditional route.

\subsection{Graded Lie groups}\label{sec-intro-graded-group}

A {graded} Lie group $G$ is a connected and simply connected Lie group whose Lie algebra $\mathfrak{g}$ is finite dimensional, equipped with a vector space decomposition $\mathfrak{g} = \bigoplus_{k=1}^\infty V_k$, satisfying {$[V_k,V_j] \subseteq V_{k+j}$} (for reference, see  \cite[Definition 3.1.1.]{FischerRuzhansky2016}). We ask $\gf$ to be finite dimensional throughout the paper, then the direct sum is finite, thus $\gf$ is nilpotent, that is, $[\gf,[...,[\gf, \gf]]]=0$, for finitely many steps of taking Lie brackets. The homogeneous dimension $d_{\hom}$ is defined as
{
\[
d_{\hom} = \sum_{k=1}^\infty k\cdot \mathrm{dim}(V_k).
\]
Notice that $d_{\hom}$ highly depends on the grading.
}
Since $G$ is connected, simply connected and nilpotent, it follows that the exponential map $\exp:\gf\to G$ is a diffeomorphism, and so we may assume that $G$ and $\gf$ coincide as sets. With this identification, the Lebesgue measure on $\gf$ is a bivariant Haar measure for $G$ \cite[Proposition 1.2]{FollandStein1982}. The Haar measure on $G$ is therefore written as $dx,$ so that $\int_{G} f(x)\,dx$ means the integral of a measurable function $f$ on $G$ with respect to the Haar measure of $G.$ The elements of $\gf$ can be identified with right-invariant derivations of compactly supported smooth functions on $G$ ({denote such function class as $C^\infty_c(G)$}), if we identify $X\in \gf$ as the generator of the group of {\emph{left} translations
    \[
        Xu(g) := \frac{d}{dt}u(\exp(-tX)g)|_{t=0},\quad g \in G,\, u\in C^\infty_c(G).
    \]
    }

We will fix a choice of a preferred generating set $\{X_j\}_{j=1}^{n'}$ (see Definition \ref{def-generator-basis}) for the remainder of this paper. We will denote $\{v_j\}_{j=1}^{n'}$ for their degrees, and their least common multiple $v = \lcm(\{v_j\}_{j=1}^{n'})$.
Similar to \cite[Corollary 4.1.10.]{FischerRuzhansky2016}, we define an associated operator $\Delta_{G}$:
   \begin{equation}\label{eq-def-new-Laplacian}
       \Delta_{G} = - \sum_{j=1}^{n'} (-1)^{\frac{v}{v_j}}X_j^{\frac{2v}{v_j}}.
   \end{equation}

Mostly we will write $\Delta$ for $\Delta_G$ without ambiguity. 
{The operator $\Delta_G$ is a homogeneous differential operator of order $2v$.} It is typically not elliptic, but hypoelliptic in the classical sense, as in e.g. \cite[Section 11.1]{Hormander-II}.
{A graded Lie algebra $\gf$ is said to be stratified if $\gf_1$ generates $\gf$.
For stratified $G$, that is $\gf$ being stratified, the set of preferred generators can be selected to consist entirely of homogeneous elements of order 1. Thus, $v=1$ and $\Delta_G$ becomes the sub-Laplacian.

\subsection{Differential operators on graded Lie groups}\label{sec-intro-differential-graded}

We define differential operators on graded Lie groups, in analogy with those in the Euclidean case. The stratified case was long-established, for example, in \cite{Folland1975}. For the general graded case, see \cite{FischerRuzhansky2016}.  Given a word $\alpha = \alpha_1\ldots\alpha_k$
in the alphabet $\alpha_i \in \{1,2,\ldots,n'\},$ write
\begin{equation}\label{eq-def-diff-op}
    X^{\alpha} = X_{\alpha_1}X_{\alpha_2}\cdots X_{\alpha_k}.
\end{equation}
{
Recall that $\{X_i\}_{j=1}^{n'}$, with degrees $\{v_j\}_{j=1}^{n'}$, is a fixed choice of preferred generators (Definition \ref{def-generator-basis}).
Additionally, we define
\begin{equation}\label{eq-def-len}
    \mathrm{len}(\alpha)= \sum_{j=1}^{k}\deg(X_{\alpha_j})=\sum_{j=1}^{k} v_{\alpha_j}.
\end{equation}
Note that $\mathrm{len}(\alpha)$ is not the usual length $k$ of the word $\alpha,$ but is weighted by the degrees $v_j.$ In the literature, $\len(\alpha)$ has been called a \textit{weighted length} \cite[Page 2]{terElstRobinson97}.
}

\begin{definition} A differential operator of order at most $m$ on $G$ is a linear operator given by
    \begin{equation}\label{eq-P-expression}
        P = \sum_{\len(\alpha)\leqslant m} M_{a_{\alpha}}X^{\alpha}:\Sc(G)\to \Sc(G).
    \end{equation}
    Here, every $a_{\alpha}$ is a smooth function on $G$, and $\Sc(G)$ is the set of Schwartz class functions on $G$. We only consider operators $P$ with every $a_{\alpha}$ being uniformly smooth in the correct sense (see Section \ref{subsec-3.2} below).

    For operator $P$ with expression \eqref{eq-P-expression}, we denote by $P^\dagger$ its formal adjoint given by:
    \begin{equation}\label{eq-P-dagg-expre}
        P^\dagger = \sum_{\len(\alpha)\leqslant m} (X^{\alpha})^\dagger M_{\overline{a_{\alpha}}} :\Sc(G)\to \Sc(G) ,
    \end{equation}
    where if $\alpha=\alpha_1\cdots\alpha_k,$ we have $(X^{\alpha})^\dagger = (-1)^kX_{\alpha_k}\cdots X_{\alpha_1}.$ 

    Let $\widetilde{P}:\Sc'(G)\to\Sc'(G)$ denote the extension of $P$ to distributions, defined on $\omega \in \Sc'(G)$ by
    \begin{equation}\label{eq-P-tilde-expre}
        \langle \widetilde{P}\omega,\phi\rangle = \langle \omega,P^\dagger\phi\rangle,\quad \phi\in \Sc(G).
    \end{equation}
\end{definition}

\begin{remark}
    {
    Since the monomials $X^{\alpha}$ are not linearly independent, the expression \eqref{eq-P-expression} for $P$ might not be unique, so it is not immediate that $P^{\dagger}$ is well-defined by \eqref{eq-P-dagg-expre}. However $P^\dagger$ is related to $P$ by the adjoint relation
    \begin{equation*}
        \langle P\omega,\phi\rangle = \langle \omega,P^\dagger\phi\rangle,\qquad \omega, \phi\in \Sc(G).
    \end{equation*}
    Hence, as an operator $P^\dagger$ is uniquely determined by the operator $P.$}
\end{remark}

If $P$ has a representation where {all of the coefficient functions $a_{\alpha}$ are constants}, we say that $P$ is a constant coefficient differential operator. {In this case, $P$ may be identified with an element of the universal enveloping algebra $\Uc(\gf)$. For more background on the universal enveloping algebra, see Section \ref{sec-universal-enve-alg}.}

As a replacement for the notion of principal symbol, we consider the constant coefficient operator obtained from $P$ as follows.
\begin{definition} 
Let $P$ be a differential operator as in \eqref{eq-P-expression}. For $g \in G,$ denote
    \[
        P_g^{{\rm top}}=\sum_{\len(\alpha)=m}a_{\alpha}(g)X^{\alpha}.
    \]
{Since each $a_{\alpha}(g)$ is constant, thus $P_g^{\rm top} \in \Uc(\gf)$.}
\end{definition}

The operator $P_g^{{\rm top}}$ is the analogy of the Fourier transform of the principal symbol in pseudodifferential calculus.

In the nilpotent group setting, several different notions of ellipticity have been proposed, such as the maximal hypoellipticity of Helffer and Nourrigat \cite[Definition 1.1]{HelfferNourrigat1985} and the Rockland condition \cite{Rockland1978}. For our purposes the following notion is most useful, see also Definition \ref{definition of ellipticity_1} below.
\begin{definition}\label{definition of ellipticity} Let $P$ be a differential operator of order $m.$ We say that $P$ is uniformly Rockland if there exists a constant $c_P>0$ such that
$$\|P_g^{{\rm top}}u\|_{L_2(G)}\geqslant c_P\|(-\Delta_G)^{\frac{m}{2{v}}} u\|_{L_2(G)},\quad u\in\Sc(G),\quad g\in G.$$
\end{definition}

\begin{remark}\label{rmk-elliptic-representation-verify}
Definition \ref{definition of ellipticity} can be stated in terms of representation theory. A differential operator $P$ of order $m$ is  uniformly Rockland  in the sense of Definition \ref{definition of ellipticity} if and only if there exists a constant $c_P$ such that for all unitary irreducible representations $(\pi,H_{\pi})$ of $G,$ we have
$$\|\pi(P_g^{{\rm top}})\eta\|_{H_{\pi}} \geqslant c_P\|\pi((-\Delta_G)^{\frac{m}{2{v}}})\eta\|_{H_{\pi}},\quad \eta \in H^{\infty}_{\pi}.$$
Here, $H^{\infty}_{\pi}$ denotes the space of all smooth vectors in $H_{\pi},$ i.e. the joint domain of $\pi(X^{\alpha})$ over all words $\alpha.$ {Notice that the constant $c_P$ does not depend on the representation $\pi$}. This equivalence is illustrated in Appendix \ref{higson_appendix}, specifically, see Theorem \ref{thm-abstract-rockland-general-g}.
\end{remark}

Also associated to the {grading} of $G$ is a canonical scale of Sobolev spaces $\{W^s_2(G)\}_{s\in \R}.$ We defer their definition to Section \ref{sec-3}.

\subsection{Main Results}
Our first theorem shows that a symmetric uniformly maximally hypoelliptic  operator is self-adjoint and improves regularity. In the Euclidean setting, this is a classical result that could be found in textbooks, for example Theorem 6.3.12 and 6.3.14 in \cite{Zimmer1990}.
\begin{theorem}\label{elliptic regularity theorem}
Let $P=P^{\dagger}$ be an  uniformly Rockland  order $m$ differential operator on $G.$
\begin{enumerate}[{\rm (i)}]
\item\label{erta} (Elliptic regularity) If $u\in L_2(G)$ is such that $\widetilde{P}u\in L_2(G),$ then $u\in W^m_2(G).$
\item\label{ertb} (Self-adjointness) {$\widetilde{P}$ is a self-adjoint operator on $L_2(G)$ with domain $W^m_2(G).$}
\end{enumerate}
\end{theorem}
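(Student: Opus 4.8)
The two assertions are intertwined, and the plan is to establish (i) from the global a priori estimate of Theorem~\ref{general elliptic estimate theorem 2} (equivalently Theorem~\ref{gee_introduction}), applied to $P$ itself (which is uniformly Rockland of order $m$ by hypothesis), and then to deduce (ii) from (i) together with the forward estimate of Theorem~\ref{forward elliptic estimate} by a functional-analytic argument. For (i), the estimate one wants is the elliptic bound $\|v\|_{W^{s+m}_2(G)}\lesssim \|\widetilde{P}v\|_{W^s_2(G)}+\|v\|_{W^s_2(G)}$; it is proved (or stated) a priori for $v$ in $\Sc(G)$, so the remaining point is to upgrade it to the maximal domain. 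Given $u\in L_2(G)$ with $\widetilde{P}u\in L_2(G)$, I would regularize by setting $u_\varepsilon=u*\psi_\varepsilon$ for a dilation-adapted approximate identity $\psi_\varepsilon\in C^\infty_c(G)$, so that $u_\varepsilon\in\bigcap_s W^s_2(G)$ and $u_\varepsilon\to u$ in $L_2(G)$. Since the $X_j$ are left-invariant, each $X^\alpha$ commutes with right convolution, hence $Pu_\varepsilon=(\widetilde{P}u)*\psi_\varepsilon+[P,C_{\psi_\varepsilon}]u$, where the commutator involves only the multiplication operators $M_{a_\alpha}$. A Friedrichs-type mollifier estimate, exploiting the uniform smoothness of the coefficients $a_\alpha$, then shows that $\|[P,C_{\psi_\varepsilon}]u\|_{L_2(G)}$ stays bounded (in fact tends to $0$) as $\varepsilon\to 0$. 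Applying the a priori estimate to $u_\varepsilon$ with $s=0$ gives a uniform bound on $\|u_\varepsilon\|_{W^m_2(G)}$; extracting a weakly convergent subsequence in $W^m_2(G)$ and identifying the limit with $u$ yields $u\in W^m_2(G)$.

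For (ii), set $P_0=P|_{\Sc(G)}$. Because $P=P^\dagger$ and $\Sc(G)$ is dense in $L_2(G)$, $P_0$ is symmetric. The forward estimate together with the trivial bound $\|Pv\|_{L_2(G)}\lesssim\|v\|_{W^m_2(G)}$ shows that the graph norm of $P_0$ is equivalent to $\|\cdot\|_{W^m_2(G)}$ on $\Sc(G)$; since $\Sc(G)$ is dense in $W^m_2(G)$ and since the forward estimate also makes $\widetilde{P}$ with domain $W^m_2(G)$ a closed operator, the closure $\overline{P_0}$ is exactly $\widetilde{P}$ with domain $W^m_2(G)$, and $\Sc(G)$ is a core. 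On the other hand, $P_0^{*}=\widetilde{P^\dagger}$ with domain $\{u\in L_2(G):\widetilde{P^\dagger}u\in L_2(G)\}$, which, using $P^\dagger=P$, is $\widetilde{P}$ on $\{u\in L_2(G):\widetilde{P}u\in L_2(G)\}$. By part (i) this domain equals $W^m_2(G)$, so $P_0^{*}=\widetilde{P}|_{W^m_2(G)}=\overline{P_0}$. Thus $P_0$ is essentially self-adjoint, and its closure, namely $\widetilde{P}$ with domain $W^m_2(G)$, is self-adjoint, which is (ii).

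The main obstacle is the regularization step in (i): without a pseudodifferential calculus one must carry out the Friedrichs commutator estimate with respect to the anisotropic, dilation-adapted Sobolev scale $W^s_2(G)$, choosing $\psi_\varepsilon$ and integrating by parts so as never to invoke regularity of $u$ beyond what is already available — which may force a bootstrap through intermediate exponents rather than a direct jump to $W^m_2(G)$. (If, instead, Theorem~\ref{general elliptic estimate theorem 2} is already formulated on the maximal domain, i.e. it asserts the regularity conclusion of (i) together with the estimate, then (i) is immediate and the only substantive content of the theorem is the operator-theoretic argument above; one could alternatively reach (i) from a solvability statement for $\widetilde{P}\colon W^m_2(G)\to L_2(G)$ plus the dense-range property that the excerpt obtains from Theorems~\ref{forward elliptic estimate} and~\ref{backward elliptic estimate}.)
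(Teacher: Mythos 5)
Your treatment of part (ii) is essentially what the paper does: identify $Q=\widetilde{P}|_{W^m_2(G)}$ with $\overline{P_0}$ via the forward estimate, show $\mathrm{dom}(Q^{\ast})=\{h\in L_2: \widetilde{P}h\in L_2\}$, and invoke part (i) to conclude $Q^{\ast}=Q$. But your route to part (i) diverges from the paper and contains a gap you flag but do not close. The paper proves (i) from Lemma~\ref{zimmer theorem}, which establishes that $\widetilde{P}+ic\colon W^m_2(G)\to L_2(G)$ is an \emph{isomorphism} for large $|c|$: injectivity comes from the forward estimate, but surjectivity is a genuine separate step, supplied by the \emph{backward} estimate (Theorem~\ref{backward elliptic estimate}), which makes $\widetilde{P}-ic\colon L_2(G)\to W^{-m}_2(G)$ injective and hence the range of $\widetilde{P}+ic$ on $W^m_2(G)$ dense in $L_2(G)$. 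Once the isomorphism is known, given $u\in L_2(G)$ with $(\widetilde{P}+ic)u\in L_2(G)$, one finds $h\in W^m_2(G)$ with the same image, and $(\widetilde{P}+ic)(u-h)=0$ in $W^{-m}_2(G)$ forces $u=h$ by the backward estimate again. This is precisely the "solvability plus dense-range" alternative you mention only in your final parenthetical; the paper commits to it because, as the introduction notes, the dense-range property for non-translation-invariant operators is the hard part and does \emph{not} come for free. Also, minor but worth correcting: the a priori estimate you cite (Theorem~\ref{general elliptic estimate theorem 2}/\ref{gee_introduction}) is proved in the paper \emph{after} Theorem~\ref{elliptic regularity theorem}, using it (via $\mathrm{dom}(P^k)=W^{mk}_2(G)$ in Lemma~\ref{gee first lemma}); for the $s=0$ estimate you need you should cite Theorem~\ref{forward elliptic estimate} and Corollary~\ref{forward elliptic estimate 2} instead, which are proved beforehand.

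The Friedrichs mollification you propose as the primary route does not work as written, and the difficulty is structural, not cosmetic. In the paper's convention the $X_j$ are \emph{right}-invariant (limits of left translations), and the Sobolev scale $W^s_2(G)$ is built from the right-invariant $\Delta_G$. With your choice $u_\varepsilon=u\ast\psi_\varepsilon$ one has $X^{\alpha}(u\ast\psi_\varepsilon)=(X^{\alpha}u)\ast\psi_\varepsilon$ — the derivative falls on the distribution $X^{\alpha}u$, not on the mollifier — and since the Adjoint action enters when one tries to integrate by parts, the coefficient of the resulting vector field on $\psi_\varepsilon$ grows polynomially in the group variable; already on the Heisenberg group one sees that $X(u\ast\psi_\varepsilon)$ need not lie in $L_2$ for a generic $u\in L_2$, so $u\ast\psi_\varepsilon\notin\bigcap_s W^s_2(G)$. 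To get an element of $\bigcap_s W^s_2(G)$ you must use left convolution $u_\varepsilon=\psi_\varepsilon\ast u$, for which $X^{\alpha}(\psi_\varepsilon\ast u)=(X^{\alpha}\psi_\varepsilon)\ast u\in L_2$; but then $X^{\alpha}$ does \emph{not} commute with the mollifier, so the commutator $P u_\varepsilon - \psi_\varepsilon\ast\widetilde{P}u$ is no longer a commutator with multiplication operators alone, and the usual Friedrichs cancellation (which moves one derivative from $\psi_\varepsilon$ to $u$ by Euclidean integration by parts) has no direct analogue — the change of variables that makes it work on $\R^d$ produces $\mathrm{Ad}$-twisted vector fields with unbounded coefficients. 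You would need a uniform (in $\varepsilon$) $L_2\to L_2$ bound for an order-$m$ commutator, and in this anisotropic, non-commutative setting that is a substantial additional result, not a routine lemma. The paper deliberately sidesteps exactly this by routing (i) through the backward estimate.
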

The following theorem shows uniformly Rockland operators obey \emph{a priori} estimates.
\begin{theorem}\label{gee_introduction}
    Let $P=P^\dagger$ be an  uniformly Rockland  order $m$ differential operator on $G.$ Then for every $s \in \mathbb{R},$ there exists $c_{P,s}>0$ such that for all real $c$ with $|c|$ sufficiently large, we have
    \[
        \|u\|_{W^{s+m}_2(G)} \leqslant c_{P,s}\|(P+ic)u\|_{W^s_2(G)},\quad u \in \Sc(G).
    \]
\end{theorem}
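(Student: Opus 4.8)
The plan is to reduce the global estimate of Theorem \ref{gee_introduction} to the combination of a \emph{forward} estimate, controlling $u$ in terms of $(P+ic)u$ plus a lower-order error, and a \emph{backward} (or coercivity) estimate, which absorbs that error by exploiting the large parameter $c$. Both ingredients should themselves be assembled from the ``freeze-unfreeze'' scheme sketched in the introduction: first prove the estimate for the frozen constant-coefficient operators $P_g^{\mathrm{top}} \in \Uc(\gf)$ using the uniform Rockland hypothesis of Definition \ref{definition of ellipticity} together with functional calculus for $\Delta_G$, then propagate it to $P$ restricted to a small neighborhood where the coefficients $a_\alpha$ are nearly constant, and finally patch the local estimates with a partition of unity subordinate to a cover of $G$ of bounded geometry.

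Concretely, I would proceed as follows. \textbf{Step 1 (constant-coefficient model).} For fixed $g$, the uniform Rockland condition gives $\|P_g^{\mathrm{top}} u\|_{L_2} \gtrsim \|(-\Delta_G)^{m/2v} u\|_{L_2}$. Using right-invariance, the identification of $P_g^{\mathrm{top}}$ with an element of $\Uc(\gf)$, and the homogeneity of $\Delta_G$, upgrade this to the full Sobolev scale: $\|u\|_{W^{s+m}_2} \lesssim \|P_g^{\mathrm{top}} u\|_{W^s_2} + \|u\|_{W^{s}_2}$, uniformly in $g$ — this is where one invokes the earlier forward estimate (Theorem \ref{forward elliptic estimate}). \textbf{Step 2 (freezing the lower-order and coefficient errors).} Write $P = P_g^{\mathrm{top}} + (P - P_g^{\mathrm{top}})$; on functions supported in a ball $B$ of small radius around $g$, the difference $P - P_g^{\mathrm{top}}$ has top-order part with coefficients of small sup-norm (by uniform smoothness, Section \ref{subsec-3.2}) plus genuinely lower-order terms. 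Choosing the radius small enough, the top-order error is absorbed into the left side, and the lower-order error is controlled by $\|u\|_{W^{s+m-1}_2}$, so that $\|u\|_{W^{s+m}_2(B)} \lesssim \|Pu\|_{W^s_2} + \|u\|_{W^{s+m-1}_2}$ for $u$ supported in $B$. \textbf{Step 3 (globalization).} A locally finite cover $\{B_i\}$ of bounded geometry with subordinate partition of unity $\{\chi_i\}$ gives, after commuting each $\chi_i$ through $P$ (the commutator is lower order, hence controlled by $\|u\|_{W^{s+m-1}_2}$), the global estimate $\|u\|_{W^{s+m}_2} \lesssim \|Pu\|_{W^s_2} + \|u\|_{W^{s+m-1}_2}$. \textbf{Step 4 (the large parameter).} Replace $P$ by $P+ic$. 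By self-adjointness of the model/an integration-by-parts identity $\|(P+ic)u\|_{L_2}^2 = \|Pu\|_{L_2}^2 + |c|^2\|u\|_{L_2}^2$ (using $P = P^\dagger$), one gets $|c|\,\|u\|_{L_2} \le \|(P+ic)u\|_{L_2}$, and more generally a $c$-weighted estimate on the $W^s$ scale; interpolating, $\|u\|_{W^{s+m-1}_2} \le \varepsilon \|u\|_{W^{s+m}_2} + C_\varepsilon |c|^{?}\|u\|_{W^s_2}$ with the power of $|c|$ chosen so that the term is in turn dominated by $C|c|^{-1}\|(P+ic)u\|_{W^s_2}$ for $|c|$ large. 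Absorbing $\varepsilon\|u\|_{W^{s+m}_2}$ into the left side and the remaining term (for $|c|$ large) completes the proof; this is essentially the content of the backward estimate (Theorem \ref{backward elliptic estimate}).

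The main obstacle I anticipate is \textbf{Step 2}, the passage from the frozen operator to the variable-coefficient operator on a neighborhood: unlike the Euclidean case, the vector fields $X_j$ do not commute, the monomials $X^\alpha$ are not linearly independent, and ``lower order'' must be measured by the weighted length $\len$, so one must carefully track how commutators $[X^\alpha, M_{a_\alpha}]$ and $[\chi_i, X^\alpha]$ decompose into terms of strictly smaller weighted length with uniformly bounded coefficients — this requires the structure of $\Uc(\gf)$ and uniform control on all derivatives of the $a_\alpha$. A secondary subtlety is ensuring all constants in Steps 1–3 are genuinely uniform in $g$ (so that the partition-of-unity sum does not accumulate), which relies on the uniform Rockland constant $c_P$ being representation-independent (Remark \ref{rmk-elliptic-representation-verify}) and on the bounded-geometry structure of the cover. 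Once the uniform forward and backward estimates are in hand, the large-parameter argument in Step 4 is routine.
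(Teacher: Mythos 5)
Your proposal correctly identifies the freeze--unfreeze--patch scheme, and the description in Steps 1--3 matches what the paper actually carries out to establish the base case $s=0$ (Lemmas~\ref{fee constant coefficients}, \ref{fee small diameter}, \ref{lemma-local-global-estimates-commutator}, \ref{fee final} leading to Theorem~\ref{forward elliptic estimate}, and the dual chain leading to Theorem~\ref{backward elliptic estimate}). However, you attempt to run the entire scheme directly at the level of $W^s_2(G)$ for arbitrary $s$, and Step~4 has a genuine gap precisely there.

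The flagged step is the ``$c$-weighted estimate on the $W^s$ scale,'' i.e.\ an inequality of the shape $|c|\,\|u\|_{W^s_2(G)} \lesssim \|(P+ic)u\|_{W^s_2(G)}$. You need it to absorb the lower-order error $\|u\|_{W^{s+m-1}_2(G)}$ produced in Step~3. The identity $\|(P+ic)u\|_{L_2}^2 = \|Pu\|_{L_2}^2 + c^2\|u\|_{L_2}^2$ uses $P=P^\dagger$ in the $L_2$ inner product; it does not transfer to $W^s_2(G)$ for $s \neq 0$, because $P$ is \emph{not} symmetric with respect to the $W^s_2$ inner product and $P$ does not commute with $(1-\Delta_G)^{s/2v}$. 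One cannot simply conjugate by $(1-\Delta_G)^{s/2v}$. So as written, the absorption argument is circular: the $W^s$ coercivity in $c$ is essentially the statement you are trying to prove. A secondary (and related) issue is Step~1, where you invoke Theorem~\ref{forward elliptic estimate} to upgrade the frozen $L_2$ Rockland estimate to a uniform-in-$g$ estimate on the full Sobolev scale; that theorem concerns the unfrozen operator $P$ at $s=0$, not the family of frozen operators at arbitrary $s$, and the uniform-in-$g$ $W^s$ estimate for $P_g^{\mathrm{top}}$ is itself nontrivial.

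The paper circumvents both difficulties by taking a different route from $s=0$ to general $s$. After establishing the forward estimate at $s=0$ (Theorem~\ref{forward elliptic estimate}) and the backward estimate at $s=-m$ (Theorem~\ref{backward elliptic estimate}), it shows $\widetilde{P}+ic$ is an isomorphism $W^m_2(G) \to L_2(G)$ and $L_2(G) \to W^{-m}_2(G)$ (Lemmas~\ref{zimmer theorem} and \ref{similar to zimmer lemma}), deduces elliptic regularity and self-adjointness on domain $W^m_2(G)$ (Theorem~\ref{elliptic regularity theorem}), observes that $P^k$ is again uniformly Rockland (Lemmas~\ref{ellipticity of the product}, \ref{lem-P-elliptic-P*P}) so that $\mathrm{dom}(\overline{P^k}) = W^{mk}_2(G)$, and then exploits the \emph{self-adjoint} functional calculus of $P$ itself (not $\Delta_G$) to get $(P+i)^{-1}\colon W^{mk}_2(G) \to W^{m(k+1)}_2(G)$ bounded for all $k$ (Lemma~\ref{gee first lemma}). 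Complex interpolation fills in $s\geqslant 0$, duality handles $s\leqslant -m$, and interpolation between the two isomorphisms covers $s\in[-m,0]$ (Lemmas~\ref{gee second lemma}, \ref{gee third lemma}, \ref{gee fourth lemma}). The key advantage of this route is that the self-adjointness of $P$ is only ever used in $L_2$, so the integration-by-parts step you need is never required at the $W^s$ level. To salvage your outline, you would need to replace Step~4 with this ``powers of $P$ plus interpolation/duality'' bootstrap, or supply a genuinely new argument for the $W^s$ coercivity in $c$.
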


Further, we remove the condition $P=P^{\dagger}$.
\begin{theorem}\label{general elliptic estimate theorem 2} 
Let $P$ be a differential operator of order $m$. If $P$ is  uniformly Rockland, then for every $s\in \mathbb{R}$, there exist constants $c_{P,s,1}, c_{P,s,2}>0$ such that 
\begin{equation*}
	c_{P,s,1}\|u\|_{W^{s+m}_2(G)}\leqslant \|Pu\|_{W^s_2(G)} + c_{P,s,2}\|u\|_{L_2(G)} ,\quad u\in \Sc(G).
\end{equation*}
\end{theorem}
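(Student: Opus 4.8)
The plan is to deduce Theorem \ref{general elliptic estimate theorem 2} from Theorem \ref{gee_introduction} by the standard device of symmetrizing the operator. Given a uniformly Rockland operator $P$ of order $m$, consider the second-order system on $G$ (or equivalently, pass to the operator acting on pairs) built from $P$ and $P^{\dagger}$. Concretely, I would first observe that if $P$ is uniformly Rockland of order $m$, then so is $P^{\dagger}$, since $(P^{\dagger})_g^{\rm top} = (P_g^{\rm top})^{\dagger}$ as elements of $\Uc(\gf)$, and the Rockland lower bound for $P_g^{\rm top}$ is equivalent (via the representation-theoretic reformulation in Remark \ref{rmk-elliptic-representation-verify}, noting $\pi(P_g^{\rm top})^* = \pi((P_g^{\rm top})^{\dagger})$ on smooth vectors) to the same lower bound for $(P_g^{\rm top})^{\dagger}$ with the same constant. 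Hence $P^{\dagger}$ is uniformly Rockland of order $m$ as well.

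Next I would form the formally self-adjoint operator $Q = P^{\dagger}P$, which is a differential operator of order $2m$. Its top-order part satisfies $Q_g^{\rm top} = (P_g^{\rm top})^{\dagger}P_g^{\rm top}$: this requires checking that lower-order corrections arising from commutators $[M_{\overline{a_\alpha}}, X^\beta]$ and from products of lower-order terms only contribute weighted length strictly below $2m$, which follows from the fact that multiplication by a smooth function does not raise $\len$ and each commutator strictly lowers it. Then, using Remark \ref{rmk-elliptic-representation-verify} again, for every unitary irreducible $\pi$ and every smooth vector $\eta$,
\[
\|\pi(Q_g^{\rm top})\eta\|_{H_\pi}^{1/2}\|\eta\|_{H_\pi}^{1/2} \geqslant \langle \pi(Q_g^{\rm top})\eta,\eta\rangle^{1/2} = \|\pi(P_g^{\rm top})\eta\|_{H_\pi} \geqslant c_P \|\pi((-\Delta_G)^{m/2v})\eta\|_{H_\pi},
\]
and a second application of the Rockland bound for $P$ to the vector $\pi((-\Delta_G)^{m/2v})\eta$ (together with the fact that $(-\Delta_G)^{m/2v}$ is essentially self-adjoint and commutes appropriately, or more simply by squaring the displayed inequality after bounding $\|\eta\|$ by $\|\pi((-\Delta_G)^{-m/2v})\pi((-\Delta_G)^{m/2v})\eta\|$) shows $Q = P^{\dagger}P$ is uniformly Rockland of order $2m$. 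I would carry out this step carefully, as it is the place where the argument could go wrong: one must really produce a clean lower bound $\|\pi(Q_g^{\rm top})\eta\| \geqslant c_Q \|\pi((-\Delta_G)^{m/v})\eta\|$ uniformly in $\pi$ and $g$, and the most robust route is to write $\|\pi(Q_g^{\rm top})\eta\| \geqslant \langle \pi(Q_g^{\rm top})\eta, \xi\rangle$ for suitable unit $\xi$, but the cleanest is simply $\|\pi(Q_g^{\rm top})\eta\|\,\|(-\Delta_G)^{-m/v}_\pi\| \cdot (\text{spectral argument})$; I expect this bookkeeping to be the main obstacle.

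Having established that $Q = P^{\dagger}P$ is uniformly Rockland of order $2m$ and $Q = Q^{\dagger}$, I apply Theorem \ref{gee_introduction} with $s$ replaced by $s$ and order $2m$: for $|c|$ large,
\[
\|u\|_{W^{s+2m}_2(G)} \leqslant c_{Q,s}\|(Q+ic)u\|_{W^s_2(G)}, \quad u \in \Sc(G).
\]
Fixing one such $c$, the triangle inequality gives $\|u\|_{W^{s+2m}_2(G)} \leqslant c_{Q,s}\|Qu\|_{W^s_2(G)} + c_{Q,s}|c|\,\|u\|_{W^s_2(G)}$. Now replace $s$ by $s$ and use this at level $s$: since $Q = P^{\dagger}P$, I estimate $\|Qu\|_{W^s_2(G)} = \|P^{\dagger}Pu\|_{W^s_2(G)} \leqslant C\|Pu\|_{W^{s+m}_2(G)}$ because $P^{\dagger}$ maps $W^{s+m}_2(G) \to W^s_2(G)$ boundedly (a differential operator of order $m$ with uniformly smooth coefficients is bounded between the corresponding Sobolev scales — this is a mapping property that must have been recorded earlier in Section \ref{sec-3}). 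This yields, after renaming $s \mapsto s - m$ so that $s + 2m \mapsto s + m$,
\[
\|u\|_{W^{s+m}_2(G)} \leqslant C'\|Pu\|_{W^s_2(G)} + C''\|u\|_{W^{s-m}_2(G)}.
\]
Finally I would upgrade the lower-order term $\|u\|_{W^{s-m}_2(G)}$ to $\|u\|_{L_2(G)}$: if $s - m \leqslant 0$ this is immediate since $\|u\|_{W^{s-m}_2} \leqslant \|u\|_{L_2}$ for $s \leqslant m$; for general $s$ one interpolates, $\|u\|_{W^{s-m}_2} \leqslant \varepsilon \|u\|_{W^{s+m}_2} + C_\varepsilon \|u\|_{L_2}$ by the standard Sobolev interpolation inequality on the graded scale, and absorbs the $\varepsilon\|u\|_{W^{s+m}_2}$ term into the left-hand side. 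This produces the desired estimate with constants $c_{P,s,1}, c_{P,s,2} > 0$, completing the proof.
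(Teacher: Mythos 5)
Your overall strategy coincides with the paper's: form $Q = P^\dagger P$, observe it is symmetric and uniformly Rockland of order $2m$, apply the symmetric-case estimate (Theorem \ref{gee_introduction} / Theorem \ref{general elliptic estimate theorem}), use boundedness of $P^\dagger$ on the Sobolev scale, shift $s\mapsto s-m$, and absorb the lower-order term by interpolation. The last four steps are exactly what the paper does. The opening claim that $P^\dagger$ is itself uniformly Rockland, while correct, is not needed.

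The genuine gap is precisely where you flag it: your quantitative attempt to prove that $Q = P^\dagger P$ is uniformly Rockland does not close. You write, for $\eta\in H^\infty_\pi$,
\[
\|\pi(Q_g^{\rm top})\eta\|^{1/2}\|\eta\|^{1/2} \geqslant \|\pi(P_g^{\rm top})\eta\| \geqslant c_P\|\pi((-\Delta_G)^{m/2v})\eta\|,
\]
and suggest "a second application" of the Rockland bound will finish. Squaring gives
\[
\|\pi(Q_g^{\rm top})\eta\|\cdot\|\eta\| \;\geqslant\; c_P^2\,\|\pi((-\Delta_G)^{m/2v})\eta\|^2 \;=\; c_P^2\,\langle \pi((-\Delta_G)^{m/v})\eta,\eta\rangle,
\]
but you need $\|\pi(Q_g^{\rm top})\eta\| \geqslant c_Q\,\|\pi((-\Delta_G)^{m/v})\eta\|$, i.e.\ you would need
$\|\pi((-\Delta_G)^{m/2v})\eta\|^2 \geqslant \|\pi((-\Delta_G)^{m/v})\eta\|\cdot\|\eta\|$, and Cauchy--Schwarz gives precisely the \emph{reverse} inequality. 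There is no "second application" of the $P$-bound that repairs this: the estimate you have is genuinely weaker than uniform Rocklandness of $Q$.

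The paper avoids this by arguing qualitatively rather than quantitatively. Lemma \ref{lem-P-elliptic-P*P} reduces the claim to the elementary kernel identity $\ker\pi(A^\dagger A)\cap H^\infty_\pi = \ker\pi(A)\cap H^\infty_\pi$, combined with $(P^\dagger P)_g^{\rm top} = (P_g^{\rm top})^\dagger P_g^{\rm top}$ (Lemmas \ref{top dag commute} and \ref{lem-symb-composition}), and then invokes Lemma \ref{equivalent description of ellipticity}, which characterises uniform Rocklandness by injectivity of $\pi(P_g^{\rm top})$ on $H^\infty_\pi$ for all $\pi\in\widehat G$ and all $g$ in the Stone--\v{C}ech compactification $\beta G$. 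The point is that Lemma \ref{equivalent description of ellipticity} already packages the hard qualitative-to-quantitative promotion (Helffer--Nourrigat plus compactness of $\beta G$), so that once you know pointwise injectivity for $Q_g^{\rm top}$ you get the uniform lower bound for free. Your proposal should be rerouted through that lemma; otherwise the rest of the argument is correct and matches the paper.
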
 

The proof of Theorem \ref{gee_introduction} in the case $s=0$ contains the main technical details and occupies a substantial portion of Section \ref{sec-4}.}
It is based on approximating $P$ by a constant coefficient operator, similar to the proof that elliptic operators obey elliptic estimates as in e.g. \cite[Chapter 6]{Zimmer1990} or \cite[Theorem 15.1]{AgmonDouglisNirenberg1959}.

For comparison, as defined qualitatively, hypoellipticity does not, by itself, guarantee improvement of regularity in Sobolev norms. Indeed, a notable counterexample was built by Kohn \cite{Kohn05_los}. On the other hand,  as defined quantitatively, maximally hypoelliptic operators are those operators $P$ that satisfy, for any bounded open subset $U\subset G$,
\begin{equation*} 
	\sum_{|\alpha|\leqslant m} \Vert X^{\alpha}u \Vert_{L_2(G)}\leqslant C_U (\Vert P u\Vert_{L_2(G)}+ \Vert u\Vert_{L_2(G)}), \quad u\in C^{\infty}_c(U).
\end{equation*}

Our statements share the same spirit with the literature.
For second order differential operators on a $CR$ manifold, Beals and Greiner established \emph{a priori}  estimates from conditions imposed on the "frozen" principal part of the operator, see Theorem 2.9, Theorem 18.4, and (18.33) in \cite{BealsGreiner1988}.
For general filtered manifolds, analogous estimates follow from a representation theoretical condition \cite{AMY-Helffer-Nourrigat-conjecture}.

\subsection{Structure of the paper}
\begin{itemize}
    \item In Section \ref{preliminaries}, we recall background materials about {graded} Lie groups.
    \item In Section \ref{sec-3}, we build a special partition of unity which plays an important role in the proofs of the main theorems. Using this partition of unity, we also localize the Sobolev norm on a {graded} Lie group (Theorem \ref{sobolev localization theorem}).
    \item In Section \ref{sec-4}, we prove the "elliptic" ( uniformly Rockland ) operators defined by Definition \ref{definition of ellipticity} satisfy elliptic estimates, see Theorem \ref{gee_introduction} and \ref{general elliptic estimate theorem}.
    \item In \ref{higson_appendix}, we give a proof of a theorem of Nigel Higson about verifying ellipticity via representations,  see Remark \ref{rmk-elliptic-representation-verify}.
\end{itemize}


\section{Preliminaries}\label{preliminaries}

\subsection{Graded Lie group}

{
\begin{definition}\label{def-stratfied-g}
    A Lie algebra $\gf$ is said to be graded if equipped with a direct sum vector space decomposition
    \begin{equation}\label{eq-stratfied-decomp}
        \gf  = \bigoplus_{i=1}^\infty V_i
    \end{equation}
    such that
    \begin{equation}\label{eq-stratified-relation}
        [V_j, V_k] \subseteq V_{j+k}, \quad j,  k \geqslant 1,
    \end{equation}
    In this paper, we only consider finite dimensional Lie algebras, thus there exists a smallest integer $s\geqslant 1$, such that $V_{i}=\{0\}$ for all $i> s$. Such $s$ is the highest degree of homogeneous elements.
    
    A Lie group $G$ is called graded, if it is connected, simply connected, and its Lie algebra $\gf$ is graded.
\end{definition}

\begin{definition}
    A Lie algebra $\gf$ is nilpotent if for
    \begin{equation*}
        \gf_0 \coloneqq \gf, \quad \gf_{j} \coloneqq [\gf, \gf_{j-1}], \ j\geqslant 1,
    \end{equation*}
    there exists an integer $s\geqslant 1$, such that $\gf_j = \{0\}$, $\forall j\geqslant s$. The minimum of such $s$ is called the "step" or "index" of the nilpotent Lie algebra $\gf$.
\end{definition}

By definitions above, finite dimensional graded Lie algebra is nilpotent. While the reverse is not always true.
\begin{remark}
    Not all nilpotent Lie algebras can be equipped with grading structure. On the other hand, the grading structure is not always unique. \cite[Remark 3.1.6.]{FischerRuzhansky2016}
\end{remark}

Furthermore, $\gf$ can be equipped with an action of $\mathbb{R}^{\times}$ which we denote by $\delta$ and call it dilation, with formula 
\begin{equation}\label{eq-dilation}
    \delta_{t}(Y_j) = t^jY_j,\quad Y_j \in V_j,\quad t>0.
\end{equation}

These are Lie algebra isomorphisms. Since $G$ is simply connected, by Lie theory, each $\delta_{t}$ will induce an unique Lie group isomorphism which we also denote by $\delta_{t}$. It satisfies  
\[
\delta_{t}\exp(Y_1+\cdots+Y_s) = \exp(t Y_1+t^2 Y_2+\cdots+t^s Y_s),
\]
where $Y_j \in V_j$ ({$Y_j=0$, if $V_j = \{0\}$}), and $\exp(Y_1+\ldots+Y_s)$ is a generic element of $G$, {due to \eqref{eq-stratfied-decomp}}. 
Since $G$ is nilpotent, exponential map is isomorphic between Lie algebra and Lie group, thus every element in $G$ can be expressed in the form $\exp(Y_1+\ldots+Y_s)$.

On homogeneous Lie group (Lie group equipped with dilations), \cite{HS90} guarantees there always exists a homogeneous norm. We fix this homogeneous norm for the rest of paper.
\begin{definition}\label{def-homo-metric}
	Denote the homogeneous norm on $G$ described above, by $|\cdot|_{\hom}$. It satisfies $|\delta_t g |_{\hom} = t| g|_{\hom}$ and $|g|_{\hom}=|g^{-1}|_{\hom}$, for all $g\in G, t\in \R^{\times}$. It induces a right-translation-invariant, homogeneous metric by $\dist(g_1, g_2) = |g_1g_2^{-1}|_{\hom}$ for any $g_1,g_2 \in G$.
\end{definition}

\begin{remark}
    We use the term "graded Lie algebra" to maintain consistency with the literature, such as, \cite[Definition 3.1.1.]{FischerRuzhansky2016}. One can also call this algebra  $\Z_s$ graded, $\mathbb{N}$ graded, or $\Z$ graded with additional conditions. However, we caution the reader not to confuse this with the "super graded Lie algebra" commonly encountered in differential geometry, where the prefix "super" is sometimes omitted. 
\end{remark}

\begin{definition}\label{def-generator-basis}
    For a graded Lie algebra $\gf,$ we say that a set $\{X_j\}_{j=1}^{n'}$,$X_i \in \gf$
    is a \emph{set of preferred generators} if
    \begin{enumerate}[{\rm (i)}]
        \item\label{enu-generator-basis-1} $\{X_j\}_{j=1}^{n'}$ are linearly independent,
        \item\label{enu-generator-basis-2} $\{X_j\}_{j=1}^{n'}$ generates $\gf$,
        \item\label{enu-generator-basis-3} Each $X_j$ is homogeneous, that is, $X_j \in V_{v_j}$, for some $v_j \geqslant 1$. 
    \end{enumerate}
    We call $v_j$ the degree of $X_j,$ denoted by $\deg(X_j) = v_j$. Set $v_{\gf} = \max_{j}{v_j}$.
\end{definition}
\begin{remark}
    By \cite[Lemma 2.2]{terElstRobinson97}, there exists a preferred generating set for any graded Lie algebra.
\end{remark}

}

\subsection{The universal enveloping algebra}\label{sec-universal-enve-alg}

Let us recall the definition and properties of the universal enveloping algebra of a Lie algebra. Let us fix a Lie algebra $\gf$ with corresponding Lie group $G$. Let $d = \mathrm{dim}(\gf) < \infty$.
\begin{definition}
    For Lie algebra $\gf$, the universal enveloping algebra is $\Uc(\gf) = T(\gf)/J$.
    Here, 
    \begin{equation*}
        T(\gf) := \oplus_{k=0}^{\infty} \gf^{\otimes k} = \C \oplus \gf \oplus (\gf \otimes \gf) \oplus \cdots
    \end{equation*}
    is the tensor algebra of $\gf$, $J\subseteq T(\gf)$ is the two-sided ideal generated by 
    \begin{equation}\label{eq-universal-envelop-alg-realtion}
        X \otimes Y - Y\otimes X - [X,Y], \quad X, Y\in \gf.
    \end{equation}
\end{definition}
It is well-known that $\Uc(\gf)$ can be identified with the space of right-invariant differential operators on $G.$ Let $X \in \gf$, its action $\lambda(X)$ on $u \in L_2(G)$ is defined as 
\begin{equation}\label{eq-def-right-action-Lie-alg}
    \lambda(X)u(g) = \frac{d}{dt}\Big|_{t=0} u(e^{-tX}g).
\end{equation}
Since $\lambda(X)$ is the generator of a unitary semigroup, Stone's theorem implies that $\lambda(X)$ is anti-self-adjoint. The representation $\lambda$ of $\gf$ on $L_2(G)$ extends to an algebra isomorphism from $\Uc(\gf)$ to the unital subalgebra of linear maps $C^\infty(G)\to C^\infty(G)$ generated by $\lambda(X_1),\ldots,\lambda(X_{n'})$, $X_1,...,X_{n'}$ are defined in Definition \ref{def-generator-basis}, for graded $\gf$.

Indeed, the Poincar\'e-Birkhoff-Witt theorem asserts that if $\{e_1,\ldots,e_d\}$ is a basis for $\gf,$ then the set of monomials $\{e_1^{\otimes k_1}\otimes \cdots\otimes e_{d}^{\otimes k_d}+J\}_{k_1,\ldots,k_d\geqslant 0}$ is a basis for $\Uc(\gf)$. More details can be found in  \cite[Proposition 1.9, Corollary 1.10, p.108]{Helgason-book-1978}.

Recall that we assume a grading on the Lie algebra $\gf$,\ $\gf = \bigoplus_{j=1}^s \gf_j,$ as in Definition \ref{def-stratfied-g}, and that we have fixed a choice of preferred generating set $\{X_j\}_{j=1}^{n'}.$ Recall Definition \ref{def-generator-basis} and \eqref{eq-def-diff-op}, given a word $\alpha=\alpha_1\cdots\alpha_k$ where $\alpha_j \in \{1,\ldots,n'\},$ denote
\begin{equation}\label{eq-def-X-alpha}
    X^{\alpha}:= X_{\alpha_1}X_{\alpha_2}\cdots X_{\alpha_k} \in  \Uc(\gf).
\end{equation}
If $\alpha$ is the empty word, put $X^{\alpha} = 1 \in \C \subset \Uc(\gf).$
Given a word $\alpha,$ recall that $\len(\alpha)$ denotes its weighted length \eqref{eq-def-len}.
\begin{remark}
    Notation \eqref{eq-def-X-alpha} is similar to that of Folland \cite[p.190]{Folland1975} and Helffer--Nourrigat \cite[Equation 1.6]{HelfferNourrigat1985}. Some other sources use a different spanning set for $\Uc(\gf),$ e.g. Fischer--Ruzhansky \cite[p.102]{FischerRuzhansky2016}.
\end{remark}

\begin{lemma}\label{lem-graded-basis-U}
    Let $\gf$ be a graded Lie algebra as above. 
    Then $\Uc(\gf) =  \mathrm{span}\{X^{\alpha}\},$ where $\alpha$ ranges over words in $\{1,\ldots,n'\}.$
\end{lemma}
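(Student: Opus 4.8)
The plan is to show the two inclusions $\mathrm{span}\{X^\alpha\} \subseteq \Uc(\gf)$ (which is trivial, since each $X^\alpha \in \Uc(\gf)$ by construction) and $\Uc(\gf) \subseteq \mathrm{span}\{X^\alpha\}$. For the nontrivial inclusion, the natural strategy is to combine the Poincaré--Birkhoff--Witt theorem with the fact that the preferred generators $\{X_j\}_{j=1}^{n'}$ generate $\gf$ as a Lie algebra. First I would recall from PBW that $\Uc(\gf)$ is spanned by ordered monomials $e_{i_1}^{\otimes k_1}\cdots e_{i_d}^{\otimes k_d}$ in any fixed basis $\{e_1,\dots,e_d\}$ of $\gf$; in particular it suffices to show every element of the basis $\{e_i\}$ — indeed every element of $\gf$ itself — lies in $\mathrm{span}\{X^\alpha\}$, because the subspace $\mathrm{span}\{X^\alpha\}$ is closed under multiplication (the product of two words is again a word) and contains $1$, hence is a subalgebra of $\Uc(\gf)$; any subalgebra containing $\gf$ must be all of $\Uc(\gf)$.

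So the crux reduces to: every $X \in \gf$ lies in $\mathrm{span}\{X^\alpha\}$. Since $\{X_j\}$ generates $\gf$ as a Lie algebra, $\gf$ is spanned by iterated Lie brackets $[X_{j_1},[X_{j_2},[\dots,X_{j_\ell}]]]$ of the generators. Each such bracket, when expanded via the defining relation $X\otimes Y - Y\otimes X = [X,Y]$ of $\Uc(\gf)$, becomes a $\Z$-linear combination of words $X^\alpha$ in the generators. More carefully, I would prove by induction on the bracket depth $\ell$ that every $\ell$-fold bracket of generators lies in $\mathrm{span}\{X^\alpha\}$: the base case $\ell=1$ is immediate, and for the inductive step $[X_{j_1}, b]$ with $b = \sum_\alpha c_\alpha X^\alpha$ an $(\ell-1)$-fold bracket already in the span, one has $[X_{j_1}, b] = X_{j_1} b - b X_{j_1} = \sum_\alpha c_\alpha (X_{j_1} X^\alpha - X^\alpha X_{j_1})$, and $X_{j_1}X^\alpha$ and $X^\alpha X_{j_1}$ are again words in the generators, hence in the span. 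Taking linear combinations, every element of $\gf$ lies in $\mathrm{span}\{X^\alpha\}$, completing the argument.

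I do not expect any serious obstacle here; the statement is essentially the observation that a generating set for a Lie algebra yields a spanning set of (non-ordered) monomials for its universal enveloping algebra. The only point requiring a little care is the closure of $\mathrm{span}\{X^\alpha\}$ under multiplication — this is immediate from concatenation of words, $X^\alpha X^\beta = X^{\alpha\beta}$ — and the observation that since we do not insist the words be "ordered" or reduced, there is no need to invoke homogeneity or the grading at all; Lemma \ref{lem-graded-basis-U} holds for any Lie algebra equipped with a Lie-generating set, with the grading playing no role in this particular statement (though it is of course essential for the finer degree-based estimates elsewhere in the paper). If one wanted the sharper statement that words with $\len(\alpha) \le$ some bound suffice in each filtration degree, one would bring in the grading, but that is not what is asserted here.
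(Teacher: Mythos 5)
Your proof is correct and essentially matches the paper's: both reduce the claim via the Poincar\'e--Birkhoff--Witt theorem to the fact that the preferred generators Lie-generate $\gf$, so that every basis vector of $\gf$ is a linear combination of iterated brackets, hence lies in $\mathrm{span}\{X^\alpha\}$, and concatenation of words handles the products. Your side observation that the grading plays no role here, and that one can in fact bypass PBW altogether by noting that $\mathrm{span}\{X^\alpha\}$ is a unital subalgebra of $\Uc(\gf)$ containing $\gf$, is valid and slightly cleaner, but substantively the argument is the same as the paper's.
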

\begin{proof}
    Extend $\{X_1,\ldots,X_{n'}\}$ to a basis of $\gf,$ say $\{X_1,\ldots,X_{d}\},$
    where $d = \mathrm{dim}(\gf).$ By the Poincar\'e-Birkhoff-Witt theorem, 
    $\Uc(\gf)$ is spanned by monomials of the form $X_1^{k_1}\cdots X_{d}^{k_d}$ for $k_1,\ldots,k_{d}\geqslant 0.$  
    
    By assumption, $\{X_j\}_{j=1}^{n'}$ generates the Lie algebra $\gf$. Thus, each $X_j$ for $j>n'$ is a linear combination of commutators of $\{X_j\}_{j=1}^{n'}$.
    In particular, it is a linear combination of products of $\{X_j\}_{j=1}^{n'}$.
    Hence, every $X_1^{k_1}\cdots X_{d}^{k_d}$ belongs to $\mathrm{span}\{X^{\alpha}\}$, where $\alpha$ ranges over words in $\{1,\ldots,n'\}.$
\end{proof}

Notice that the set $\{X^{\alpha}\}$ is not necessarily linearly independent, thus Lemma \ref{lem-graded-basis-U} is much weaker than the Poincar\'e-Birkhoff-Witt theorem.

\begin{definition}
    Let $\gf$ be a graded Lie algebra as above.
    For $m\geqslant 0,$ we denote $\Uc_m(\gf)$ for the subspace spanned by $\{X^{\alpha}\}$, $\alpha$ ranges over words in $\{1,\ldots,n'\}$, and $\len(\alpha)$ at most $m$.
\end{definition}
The definition of $\Uc_{m}(\gf)$ is independent of the choice of preferred generator set $\{X_j\}_{j=1}^{n'}$ . Indeed, if $\{Y_1,\ldots,Y_{n''}\}$ is another preferred generator set, as in Definition \ref{def-generator-basis}, then each $Y_j$ is a linear combination of commutators of $\{X_k\}_{k=1}^{n'}$. 
Combined with \eqref{eq-stratified-relation}, we have 
\[ Y_j \in \mathrm{span}\{X^{\alpha}\}_{\mathrm{len}(\alpha)=\deg(Y_j)}.\]
It follows that $ Y^{\beta} \in \mathrm{span}\{X^{\alpha}\}_{\mathrm{len}(\alpha)=\mathrm{len}(\beta)}.$

\section{Sobolev spaces on a graded Lie group}\label{sec-3}

In this section, first, we discuss some preliminary material concerning function spaces and differential operators on {graded} Lie groups. Our main sources here are \cite{FischerRuzhansky2016,Folland1975,FollandStein1982,RothschildStein1976}.

Then, by taking advantage of the Lie group structure and the translation invariant metric, 
we build a special partition of unity, such that each function share the same format and their support are translated from each other. See Lemma \ref{quasi_metric_space_construction} and Construction \ref{partition_of_unity}. It will play an essential role in inheriting results from constant coefficient differential operator to the ones with varying coefficients.

The main result in this section is Theorem \ref{sobolev localization theorem} which says the Folland-Stein Sobolev norm whose definition is recalled below can be localized using the partition of unity.

This section is organized as follows:

In \ref{subsec-3.1}, we recall the Folland-Stein Sobolev spaces and discuss their interpolation properties.

In \ref{subsec-3.2}, we construct a partition of unity for $G.$

In \ref{subsec-3.3}, we prove Theorem \ref{sobolev localization theorem}.

\subsection{Sobolev Spaces}\label{subsec-3.1}

In this Section, we define Sobolev spaces on graded Lie groups, and exhibits some essential properties. For further details, see \cite[Chapter 4]{FischerRuzhansky2016} or \cite{Folland1975} for the special case when $\gf$ is stratified.

As in Definition \ref{def-generator-basis}, a set of preferred generators $\{X_j\}_{j=1}^{n'}$ is fixed throughout the paper, $\{v_j\}_{j=1}^{n'}$ are their degrees, and $v = \lcm(\{v_j\}_{j=1}^{n'})$. Recall \eqref{eq-def-new-Laplacian}, 
    \begin{equation*}
        \Delta_{G} := - \sum_{j=1}^{n'} (-1)^{\frac{v}{v_j}}X_j^{\frac{2v}{v_j}}.
    \end{equation*}
    This operator is Rockland, that is, for every non-trivial unitary irreducible representation $\pi$ of $G$, $\pi(\Delta)$ is injective on smooth vectors \cite[Definition 4.1.1. and Corollary 4.1.10.]{FischerRuzhansky2016}. The operator $-\Delta_G$ is positive definite and self-adjoint on $L_2(G)$ \cite[Proposition 4.1.15.]{FischerRuzhansky2016}.
    Knowing the self-adjointness of $\Delta_G,$ we define a scale of Sobolev spaces $\{W^s_2(G)\}_{s\in \mathbb{R}}$ with the norms
    \begin{equation}\label{eq-sobolev-norm}
        \|u\|_{W^s_2(G)} := \|(1-\Delta_G)^{\frac{s}{2v}}u\|_{L_2(G)}, \quad u \in C_c^{\infty}(G).
    \end{equation}
    Here, the power $(1-\Delta_G)^{\frac{s}{2v}}$ is understood in the sense of spectral theory.
    The Sobolev space is defined as the closure of $C^\infty_c(G)$ in $\Sc'(G)$ with the norm $\|\cdot\|_{W^s_2(G)}.$

We also consider the homogeneous Sobolev semi-norm  $\Vert \cdot  \Vert_{\mathring{W}^s_2(G)}$, $s\in \mathbb{R}$ defined by
\begin{equation}\label{eq-sobolev-norm-homogeneous}
    \|u\|_{\mathring{W}^s_2(G)} := \|(-\Delta_G)^{\frac{s}{2v}}u\|_{L_2(G)}, \quad u \in \mathrm{dom}((-\Delta_G)^{\frac{s}{2v}}).
\end{equation}
We will mostly abbreviate $\Delta_G$ as $\Delta.$

We list a few useful properties of these Sobolev spaces:
\begin{enumerate}[{\rm (i)}]
    \item Both inhomogeneous and homogeneous Sobolev norms $W^s_2(G)$ and $\mathring{W}^s_2(G)$ are independent of the choice of preferred generating set $\{X_j\}_{j=1}^{n'}$ \cite[Theorem 4.4.20]{FischerRuzhansky2016}.
    \item The Schwartz space $\Sc(G)$ is dense in $W^s_2(G)$ for all $s\in\mathbb{R}$ \cite[Lemma 4.4.1]{FischerRuzhansky2016}.
    \item \label{item-Sobolev-emb}
    	We have $W^{s_1}_2(G) \subseteq W^{s_2}_2(G)$ for $s_1 \geqslant s_2 \in \R$ \cite[Theorem 4.4.3]{FischerRuzhansky2016}.
    \item For all $s_0,s_1\in \mathbb{R}$ and $0 < \theta < 1$ we have (up to equivalence of norms)
        \begin{equation}\label{interpolation}
            (W^{s_0}_2(G),W^{s_1}_2(G))_{\theta} = W^{s_\theta}_2(G),\quad s_\theta=(1-\theta)s_0+\theta s_1,
        \end{equation}
        where $(\cdot,\cdot)_{\theta}$ is the functor of complex interpolation. 
        In particular,
        \begin{equation}\label{eq-interpolation-norm}
            \|u\|_{W^{s_\theta}_2(G)} \leqslant \|u\|_{W^{s_0}_2(G)}^{1-\theta}\|u\|_{W^{s_1}_2(G)}^{\theta},\quad u \in W^{\max\{s_0,s_1\}}_2(G).
        \end{equation}
        See \cite[Theorem 4.4.28]{FischerRuzhansky2016}.
    \item Recall that $v$ is the least common multiple of $\{v_j\}_{j=1}^{n'}$, the degrees of preferred generators. For $s \in 2v\cdot \Z_{+}$, the Sobolev norm $\Vert \cdot \Vert_{W^{s}_2(G)}$ is equivalent to the following norm:
        \begin{equation}\label{integer sobolev norm}
            u \mapsto \big(\sum_{\len(\alpha)\leqslant s}\|X^{\alpha}u\|_{L_2(G)}^2\big)^{\frac12},
            \quad u \in W^s_2(G).
        \end{equation}
        For $s \in 2v\cdot\Z^+$, we have
        \begin{equation}\label{eq-Sobolev-alt-def}
            W^s_2(G) = \left\{u\in L_2(G) \;:\; \ X^{\alpha}u\in L_2(G), \ \len(\alpha)\leqslant s \right\}.
        \end{equation}
        The equivalence of \eqref{eq-sobolev-norm} and \eqref{integer sobolev norm} was originally proved by Helffer and Nourrigat  \cite[Estimate (6.1)]{HelfferNourrigat1979}, and restated in \cite[Corollary 4.1.14.]{FischerRuzhansky2016}.
    \item For all $s \in 2v \cdot \mathbb{Z}_+,$ $W^{-s}_2(G)$ coincides with the Banach dual of $W^s_2(G).$ Concretely, $W^{-s}_2(G)$ is identified with the space of distributions $u\in \Sc'(G)$ such that there is a constant $C$ such that for all $\phi \in \Sc(G)$ we have
    \[
        |(u,\phi)|\leqslant C\|\phi\|_{W^s_2(G)}.
    \]
    The least constant $C$ is a norm equivalent to $\|\cdot\|_{W^{-s}_2(G)}.$ 
    For a stratified Lie group $G$, this follows from \cite[Theorem 3.15(v)]{Folland1975}, in particular see the Remark below the proof of \cite[Proposition 4.1]{Folland1975}.
\end{enumerate}

It is immediate from the above definitions that the order of a differential operator in $\Uc(\gf)$ coincides with its order as a mapping between Sobolev spaces.
\begin{lemma}\label{lemma-X_alpha-bdd}
For every $s \in \mathbb{R},$ an element $D \in \Uc_m(\gf)$ extends by continuity to a bounded linear map
\begin{equation*}
D:W^s_2(G)\to W^{s-m}_2(G).
\end{equation*}
\end{lemma}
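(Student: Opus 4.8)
The plan is to reduce the general case to a single basic estimate: for each preferred generator $X_j$ of degree $v_j$, the map $X_j \colon W^s_2(G) \to W^{s-v_j}_2(G)$ is bounded for every $s \in \R$. Granting this, the result for arbitrary $D \in \Uc_m(\gf)$ follows because, by the definition of $\Uc_m(\gf)$, $D$ is a linear combination of monomials $X^\alpha = X_{\alpha_1}\cdots X_{\alpha_k}$ with $\len(\alpha) = v_{\alpha_1} + \cdots + v_{\alpha_k} \leqslant m$; composing the generator bounds one factor at a time moves us from $W^s_2(G)$ down through $W^{s - v_{\alpha_1}}_2(G)$, then $W^{s - v_{\alpha_1} - v_{\alpha_2}}_2(G)$, and so on, landing in $W^{s - \len(\alpha)}_2(G) \subseteq W^{s-m}_2(G)$ by the Sobolev embedding (property \eqref{item-Sobolev-emb}). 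Finiteness of the number of monomials (for a fixed representative of $D$) then gives a single constant. Since $\Sc(G)$ is dense in $W^s_2(G)$ and the estimate is established on $\Sc(G)$, the operator extends by continuity.

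To prove the basic estimate for a generator $X_j$, I would argue via spectral calculus for $-\Delta = -\Delta_G$. The claim $\|X_j u\|_{W^{s-v_j}_2(G)} \lesssim \|u\|_{W^s_2(G)}$ is, by definition \eqref{eq-sobolev-norm}, the statement that
\[
(1-\Delta)^{\frac{s-v_j}{2v}}\, X_j\, (1-\Delta)^{-\frac{s}{2v}}
\]
is bounded on $L_2(G)$. The key input is that $X_j$ itself is "order $v_j$" relative to $\Delta$: concretely, $X_j (1-\Delta)^{-\frac{v_j}{2v}}$ is bounded on $L_2(G)$. This is exactly the content of the equivalence of the two Sobolev norms \eqref{integer sobolev norm}/\eqref{eq-Sobolev-alt-def} (originally Helffer--Nourrigat, restated in \cite[Corollary 4.1.14.]{FischerRuzhansky2016}): for $s$ a positive multiple of $2v$, control of $\|u\|_{W^s_2(G)}$ is equivalent to control of all $\|X^\alpha u\|_{L_2(G)}$ with $\len(\alpha) \leqslant s$; taking $s = 2v$ and a single generator yields boundedness of $X_j(1-\Delta)^{-\frac{v_j}{2v}}$ (and, by duality, of $(1-\Delta)^{-\frac{v_j}{2v}}X_j$ as well, noting $X_j^\dagger = -X_j$). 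Alternatively one can cite the mapping properties of left-invariant differential operators between Sobolev spaces directly from \cite[Section 4.4]{FischerRuzhansky2016}, which is really what makes Lemma \ref{lemma-X_alpha-bdd} "immediate."

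The one genuine subtlety — and the step I expect to require the most care — is passing from integer-order (multiples of $2v$) statements to \emph{all} real $s$. The norm equivalence \eqref{integer sobolev norm} is only asserted for $s \in 2v\cdot\Z_+$, whereas Lemma \ref{lemma-X_alpha-bdd} is claimed for every $s \in \R$. To bridge this I would interpolate and dualize: boundedness of $D \colon W^{s_0}_2 \to W^{s_0 - m}_2$ and $D \colon W^{s_1}_2 \to W^{s_1 - m}_2$ for two integer exponents $s_0 < s_1$ gives, by complex interpolation \eqref{interpolation} applied on both ends simultaneously, boundedness $W^{s_\theta}_2 \to W^{s_\theta - m}_2$ for all intermediate $s_\theta$; combining with the duality pairing $W^{-s}_2 = (W^s_2)^*$ (property (vi), via the adjoint $D^\dagger \in \Uc_m(\gf)$) extends this to negative exponents, and letting $s_0 \to -\infty$, $s_1 \to +\infty$ through multiples of $2v$ covers all of $\R$. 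Everything else is bookkeeping: choosing a fixed finite representative $D = \sum_{\len(\alpha) \leqslant m} c_\alpha X^\alpha$, tracking constants through the composition, and invoking density of $\Sc(G)$ to conclude the extension is well-defined and unique.
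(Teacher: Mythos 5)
Your plan (reduce to the single-generator bound $X_j\colon W^s_2(G)\to W^{s-v_j}_2(G)$, compose, interpolate/dualize) is a reasonable route, and your fallback citation of the Sobolev mapping properties in \cite[Section~4.4]{FischerRuzhansky2016} is what the paper implicitly leans on when it calls Lemma~\ref{lemma-X_alpha-bdd} immediate; the paper supplies no proof of its own. The step that breaks is precisely the one you call ``the key input.'' The norm equivalence \eqref{integer sobolev norm}/\eqref{eq-Sobolev-alt-def} is stated in the paper only for $s\in 2v\cdot\Z_+$, and taking $s=2v$ with $\alpha$ a single letter yields $\|X_j u\|_{L_2(G)}\lesssim \|u\|_{W^{2v}_2(G)}=\|(1-\Delta)u\|_{L_2(G)}$, i.e.\ boundedness of $X_j(1-\Delta)^{-1}$ --- not boundedness of $X_j(1-\Delta)^{-v_j/(2v)}$ as you assert. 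Since $v_j$ divides $v$, we have $v_j\leqslant v<2v$, so the latter is strictly stronger, and it would require the equivalence at $s=v_j$, which is not a multiple of $2v$ and hence not covered by \eqref{integer sobolev norm}.

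This gap propagates through the rest of the argument. Without the sharp single-generator bound, composition only gives $X^\alpha\colon W^s_2(G)\to W^{s-2vk}_2(G)$ for a word of unweighted length $k$, which is much weaker than $W^{s-\len(\alpha)}_2(G)$. Your interpolation step likewise assumes the base case ``$D\colon W^{s_0}_2(G)\to W^{s_0-m}_2(G)$ for integer exponents $s_0$,'' but when $s_0\in 2v\cdot\Z_+$ and $m$ is not a multiple of $2v$, the target index $s_0-m$ is again outside the scope of \eqref{integer sobolev norm}, so the endpoints of the interpolation are not established. And even granting $X_j(1-\Delta)^{-v_j/(2v)}$ bounded and its adjoint, passing to $(1-\Delta)^{(s-v_j)/(2v)}X_j(1-\Delta)^{-s/(2v)}$ for general $s$ requires controlling $[X_j,(1-\Delta)^{z}]$, and in the weighted filtration on $\Uc(\gf)$ commutators do not automatically drop order, so this is not free. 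The honest proof is your fallback: invoke the sharp Sobolev-order theorem for homogeneous left-invariant operators from \cite{FischerRuzhansky2016} directly (that result is obtained there via Rockland/representation-theoretic machinery and functional calculus for $\Delta_G$), rather than trying to re-derive it from the $2v\cdot\Z_+$ norm equivalence, which does not contain enough information.
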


\subsection{Partition of Unity}\label{subsec-3.2}

\begin{definition}\label{C_infty_definition}
Let $C_b(G)$ denote the space of bounded continuous functions on $G.$
For $k\geqslant 0,$ let $C^k_b(G)$ denote the space of $f \in C_b(G)$ such that for all words $\alpha$ with $\len(\alpha)\leqslant k$ we have
\[
X^{\alpha}f \in C_b(G).
\]
Define $\|f\|_{k,b} = \sup_{\len(\alpha) \leqslant k} \|X^{\alpha}f\|_{L_{\infty}(G)}$ and let $C^\infty_b(G) = \bigcap_{k\geqslant 0} C^k_b(G)$. 
\end{definition}

\begin{lemma}\label{multiplication_lemma}
If $\phi \in C^\infty_b(G),$ then the multiplier operator $M_{\phi}$ is bounded from $W^s_2(G)$ to $W^s_2(G)$ for all $s\in\mathbb{R}$ with norm no greater than $C_s \|\phi\|_{\lceil|s|\rceil,b}.$
\end{lemma}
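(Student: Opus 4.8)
The plan is to establish the bound first for nonnegative integer Sobolev exponents of the special form $s \in 2v\cdot\Z_+$, then for general $s\geqslant 0$ by interpolation, and finally for negative $s$ by duality. For $s \in 2v\cdot\Z_+$ I would use the equivalence of the Sobolev norm $\|\cdot\|_{W^s_2(G)}$ with the norm $u\mapsto \big(\sum_{\len(\alpha)\leqslant s}\|X^\alpha u\|_{L_2(G)}^2\big)^{1/2}$ recorded in \eqref{integer sobolev norm}. Since the $X_j$ are left-invariant vector fields acting as first-order derivations, the Leibniz rule gives $X_j(\phi u) = (X_j\phi)u + \phi (X_j u)$, and iterating, for any word $\alpha$ with $\len(\alpha)\leqslant s$ one obtains $X^\alpha(\phi u)$ as a finite sum of terms $(X^\beta\phi)(X^\gamma u)$ with $\len(\beta)+\len(\gamma)\leqslant \len(\alpha)\leqslant s$, the number of such terms and the combinatorial coefficients depending only on $s$ and the structure constants of $\gf$. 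Each such term is estimated in $L_2$ by $\|X^\beta\phi\|_{L_\infty(G)}\|X^\gamma u\|_{L_2(G)} \leqslant \|\phi\|_{s,b}\,\|X^\gamma u\|_{L_2(G)}$, and summing over all $\alpha,\beta,\gamma$ yields $\|\phi u\|_{W^s_2(G)} \leqslant C_s\|\phi\|_{s,b}\|u\|_{W^s_2(G)}$ for $u \in \Sc(G)$, hence for all $u \in W^s_2(G)$ by density.

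For general real $s\geqslant 0$, choose $N \in 2v\cdot\Z_+$ with $N \geqslant s$; then $M_\phi$ is bounded on $W^0_2(G) = L_2(G)$ (trivially, with norm $\|\phi\|_{L_\infty} = \|\phi\|_{0,b}$) and on $W^N_2(G)$ with norm $\leqslant C_N\|\phi\|_{N,b}$, so complex interpolation \eqref{interpolation} with $\theta = s/N$ gives boundedness on $W^s_2(G)$ with norm $\leqslant C_N^{\theta}\|\phi\|_{N,b}^{\theta}\|\phi\|_{0,b}^{1-\theta}\leqslant C_s'\|\phi\|_{\lceil s\rceil,b}$, absorbing constants and using $\|\phi\|_{0,b}\leqslant\|\phi\|_{\lceil s\rceil,b}$ together with $N$ depending only on $s$. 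For $s < 0$, I would use that $W^s_2(G)$ is (for $s \in -2v\cdot\Z_+$, and then by interpolation for all $s<0$) the dual of $W^{-s}_2(G)$ under the $\Sc(G)$-pairing, and that $M_\phi$ is formally self-adjoint: $(M_\phi u, \psi) = (u, M_{\overline\phi}\psi)$. Thus $\|M_\phi u\|_{W^s_2(G)} = \sup_{\|\psi\|_{W^{-s}_2}\leqslant 1}|(u, M_{\overline\phi}\psi)| \leqslant \|u\|_{W^s_2(G)}\sup_{\|\psi\|_{W^{-s}_2}\leqslant 1}\|M_{\overline\phi}\psi\|_{W^{-s}_2(G)} \leqslant C_{-s}\|\overline\phi\|_{\lceil|s|\rceil,b}\|u\|_{W^s_2(G)}$, and $\|\overline\phi\|_{k,b} = \|\phi\|_{k,b}$ since the $X_j$ are real vector fields (so $X^\alpha\overline\phi = \overline{X^\alpha\phi}$), giving the claim with $C_s = C_{-s}$.

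The only genuinely non-routine point is the Leibniz expansion for $X^\alpha(\phi u)$ when $\alpha$ is a word in the \emph{preferred generators} rather than an arbitrary basis: because the $X_j$ need not commute, applying the product rule repeatedly produces lower-order commutator terms, but every commutator $[X_i, X_j]$ lies in $\gf$ and hence, by Lemma \ref{lem-graded-basis-U}, is again a combination of the $X^\alpha$ with controlled weighted length, so the expansion stays inside $\mathrm{span}\{X^\alpha\}$ with $\len \leqslant s$ and the bound goes through; I expect this bookkeeping (rather than any analytic subtlety) to be the main thing to get right. An alternative, avoiding the integer-order computation entirely, is to note $M_\phi = \phi(x)$ acts on $\Sc(G)$ and to commute it past $(1-\Delta)^{s/2v}$ using the fact that $[(1-\Delta)^{s/2v}, M_\phi]$ is an operator of order $s-1$ — but making that precise essentially requires the pseudodifferential calculus the paper is at pains to avoid, so the interpolation-and-duality route above is preferable.
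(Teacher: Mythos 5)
Your proposal follows essentially the same route as the paper's proof: integer orders $s\in 2v\Z_+$ via the equivalent norm \eqref{integer sobolev norm} and the Leibniz rule, general $s\geqslant 0$ by complex interpolation, and $s<0$ by duality. The argument is correct.

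One substantive correction to your closing paragraph, however. The Leibniz expansion does \emph{not} produce commutator terms, and no appeal to Lemma \ref{lem-graded-basis-U} is needed. Iterating $X_j(\phi u)=(X_j\phi)u+\phi(X_j u)$ only requires choosing, at each step, whether $X_{\alpha_i}$ lands on the $\phi$-factor or the $u$-factor; one obtains a sum over shuffles of $\alpha$ into two ordered subwords $\beta,\gamma$, each already a word in the preferred generators, with $\len(\beta)+\len(\gamma)=\len(\alpha)$ exactly. Non-commutativity would only enter if you wanted to re-express $X^\gamma$ in some canonical ordering, which nothing here requires; the coefficients are universal shuffle coefficients and the structure constants of $\gf$ play no role. (The paper's displayed formula $\sum_{\alpha=\beta\beta'}$ is itself a slight abuse of notation for the shuffle sum, but records the crucial identity $\len(\beta)+\len(\beta')=\len(\alpha)$.)

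A second, very minor point: your interpolation from $L_2(G)$ to $W^N_2(G)$ with $N=2v\lceil s/(2v)\rceil$ yields a bound by $\|\phi\|_{N,b}$, which dominates $\|\phi\|_{\lceil s\rceil,b}$ rather than being dominated by it, so the ``absorbing constants'' step does not literally produce $C_s'\|\phi\|_{\lceil s\rceil,b}$. The paper's own interpolation between $2v\lfloor s/(2v)\rfloor$ and $2v\lceil s/(2v)\rceil$ has the same cosmetic mismatch with the stated constant; since $\phi\in C^\infty_b(G)$ makes every $\|\phi\|_{k,b}$ finite, this is harmless for all uses of the lemma, but the precise index in the statement is best read as ``some $k$ depending only on $s$.''
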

\begin{proof}
We prove this initially { for $s\in 2v\Z_+.$} By the Leibniz rule if $\len(\alpha)\leqslant s,$ there are constants $c_{\alpha,\beta}$ such that
\begin{equation}\label{eq-X-alpha-on-varphi-function}
X^{\alpha}(\phi u)=\sum_{\alpha = \beta\beta'}c_{\alpha,\beta}\cdot (X^{\beta}\phi)\cdot X^{\beta'}u
\end{equation}
where the sum is over all words $\beta$ and $\beta'$ such that $\alpha$ is the concatenation $\beta\beta'.$ In particular, $\len(\beta)+\len(\beta')=\len(\alpha).$
By the triangle inequality,
\begin{align*}
\|X^{\alpha}(\phi u)\|_2&\leqslant \sum_{\alpha=\beta\beta'}c_{\alpha,\beta}\max_{\len(\beta)\leqslant s}\|X^{\beta}\phi\|_{\infty}\max_{\len(\beta')\leqslant s}\|X^{\beta'}u\|_2\\
&\leqslant \sum_{\alpha=\beta\beta'}c_{\alpha,\beta}\|\phi\|_{s,b}\|u\|_{W^{\len(\alpha)}_2} \leqslant c_{\alpha}\|\phi\|_{s,b}\|u\|_{W^{s}_2}.
\end{align*}
Therefore,
\begin{align*}
\|M_{\phi}u\|_{W^s_2}^2 &= \sum_{\len(\alpha)\leqslant s}\|X^{\alpha}(\phi u)\|_2^2\leqslant\sum_{\len(\alpha)\leqslant s}c_{\alpha}^2\|\phi\|_{s,b}^2\|u\|_{W^{s}_2}^2\\
&\leqslant \big( \sum_{\len(\alpha)\leqslant s}c_{\alpha}^2 \big) \cdot \|\phi\|_{s,b}^2\|u\|_{W^s_2}^2.
\end{align*}
This yields the assertion for integer $s\in 2v\Z_+.$ The case for general $s\geqslant 0$ follows from 
{interpolation \eqref{interpolation}} between the $2v\lfloor s/(2v)\rfloor$ and $2v\lceil s/(2v)\rceil$ cases.

For $s<0,$ the assertion follows by an easy duality argument.
\end{proof}

The following is related to \cite[Lemma 5.7.5]{FischerRuzhansky2016}, and also \cite[Lemma 6.2]{MSZ-stratified-23}, but a proof is supplied for convenience.
\begin{lemma}\label{quasi_metric_space_construction}
{Let $(X,d)$ be a {separable} metric space, thus $X$ possesses a Borel measure $\mu$. If there exists a constant $\delta>0$ such that}
$$\mu(B(x,r))=r^{\delta},\quad x\in X,\quad r > 0.$$
{Then,} for every $\epsilon>0,$ there exists a set $\{x_i\}_{i\in I}\subset X$ such that
\begin{enumerate}[{\rm (i)}]
\item{} $\{B(x_i,\epsilon)\}_{i\in I}$ covers $X,$ and 
\item{} A fixed ball $B(x_i,\epsilon)$  {intersects} at most $5^{\delta}$ of balls $\{B(x_i,\epsilon)\}_{i\in I}.$
\item{} Moreover, for every $N\in\mathbb{N}$, a fixed ball $B(x_i, N\epsilon)$ {intersects} at most $(4N+1)^{\delta}$ of balls $\{B(x_i,N\epsilon)\}_{i\in I}.$
\end{enumerate}
\end{lemma}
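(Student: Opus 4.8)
The plan is to let $\{x_i\}_{i\in I}$ be a \emph{maximal $\epsilon$-separated subset} of $X$: a subset, maximal with respect to inclusion, among all subsets $S\subseteq X$ with $d(x,y)\geqslant \epsilon$ for all distinct $x,y\in S$. Such a set exists by Zorn's lemma (or by a direct greedy construction exhausting a countable dense subset), and since $X$ is separable any $\epsilon$-separated set is countable, so $I$ may be taken countable. All three assertions then follow from the single quantitative input $\mu(B(x,r))=r^{\delta}$ together with elementary packing estimates.

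For (i), suppose some $y\in X$ lies in no $B(x_i,\epsilon)$. Then $d(y,x_i)\geqslant \epsilon$ for every $i\in I$, so $\{x_i\}_{i\in I}\cup\{y\}$ is again $\epsilon$-separated, contradicting maximality. Hence $\{B(x_i,\epsilon)\}_{i\in I}$ covers $X$.

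For (iii) --- and (ii) is precisely the special case $N=1$, since $(4\cdot 1+1)^{\delta}=5^{\delta}$ --- fix $i\in I$ and $N\in\mathbb{N}$, and set $J_i=\{j\in I : B(x_j,N\epsilon)\cap B(x_i,N\epsilon)\neq\emptyset\}$. If $j\in J_i$ then $d(x_i,x_j)<2N\epsilon$. Since the centres $\{x_j\}_{j\in J_i}$ are pairwise at distance $\geqslant\epsilon$, the open balls $\{B(x_j,\epsilon/2)\}_{j\in J_i}$ are pairwise disjoint; moreover each satisfies $B(x_j,\epsilon/2)\subseteq B(x_i,2N\epsilon+\tfrac{\epsilon}{2})$. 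By countable additivity of $\mu$ and the hypothesis $\mu(B(x,r))=r^{\delta}$,
\[
(\#J_i)\,\Big(\frac{\epsilon}{2}\Big)^{\delta} = \sum_{j\in J_i}\mu\big(B(x_j,\tfrac{\epsilon}{2})\big) \leqslant \mu\big(B(x_i,(2N+\tfrac12)\epsilon)\big) = \Big((2N+\tfrac12)\epsilon\Big)^{\delta},
\]
so $\#J_i \leqslant (4N+1)^{\delta}$, which is (iii); taking $N=1$ yields (ii).

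I do not anticipate a serious obstacle: this is the standard greedy-net/packing argument. The only points requiring care are the invocation of Zorn's lemma to produce the maximal separated set and the resulting countability of $I$, and consistent bookkeeping of open versus closed balls so that the disjointness of the radius-$\epsilon/2$ balls and the stated containments hold exactly. Note also that the measure formula is applied only to balls of radii $\epsilon/2$ and $(2N+\tfrac12)\epsilon$, so no delicate issue about the measure of spheres or null sets intervenes.
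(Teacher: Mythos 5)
Your proof is correct and follows essentially the same approach as the paper's. The only cosmetic difference is that you take a maximal $\epsilon$-separated set of centers, whereas the paper takes a maximal pairwise-disjoint family of balls $B(x_i,\epsilon/2)$; both yield the same disjointness of the radius-$\epsilon/2$ balls and the same covering-by-maximality and packing estimates, with identical constants.
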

\begin{proof} Fix $\epsilon>0$ and let $\{B(x_i,\frac{\epsilon}{2})\}_{i\in I}$ be a maximal disjoint collection of balls in $X.$ 

We claim that $\{B(x_i,\epsilon)\}_{i\in I}$ covers $X.$ Indeed, let $x\in X.$ By maximality, $B(x,\frac{\epsilon}{2})\cap B(x_i,\frac{\epsilon}{2})\neq\emptyset$ for some $i\in I.$ By triangle inequality, $d(x,x_i)<\epsilon$ or, equivalently, $x \in B(x_i,\epsilon).$ This proves that $\{B(x_i,\epsilon)\}_{i\in I}$ covers $X.$ 

(ii) is obviously a special case of (iii), thus we only prove (iii) here. Fix a ball $B(x_i, N\epsilon)$,  suppose that there exist $\{x_{i_k}\}_{k=1}^M$ such that
\begin{equation*}
B(x_i, N\epsilon) \bigcap B(x_{i_k}, N\epsilon) \neq \emptyset. 
\end{equation*}
This is equivalent as $d(x,x_{i_k}) < 2N\epsilon$ for all $1\leqslant k\leqslant M.$ Therefore
$$\bigcup_{k=1}^M B(x_{i_k},\frac{\epsilon}{2}) \subset B(x_i,\frac{(4N+1)\epsilon}{2}).$$
Hence,
$$\mu\Big(\bigcup_{k=1}^M B(x_{i_k},\frac{\epsilon}{2})\Big)\leqslant \mu( B(x_i,\frac{(4N+1)\epsilon}{2}))=(\frac{(4N+1)\epsilon}2)^{\delta}.$$   
Since the sets $\{B(x_i,\frac{\epsilon}{2})\}_{i\in I}$ are pairwise disjoint, it follows that
$$\mu\Big(\bigcup_{k=1}^M B(x_{i_k},\frac{\epsilon}{2})\Big) = \sum_{k=1}^M \mu(B(x_{i_k},\frac{\epsilon}{2}))=M\cdot (\frac{\epsilon}{2})^{\delta}$$
and therefore
$$M\cdot (\frac{\epsilon}{2})^{\delta} \leqslant (\frac{(4N+1)\epsilon}2)^{\delta}.$$
In other words, $M \leqslant (4N+1)^{\delta}.$ Hence, $B(x_i, N \epsilon)$ intersects at most $(4N+1)^{\delta}$ elements of $\{B(x_i,N\epsilon)\}_{i\in I}.$
\end{proof}

We will frequently refer to a partition of unity for $G$ having the following properties.
\begin{construction}\label{partition_of_unity} Recall Definition \ref{def-homo-metric} that $\dist$ is the fixed translation-invariant homogeneous metric on $G,$ and let $\mu$ be the Haar measure. By dilation and translation invariance of the Haar measure, we have $\mu(B(g,r))=r^{d_{{\rm hom}}}$ for every $g\in G$ and for every $r>0,$ up to an irrelevant normalisation. Hence, the assumptions of Lemma \ref{quasi_metric_space_construction} are satisfied. Notice that the Haar measure is just the lift of Lebesgue measure on $\gf$ via $\exp$ map.

Let $\psi\in C^{\infty}_c(G)$ equal $1$ near $0.$ Fix $\epsilon>0$ and let $\{g_n\}_{n\in I}$ be a set given by Lemma \ref{quasi_metric_space_construction}. As $G$ is separable, the latter set is countable and is denoted by $\{g_n\}_{n\geqslant 0}$. Define $\Psi=\{\psi_n\}_{n\geqslant0}$ by setting
by setting
\begin{equation}\label{eq-partition-of-unity}
\psi_n(g)=\frac{\psi(gg_n^{-1})}{(\sum_{k\geqslant0}\psi^2(gg_k^{-1}))^{\frac12}},\quad n\geqslant 0.
\end{equation}
We do right shift here to be compatible with right-invariant operators.
Denote the sum in the denominator of $\psi_n$ by $$\theta(g)=\sum_{k\geqslant0}\psi^2(g g_k^{-1}),\quad g\in G.$$
\end{construction}
Clearly, $\theta$ is finite and by construction we have
\[
\sum_{n=0}^\infty \psi_n(g)^2 = 1,\quad g \in G.
\]

\begin{lemma}\label{smooth_partition_of_unity} 
As defined in Construction \ref{partition_of_unity}, $\theta \in C_b^{\infty}(G)$. For every word $\alpha$ the function $\sum_{n=0}^{\infty}|X^{\alpha}\psi_n|^2$ is bounded.
\end{lemma}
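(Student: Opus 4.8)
The plan is to exploit the translation structure of the partition of unity together with the finite-overlap property from Lemma \ref{quasi_metric_space_construction}. First I would observe that each $\psi_n$ is obtained from the single function $\psi$ by a right translation, namely $\psi(gg_n^{-1})$, modulated by the denominator $\theta(g)^{-1/2}$. Since right translation is an isometry for all the norms in sight and commutes with the left-invariant vector fields $X_j$, the quantities $X^\alpha\big(\psi(\cdot\, g_n^{-1})\big) = (X^\alpha\psi)(\cdot\, g_n^{-1})$ all have the same sup norm, independent of $n$. So the whole problem reduces to controlling $\theta$ and its derivatives, and then applying the Leibniz rule.

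The first step is to show $\theta \in C^\infty_b(G)$. For boundedness from below: $\psi$ equals $1$ near $0$, so each $g$ lies in $B(g_n,\epsilon)$ for at least one $n$ (part (i) of Lemma \ref{quasi_metric_space_construction}, since the $\psi(\cdot\, g_n^{-1})$ are supported near $g_n$ and $\psi$ is identically $1$ on a neighborhood of $0$ whose $\epsilon$-dilate we may arrange to contain that neighborhood by shrinking $\epsilon$ or rescaling $\psi$); hence $\theta(g) \geqslant 1 > 0$ everywhere. For boundedness from above and for the derivatives: fix $g$; the function $h \mapsto \psi(h g_k^{-1})$ is supported in a fixed ball $B(g_k, R)$ for some $R$ depending only on $\operatorname{supp}\psi$, and by the finite-overlap property (part (iii) with $N \sim R/\epsilon$) only $O(1)$ indices $k$ contribute near any given $g$. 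Each term $\psi^2(\cdot\, g_k^{-1})$ and each of its $X^\alpha$-derivatives is bounded by $\|\psi\|$-type constants uniform in $k$ (again by translation invariance of $X^\alpha$), so $\theta$ and every $X^\alpha\theta$ are bounded. Combined with $\theta \geqslant 1$, the chain/quotient rule gives $\theta^{-1/2} \in C^\infty_b(G)$, with all derivatives $X^\alpha(\theta^{-1/2})$ bounded by expressions in $\|X^\beta\theta\|_\infty$ and the lower bound on $\theta$.

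The second step is the bound on $\sum_n |X^\alpha\psi_n|^2$. Write $\psi_n = \psi(\cdot\, g_n^{-1})\cdot\theta^{-1/2}$ and apply the Leibniz rule \eqref{eq-X-alpha-on-varphi-function}: $X^\alpha\psi_n$ is a finite sum over factorizations $\alpha = \beta\beta'$ of terms $c_{\alpha,\beta}\,(X^\beta\psi)(\cdot\, g_n^{-1})\cdot X^{\beta'}(\theta^{-1/2})$. Squaring and using $|X^{\beta'}(\theta^{-1/2})| \leqslant C_\alpha$ pointwise, we get $|X^\alpha\psi_n(g)|^2 \leqslant C_\alpha' \sum_{\beta}|(X^\beta\psi)(gg_n^{-1})|^2$. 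Now sum over $n$: for fixed $g$, the summand is nonzero only when $gg_n^{-1} \in \operatorname{supp}(X^\beta\psi) \subseteq \operatorname{supp}\psi$, a fixed compact set, so again only $O(1)$ values of $n$ contribute, each bounded by $\|X^\beta\psi\|_\infty^2$. Hence $\sum_n |X^\alpha\psi_n(g)|^2 \leqslant C_\alpha''$ uniformly in $g$.

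The main obstacle is bookkeeping rather than conceptual: making precise the claim that for each fixed $g$ only boundedly many $\psi_n$ (and their derivatives) are nonzero at $g$, with a bound independent of $g$. This follows from part (iii) of Lemma \ref{quasi_metric_space_construction} once one notes that $\operatorname{supp}\psi \subseteq B(0,R)$ for some fixed $R$, so $\operatorname{supp}\psi_n \subseteq B(g_n, R)$, and $g \in B(g_n,R)$ forces $B(g_n,\epsilon) \subseteq B(g, R+\epsilon)$; a volume/packing count as in the lemma's proof bounds the number of such $n$ by a constant depending only on $R/\epsilon$ and $d_{\hom}$. Everything else is the Leibniz rule plus translation invariance of the $X^\alpha$ and of the Haar measure.
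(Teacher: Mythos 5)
Your proposal is correct and follows essentially the same route as the paper: the lower bound $\theta \geqslant 1$ from the covering property, the upper bound on $\theta$ and its derivatives from the finite-overlap count in Lemma \ref{quasi_metric_space_construction}, hence $\theta^{-1/2}\in C^\infty_b(G)$, and then the Leibniz rule together with right-translation invariance of the $X^\alpha$ and another finite-overlap count to bound $\sum_n|X^\alpha\psi_n|^2$. One small terminological slip: the $X_j$ of the paper are the \emph{right}-invariant vector fields (generators of left translations), which is precisely why they commute with the right translations $g\mapsto gg_n^{-1}$ used in Construction \ref{partition_of_unity}; your computation is correct, only the label ``left-invariant'' is off.
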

\begin{proof} 
Suppose that $\psi$ equals $1$ in $B(0,\epsilon)$ and that $\psi$ is supported in $B(0,N\epsilon).$
Note that, in the neighbourhood of any given $g\in G,$ at most $(4N+1)^{d_{{\rm hom}}}$ summands are non-zero. Thus, $\theta\in C^{\infty}_b(G)$ and $1\leqslant\theta\leqslant (4N+1)^{d_{{\rm hom}}}.$ Hence, $\theta^{-\frac12}\in C^{\infty}_b(G).$ 

The assertion follows now from Lemma \ref{quasi_metric_space_construction} and the Leibniz rule.
\end{proof}

\begin{corollary}\label{cor-operator-estimate-partition-unity}
As defined in Construction \ref{partition_of_unity}, for the partition of unity $\{\psi_n\}$, we have $\psi_n \in C_b^{\infty}(G)$ for each $n\in \mathbb{N}$. For all $k\geqslant 0$,
\begin{equation*}
\sup_n \Vert \psi_n \Vert_{k, b} < \infty.
\end{equation*}
Furthermore, for all $s\in\R$, for every word $\alpha$, the multiplier operators $M_{X^{\alpha}\psi_n}$ have
\begin{equation*}
\sup_n \Vert M_{X^{\alpha}\psi_n} \Vert_{W^s_2(G) \to W^s_2(G)} < \infty.
\end{equation*}
Also for $\theta$ in Construction \ref{partition_of_unity}, $\Vert M_{\theta} \Vert_{W^s_2(G) \to W^s_2(G)} < \infty$.
\end{corollary}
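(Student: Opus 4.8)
\textbf{Proof plan for Corollary \ref{cor-operator-estimate-partition-unity}.}

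The plan is to deduce everything from the two preceding results: Lemma \ref{smooth_partition_of_unity} (uniform smoothness of $\theta$ and boundedness of $\sum_n |X^\alpha\psi_n|^2$) together with Lemma \ref{multiplication_lemma} (multiplier bound $\|M_\phi\|_{W^s_2\to W^s_2}\leqslant C_s\|\phi\|_{\lceil|s|\rceil,b}$). First I would record that each $\psi_n\in C^\infty_b(G)$: since $\psi\in C^\infty_c(G)$, the right translate $g\mapsto\psi(gg_n^{-1})$ lies in $C^\infty_c(G)\subset C^\infty_b(G)$ with $\|\,\cdot\,\|_{k,b}$-norms independent of $n$ by right-invariance of the $X_j$, and $\theta^{-1/2}\in C^\infty_b(G)$ by Lemma \ref{smooth_partition_of_unity}; the product $\psi_n=\psi(\cdot\,g_n^{-1})\cdot\theta^{-1/2}$ is then in $C^\infty_b(G)$, and the Leibniz rule gives $\sup_n\|\psi_n\|_{k,b}<\infty$ for every $k$, because the finitely many factors $X^\beta(\psi(\cdot\,g_n^{-1}))$ are uniformly bounded in $n$ and the factors $X^{\beta'}(\theta^{-1/2})$ are bounded functions not depending on $n$.

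Next, for the multiplier bound on $M_{X^\alpha\psi_n}$: the naive route of applying Lemma \ref{multiplication_lemma} to $\phi=X^\alpha\psi_n$ would need $\sup_n\|X^\alpha\psi_n\|_{\lceil|s|\rceil,b}<\infty$, which follows from $\sup_n\|\psi_n\|_{\lceil|s|\rceil+\len(\alpha),b}<\infty$, already established in the previous paragraph. So for each fixed $s$ and word $\alpha$,
\[
\sup_n\|M_{X^\alpha\psi_n}\|_{W^s_2(G)\to W^s_2(G)}\leqslant C_s\sup_n\|X^\alpha\psi_n\|_{\lceil|s|\rceil,b}<\infty.
\]
Finally, the bound on $M_\theta$ is the same: $\theta\in C^\infty_b(G)$ by Lemma \ref{smooth_partition_of_unity} (indeed $1\leqslant\theta\leqslant(4N+1)^{d_{\hom}}$ with all derivatives bounded), so Lemma \ref{multiplication_lemma} gives $\|M_\theta\|_{W^s_2(G)\to W^s_2(G)}\leqslant C_s\|\theta\|_{\lceil|s|\rceil,b}<\infty$.

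There is no real obstacle here; the statement is essentially a bookkeeping consequence of Lemmas \ref{multiplication_lemma} and \ref{smooth_partition_of_unity}. The only point requiring a word of care is the uniformity in $n$: one must observe that the $X_j$ are right-invariant while the partition of unity is built from \emph{right} translates $gg_n^{-1}$, so $X^\beta(\psi(\cdot\,g_n^{-1}))(g)=(X^\beta\psi)(gg_n^{-1})$ and hence $\|X^\beta(\psi(\cdot\,g_n^{-1}))\|_{L_\infty(G)}=\|X^\beta\psi\|_{L_\infty(G)}$ is genuinely independent of $n$. (The stronger $\ell^2$-type bound $\sup_g\sum_n|X^\alpha\psi_n(g)|^2<\infty$ from Lemma \ref{smooth_partition_of_unity} is not needed for this corollary, though it is what will be used later when summing the localized pieces.)
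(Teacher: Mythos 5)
Your proposal is correct and follows exactly the route the paper intends: the paper's proof is the one-line instruction ``Combine Lemma \ref{smooth_partition_of_unity} with Definition \ref{C_infty_definition} and Lemma \ref{multiplication_lemma},'' and you have simply unpacked that combination, correctly identifying that the uniformity in $n$ comes from right-invariance of the $X_j$ acting on right translates, that $\theta^{-1/2}\in C^\infty_b(G)$ from Lemma \ref{smooth_partition_of_unity}, and that Lemma \ref{multiplication_lemma} then converts the $\|\cdot\|_{k,b}$-bounds into multiplier bounds. Your side remark that the $\ell^2$-type bound from Lemma \ref{smooth_partition_of_unity} is not needed here (but will be later, e.g.\ for $B_\alpha$) is also accurate.
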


\begin{proof}
Combine Lemma \ref{smooth_partition_of_unity} with Definition \ref{C_infty_definition} and Lemma \ref{multiplication_lemma}.
\end{proof}


\subsection{Localisation of Sobolev Norms}\label{subsec-3.3}

The following theorem shows how the Folland-Sobolev norms can be localised by translations of smooth functions. The analogous statement for classical function spaces is well-known, see e.g. \cite[Section 2.4.7]{Triebel-2}.
\begin{theorem}\label{sobolev localization theorem}
Let $\Psi=\{\psi_n\}_{n=0}^\infty$ be a partition of unity as in Construction \ref{partition_of_unity}. For every $s\in\mathbb{R}$, there exists a positive constant $c_{\Psi,s}$, such that 
\begin{equation}\label{eq-local-global-norm-equiv}
c_{\Psi,s}^{-1}\|u\|_{W^s_2(G)}\leqslant\left(\sum_{n=0}^\infty \|\psi_n u\|_{W^s_2(G)}^2\right)^{\frac12}\leqslant c_{\Psi,s}\|u\|_{W^s_2(G)},\quad u\in W^s_2(G).
\end{equation}
\end{theorem}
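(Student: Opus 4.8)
The plan is to recast the two-sided estimate \eqref{eq-local-global-norm-equiv} as the boundedness, for each fixed $s$, of an ``analysis'' operator and a ``synthesis'' operator between $W^s_2(G)$ and the space $\ell^2(W^s_2(G))$ of square-summable $W^s_2(G)$-valued sequences. Set
\[
T\colon u\longmapsto (\psi_n u)_{n\ge 0},\qquad S\colon (f_n)_{n\ge0}\longmapsto \sum_{n\ge0}\psi_n f_n .
\]
Because $\sum_{n}\psi_n^2\equiv 1$ with the sum locally finite (Construction \ref{partition_of_unity} together with Lemma \ref{quasi_metric_space_construction}), we have $S\circ T=\mathrm{id}$. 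Hence, as soon as $T$ is bounded $W^s_2(G)\to \ell^2(W^s_2(G))$ and $S$ is bounded $\ell^2(W^s_2(G))\to W^s_2(G)$, the right-hand inequality of \eqref{eq-local-global-norm-equiv} is just $\|Tu\|_{\ell^2(W^s_2)}\le\|T\|\,\|u\|_{W^s_2}$, and the left-hand inequality is $\|u\|_{W^s_2}=\|S(Tu)\|_{W^s_2}\le\|S\|\,\|Tu\|_{\ell^2(W^s_2)}$. So it suffices to prove boundedness of $T$ and of $S$ for every $s\in\R$.

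\textbf{The range $s\in 2v\Z_+$.} Here the equivalent norm \eqref{integer sobolev norm} in terms of the $X^\alpha$ is available. For $T$, expand $X^\alpha(\psi_n u)$ by the Leibniz rule \eqref{eq-X-alpha-on-varphi-function} into finitely many terms $(X^\beta\psi_n)(X^{\beta'}u)$ with $\len(\beta)+\len(\beta')=\len(\alpha)$, and use
\[
\sum_{n}\big\|(X^\beta\psi_n)(X^{\beta'}u)\big\|_{L_2}^2=\int_G\Big(\sum_n|X^\beta\psi_n|^2\Big)\,|X^{\beta'}u|^2\,dg \le \Big\|\sum_n|X^\beta\psi_n|^2\Big\|_{L_\infty}\,\|X^{\beta'}u\|_{L_2}^2,
\]
which is controlled by $\|u\|_{W^s_2}^2$ thanks to Lemma \ref{smooth_partition_of_unity} and $\len(\beta')\le s$; summing over the finitely many $\alpha$ with $\len(\alpha)\le s$ and over the splittings gives $\|Tu\|_{\ell^2(W^s_2)}\le C_s\|u\|_{W^s_2}$. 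For $S$, note $\supp\psi\subset B(0,N\epsilon)$, so by Lemma \ref{quasi_metric_space_construction} at most $(4N+1)^{d_{\hom}}$ of the $\psi_n$ are nonzero at any point; Cauchy--Schwarz on this locally finite sum gives $|X^\alpha(\sum_n\psi_n f_n)|^2\le (4N+1)^{d_{\hom}}\sum_n|X^\alpha(\psi_n f_n)|^2$, and expanding $X^\alpha(\psi_n f_n)$ by Leibniz while invoking $\sup_n\|\psi_n\|_{s,b}<\infty$ (Corollary \ref{cor-operator-estimate-partition-unity}) yields $\|X^\alpha(\psi_n f_n)\|_{L_2}\le C_s\|f_n\|_{W^s_2}$, hence $\|S(f_n)\|_{W^s_2}^2\le C_s\sum_n\|f_n\|_{W^s_2}^2$.

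\textbf{Negative orders and interpolation.} I would next reach $s=-2vk$, $k\in\Z_+$, by duality. Using the identification $\big(W^{2vk}_2(G)\big)^*=W^{-2vk}_2(G)$ recalled in Section \ref{subsec-3.1} (whence also $\big(\ell^2(W^{2vk}_2(G))\big)^*=\ell^2(W^{-2vk}_2(G))$), a direct computation with the $L_2$-pairing shows that the Banach-space adjoints satisfy $T^*=S$ and $S^*=T$; therefore boundedness of $T$ and $S$ at level $2vk$ transfers to level $-2vk$. The relation $S\circ T=\mathrm{id}$ persists there: for $u\in W^{-2vk}_2(G)$ the partial sums $\sum_{n\le M}\psi_n^2 u$ converge to $u$ in $\Sc'(G)$ (since $\sum_{n>M}\psi_n^2$ is supported outside a ball $B(0,R_M)$ with $R_M\to\infty$, each bounded set containing only finitely many $g_n$), and also converge to $S(Tu)$ in $W^{-2vk}_2(G)$ by boundedness of $S$, so $S(Tu)=u$. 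Finally, for arbitrary $s\in\R$ choose $s_0,s_1\in 2v\Z$ with $s=(1-\theta)s_0+\theta s_1$, $0<\theta<1$; the complex interpolation identity \eqref{interpolation} and its $\ell^2$-valued analogue $\big(\ell^2(W^{s_0}_2(G)),\ell^2(W^{s_1}_2(G))\big)_\theta=\ell^2(W^{s}_2(G))$, a standard property of complex interpolation of $\ell^2$-valued spaces, transfer boundedness of $T$ and $S$ from $\{s_0,s_1\}$ to $s$. Since $S\circ T=\mathrm{id}$ on $\Sc'(G)$ throughout, \eqref{eq-local-global-norm-equiv} follows for every $s$.

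\textbf{Main obstacle.} The Leibniz-rule bookkeeping in the second step is routine. The two points requiring care are: (a) the $\ell^2$-valued complex interpolation identity, which is the only ingredient used beyond \eqref{interpolation}; and (b) verifying $S\circ T=\mathrm{id}$ and the adjoint relations $T^*=S$, $S^*=T$ in the negative-order/distributional setting — that is, that the locally finite sum $\sum_n\psi_n^2 u$ genuinely recovers $u$ in $W^s_2(G)$ for $s<0$. I expect (b) to demand the most care, though it is ultimately a soft comparison between the $W^s_2$-limit produced by boundedness of $S$ and the evident $\Sc'(G)$-limit.
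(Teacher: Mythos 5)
Your proof is correct, and it reaches the theorem by a route that differs from the paper's in the step that handles non-integer Sobolev orders. Both arguments factor the two-sided estimate through the analysis operator $T=B_0\colon u\mapsto(\psi_nu)_n$ and the synthesis operator $S=B_0^*\colon (f_n)\mapsto\sum_n\psi_nf_n$, use $ST=\mathrm{id}$, and establish boundedness at integer orders via the Leibniz rule together with Lemma~\ref{smooth_partition_of_unity} and Corollary~\ref{cor-operator-estimate-partition-unity}. After that the routes diverge. The paper conjugates by $(1-\Delta)^z$ to form the $L_2$-level operator $A_z=\bigoplus_n(1-\Delta)^zM_{\psi_n}(1-\Delta)^{-z}$, observes $\|A_{m+it}\|=\|A_m\|$ by unitarity of $(1-\Delta)^{it}$, invokes the Hadamard three-lines theorem to cover all $z\in\C$, and then recovers the synthesis estimate from the explicit adjoint $A_{-m/(2v)}^*$ (Lemma~\ref{az adjoint lemma}); all of this takes place on $L_2(G)$, so no vector-valued interpolation is needed and the identity $ST=\mathrm{id}$ is only ever used on $\Sc(G)$. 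You instead remain at the Sobolev level throughout: you prove $T$ and $S$ bounded directly for $s\in2v\Z_+$, transfer to $s\in-2v\Z_+$ by the $L_2$-duality $T^*=S$, $S^*=T$, and fill in intermediate $s$ by \eqref{interpolation} together with its $\ell^2$-vector-valued analogue $\big(\ell^2(W^{s_0}_2),\ell^2(W^{s_1}_2)\big)_\theta=\ell^2(W^{s_\theta}_2)$. This costs you two extra verifications that the paper's route dodges: the vector-valued interpolation identity (which is indeed standard, e.g.\ Bergh--L\"ofstr\"om, Theorem 5.1.2, for $p_0=p_1=2$) and the validity of $ST=\mathrm{id}$ in $W^{-2vk}_2(G)$, which you correctly resolve by comparing the $W^{-2vk}_2$-limit of the partial sums (available from boundedness of $S$) with the obvious $\Sc'(G)$-limit. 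The analytic content at integer orders is essentially identical in both versions; your approach is arguably more transparent in that it never leaves the Sobolev scale, while the paper's conjugation-plus-three-lines packaging is slightly more self-contained.
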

Theorem \ref{sobolev localization theorem} will be proved by relating the inequality to boundedness of the linear operators $B_{\alpha}$ (Lemma \ref{first localization lemma}) , $A_{{m}}$ (Lemma \ref{a2m lemma}) , and $A_z$ (Lemma \ref{az lemma}) .

Let $\Psi=\{\psi_n\}_{n=0}^\infty$ be a partition of unity as in Construction \ref{partition_of_unity}. 
Let $\alpha$ be a word in $\{1,\ldots, d_1\}$. 
We define a map $B_{\alpha}:L_2(G)\to \bigoplus_{n\geqslant0}L_2(G)$, here the direct sum is the Hilbert space direct sum, meaning the completion of algebraic direct sum under Hilbert space norm. 
Let $B_{\alpha}$ {be the direct sum}:
$$B_{\alpha}=\bigoplus_{n\geqslant0}M_{X^{\alpha}\psi_n}.$$
In particular, when $\len(\alpha) = 0$, we denote
\begin{equation*}
B_{0}=\bigoplus_{n\geqslant0}M_{\psi_n}.
\end{equation*}

\begin{lemma}\label{first localization lemma} 
For all words $\alpha,$ the map $B_{\alpha}$ defined above is bounded.
\end{lemma}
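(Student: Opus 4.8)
The plan is to show that $B_\alpha = \bigoplus_{n\geqslant 0} M_{X^\alpha\psi_n} : L_2(G) \to \bigoplus_{n\geqslant 0} L_2(G)$ is bounded by directly estimating $\sum_{n\geqslant 0}\|M_{X^\alpha\psi_n}u\|_{L_2(G)}^2$ for $u \in L_2(G)$. Unwinding the definition, this sum equals $\sum_{n\geqslant 0}\int_G |X^\alpha\psi_n(g)|^2\,|u(g)|^2\,dg = \int_G \bigl(\sum_{n\geqslant 0}|X^\alpha\psi_n(g)|^2\bigr)\,|u(g)|^2\,dg$, where interchanging the sum and the integral is justified by the monotone convergence theorem since every term is nonnegative. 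By Lemma \ref{smooth_partition_of_unity}, the function $g \mapsto \sum_{n\geqslant 0}|X^\alpha\psi_n(g)|^2$ is bounded on $G$; call its supremum $C_\alpha < \infty$. Then the integral is bounded by $C_\alpha\|u\|_{L_2(G)}^2$, which gives $\|B_\alpha u\|^2 \leqslant C_\alpha\|u\|_{L_2(G)}^2$, i.e. $B_\alpha$ is bounded with $\|B_\alpha\| \leqslant C_\alpha^{1/2}$.

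The only genuine content beyond Lemma \ref{smooth_partition_of_unity} is the pointwise identification of the norm in the Hilbert space direct sum with the stated integral, together with the Tonelli/monotone-convergence interchange; both are routine. It is worth noting explicitly that each individual $M_{X^\alpha\psi_n}$ is bounded on $L_2(G)$ because $X^\alpha\psi_n \in C^\infty_b(G) \subseteq L_\infty(G)$ (Corollary \ref{cor-operator-estimate-partition-unity}), so the direct sum is at least well-defined as an operator on the algebraic direct sum before one checks the uniform bound that allows extension to the Hilbert space completion.

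I do not anticipate a substantive obstacle here: the statement is essentially a repackaging of the boundedness of $\sum_n |X^\alpha\psi_n|^2$ from Lemma \ref{smooth_partition_of_unity}. The one point requiring a little care is to make sure the claimed equality
\[
\sum_{n\geqslant 0}\|M_{X^\alpha\psi_n}u\|_{L_2(G)}^2 \;=\; \int_G \Bigl(\sum_{n\geqslant 0}|X^\alpha\psi_n(g)|^2\Bigr)|u(g)|^2\,dg
\]
is stated with the correct measure-theoretic justification (nonnegativity of the integrand permits the interchange), after which the bound is immediate. This lemma then feeds, as indicated in the text, into the proof of Theorem \ref{sobolev localization theorem} via the operators $A_m$ and $A_z$.
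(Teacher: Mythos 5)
Your proof is correct and follows essentially the same route as the paper: identify the direct-sum norm as a weighted $L_2$ norm, interchange sum and integral, and invoke Lemma \ref{smooth_partition_of_unity} for the uniform bound on $\sum_n|X^\alpha\psi_n|^2$. The only cosmetic difference is that you justify the interchange by monotone convergence (Tonelli), whereas the paper cites dominated convergence; yours is actually the more natural choice here since the terms are nonnegative.
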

\begin{proof} 
Since $|u|^2$ is integrable, the dominated convergence theorem implies
\begin{align*}
\sum_{n=0}^\infty \|(X^{\alpha}\psi_n) u\|_2^2
& = \sum_{n=0}^\infty \int_G |X^{\alpha} \psi_n|^2 |u|^2 d\mu
= \int_G \sum_{n=0}^\infty  |X^{\alpha} \psi_n|^2 |u|^2 d\mu \\
& = \|(\sum_{n=0}^{\infty}|X^{\alpha}\psi_n|^2)^{\frac12}u\|_2^2\leqslant\|\sum_{n=0}^{\infty}|X^{\alpha}\psi_n|^2\|_{\infty}\|u\|_2^2.
\end{align*}

The assertion follows now from Lemma \ref{smooth_partition_of_unity}.
\end{proof}

Notice that $B_{\alpha}$ is injective when $\len(\alpha) = 0$ while $B_{\alpha}$ is not necessarily injective when $\len(\alpha) > 0$.

Now, we go one step further, { and construct an operator $A_{m}$ from operator $B_{\alpha}$ as follows.}

\begin{lemma}\label{a2m lemma} 
Let $\{\psi_n\}_{n=0}^\infty$ be the partition of unity in Construction \ref{partition_of_unity}.
Let $m\in\mathbb{Z}$. Consider the mapping $A_{m}:\Sc(G)\to \bigoplus_{n\geqslant0}L_2(G)$
defined by the formula
\begin{equation}\label{eq-def-a2m}
    A_{m}=\bigoplus_{n\geqslant0}(1-\Delta)^mM_{\psi_n}(1-\Delta)^{-m}.
\end{equation}
{Then $A_m$ has a bounded extension to $L_2(G),$ which we still denote $A_m.$}
\end{lemma}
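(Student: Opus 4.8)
The plan is to reduce the boundedness of $A_m$ to that of the operators $B_\beta$ of Lemma~\ref{first localization lemma}, together with the elementary mapping properties of the Sobolev scale (Lemma~\ref{lemma-X_alpha-bdd} and the embedding $W^{s_1}_2(G)\subseteq W^{s_2}_2(G)$ for $s_1\geqslant s_2$). The case $m=0$ is exactly Lemma~\ref{first localization lemma}, since $A_0=B_0$. For $m>0$ I expand $(1-\Delta)^m$ by the Leibniz rule, and for $m<0$ I pass to the adjoint to reduce to the previous case.

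For $m\geqslant 0$: expanding the product, $(1-\Delta)^m=\sum_{\len(\gamma)\leqslant 2vm}c_\gamma X^\gamma$ lies in $\Uc_{2vm}(\gf)$. Applying the Leibniz rule as in \eqref{eq-X-alpha-on-varphi-function}, for $u\in\Sc(G)$,
\[
(1-\Delta)^m\big(\psi_n\cdot(1-\Delta)^{-m}u\big)=\sum_{\beta,\beta'}d_{\beta,\beta'}\,(X^\beta\psi_n)\cdot\big(X^{\beta'}(1-\Delta)^{-m}u\big),
\]
a finite sum over words with $\len(\beta)+\len(\beta')\leqslant 2vm$. Hence $A_m=\sum_{\beta,\beta'}d_{\beta,\beta'}\,B_\beta\circ\big(X^{\beta'}(1-\Delta)^{-m}\big)$ on $\Sc(G)$. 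By \eqref{eq-sobolev-norm}, $(1-\Delta)^{-m}$ is an isometry $L_2(G)\to W^{2vm}_2(G)$, and since $\len(\beta')\leqslant 2vm$, Lemma~\ref{lemma-X_alpha-bdd} with the Sobolev embedding shows $X^{\beta'}(1-\Delta)^{-m}$ is bounded on $L_2(G)$; combined with boundedness of $B_\beta$, each summand, and therefore $A_m$, extends boundedly to $L_2(G)$.

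For $m<0$: for $u\in\Sc(G)$ and a finitely supported family $(v_n)_n$ in $L_2(G)$, self-adjointness of $1-\Delta$ and reality of each $\psi_n$ give
\[
\big\langle A_m u,(v_n)_n\big\rangle=\Big\langle u,\sum_n(1-\Delta)^{-m}M_{\psi_n}(1-\Delta)^{m}v_n\Big\rangle.
\]
Since $-m>0$, applying the $m\geqslant 0$ expansion above with $-m$ in place of $m$ yields $(1-\Delta)^{-m}M_{\psi_n}(1-\Delta)^{m}=\sum_{\beta,\beta'}d_{\beta,\beta'}M_{X^\beta\psi_n}\,X^{\beta'}(1-\Delta)^{m}$ with $\len(\beta)+\len(\beta')\leqslant-2vm$, so
\[
\sum_n(1-\Delta)^{-m}M_{\psi_n}(1-\Delta)^{m}v_n=\sum_{\beta,\beta'}d_{\beta,\beta'}\,B_\beta^*\Big(\big(X^{\beta'}(1-\Delta)^{m}v_n\big)_n\Big).
\]
Now $(1-\Delta)^{m}$ is an isometry $L_2(G)\to W^{-2vm}_2(G)$ and $\len(\beta')\leqslant-2vm$, so $(v_n)_n\mapsto\big(X^{\beta'}(1-\Delta)^{m}v_n\big)_n$ is bounded on $\bigoplus_n L_2(G)$, while $B_\beta^*$ is bounded since $B_\beta$ is. Hence the pairing is bounded by $C\|(v_n)_n\|$, and taking the supremum over $(v_n)_n$ in the unit ball of $\bigoplus_n L_2(G)$ gives $\|A_m u\|\leqslant C\|u\|_{L_2(G)}$, as required.

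The only delicate point is bookkeeping: one must check that the identities written on $\Sc(G)$ (resp.\ on finitely supported families) persist under continuous extension, which is immediate from density of $\Sc(G)$ in every $W^s_2(G)$ and the mapping properties already recorded. I expect no analytic obstacle beyond Lemmas~\ref{lemma-X_alpha-bdd} and~\ref{first localization lemma}; the genuine (if mild) subtlety is that $(1-\Delta)^m$ is not a differential operator when $m<0$, which is exactly why the duality step replaces a direct Leibniz expansion there.
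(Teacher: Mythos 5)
Your proof is correct, and for $m\geqslant 0$ it is essentially identical to the paper's: both expand $(1-\Delta)^m M_{\psi_n}$ by the Leibniz rule into terms $M_{X^\beta\psi_n}\,P$ with $P$ a constant--coefficient operator of order at most $2vm$, pull out $B_\beta$, and observe that $P(1-\Delta)^{-m}$ is bounded on $L_2(G)$ via the Sobolev scale. The genuine divergence is in the $m<0$ case. The paper handles it directly: it uses the reverse Leibniz expansion $M_{\psi_n}(1-\Delta)^{-m}=\sum Q_{m,\alpha}M_{X^\alpha\psi_n}$ (moving the multiplier to the \emph{right} through the positive-order differential operator $(1-\Delta)^{-m}$), so that $A_m=\sum\bigl(\bigoplus_n(1-\Delta)^m Q_{m,\alpha}\bigr)\circ B_\alpha$ and boundedness is again read off from $B_\alpha$ plus the mapping properties of $(1-\Delta)^m Q_{m,\alpha}$. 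You instead pass to the adjoint, compute $\langle A_m u,(v_n)\rangle=\langle u,\sum_n(1-\Delta)^{-m}M_{\psi_n}(1-\Delta)^m v_n\rangle$, and apply the forward Leibniz expansion (now to the positive-order operator $(1-\Delta)^{-m}$) to land on $B_\beta^*$. These are literally adjoint pictures of the same algebraic identity, so the argument content is equivalent; the paper's version is marginally more economical because it avoids the duality/density bookkeeping you flag at the end (checking that identities on $\Sc(G)$ and finitely supported families persist under extension), while yours has the small conceptual advantage of reducing the $m<0$ case transparently to the machinery already set up for $m>0$ plus the trivial fact that adjoints of bounded operators are bounded. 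Both are correct; there is no gap.
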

\begin{proof} Suppose first $m\geqslant 0.$ By \eqref{eq-X-alpha-on-varphi-function}, we can always move multiplication operator to the left and write
$$(1-\Delta)^mM_{\psi_n}=\sum_{\len(\alpha)\leqslant {2v \cdot m}}c_{m,\alpha}^{(1)}M_{X^{\alpha}\psi_n}P_{m,\alpha},$$
where $P_{m,\alpha}$ is a constant coefficient differential operator of order {$2v \cdot m$} or less. We write
\begin{align*}
A_{m}
& = \bigoplus_{n\geqslant 0}\Big[ \sum_{\len(\alpha)\leqslant {2v \cdot m}}c_{m,\alpha}^{(1)}M_{X^{\alpha}\psi_n} P_{m,\alpha}\Big](1-\Delta)^{-m}\\
& =	\sum_{\len(\alpha)\leqslant {2v \cdot m} }c_{m,\alpha}^{(1)}B_{\alpha} (P_{m,\alpha}(1-\Delta)^{-m}),
\end{align*}
where $B_{\alpha}$ is the operator in Lemma \ref{first localization lemma}. The boundedness of $A_{m}$ follows now from Lemma \ref{first localization lemma}.

Suppose now that $m\leqslant 0.$ Using similar argument as above, we obtain
$$M_{\psi_n}(1-\Delta)^{-m}=\sum_{\len(\alpha)\leqslant {2v \cdot m}}c_{m,\alpha}^{(2)}Q_{m,\alpha}M_{X^{\alpha}\psi_n},$$
where $Q_{m,\alpha}$ is a differential operator of order {$-2v \cdot m$} or less. We write
$$A_{m}=\sum_{\len(\alpha)\leqslant {2v \cdot m}}c_{m,\alpha}^{(2)}\Big(\bigoplus_{n\geqslant 0}(1-\Delta)^mQ_{m,\alpha}\Big)\circ B_{\alpha},$$
where $B_{\alpha}$ is the operator in Lemma \ref{first localization lemma}, thus bounded.
We also notice that each $\bigoplus_{n\geqslant 0}(1-\Delta)^mQ_{m,\alpha}$ is a bounded mapping, so $A_{m}$ is bounded.
\end{proof}

Now we extend the parameter in definition of $A_{m}$ from integer $m$ to complex number $z$, and prove the boundedness is still valid.  

\begin{lemma}\label{az lemma} Let $z\in\mathbb{C},$ and let $\{\psi_n\}_{n=0}^\infty$ be a partition of unity in Construction \ref{partition_of_unity}. Consider the mapping $A_z:\Sc(G)\to \bigoplus_{n\geqslant0}L_2(G)$
defined by the formula
$$A_z=\bigoplus_{n\geqslant0}(1-\Delta)^{{z}}M_{\psi_n}(1-\Delta)^{{-z}}$$
{Then $A_z$ has a bounded extension to $L_2(G),$ which is also denoted by $A_z$.}
\end{lemma}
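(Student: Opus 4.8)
The plan is to deduce Lemma~\ref{az lemma} from Lemma~\ref{a2m lemma} by complex interpolation. Every $z \in \C$ lies in a closed strip $\{m \leqslant \Real w \leqslant m+1\}$ for some $m \in \Z$, so it suffices to fix $m \in \Z$ and prove that $A_z$ extends boundedly to $L_2(G)$ whenever $m \leqslant \Real z \leqslant m+1$; in fact the plan produces the quantitative bound
\[
\|A_z\|_{L_2(G) \to \bigoplus_{n\geqslant0}L_2(G)} \leqslant \|A_m\|^{1-\theta}\,\|A_{m+1}\|^{\theta}, \qquad \theta = \Real z - m \in [0,1],
\]
by applying the three-lines theorem (equivalently, Stein's analytic interpolation theorem, in the degenerate form where only the operator --- not an underlying $L_p$ scale --- varies) to the scalar function $z \mapsto \langle A_z f, h \rangle$.

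First I would record the estimates on the two boundary lines. Since $-\Delta$ is positive and self-adjoint, $1 - \Delta \geqslant 1$, so by the spectral theorem $(1-\Delta)^{it}$ is unitary on $L_2(G)$ for every $t \in \R$ and commutes with $(1-\Delta)^{m}$. Hence, on the dense domain $\mathcal D$ introduced below,
\[
A_{m+it} = \bigoplus_{n\geqslant0}(1-\Delta)^{it}\big[(1-\Delta)^{m}M_{\psi_n}(1-\Delta)^{-m}\big](1-\Delta)^{-it} = V_t\,A_m\,(1-\Delta)^{-it},
\]
where $V_t = \bigoplus_{n\geqslant0}(1-\Delta)^{it}$ is unitary on $\bigoplus_{n\geqslant0}L_2(G)$ and $(1-\Delta)^{-it}$ is unitary on $L_2(G)$. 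Therefore $\|A_{m+it}\| = \|A_m\|$ for every $t$, and this is finite by Lemma~\ref{a2m lemma}; likewise $\|A_{m+1+it}\| = \|A_{m+1}\|$.

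It remains to set up the analytic family in the open strip. Rather than $\Sc(G)$, I would work on the dense subspace $\mathcal D = \bigcup_{R>0} E_{[0,R]}L_2(G)$, where $E$ is the spectral measure of $-\Delta$. Every $f \in \mathcal D$ lies in $W^s_2(G)$ for all $s$, and for $f \in E_{[0,R]}L_2(G)$ the map $w \mapsto (1-\Delta)^w f$ is entire with
\[
\|(1-\Delta)^w f\|_{W^k_2(G)} \leqslant (1+R)^{\frac{k}{2v} + |\Real w|}\,\|f\|_{L_2(G)},
\]
a bound that is locally uniform in $w$ and does not grow in the imaginary direction. Fix such an $f$ and a vector $h = (h_n)_{n\geqslant0}$ with only finitely many nonzero $h_n \in L_2(G)$, and write $z = \sigma + it$. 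Factoring $(1-\Delta)^z = (1-\Delta)^{z-m-1}(1-\Delta)^{m+1}$, the operator $(1-\Delta)^{z-m-1}$ is a contraction on $L_2(G)$ --- its exponent has non-positive real part --- that depends analytically on $z$ for $\sigma < m+1$ and strongly continuously on the closed strip, while, by Lemma~\ref{multiplication_lemma}, Corollary~\ref{cor-operator-estimate-partition-unity} and the displayed bound, $M_{\psi_n}(1-\Delta)^{-z}f$ is a $W^{2v(m+1)}_2(G)$-valued analytic function of $z$ with norm $\leqslant C_{R,m}\|f\|_{L_2(G)}$, uniformly in $n$ and in $z$ in the strip. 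Consequently
\[
F(z) := \langle A_z f, h\rangle = \sum_{n\geqslant0}\big\langle (1-\Delta)^z M_{\psi_n}(1-\Delta)^{-z}f,\; h_n\big\rangle
\]
is a finite sum of analytic functions; it is analytic on $m < \sigma < m+1$, continuous on the closed strip, and bounded there.

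With these facts in place, the three-lines theorem applied to $F$ yields $|F(z)| \leqslant \|A_m\|^{1-\theta}\|A_{m+1}\|^{\theta}\,\|f\|_{L_2(G)}\,\|h\|$ for $\Real z = m + \theta$; taking the supremum over $f \in \mathcal D$ and over finitely supported $h$ --- dense in $L_2(G)$ and in $\bigoplus_{n\geqslant0}L_2(G)$ respectively --- gives the displayed bound on $\mathcal D$, and $A_z$ extends by continuity to $L_2(G)$. The one place that takes genuine work is the paragraph above: checking analyticity and uniform (in $t$ and in $n$) boundedness of the operator-valued pieces up to the boundary of the strip. The truncation to $\mathcal D$, which makes the powers of $1-\Delta$ act boundedly with norms independent of $t$, is exactly what reduces this to a routine verification rather than a delicate estimate.
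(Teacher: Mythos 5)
Your proof is correct and follows the same strategy as the paper: boundary-line estimates obtained from the unitarity of $(1-\Delta)^{it}$ together with Lemma~\ref{a2m lemma}, combined with the Hadamard three-lines theorem to fill in the strip. The paper's own proof compresses the three-lines step into a single sentence, whereas you supply the supporting details (the spectral truncation $\mathcal D$, finitely supported test vectors $h$, and the analyticity and uniform boundedness of $z\mapsto\langle A_z f,h\rangle$) needed to make that invocation rigorous.
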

\begin{proof} Let $z={m} +it,$ $m\in\mathbb{Z}.$ We have
$$A_z=(\bigoplus_{n\geqslant0}(1-\Delta)^{{it}})\circ A_{m}\circ (1-\Delta)^{{-it}}.$$

{
Since $-\Delta$ is positive and self-adjoint, the operators $(1-\Delta)^{it}$ and $\bigoplus_{n\geqslant 0} (1-\Delta)^{it}$ are unitary on $L_2(G)$ and $\bigoplus_{n\geqslant 0} L_2(G)$ respectively. 
}
Thus,
$$\|A_z\|_{L_2(G)\to \bigoplus_{n\geqslant0}L_2(G)}=\|A_{2m}\|_{L_2(G)\to \bigoplus_{n\geqslant0}L_2(G)}.$$
The assertion follows from Lemma \ref{a2m lemma} and the Hadamard 3 lines theorem.
\end{proof}

\begin{lemma}\label{az adjoint lemma} Let $z\in\mathbb{C}$ and let $A_z$ be as in Lemma \ref{az lemma}.
Its adjoint is given by the formula
$$A_z^{\ast}:(v_n)_{n\geqslant0}\to\sum_{n\geqslant0}(1-\Delta)^{{-\bar{z}}}M_{\psi_n}(1-\Delta)^{{\bar{z}}}v_n,\quad (v_n)_{n\geqslant0}\in \bigoplus_{n\geqslant0}L_2(G).$$
\end{lemma}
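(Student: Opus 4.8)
The plan is to compute the adjoint of $A_z$ directly from the defining formula and the adjoint formulas for each factor, then recognize the result as a convergent sum. Recall that $A_z$ is a bounded operator $L_2(G)\to \bigoplus_{n\geqslant 0}L_2(G)$ by Lemma \ref{az lemma}, so $A_z^{\ast}$ is a bounded operator $\bigoplus_{n\geqslant 0}L_2(G)\to L_2(G)$, characterized by $\langle A_z^{\ast}(v_n)_{n\geqslant 0}, u\rangle_{L_2(G)} = \langle (v_n)_{n\geqslant 0}, A_z u\rangle_{\bigoplus L_2(G)}$ for all $u\in L_2(G)$ and $(v_n)_{n\geqslant 0}\in\bigoplus_{n\geqslant 0}L_2(G)$. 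First I would expand the right-hand side as $\sum_{n\geqslant 0}\langle v_n, (1-\Delta)^{z}M_{\psi_n}(1-\Delta)^{-z}u\rangle_{L_2(G)}$.

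Next, for each fixed $n$ I would move the three factors across the inner product one at a time, using the following standard facts: $(1-\Delta)^{w}$ is, for any $w\in\mathbb{C}$, a closed densely defined operator whose adjoint is $(1-\Delta)^{\bar w}$ (this follows from $-\Delta$ being positive self-adjoint and the spectral theorem, since $(1-\Delta)^w = \int_{[0,\infty)}(1+\lambda)^w\,dE(\lambda)$ and $\overline{(1+\lambda)^w} = (1+\lambda)^{\bar w}$); and $M_{\psi_n}$ is bounded self-adjoint because $\psi_n$ is real-valued (by Construction \ref{partition_of_unity}, $\psi$ and hence $\psi_n$ and $\theta$ take values in $\mathbb{R}$). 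Applying these, for $u$ ranging over a suitable core (say $\mathcal{S}(G)$, on which all the unbounded factors act and which is dense in $L_2(G)$), I get $\langle v_n, (1-\Delta)^{z}M_{\psi_n}(1-\Delta)^{-z}u\rangle = \langle (1-\Delta)^{-\bar z}M_{\psi_n}(1-\Delta)^{\bar z}v_n, u\rangle$, valid at least when $v_n$ lies in the domain of $(1-\Delta)^{\bar z}$ (so the composition on the left of the pairing makes sense). Summing over $n$ then gives the claimed formula, and one concludes by density and boundedness of $A_z^{\ast}$ that the formula extends to all $(v_n)_{n\geqslant 0}$, with the sum converging in $L_2(G)$ because it represents the action of the bounded operator $A_z^{\ast}$.

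A cleaner route, which I would actually prefer to carry out, is to avoid domain bookkeeping entirely: observe that the operator $(v_n)_{n\geqslant 0}\mapsto \sum_{n\geqslant 0}(1-\Delta)^{-\bar z}M_{\psi_n}(1-\Delta)^{\bar z}v_n$ is nothing but $A_{\bar z}^{\ast\ast}$-type rearrangement — more precisely, it equals $\overline{A_{\,\overline{z}}}^{\,\ast}$ after noticing that $A_{\bar z}$ with the roles of $z$ and $-z$... rather, the simplest observation is that the proposed formula defines a bounded operator $T:\bigoplus_{n\geqslant 0}L_2(G)\to L_2(G)$ (by the same argument as Lemma \ref{az lemma} applied with $\bar z$ in place of $z$, together with Lemma \ref{a2m lemma}), so it suffices to verify the adjoint relation $\langle Tu', u\rangle = \langle u', A_z u\rangle$ on the dense subspace $u, u'\in\mathcal{S}(G)$ with only finitely many nonzero components, where all manipulations are between genuine vectors and no limiting arguments are needed. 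There the identity reduces, component by component, to the three elementary adjoint computations above.

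The main obstacle is purely a matter of care rather than depth: one must ensure that the formal manipulations $(1-\Delta)^{z\,\ast} = (1-\Delta)^{\bar z}$ and the interchange of summation with the inner product are justified on an appropriate dense domain before invoking boundedness to pass to the full space. Since $\Delta$ is self-adjoint with $-\Delta\geqslant 0$ (as recalled in Section \ref{subsec-3.1}, citing \cite[Proposition 4.1.15]{FischerRuzhansky2016}) and since $\mathcal{S}(G)$ is contained in the domain of every complex power $(1-\Delta)^{w}$ and is dense in $L_2(G)$, these points are routine; the essential input is the already-established boundedness from Lemmas \ref{a2m lemma} and \ref{az lemma}.
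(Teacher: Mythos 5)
Your proposal is correct and follows the same route as the paper: compute $\langle A_z u, v\rangle$ term by term, move $(1-\Delta)^{\pm z}$ and $M_{\psi_n}$ across the inner product using $\left((1-\Delta)^w\right)^{\ast}=(1-\Delta)^{\bar w}$ (spectral theorem) and self-adjointness of $M_{\psi_n}$, and then sum. The paper's proof states this computation directly and tersely, implicitly relying on density of $\Sc(G)$ and the already-established boundedness of $A_z$, whereas you make the domain bookkeeping explicit; these are the same argument at different levels of detail.
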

\begin{proof} Let $u\in L_2(G)$ and let $v = (v_n)_{n\geqslant 0} \in \bigoplus_{n\geqslant 0} L_2(G).$ We have
\begin{align*}
\langle A_zu,v\rangle 
&=\sum_{n\geqslant0}\langle (1-\Delta)^{{-\bar{z}}}M_{\psi_n}(1-\Delta)^{{-\bar{z}}}u,v_n\rangle\\
&=\sum_{n\geqslant0}\langle u,(1-\Delta)^{{-\bar{z}}}M_{\psi_n}(1-\Delta)^{{\bar{z}}}v_n\rangle\\
&=\langle u,\sum_{n\geqslant0}(1-\Delta)^{{-\bar{z}}}M_{\psi_n}(1-\Delta)^{{\bar{z}}}v_n\rangle.
\end{align*}
\end{proof}

\begin{proof}[Proof of Theorem \ref{sobolev localization theorem}] The boundedness of $A_{{\frac{m}{2v}}}:L_2(G)\to \bigoplus_{n\geqslant0}L_2(G)$ as defined in Lemma \ref{az lemma} is equivalent to the boundedness of $B_0:W^m_2(G)\to \bigoplus_{n\geqslant 0}W^m_2(G),$ where $B_0$ is defined in Lemma \ref{first localization lemma}. This yields the right hand side inequality of \eqref{eq-local-global-norm-equiv} .

The boundedness of $A_{{-\frac{m}{2v}}}^{\ast}:\bigoplus_{n\geqslant0}L_2(G)\to L_2(G)$ is equivalent to the boundedness of operator 
\begin{align*}
B_0^*\colon  \bigoplus_{n\geqslant 0}W^m_2(G) & \to W^m_2(G),\\
(v_n)_{n\geqslant 0} & \mapsto \sum_{n\geqslant 0}\psi_n v_n.
\end{align*} 
Indeed, $A_{{-\frac{m}{2v}}}^{\ast}$ is bounded according to Lemma \ref{az adjoint lemma}. So is $B_0^*$.

Now, for any $u\in W^m_2(G)$, by boundedness of $B_0:W^m_2(G)\to \bigoplus_{n\geqslant 0}W^m_2(G)$ explained earlier, we have $(\psi_n u)_{n\geqslant0} \in \bigoplus_{n\geqslant 0}W_2^m(G)$. 
Since $\{\psi_n\}_{n=0}^\infty$ is the partition of unity as in Construction \ref{partition_of_unity}, we have $\sum_{n\geqslant0}\psi_n^2 = 1$. 
It follows that $B_0^* (\psi_n u)_{n\geqslant0} = \sum_{n\geqslant 0} \psi_n \cdot \psi_n u = u$. This  yields the left hand side  inequality of \eqref{eq-local-global-norm-equiv}.
\end{proof}


\section{Differential operators and elliptic regularity}\label{sec-4}

In this section, we prove our main result (Theorem \ref{elliptic regularity theorem}).  
{As a key step}, we reveal that, for differential operators as in Definition \ref{def-differential-op}, "ellipticity" ( uniformly Rockland ) in the sense of Definition \ref{definition of ellipticity_1} implies global elliptic estimates (Theorem \ref{forward elliptic estimate} and \ref{backward elliptic estimate}).

We prove these results first for constant coefficient differential operators, then varying the coefficients in a small neighbourhood, and then globalising using the partition of unity discussed in Section \ref{sec-3}.

This section is organised as follows:

In section \ref{sub-sec-diff-op}, we list a few definitions and define   uniformly Rockland   algebraically.

In section \ref{sub-sec-forward-elliptic} and \ref{sub-sec-backward-elliptic}, we show two different kinds of elliptic estimates, which we term "forward" and "backward" estimates. 

In section \ref{sub-sec-self-adjoint}, we prove all of main theorems of this paper.

In section \ref{example 4 section}, we discuss an example of  uniformly Rockland   operators.

\subsection{Differential Operator}\label{sub-sec-diff-op}

\begin{definition}\label{def-differential-op}
A differential operator of order at most $m$ on $G$ is a linear operator on $\Sc(G)$ given by
$$P=\sum_{\len(\alpha)\leqslant m} M_{a_{\alpha}}X^{\alpha}:\Sc(G)\to \Sc(G),$$
where every $a_{\alpha}$ belongs to $C^\infty_b(G)$, and $X^{\alpha}$ is defined in \eqref{eq-def-X-alpha}.

We denote by $P^\dagger$ the formal adjoint of $P,$ given by 
$$P^\dagger = \sum_{\len(\alpha)\leqslant m} (X^{\alpha})^\dagger M_{\overline{a_{\alpha}}} ,$$ 
where $(X^{\alpha})^\dagger = (-1)^kX_{\alpha_k}\cdots X_{\alpha_1}$, for $\alpha=\alpha_1\cdots\alpha_k$.  {Recall Definition \ref{C_infty_definition}, $a_{\alpha}$ and all its derivatives are bounded, which guarantees that $P^{\dagger}$ be continuous on the Schwartz space $\Sc(G)$.}

Let $\widetilde{P}:\Sc'(G)\to\Sc'(G)$ denote the extension of $P$ to distributions, defined on $\omega \in \Sc'(G)$ by
    \begin{equation}\label{def-P-dagger}
        \langle \widetilde{P}\omega,\phi\rangle = \langle \omega,P^\dagger\phi\rangle,\quad \phi\in \Sc(G).
    \end{equation}
\end{definition}

\begin{remark}\label{rmk-P-P-dagg-P-tilde-bdd}
    {Since all coefficients $a_{\alpha} \in C_{b}^{\infty}(G)$, the differential operator $P$ is continuous on $\Sc(G).$ Since $P^{\dagger}$ is of the same form, $\widetilde{P}$ is continuous on $\Sc'(G)$ with its usual topology.}
\end{remark}

We use the symbol $\dagger$ for the formal adjoint to avoid confusion with the adjoint $P^{\ast}$ in  Hilbert space $L_2(G)$,  which will have a slightly different meaning due to having a different domain.

By \eqref{def-P-dagger}, we notice that  $P$ coincides with $\widetilde{P}$ on $\Sc(G),$ $P^\dagger$ coincides with $P^{\ast}$ on $\Sc(G).$

\begin{lemma}\label{differential_operators_are_bounded} 
The distributional extension $\widetilde{P}$ of a differential operator $P$ of order at most $m$ restricts to a bounded linear operator from $W^{m+s}_2(G)$ to $W^s_2(G)$ for every $s\in \mathbb{R}.$
\end{lemma}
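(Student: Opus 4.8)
The plan is to deduce the mapping property of $\widetilde P$ by first establishing the estimate on the Schwartz space, then using the density of $\Sc(G)$ in $W^{m+s}_2(G)$ to obtain a bounded extension, and finally identifying that extension with $\widetilde P$ via the continuity of $\widetilde P$ on $\Sc'(G)$.

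\textbf{Step 1: the estimate on Schwartz functions.} Write $P=\sum_{\len(\alpha)\leqslant m}M_{a_\alpha}X^\alpha$ with $a_\alpha\in C_b^\infty(G)$, and fix $s\in\R$. For each word $\alpha$ with $\len(\alpha)\leqslant m$ one has $X^\alpha\in\Uc_{\len(\alpha)}(\gf)$, so Lemma \ref{lemma-X_alpha-bdd} gives a bounded map $X^\alpha\colon W^{m+s}_2(G)\to W^{m+s-\len(\alpha)}_2(G)$. Since $\len(\alpha)\leqslant m$ we have $m+s-\len(\alpha)\geqslant s$, and the Sobolev inclusion (property \ref{item-Sobolev-emb}) gives a continuous embedding $W^{m+s-\len(\alpha)}_2(G)\hookrightarrow W^s_2(G)$. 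Finally $M_{a_\alpha}\colon W^s_2(G)\to W^s_2(G)$ is bounded by Lemma \ref{multiplication_lemma}, with norm controlled by $\|a_\alpha\|_{\lceil|s|\rceil,b}<\infty$. Composing and summing the finitely many terms produces a constant $c_{P,s}$ with $\|Pu\|_{W^s_2(G)}\leqslant c_{P,s}\|u\|_{W^{m+s}_2(G)}$ for all $u\in\Sc(G)$.

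\textbf{Step 2: extension and identification with $\widetilde P$.} Since $\Sc(G)$ is dense in $W^{m+s}_2(G)$, the bound of Step 1 shows that $P$ extends uniquely to a bounded operator $\overline P\colon W^{m+s}_2(G)\to W^s_2(G)$. To see that $\overline P$ coincides with $\widetilde P|_{W^{m+s}_2(G)}$, take $u\in W^{m+s}_2(G)$ and $u_k\in\Sc(G)$ with $u_k\to u$ in $W^{m+s}_2(G)$. Each Sobolev space embeds continuously in $\Sc'(G)$ (this is built into the definition of $W^{t}_2(G)$ as a completion inside $\Sc'(G)$), so $u_k\to u$ and $Pu_k\to\overline Pu$ in $\Sc'(G)$. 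On the other hand $\widetilde P$ is continuous on $\Sc'(G)$ by Remark \ref{rmk-P-P-dagg-P-tilde-bdd} and restricts to $P$ on $\Sc(G)$, hence $\widetilde Pu_k=Pu_k\to\widetilde Pu$ in $\Sc'(G)$. Uniqueness of limits in $\Sc'(G)$ then forces $\widetilde Pu=\overline Pu\in W^s_2(G)$, and the bound $\|\widetilde Pu\|_{W^s_2(G)}\leqslant c_{P,s}\|u\|_{W^{m+s}_2(G)}$ carries over.

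\textbf{Main difficulty.} There is no real obstacle here: the argument is a routine density-plus-continuity passage, resting entirely on results already available (Lemmas \ref{lemma-X_alpha-bdd} and \ref{multiplication_lemma}, the Sobolev inclusion, and density of $\Sc(G)$). The only point meriting a moment of care is the identification in Step 2 of the abstract continuous extension $\overline P$ with the concretely defined distributional operator $\widetilde P$, which hinges on the continuous embeddings $W^{t}_2(G)\hookrightarrow\Sc'(G)$ and on the continuity of $\widetilde P$ on $\Sc'(G)$ recorded in Remark \ref{rmk-P-P-dagg-P-tilde-bdd}.
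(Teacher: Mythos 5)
Your proof is correct and supplies exactly the routine details behind the paper's one-word proof ``Obvious'': Step~1 assembles the $\Sc(G)$-estimate from Lemma~\ref{lemma-X_alpha-bdd}, the Sobolev embedding (property~\ref{item-Sobolev-emb}), and Lemma~\ref{multiplication_lemma}, and Step~2 is the standard density-plus-continuity passage identifying the bounded extension with $\widetilde P$. Nothing further needs to be said; this is the intended argument.
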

\begin{proof} Obvious.
\end{proof}

Now we introduce a way to "freeze" the coefficients of $P$ at a point $g\in G.$
\begin{definition}\label{def-P_g}
Let $P$ be a differential operator as in \eqref{eq-P-expression}. For any $g\in G$, we define the right-invariant constant coefficient differential operator $P_g \in \Uc(\gf)$ {as follows:}
\begin{equation*}
P_g=\sum_{\len(\alpha)\leqslant  m}a_{\alpha}(g)X^{\alpha}.
\end{equation*}
\end{definition}
It is not obvious that $P_g$ is well-defined. Indeed, the coefficients $X^{\alpha}$ are linearly dependent and there is no guarantee that the individual terms $a_{\alpha}(g)X^{\alpha}$ are uniquely determined by the operator $P.$ Nevertheless, their sum defines a unique element $P_g$ of $\Uc(\gf).$

To see this, it is enough to assume that $P = \sum_{\len(\alpha)\leqslant m} M_{a_{\alpha}}X^{\alpha} = 0.$ Fix a particular representation of $P$ as in \eqref{eq-P-expression} and consider $P_g$ coming from this representation. We have
\begin{equation*}
(P_gf)(g)=\sum_{\len(\alpha)\leqslant m}a_{\alpha}(g)\cdot (X^{\alpha}f)(g)=\Big(\sum_{\len(\alpha)\leqslant m} M_{a_{\alpha}}X^{\alpha}f\Big)(g)=(Pf)(g)=0
\end{equation*}
for every $f\in \Sc(G).$ Since the operator $P_g$ is right-invariant, it follows that $(P_gf)(g')=0$ for every $f\in\Sc(G)$ and for every $g'\in G.$ In other words, $P_g=0$ as required.

The principal part of a differential operator is its homogeneous component of highest degree. 
We define it analytically, and prove it is coherent with the classical definition (as in \cite[Equation, p.7]{HelfferNourrigat1985}).

\begin{definition}\label{def-P-top}
For differential operator $P$ of order at most $m$ on $G$, we define constant coefficient homogeneous differential operator $P^{\rm top}_g$ as follows:
\begin{equation*}
P_g^{{\rm top}}=\sum_{\len(\alpha)=m}a_{\alpha}(g)X^{\alpha}.
\end{equation*}
\end{definition}

Clearly, $P_g^{{\rm top}}$ is in the span of dilations of the operator $P_g.$ Since the operator $P_g$ is well defined, it follows that $P_g^{{\rm top}}$ is also well defined.

The following lemma shows two new operations $(\cdot)^{\dagger}$ and $(\cdot)^{\rm top}_g$ commute.

\begin{lemma}\label{top dag commute}
$(P_g^{{\rm top}})^\dag = (P^{\dag})_g^{\rm top}$.
\end{lemma}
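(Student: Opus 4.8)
The plan is to reduce everything to a single monomial $X^\alpha$ and then invoke the already-established well-definedness of both $(\cdot)_g$ and $(\cdot)^{\mathrm{top}}_g$ as operations on the operator $P$ (not on its symbol expression). Write $P=\sum_{\len(\alpha)\le m}M_{a_\alpha}X^\alpha$. By Definition \ref{def-P-top}, $P_g^{\mathrm{top}}=\sum_{\len(\alpha)=m}a_\alpha(g)X^\alpha$, which is a fixed element of $\Uc(\gf)$. Applying the formal-adjoint formula from Definition \ref{def-differential-op} to this constant-coefficient operator gives $(P_g^{\mathrm{top}})^\dagger=\sum_{\len(\alpha)=m}\overline{a_\alpha(g)}\,(X^\alpha)^\dagger$, since the multiplier $M_{\overline{a_\alpha(g)}}$ is just multiplication by a scalar and commutes through. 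On the other side, $P^\dagger=\sum_{\len(\alpha)\le m}(X^\alpha)^\dagger M_{\overline{a_\alpha}}$; I would rewrite each term $(X^\alpha)^\dagger M_{\overline{a_\alpha}}$ by moving the multiplier to the left via the Leibniz rule \eqref{eq-X-alpha-on-varphi-function}, obtaining $P^\dagger=\sum_{\len(\beta)\le m}M_{b_\beta}X^\beta$ for suitable $b_\beta\in C^\infty_b(G)$, where crucially the top-degree ($\len(\beta)=m$) part of $b_\beta$ comes only from the terms with $\len(\alpha)=m$ and with no derivative falling on $\overline{a_\alpha}$ — because every derivative on the coefficient strictly lowers the order of the surviving differential monomial. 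Thus the top part of $(P^\dagger)_g$ is $(P^\dagger)_g^{\mathrm{top}}=\sum_{\len(\alpha)=m}\overline{a_\alpha(g)}\,(X^\alpha)^\dagger$, matching $(P_g^{\mathrm{top}})^\dagger$.

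The key structural point to isolate is the following: if $\alpha=\alpha_1\cdots\alpha_k$ then $(X^\alpha)^\dagger=(-1)^k X_{\alpha_k}\cdots X_{\alpha_1}$ is again a monomial of the same weighted length $\len(\alpha)$, so the operation $\alpha\mapsto\alpha^\dagger$ preserves the top-degree stratum. Then for a constant-coefficient operator $D=\sum_{\len(\alpha)\le m}c_\alpha X^\alpha\in\Uc(\gf)$ one has $D^\dagger=\sum_{\len(\alpha)\le m}\overline{c_\alpha}(X^\alpha)^\dagger$, and taking the degree-$m$ component commutes with this because the map $X^\alpha\mapsto(X^\alpha)^\dagger$ sends the span of $\{X^\alpha:\len(\alpha)=m\}$ to itself and the span of $\{X^\alpha:\len(\alpha)<m\}$ to itself. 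Applying this with $D=P_g$ gives directly $(P_g)^\dagger{}^{\mathrm{top}}$-considerations; but I prefer the route above that passes through the Leibniz normalization of $P^\dagger$, since $(P^\dagger)_g^{\mathrm{top}}$ is defined via a representation of $P^\dagger$ in standard form and one must check the standard-form top coefficients are the claimed ones.

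The main obstacle — really the only subtlety — is bookkeeping the Leibniz rule to confirm that differentiating a coefficient $\overline{a_\alpha}$ cannot contribute to the top-degree part of $P^\dagger$. Concretely, in $(X^\alpha)^\dagger M_{\overline{a_\alpha}}=\sum_{\alpha^\dagger=\beta\beta'}c_{\beta}\,M_{X^\beta\overline{a_\alpha}}\,X^{\beta'}$, the surviving differential monomial $X^{\beta'}$ has $\len(\beta')=\len(\alpha)-\len(\beta)$, so $\len(\beta')=m$ forces $\len(\beta)=0$ (empty word, no derivative) and $\len(\alpha)=m$. This confirms that the degree-$m$ part of the standard form of $P^\dagger$ is exactly $\sum_{\len(\alpha)=m}M_{\overline{a_\alpha}}(X^\alpha)^\dagger$, whence evaluating coefficients at $g$ yields $(P^\dagger)_g^{\mathrm{top}}=\sum_{\len(\alpha)=m}\overline{a_\alpha(g)}(X^\alpha)^\dagger=(P_g^{\mathrm{top}})^\dagger$, as desired. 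Since $P_g^{\mathrm{top}}$, $(P^\dagger)_g^{\mathrm{top}}$, and the formal adjoint are all well-defined independently of the chosen representation (established in the text preceding Definition \ref{def-P-top}), the identity is representation-independent and the proof is complete.
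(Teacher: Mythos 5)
Your proposal is correct and follows essentially the same route as the paper: both rewrite $P^\dagger = \sum_{\len(\alpha)\le m}(X^\alpha)^\dagger M_{\overline{a_\alpha}}$ into standard multiplier-on-the-left form, observe that derivatives landing on $\overline{a_\alpha}$ strictly lower the weighted length (the paper phrases this via the commutator $[(X^\alpha)^\dagger, M_{\overline{a_\alpha}}]$, you via the explicit Leibniz expansion, which are the same computation), and then apply $(\cdot)^{\rm top}_g$. Your added remark that $\alpha\mapsto\alpha^\dagger$ preserves the weighted-length stratum, and your care about why one must pass through the standard form of $P^\dagger$ rather than applying the adjoint to $P_g$, are both sound and in the spirit of the paper's argument.
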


\begin{proof}
Since taking the commutator of a left-invariant differential operator with a multiplication operator lowers the degree of differential operator, we have
\begin{align*}
P^\dagger 
& = \sum_{\len(\alpha)\leqslant m} (X^{\alpha})^\dagger \cdot M_{\overline{a_{\alpha}}}\\
& = \sum_{\len(\alpha)\leqslant m} M_{\overline{a_{\alpha}}} \cdot (X^{\alpha})^\dagger
+ \sum_{\len(\alpha)\leqslant m} \big[(X^{\alpha})^\dagger, M_{\overline{a_{\alpha}}}\big]\\
& = \sum_{\len(\alpha)= m} M_{\overline{a_{\alpha}}} \cdot (X^{\alpha})^\dagger
+ \sum_{\len(\beta)< m} M_{b_{\beta}} \cdot X^{\beta}.
\end{align*}

In the last equation, we merge all the summands with degree less than $m$ together, denote it by $\sum_{\len(\beta)< m} M_{b_{\beta}}X^{\beta}$, with $b_\beta \in C^{\infty}_b(G)$.
Taking $(\cdot)^{\rm top}_g$ both sides at $g\in G$, we get
\begin{equation*}
(P^{\dag})_g^{\rm top} 
= \sum_{\len(\alpha)= m} \overline{a_{\alpha}}(g)(X^{\alpha})^\dagger
= \big(\sum_{\len(\alpha)= m} a_{\alpha}(g)X^{\alpha}\big)^\dagger
= (P_g^{\rm top})^{\dag}.
\end{equation*}
\end{proof}

A similar proof shows 
\begin{lemma}\label{lem-symb-composition}
	For any differential operator $P$ and $Q$, $(PQ)^{\rm top}_g = P^{\rm top}_g Q^{\rm top}_g$.
\end{lemma}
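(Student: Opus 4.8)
The plan is to follow the strategy used for Lemma \ref{top dag commute}: expand the composition $PQ$, commute every multiplication operator to the left past the constant-coefficient monomials, and then read off the homogeneous component of highest degree. Write $P=\sum_{\len(\alpha)\leqslant m}M_{a_\alpha}X^\alpha$ and $Q=\sum_{\len(\beta)\leqslant m'}M_{b_\beta}X^\beta$, and regard $PQ$ as an operator of order at most $m+m'$; the goal is then $(PQ)_g^{\rm top}=P_g^{\rm top}Q_g^{\rm top}$.

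First I would record the commutation identity coming from the Leibniz rule \eqref{eq-X-alpha-on-varphi-function}: for any word $\alpha$ and any $\phi\in C_b^\infty(G)$,
\[
X^\alpha M_\phi=\sum_{\alpha=\gamma\gamma'}c_{\alpha,\gamma}\,M_{X^\gamma\phi}\,X^{\gamma'},
\]
the sum running over all ways of writing $\alpha$ as a concatenation $\gamma\gamma'$, with $\len(\gamma)+\len(\gamma')=\len(\alpha)$ and with the $\gamma$-empty term being exactly $M_\phi X^\alpha$. In every other term $\len(\gamma')<\len(\alpha)$; this is the precise form of the statement that commuting a monomial past a multiplier strictly lowers its differential degree. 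Applying this to each summand $M_{a_\alpha}X^\alpha M_{b_\beta}X^\beta$ of $PQ$, and using $X^{\gamma'}X^\beta=X^{\gamma'\beta}$ with $\len(\gamma'\beta)=\len(\gamma')+\len(\beta)$, I obtain
\[
PQ=\sum_{\len(\alpha)\leqslant m,\ \len(\beta)\leqslant m'}M_{a_\alpha b_\beta}\,X^{\alpha\beta}\;+\;\sum_{\len(\delta)<m+m'}M_{c_\delta}\,X^\delta,
\]
where the first sum collects the $\gamma$-empty contributions and the second absorbs everything else — these terms genuinely have degree $<m+m'$, since $\len(\gamma')+\len(\beta)<\len(\alpha)+\len(\beta)\leqslant m+m'$ — and all coefficients $c_\delta$ remain in $C_b^\infty(G)$ because that space is closed under differentiation and products. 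In particular this also confirms that $PQ$ is a differential operator of the admissible type, so that $(PQ)_g^{\rm top}$ is defined.

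Next I would extract the degree-$(m+m')$ part. In the first sum $X^{\alpha\beta}$ has degree $\len(\alpha)+\len(\beta)$, and since $\len(\alpha)\leqslant m$, $\len(\beta)\leqslant m'$, this equals $m+m'$ exactly when $\len(\alpha)=m$ and $\len(\beta)=m'$. Hence
\[
(PQ)_g^{\rm top}=\sum_{\len(\alpha)=m,\ \len(\beta)=m'}a_\alpha(g)\,b_\beta(g)\,X^{\alpha\beta}=\Big(\sum_{\len(\alpha)=m}a_\alpha(g)X^\alpha\Big)\Big(\sum_{\len(\beta)=m'}b_\beta(g)X^\beta\Big)=P_g^{\rm top}Q_g^{\rm top},
\]
which is the claim. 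As noted after Definition \ref{def-P-top}, $P_g^{\rm top}$, $Q_g^{\rm top}$ and $(PQ)_g^{\rm top}$ are well-defined elements of $\Uc(\gf)$, so this is an honest identity in $\Uc(\gf)$, independent of the chosen representations of $P$ and $Q$.

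There is no real obstacle here; the only point requiring a little care is the degree bookkeeping — checking that every term produced by commuting $X^\alpha$ past $M_{b_\beta}$ other than the clean one $M_{b_\beta}X^\alpha$ drops strictly below the top degree $m+m'$ — together with the (immediate) observation that $PQ$ is again of admissible form so that $(PQ)_g^{\rm top}$ is meaningful.
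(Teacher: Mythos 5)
Your proof is correct and follows essentially the same route as the paper's: expand $PQ$ as a double sum of $M_{a_\alpha}X^\alpha M_{b_\beta}X^\beta$, observe that commuting $X^\alpha$ past $M_{b_\beta}$ drops degree (you make this explicit via the Leibniz identity \eqref{eq-X-alpha-on-varphi-function}; the paper just invokes the commutator), absorb the lower-order remainder, and read off the degree-$(m+m')$ component. The only cosmetic difference is that you keep $\len(\alpha)<m$ or $\len(\beta)<m'$ contributions inside the first sum and discard them at the extraction step, whereas the paper filters them out one step earlier; this has no bearing on correctness.
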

\begin{proof}
	Set $P=\sum_{\len(\alpha)\leqslant m_1} M_{a_{\alpha}}X^{\alpha}$, $Q=\sum_{\len(\beta)\leqslant m_2} M_{b_{\beta}}X^{\beta}$.
	Their product reads
	\begin{align}\label{eq-PQ-top-g-produc-pf-1}
		PQ & = \sum_{\substack{\len(\alpha)\leqslant m_1\\ \len(\beta)\leqslant m_2}} 
			M_{a_{\alpha}}X^{\alpha}M_{b_{\beta}}X^{\beta}\nonumber \\
		& = \sum_{\substack{\len(\alpha)= m_1\\ \len(\beta)= m_2}}
			(M_{a_{\alpha}}M_{b_{\beta}}X^{\alpha}X^{\beta} + M_{a_{\alpha}}[X^{\alpha},M_{b_{\beta}}]X^{\beta})\nonumber \\
		& \quad	+ \sum_{\substack{\len(\alpha)+ \len(\beta)\\ < m_1 + m_2}}
			M_{a_{\alpha}}X^{\alpha}M_{b_{\beta}}X^{\beta}\nonumber  \\
		& = \sum_{\substack{\len(\alpha)= m_1\\ \len(\beta)= m_2}}
			M_{a_{\alpha}}M_{b_{\beta}}X^{\alpha}X^{\beta}
			+ \sum_{\len(\gamma)< m_1+m_2} M_{c_{\gamma}}X^{\gamma}.
	\end{align}
	For the last equation, recall that taking the commutator of a left-invariant differential operator with a multiplication operator lowers the degree of differential operator, 
	thus summands $M_{a_{\alpha}}[X^{\alpha},M_{b_{\beta}}]X^{\beta}$ are of degree at most $m_1 + m_2 - 1$. Merge all the summands with degree less than $m_1 + m_2$ together, denote it by $\sum_{\len(\gamma)< m_1+m_2} M_{c_{\gamma}}X^{\gamma}$, with $c_\gamma \in C^{\infty}_b(G)$.
	
	For both sides of \eqref{eq-PQ-top-g-produc-pf-1}, we take the top degree part and evaluate coefficients at point $g\in G$.
	\begin{multline*}
		(PQ)^{\rm top}_g 
		= \sum_{\substack{\len(\alpha)= m_1\\ \len(\beta)= m_2}}
			a_{\alpha}(g) b_{\beta}(g) X^{\alpha}X^{\beta}\\
		= (\sum_{\len(\alpha)= m_1} a_{\alpha}(g) X^{\alpha})
		\cdot (\sum_{ \len(\beta)= m_2} b_{\beta}(g)X^{\beta})
		= P^{\rm top}_g Q^{\rm top}_g.
	\end{multline*}
\end{proof}

Operation $(\cdot)^{\rm top}_g$ plays a similar role as taking principle symbol in pseudodifferential calculus. Compare Lemma \ref{lem-symb-composition} with \cite[Theorem 3.4]{Shubin-psido-2001} in Euclidean setting.

Often, a differential operator on $\mathbb{R}^d$ is said to be elliptic if the top degree component of its symbol is invertible except at zero. However, such a definition lacks a uniformity which is very important in the non-compact setting. We recall here the definition of  uniformly Rockland  stated in the introduction. For the special case $G=\mathbb{R}^d,$ this notion corresponds to a uniformly elliptic operator.

\begin{definition}\label{definition of ellipticity_1} Let $P$ be a differential operator of order $m.$ We say that $P$ is   uniformly Rockland  if there exists a constant $c_P>0$ such that
$$\|P_g^{{\rm top}}u\|_{L_2(G)}\geqslant c_P\|(-\Delta)^{\frac{m}{2{v}}} u\|_{L_2(G)},\quad u\in\Sc(G),\quad g\in G.$$
\end{definition}

We may think of $g\mapsto P_g$ as a bounded $\mathbb{C}^M$-valued function on $G$ (here, $M$ is the number of linearly independent $X^{\alpha},$ ${\rm len}(\alpha)\leqslant m$). It follows that there exists an extension of the function $g\mapsto P_g$ to the Stone-\v{C}ech compactification $\beta G$ of $G.$ 

\begin{lemma}\label{equivalent description of ellipticity} A differential operator $P$ is  uniformly Rockland  if and only if $\pi(P_g^{{\rm top}})$ is injective on $H^{\infty}_{\pi}$ for every $g\in\beta G$ and for every $\pi\in\widehat{G}.$ Here,
$$H^{\infty}_{\pi}=\bigcap_{\alpha}{\rm dom}(\pi(X^{\alpha}))$$
is the subspace of smooth vectors in the representation $(\pi,H_{\pi}).$
\end{lemma}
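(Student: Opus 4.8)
The plan is to reduce the statement to the representation-theoretic characterization already prepared in Remark \ref{rmk-elliptic-representation-verify} and Appendix \ref{higson_appendix} (Theorem \ref{thm-abstract-rockland-general-g}), together with a compactness argument on $\beta G$. First I would recall that for a \emph{fixed} constant-coefficient operator $D \in \Uc(\gf)$, the Plancherel/direct-integral decomposition of $L_2(G)$ gives the equivalence
\[
\|Du\|_{L_2(G)} \geqslant c\,\|(-\Delta)^{\frac{m}{2v}}u\|_{L_2(G)},\quad u\in\Sc(G)
\]
if and only if $\|\pi(D)\eta\|_{H_\pi} \geqslant c\,\|\pi((-\Delta)^{\frac{m}{2v}})\eta\|_{H_\pi}$ for all $\pi\in\widehat{G}$ and all $\eta\in H_\pi^\infty$, with the \emph{same} constant $c$; this is exactly the content of Theorem \ref{thm-abstract-rockland-general-g}. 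Moreover, since $(-\Delta)^{\frac{m}{2v}}$ is Rockland, $\pi((-\Delta)^{\frac{m}{2v}})$ is injective on $H_\pi^\infty$ for every non-trivial $\pi$, so for each fixed $g$ the existence of \emph{some} constant $c_g>0$ is equivalent to injectivity of $\pi(P_g^{\rm top})$ on $H_\pi^\infty$ for all $\pi$ (the trivial representation being handled separately, where $P_g^{\rm top}$ acts as multiplication by $0$ on constants for $m>0$, matching $(-\Delta)^{\frac{m}{2v}}$).

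Next I would extend the map $g\mapsto P_g^{\rm top}$ continuously to $\beta G$: as noted just before the lemma, $g\mapsto (a_\alpha(g))_{\len(\alpha)=m}$ is a bounded $\mathbb{C}^M$-valued function, hence extends to $\beta G$, and $P_g^{\rm top}$ for $g\in\beta G$ is the corresponding element of $\Uc(\gf)$ obtained by plugging the limiting coefficients into $\sum_{\len(\alpha)=m} a_\alpha(g)X^\alpha$. The key point for the \emph{forward} direction ($P$ uniformly Rockland $\Rightarrow$ injectivity on all of $\beta G$) is that the homogeneous $L_2$-estimate with constant $c_P$ passes to limits: if $g_\lambda\to g$ in $\beta G$, then for fixed $u\in\Sc(G)$ one has $P_{g_\lambda}^{\rm top}u \to P_g^{\rm top}u$ in $L_2(G)$ (the coefficients converge and the finitely many $X^\alpha u$ are fixed $L_2$ functions), so $\|P_g^{\rm top}u\|_{L_2(G)} \geqslant c_P\|(-\Delta)^{\frac{m}{2v}}u\|_{L_2(G)}$ holds for all $g\in\beta G$; by the fixed-operator equivalence above this gives injectivity of $\pi(P_g^{\rm top})$ on $H_\pi^\infty$ for every $g\in\beta G$ and every $\pi\in\widehat{G}$.

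For the \emph{converse}, suppose injectivity holds at every $g\in\beta G$ but $P$ is not uniformly Rockland. Then there are $g_k\in G$, $u_k\in\Sc(G)$ with $\|(-\Delta)^{\frac{m}{2v}}u_k\|_{L_2(G)} = 1$ and $\|P_{g_k}^{\rm top}u_k\|_{L_2(G)}\to 0$. Passing to a subnet, $g_k\to g_\infty\in\beta G$, and the coefficient vectors converge; I would want to conclude that $P_{g_\infty}^{\rm top}$ violates the fixed-operator estimate and hence fails injectivity, contradicting the hypothesis. The subtlety — and the main obstacle — is that $u_k$ need not converge, so one cannot directly produce a null vector for $P_{g_\infty}^{\rm top}$; the clean way around this is to observe that for the fixed operator $D_\infty := P_{g_\infty}^{\rm top}$ the quantity $\inf\{\|D_\infty u\|_{L_2} : \|(-\Delta)^{\frac{m}{2v}}u\|_{L_2}=1,\ u\in\Sc(G)\}$ equals $\inf_\pi \inf\{\|\pi(D_\infty)\eta\| : \|\pi((-\Delta)^{\frac{m}{2v}})\eta\|=1\}$ by Theorem \ref{thm-abstract-rockland-general-g}, and the latter is positive precisely when $\pi(D_\infty)$ is injective on smooth vectors for all $\pi$ \emph{with a uniform bound in $\pi$} — which is again the assertion of that theorem. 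Then since $\|(D_\infty - D_{g_k})u\|_{L_2(G)} \leqslant \|(\text{coeff. difference})\|_\infty \sum_{\len(\alpha)=m}\|X^\alpha u\|_{L_2(G)} \leqslant \varepsilon_k \|u\|_{W^m_2(G)} \leqslant \varepsilon_k' \|(-\Delta)^{\frac{m}{2v}}u\|_{L_2(G)}$ uniformly in $u$ (using the Sobolev-norm equivalence and Lemma \ref{lemma-X_alpha-bdd}), with $\varepsilon_k'\to 0$, the estimate for $D_\infty$ with constant $c_{\infty}>0$ forces $\|D_{g_k}u_k\|_{L_2(G)} \geqslant (c_\infty - \varepsilon_k')\|(-\Delta)^{\frac{m}{2v}}u_k\|_{L_2(G)} = c_\infty - \varepsilon_k' > c_\infty/2$ for large $k$, contradicting $\|P_{g_k}^{\rm top}u_k\|_{L_2(G)}\to 0$. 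Hence $P$ is uniformly Rockland. I expect the bulk of the writing to be in making the $\beta G$-extension and the uniform-in-$\pi$ lower bound precise via the appendix; the rest is the soft compactness/perturbation argument just sketched.
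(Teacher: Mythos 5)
Your forward direction matches the paper's: extend the homogeneous $L_2$-estimate from $G$ to $\beta G$ by density, then apply Theorem \ref{thm-abstract-rockland-general-g} to move from the $L_2$-positivity of $(P_g^{\rm top})^{\dagger}P_g^{\rm top}-c_P^2(-\Delta)^{m/v}$ to positivity in each $\pi$, hence injectivity. That part is fine.

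The converse has a genuine gap at the decisive step. You assert that the quantity $\inf\{\|D_\infty u\|_{L_2}:\|(-\Delta)^{m/2v}u\|_{L_2}=1\}$ ``is positive precisely when $\pi(D_\infty)$ is injective on smooth vectors for all $\pi$ with a uniform bound in $\pi$ --- which is again the assertion of that theorem,'' and then you treat this positivity as available from the hypothesis. But the hypothesis gives only \emph{injectivity} of $\pi(D_\infty)$ on $H_\pi^\infty$ for each $\pi$; it does not give any constant, let alone a constant uniform in $\pi$. Theorem \ref{thm-abstract-rockland-general-g} cannot supply this: it goes in one direction only, from positivity \emph{in} $L_2(G)$ to positivity in each $\pi$ (and the trivial Plancherel converse transfers a uniform-in-$\pi$ estimate back to $L_2$, again requiring that the estimate be already uniform). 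Passing from pointwise injectivity over $\pi$ to a lower bound uniform in $\pi$ is exactly the content of the Helffer--Nourrigat theorem resolving Rockland's conjecture, and the paper invokes it explicitly at this spot (\cite[Theorem 2.1 and Remark 2.2]{HelfferNourrigat1985}) to produce the constant $c_g>0$. Without that input, your argument is circular: you are in effect asserting the hard step rather than proving it.

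Once the Helffer--Nourrigat constant $c_g>0$ is available for each $g\in\beta G$, both your subnet-and-perturbation contradiction and the paper's method would work, but the paper's route is cleaner: it shows directly that $g\mapsto f(g)=\inf_{0\neq u\in\Sc(G)}\|P_g^{\rm top}u\|_{L_2}/\|(-\Delta)^{m/2v}u\|_{L_2}$ is continuous on $\beta G$ (Lipschitz in the coefficient vector), and positivity of $f$ pointwise plus compactness of $\beta G$ gives $\inf_{\beta G}f>0$. Your perturbation estimate $\|(D_\infty-D_{g_k})u\|_{L_2}\leqslant\varepsilon_k'\|(-\Delta)^{m/2v}u\|_{L_2}$ is essentially the same Lipschitz bound, so there is no real gain, and the subnet language adds complexity. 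The fix is simple: cite Helffer--Nourrigat at the point where you need the positive constant for $D_\infty$ (or, as the paper does, for each $P_g^{\rm top}$).
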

\begin{proof} If $P$ is elliptic, then
$$\|P_g^{{\rm top}}u\|_{L_2(G)}\geqslant c_P\|(-\Delta)^{\frac{m}{2{v}}} u\|_{L_2(G)},\quad u\in\Sc(G),\quad g\in \beta G.$$
{This is because of Definition \ref{definition of ellipticity_1} and $G$ is dense in $\beta G$.}
In other words,
$$\langle u,\Big((P_g^{{\rm top}})^{\dagger}P_g^{{\rm top}}-c_P^2(-\Delta)^{{\frac{m}{v}}}\Big)u\rangle\geqslant0,\quad u\in\Sc(G),\quad g\in \beta G.$$
By Theorem \ref{thm-abstract-rockland-general-g}, for every $\pi\in\widehat{G},$
$$\langle \xi,\pi\Big((P_g^{{\rm top}})^{\dagger}P_g^{{\rm top}}-c_P^2(-\Delta)^{{\frac{m}{v}}}\Big)\xi\rangle\geqslant0,\quad \xi\in H^{\infty}_{\pi},\quad g\in \beta G.$$
Equivalently, for every $\pi\in\widehat{G},$
$$\|\pi(P_g^{{\rm top}})\xi\|_{H_{\pi}}\geqslant c_P\|\pi((-\Delta)^{\frac{m}{2{v}}})\xi\|_{H_{\pi}},\quad \xi\in H_{\pi}^{\infty},\quad g\in\beta G.$$
This immediately yields the injectivity of $\pi(P_g^{{\rm top}})$ for every $g\in\beta G$ and for every $\pi\in\widehat{G}.$

Conversely, suppose that $\pi(P_g^{{\rm top}})$ is injective on $H^{\infty}_{\pi}$ for every $g\in\beta G$ and for every $\pi\in\widehat{G}.$ By \cite[Theorem 2.1 and Remark 2.2]{HelfferNourrigat1985}, for every $g\in\beta G$ there exists a constant $c_g>0$ such that
$$\|\pi(P_g^{{\rm top}})\xi\|_{H_{\pi}}\geqslant c_g\|\pi((-\Delta)^{\frac{m}{2{v}}})\xi\|_{H_{\pi}},\quad \xi\in H^{\infty}_{\pi},\quad \pi\in\widehat{G}.$$
By continuity,
$$\|\pi(P_g^{{\rm top}})\xi\|_{H_{\pi}}\geqslant c_g\|\pi((-\Delta)^{\frac{m}{2{v}}})\xi\|_{H_{\pi}},\quad \xi\in{\rm dom}(\pi((-\Delta)^{\frac{m}{2{v}}})),\quad \pi\in\widehat{G}.$$
{Since} the constant $c_g$ does not depend on $\pi\in\widehat{G},$ it follows from Plancherel theorem that
$$\|P_g^{{\rm top}}u\|_{L_2(G)}\geqslant c_g\|(-\Delta)^{\frac{m}{2{v}}}u\|_{L_2(G)},\quad u\in {\rm dom}((-\Delta)^{\frac{m}{2{v}}}).$$
In particular,
$$\|P_g^{{\rm top}}u\|_{L_2(G)}\geqslant c_g\|(-\Delta)^{\frac{m}{2{v}}}u\|_{L_2(G)},\quad u\in\Sc(G).$$

Set
$$f(g)=\inf\Big\{\frac{\|P_g^{{\rm top}}u\|_{L_2(G)}}{\|(-\Delta)^{\frac{m}{2{v}}}u\|_{L_2(G)}}:\ 0\not\equiv u\in\Sc(G)\Big\},\quad g\in\beta G.$$
If $g_1,g_2\in\beta G,$ then 
\begin{align*}
    \|P_{g_1}^{{\rm top}}u\|_{L_2(G)} &\geqslant\|P_{g_2}^{{\rm top}}u\|_{L_2(G)}-\|P_{g_1}^{{\rm top}}u-P_{g_2}^{{\rm top}}u\|_{L_2(G)}\\
                                    &\geqslant f(g_2)\|(-\Delta)^{\frac{m}{2{v}}}u\|_{L_2(G)}-\|P_{g_1}^{{\rm top}}-P_{g_2}^{{\rm top}}\|_{\mathring{W}^m_2(G)\to L_2(G)}\|(-\Delta)^{\frac{m}{2{v}}}u\|_{L_2(G)}.
\end{align*}
Thus,
$$f(g_1)\geqslant f(g_2)-\|P_{g_1}^{{\rm top}}-P_{g_2}^{{\rm top}}\|_{\mathring{W}^m_2(G)\to L_2(G)}.$$
Swapping $g_1$ and $g_2,$ we obtain
$$f(g_2)\geqslant f(g_1)-\|P_{g_1}^{{\rm top}}-P_{g_2}^{{\rm top}}\|_{\mathring{W}^m_2(G)\to L_2(G)}.$$
Thus,
$$|f(g_1)-f(g_2)|\leqslant \|P_{g_1}^{{\rm top}}-P_{g_2}^{{\rm top}}\|_{\mathring{W}^m_2(G)\to L_2(G)}.$$
In other words, the function $g\to f(g)$ is continuous on $\beta G.$ Since $f(g)\geqslant c_g>0$ for every $g\in\beta G,$ it follows from the compactness of $\beta G$ that $\inf_{g\in\beta G}f(g)>0.$ Appealing to the definition of $f,$ we conclude the ellipticity of $P.$
\end{proof}

\begin{lemma}\label{ellipticity of the product}
Let $P, Q$ be defined as in Definition \ref{def-differential-op}.
If $P$ and $Q$ are both uniformly Rockland, then so is $PQ$.
\end{lemma}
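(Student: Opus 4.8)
The plan is to reduce everything to the representation-theoretic criterion of Lemma \ref{equivalent description of ellipticity}. Write $m_1$ for the order of $P$ and $m_2$ for the order of $Q$; since the coefficients of $P$ and $Q$ lie in $C^\infty_b(G)$, so do those of $PQ$ (cf. the coefficients $c_\gamma$ in the proof of Lemma \ref{lem-symb-composition}), so $PQ$ is a differential operator of order $m_1+m_2$ in the sense of Definition \ref{def-differential-op}. By Lemma \ref{equivalent description of ellipticity} it then suffices to show that $\pi((PQ)_g^{{\rm top}})$ is injective on $H^{\infty}_{\pi}$ for every $g\in\beta G$ and every $\pi\in\widehat{G}$.

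First I would upgrade the symbol-composition identity of Lemma \ref{lem-symb-composition} from $G$ to $\beta G$. The maps $g\mapsto P_g^{{\rm top}}$, $g\mapsto Q_g^{{\rm top}}$ and $g\mapsto (PQ)_g^{{\rm top}}$ are bounded and continuous from $G$ into the finite-dimensional spaces $\Uc_{m_1}(\gf)$, $\Uc_{m_2}(\gf)$ and $\Uc_{m_1+m_2}(\gf)$ respectively (their coefficients with respect to a fixed basis are bounded continuous functions, being polynomial combinations of the $C^\infty_b$ coefficients of $P$ and $Q$), hence each extends continuously to $\beta G$. Since the multiplication map $\Uc_{m_1}(\gf)\times\Uc_{m_2}(\gf)\to\Uc_{m_1+m_2}(\gf)$ is continuous, the identity $(PQ)_g^{{\rm top}}=P_g^{{\rm top}}Q_g^{{\rm top}}$ of Lemma \ref{lem-symb-composition}, which holds on the dense subset $G\subset\beta G$, persists for all $g\in\beta G$.

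Next, fix $g\in\beta G$ and $\pi\in\widehat{G}$. Since $\pi$ restricts to an algebra homomorphism on $H^{\infty}_{\pi}$ with $\pi(D)H^{\infty}_{\pi}\subseteq H^{\infty}_{\pi}$ for every $D\in\Uc(\gf)$, the previous step gives $\pi((PQ)_g^{{\rm top}})=\pi(P_g^{{\rm top}})\,\pi(Q_g^{{\rm top}})$ as operators on $H^{\infty}_{\pi}$. Applying Lemma \ref{equivalent description of ellipticity} separately to the uniformly Rockland operators $P$ and $Q$, both $\pi(P_g^{{\rm top}})$ and $\pi(Q_g^{{\rm top}})$ are injective on $H^{\infty}_{\pi}$. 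Hence, if $\xi\in H^{\infty}_{\pi}$ satisfies $\pi(P_g^{{\rm top}})\,\pi(Q_g^{{\rm top}})\xi=0$, then $\pi(Q_g^{{\rm top}})\xi\in H^{\infty}_{\pi}$ lies in the kernel of $\pi(P_g^{{\rm top}})$, so $\pi(Q_g^{{\rm top}})\xi=0$, and therefore $\xi=0$. Thus $\pi((PQ)_g^{{\rm top}})$ is injective on $H^{\infty}_{\pi}$, and invoking the converse direction of Lemma \ref{equivalent description of ellipticity} finishes the proof.

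I do not expect a serious obstacle; the only point needing care is the passage of the symbol-composition identity to the Stone-\v{C}ech compactification, which is a routine density-and-continuity argument once one notes that the relevant coefficient functions are bounded and continuous. A more hands-on alternative would be to work directly with $L_2$ norms, starting from $\|(PQ)_g^{{\rm top}}u\|_{L_2(G)}=\|P_g^{{\rm top}}Q_g^{{\rm top}}u\|_{L_2(G)}\geqslant c_P\|(-\Delta)^{\frac{m_1}{2v}}Q_g^{{\rm top}}u\|_{L_2(G)}$; but this would then require showing that $Q_g^{{\rm top}}$ is bounded below from $\mathring{W}^{m_1+m_2}_2(G)$ to $\mathring{W}^{m_1}_2(G)$ uniformly in $g$, which is awkward precisely because $(-\Delta)^{\frac{m_1}{2v}}$ and $Q_g^{{\rm top}}$ do not commute. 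The representation-theoretic route sidesteps this difficulty entirely.
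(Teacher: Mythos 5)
Your proof is correct and follows essentially the same route as the paper: invoke Lemma \ref{equivalent description of ellipticity} to reduce to injectivity of $\pi((PQ)_g^{\rm top})$ on $H^\infty_\pi$ for $g\in\beta G$, then compose the two injective maps using $(PQ)_g^{\rm top}=P_g^{\rm top}Q_g^{\rm top}$. The only difference is that you carefully justify the extension of the symbol-composition identity from $G$ to $\beta G$ by a density-and-continuity argument, a point the paper's proof dispenses with via the single word ``Clearly.''
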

\begin{proof} If $P$ and $Q$ are uniformly Rockland, then, by Lemma \ref{equivalent description of ellipticity} $\pi(P_g^{{\rm top}})$ and $\pi(Q_g^{{\rm top}})$ are injective on $H^{\infty}_{\pi}$ for every $g\in\beta G$ and for every $\pi\in\widehat{G}.$ Hence, $\pi(P_g^{{\rm top}}Q_g^{{\rm top}})$ is injective on $H^{\infty}_{\pi}$ for every $g\in\beta G$ and for every $\pi\in\widehat{G}.$ Clearly, $P_g^{{\rm top}}Q_g^{{\rm top}}=(PQ)_g^{{\rm top}}$ for every $g\in\beta G.$ Thus, $\pi((PQ)_g^{{\rm top}})$ is injective on $H^{\infty}_{\pi}$ for every $g\in\beta G$ and for every $\pi\in\widehat{G}.$ Again appealing to Lemma \ref{equivalent description of ellipticity}, we infer that $PQ$ is uniformly Rockland.
\end{proof}

\begin{lemma}\label{lem-P-elliptic-P*P}
	Let $P$ be defined as in Definition \ref{def-differential-op}.
	$P$ is uniformly Rockland if and only if  $P^{\dagger}P$ is uniformly Rockland.
\end{lemma}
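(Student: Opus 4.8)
The plan is to reduce both implications to the representation-theoretic criterion of Lemma \ref{equivalent description of ellipticity}, which characterises uniform Rocklandness of a differential operator $D$ of a given order by injectivity of $\pi(D_g^{{\rm top}})$ on $H^{\infty}_{\pi}$ for every $g\in\beta G$ and every $\pi\in\widehat{G}$. First one notes that $P^{\dagger}P$ is again a differential operator in the sense of Definition \ref{def-differential-op}: its coefficients lie in $C^{\infty}_b(G)$, and its order is $2m$, so the criterion applies to it. The symbolic identity driving the argument is
\[
    (P^{\dagger}P)_g^{{\rm top}} = (P^{\dagger})_g^{{\rm top}}\,P_g^{{\rm top}} = (P_g^{{\rm top}})^{\dagger}\,P_g^{{\rm top}},
\]
which holds for $g\in G$ by Lemmas \ref{lem-symb-composition} and \ref{top dag commute}, and extends to every $g\in\beta G$ by continuity of $g\mapsto P_g^{{\rm top}}$, exactly as in the proof of Lemma \ref{ellipticity of the product}.

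Next I would fix $g\in\beta G$ and $\pi\in\widehat{G}$ and show that injectivity of $\pi((P^{\dagger}P)_g^{{\rm top}})$ on $H^{\infty}_{\pi}$ is equivalent to injectivity of $\pi(P_g^{{\rm top}})$ on $H^{\infty}_{\pi}$. Since $\pi(X)$ is skew-adjoint for $X\in\gf$ (Stone's theorem), one has $\langle \pi(D^{\dagger})\eta,\xi\rangle_{H_{\pi}} = \langle \eta,\pi(D)\xi\rangle_{H_{\pi}}$ for all $D\in\Uc(\gf)$ and $\eta,\xi\in H^{\infty}_{\pi}$, and $\pi(D)$ preserves $H^{\infty}_{\pi}$. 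Applying this with $D=P_g^{{\rm top}}$ and $\eta=\pi(P_g^{{\rm top}})\xi$ gives
\[
    \langle \pi((P^{\dagger}P)_g^{{\rm top}})\xi,\xi\rangle_{H_{\pi}} = \|\pi(P_g^{{\rm top}})\xi\|_{H_{\pi}}^2,\qquad \xi\in H^{\infty}_{\pi},
\]
so $\pi((P^{\dagger}P)_g^{{\rm top}})\xi=0$ forces $\pi(P_g^{{\rm top}})\xi=0$. The converse implication is trivial, since $\pi(P_g^{{\rm top}})\xi=0$ obviously kills $\pi((P_g^{{\rm top}})^{\dagger})\pi(P_g^{{\rm top}})\xi$.

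Combining these steps: $P$ is uniformly Rockland iff $\pi(P_g^{{\rm top}})$ is injective on $H^{\infty}_{\pi}$ for all $(g,\pi)\in\beta G\times\widehat{G}$, iff $\pi((P^{\dagger}P)_g^{{\rm top}})$ is injective on $H^{\infty}_{\pi}$ for all $(g,\pi)$, iff $P^{\dagger}P$ is uniformly Rockland. I do not expect a genuine obstacle; the only points needing a little care are the extension of the symbolic identity to $\beta G$ and the verification that $\pi(D^{\dagger})$ acts on $H^{\infty}_{\pi}$ as the formal adjoint of $\pi(D)$, both of which are routine and are already implicitly used in the preceding lemmas. One could instead try to argue directly with the quantitative $L_2$-estimates of Definition \ref{definition of ellipticity_1}: uniform Rocklandness of $P$ yields the form inequality $(P_g^{{\rm top}})^{\dagger}P_g^{{\rm top}}\geqslant c_P^2(-\Delta)^{\frac{m}{v}}$, but converting this into the required bound $\|(P^{\dagger}P)_g^{{\rm top}}u\|_{L_2(G)}\geqslant c\,\|(-\Delta)^{\frac{m}{v}}u\|_{L_2(G)}$ is less transparent for unbounded operators, so the representation-theoretic route is preferable.
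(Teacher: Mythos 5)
Your proof is correct and follows essentially the same route as the paper: reduce to Lemma \ref{equivalent description of ellipticity}, use the symbol identities $(P^{\dagger}P)_g^{\rm top}=(P_g^{\rm top})^{\dagger}P_g^{\rm top}$ from Lemmas \ref{top dag commute} and \ref{lem-symb-composition}, and then show $\ker\pi((P_g^{\rm top})^{\dagger}P_g^{\rm top})\cap H^{\infty}_{\pi}=\ker\pi(P_g^{\rm top})\cap H^{\infty}_{\pi}$ via the identity $\langle\pi((P^{\dagger}P)_g^{\rm top})\xi,\xi\rangle=\|\pi(P_g^{\rm top})\xi\|^2$. Your write-up is slightly more explicit than the paper's on two small points -- the extension of the symbol identity from $G$ to $\beta G$ and the verification that $\pi(D^{\dagger})$ is the adjoint of $\pi(D)$ on $H^{\infty}_{\pi}$ -- but these are clarifications of the same argument, not a different one.
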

\begin{proof}
	Notice that, for any constant coefficient differential operator $A \in \Uc(\gf)$, for any given $\pi\in \widehat{G}$, 
	\begin{equation*}
		\ker(\pi(A^{\dagger}A)) \cap H^{\infty}_{\pi} = \ker(\pi(A)) \cap H^{\infty}_{\pi}.
	\end{equation*} 
	Indeed, for any $u\in \ker(\pi(A^{\dagger}A))\cap H^{\infty}_{\pi} $, 
	\begin{equation*}
		\Vert \pi(A)u\Vert^2_{H_{\pi}} = \langle \pi(A)^{\dagger}\pi(A)u,u \rangle_{H_{\pi}}
		= \langle \pi(A^{\dagger}A)u,u \rangle_{H_{\pi}}= 0.
	\end{equation*}
	This gives $\ker(\pi(A^{\dagger}A)) \cap H^{\infty}_{\pi} \subseteq \ker(\pi(A)) \cap H^{\infty}_{\pi}$. Since $\pi(A^{\dagger}A)= \pi(A^{\dagger})\pi(A)$, the other side is trivial.
	
	Lemma \ref{top dag commute} and \ref{lem-symb-composition} give us $(P^{\rm top}_g)^{\dagger}P^{\rm top}_g = (P^{\dagger})^{\rm top}_g P^{\rm top}_g = (P^{\dagger}P)^{\rm top}_g$.
	Thus, for any $\pi\in \widehat{G}$ and $g\in G$, $\pi((P^{\dagger}P)^{\rm top}_g)=\pi((P^{\rm top}_g)^{\dagger}P^{\rm top}_g)$ is injective on $H^{\infty}_{\pi}$ if and only if $\pi(P^{\rm top}_g)$ is injective on $H^{\infty}_{\pi}$.
	Appealing to Lemma \ref{equivalent description of ellipticity}, we infer that $P$ is uniformly Rockland if and only if $P^{\dagger}P$ is.
\end{proof}

\subsection{Forward elliptic estimate}\label{sub-sec-forward-elliptic}

In this section, we show that ellipticity as in Definition \ref{definition of ellipticity} implies the following elliptic estimate. 

\begin{theorem}\label{forward elliptic estimate} Let $P=P^{\dagger}$ be an  uniformly Rockland  order $m$ differential operator on $G.$ There exist $R_P,  \widetilde{c_P}\in(0,\infty)$ such that
$$\|(\widetilde{P}+ic)u\|_{L_2(G)}\geqslant \widetilde{c_P}\|u\|_{W^m_2(G)},\quad u\in \Sc(G),\quad c\in\mathbb{R},\quad |c|\geqslant R_P.$$
\end{theorem}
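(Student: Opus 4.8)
The plan is to follow the freeze–unfreeze scheme advertised in the introduction. First I would establish the estimate for the constant-coefficient model operators $P_g^{\mathrm{top}}$, then for $P_g$ itself (the frozen full operator), then for $P$ localized to a small neighbourhood, and finally globalize via the partition of unity $\{\psi_n\}$ from Construction \ref{partition_of_unity} and the localization Theorem \ref{sobolev localization theorem}.

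\emph{Step 1: the frozen principal part.} By Definition \ref{definition of ellipticity_1}, $\|P_g^{\mathrm{top}}u\|_{L_2}\geqslant c_P\|(-\Delta)^{m/2v}u\|_{L_2}$ for all $g$. Since $P=P^\dagger$, Lemma \ref{top dag commute} gives $(P_g^{\mathrm{top}})^\dagger=(P^\dagger)_g^{\mathrm{top}}=P_g^{\mathrm{top}}$, so $P_g^{\mathrm{top}}$ is symmetric; hence for $u\in\Sc(G)$,
\[
\|(P_g^{\mathrm{top}}+ic)u\|_{L_2}^2=\|P_g^{\mathrm{top}}u\|_{L_2}^2+c^2\|u\|_{L_2}^2\geqslant c_P^2\|(-\Delta)^{m/2v}u\|_{L_2}^2+c^2\|u\|_{L_2}^2.
\]
Combining the two pieces with the definition of the inhomogeneous norm $\|u\|_{W^m_2}=\|(1-\Delta)^{m/2v}u\|_{L_2}$ (and the elementary inequality $\|(1-\Delta)^{m/2v}u\|_{L_2}\lesssim \|(-\Delta)^{m/2v}u\|_{L_2}+\|u\|_{L_2}$ via spectral calculus) yields $\|(P_g^{\mathrm{top}}+ic)u\|_{L_2}\geqslant c_1\|u\|_{W^m_2}$ for a constant $c_1$ independent of $g$, valid already for all real $c$. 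Passing from $P_g^{\mathrm{top}}$ to the full frozen operator $P_g=P_g^{\mathrm{top}}+(\text{lower order})$: the lower-order part maps $W^m_2\to W^{1}_2$ with norm bounded uniformly in $g$ (coefficients lie in $C^\infty_b$), and $\|v\|_{W^{m-1}_2}$ can be absorbed using the interpolation inequality \eqref{eq-interpolation-norm} together with the trick of hiding a small multiple of $\|v\|_{W^m_2}$ and a large multiple of $\|v\|_{L_2}$; the large-$c$ term $c^2\|u\|_{L_2}^2$ on the right then absorbs the leftover $\|u\|_{L_2}$ contribution once $|c|\geqslant R_P$. This gives $\|(P_g+ic)u\|_{L_2}\geqslant c_2\|u\|_{W^m_2}$ uniformly in $g$, for $|c|\geqslant R_P$.

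\emph{Step 2: unfreezing locally, then globalizing.} For $u$ supported in a ball $B(g_0,\epsilon)$ of the partition of unity, write $Pu=P_{g_0}u+(P-P_{g_0})u$; since the coefficients are uniformly continuous (uniformly smooth, Definition \ref{C_infty_definition}), choosing $\epsilon$ small makes $\|(P-P_{g_0})u\|_{L_2}\leqslant \tfrac{c_2}{2}\|u\|_{W^m_2}$ for all such $u$, uniformly in $g_0$; combined with Step 1 this gives $\|(\widetilde P+ic)u\|_{L_2}\geqslant \tfrac{c_2}{2}\|u\|_{W^m_2}$ for $u\in\Sc(G)$ supported in any $B(g_n,\epsilon)$. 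Now for general $u\in\Sc(G)$ apply this to $\psi_n u$: by Theorem \ref{sobolev localization theorem},
\[
\|u\|_{W^m_2}^2\leqslant c_{\Psi,m}^2\sum_n\|\psi_n u\|_{W^m_2}^2\lesssim \sum_n\|(\widetilde P+ic)(\psi_n u)\|_{L_2}^2.
\]
Expand $(\widetilde P+ic)(\psi_n u)=\psi_n(\widetilde P+ic)u+[\widetilde P,M_{\psi_n}]u$; the commutator $[\widetilde P,M_{\psi_n}]$ has order $\leqslant m-1$ with coefficients built from $X^\beta\psi_n$, so by Corollary \ref{cor-operator-estimate-partition-unity} the operators are uniformly bounded $W^{m-1}_2\to L_2$ with $\ell^2$-summable control (the supports have bounded overlap, Lemma \ref{quasi_metric_space_construction}). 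Hence $\sum_n\|[\widetilde P,M_{\psi_n}]u\|_{L_2}^2\lesssim \|u\|_{W^{m-1}_2}^2$, and $\sum_n\|\psi_n(\widetilde P+ic)u\|_{L_2}^2\lesssim \|(\widetilde P+ic)u\|_{L_2}^2$. Putting this together gives $\|u\|_{W^m_2}^2\lesssim \|(\widetilde P+ic)u\|_{L_2}^2+\|u\|_{W^{m-1}_2}^2$; interpolating $\|u\|_{W^{m-1}_2}\leqslant \eta\|u\|_{W^m_2}+C_\eta\|u\|_{L_2}$ and absorbing, we arrive at $\|u\|_{W^m_2}\lesssim \|(\widetilde P+ic)u\|_{L_2}+\|u\|_{L_2}$; finally the $c^2\|u\|_{L_2}^2$ available on the right (from the imaginary shift, since $P$ is symmetric so $\|(\widetilde P+ic)u\|_{L_2}^2=\|\widetilde Pu\|_{L_2}^2+c^2\|u\|_{L_2}^2$) dominates $\|u\|_{L_2}$ once $|c|$ is large, yielding the claimed estimate with some $R_P,\widetilde{c_P}$.

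\emph{Main obstacle.} The delicate point is the uniformity across all the balls $B(g_n,\epsilon)$ simultaneously — one needs the smallness of $\|(P-P_{g_n})u\|_{L_2}$ on $B(g_n,\epsilon)$ to hold with a single $\epsilon$ and a single bound, which requires the coefficients to be \emph{uniformly} continuous (this is exactly why the $C^\infty_b$ hypothesis enters), and one needs the commutator terms to be square-summable in $n$ with a constant independent of $n$, which rests on the bounded-overlap property of the partition and the uniform bounds of Corollary \ref{cor-operator-estimate-partition-unity}. Handling the non-integer Sobolev exponent $m$ (when $m\notin 2v\Z_+$) in the "move multipliers to the left / Leibniz" manipulations also requires care, but this is supplied by interpolation exactly as in Lemma \ref{multiplication_lemma} and Lemma \ref{az lemma}.
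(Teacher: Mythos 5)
Your proposal follows the same freeze--unfreeze--globalize scheme as the paper, and the overall logic is correct; there are, however, two points worth flagging where your organization differs from (and is slightly shakier than) the paper's.

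First, the paper keeps the lower-order defect $\|u\|_{W^{m-1}_2(G)}$ explicit through the whole chain (Lemmas \ref{fee constant coefficients}, \ref{fee small diameter}, \ref{fee final} all read $\|(P_\bullet+ic)u\|_{L_2}\geqslant c\|u\|_{W^m_2}-c'\|u\|_{W^{m-1}_2}$, \emph{valid for every real $c$}) and only performs the interpolation-plus-absorption once, at the very end, where the $c^2\|u\|_{L_2}^2$ contribution from the symmetry of $P$ itself is available. You instead try to absorb the lower-order term already at the frozen stage using the $c^2\|u\|_{L_2}^2$ term coming from the symmetry of $P_g^{\mathrm{top}}$, and then you need it \emph{again} at the end. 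This double use is legitimate (the two symmetry identities $\|(P_g^{\mathrm{top}}+ic)u\|^2=\|P_g^{\mathrm{top}}u\|^2+c^2\|u\|^2$ and $\|(\widetilde P+ic)u\|^2=\|\widetilde Pu\|^2+c^2\|u\|^2$ are independent facts), but it forces you to track two separate thresholds on $|c|$ and to be careful that the local smallness parameter $\epsilon_P$ is chosen after the frozen threshold $R_P$ is fixed. The paper's single-absorption design avoids this bookkeeping. Along the way, your interim claim that $\|(P_g^{\mathrm{top}}+ic)u\|_{L_2}\geqslant c_1\|u\|_{W^m_2}$ holds ``already for all real $c$'' with a $c$-independent $c_1>0$ is not correct: at $c=0$ you only have control of the \emph{homogeneous} norm $\|(-\Delta)^{m/2v}u\|_{L_2}$, which does not dominate $\|u\|_{L_2}$, so $c_1$ must degenerate as $c\to 0$. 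This is harmless for the theorem (which anyway asserts the bound only for $|c|\geqslant R_P$), but the statement as written should be corrected.

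Second, the claim $\sum_n\|[\widetilde P,M_{\psi_n}]u\|_{L_2}^2\lesssim\|u\|_{W^{m-1}_2}^2$ does not follow from a uniform bound on $\|[\widetilde P,M_{\psi_n}]\|_{W^{m-1}_2\to L_2}$ plus bounded overlap alone, as written; naively summing a uniform operator bound gives $\sum_n\|[\widetilde P,M_{\psi_n}]u\|^2\leqslant C^2\cdot\#I\cdot\|u\|_{W^{m-1}_2}^2$, which diverges. For $s=0$ the fix is available from your own citation: expand $[\widetilde P,M_{\psi_n}]u$ by Leibniz into terms $M_{a_\alpha}M_{X^\beta\psi_n}X^{\beta'}u$ with $\mathrm{len}(\beta)\geqslant1$ and $\mathrm{len}(\beta')\leqslant m-1$, and then sum in $n$ \emph{inside} the $L_2$ integral using the pointwise bound on $\sum_n|X^\beta\psi_n|^2$ (Lemma \ref{smooth_partition_of_unity}, i.e.\ the boundedness of $B_\beta$ from Lemma \ref{first localization lemma}). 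The paper's Lemma \ref{lemma-local-global-estimates-commutator} handles arbitrary $s$ by first factoring $[P,M_{\psi_n}]=[P,M_{\psi_n}]M_{\phi_n\theta}$ (with an enlarged cutoff $\phi_n$ equal to $1$ on $\mathrm{supp}\,\psi_n$) so that Theorem \ref{sobolev localization theorem} applies to the inputs; for the present theorem only $s=0$ is needed, so the simpler route suffices, but your appeal to ``bounded overlap'' should be replaced by the explicit $B_\beta$ argument.

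With those two repairs, your proof goes through and is genuinely the same argument as the paper's.
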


Throughout this section, we use $c_P$ to refer to the constant from Definition \ref{definition of ellipticity_1} and $\widetilde{c_P}$, $R_P$ from Theorem \ref{forward elliptic estimate}.

Theorem \ref{forward elliptic estimate} can be extended to $W^{m}_2(G)$.

\begin{corollary}\label{forward elliptic estimate 2} Let $P=P^{\dagger}$ be an  uniformly Rockland  order $m$ differential operator on $G.$ There exist $R_P, \widetilde{c_P}\in(0,\infty)$ such that
$$\|(\widetilde{P}+ic)u\|_{L_2(G)}\geqslant \widetilde{c_P}\|u\|_{W^m_2(G)},\quad u\in W^m_2(G),\quad c\in\mathbb{R},\quad |c|\geqslant R_P.$$
\end{corollary}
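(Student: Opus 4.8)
The plan is to deduce the $W^m_2(G)$ estimate from the $\Sc(G)$ estimate of Theorem \ref{forward elliptic estimate} by a routine density-and-continuity argument, keeping the very same constants $R_P$ and $\widetilde{c_P}$. First I would collect two facts established earlier: $\Sc(G)$ is dense in $W^m_2(G)$ (property (ii) of the Sobolev scale in Section \ref{subsec-3.1}), and by Lemma \ref{differential_operators_are_bounded} the distributional extension $\widetilde{P}$ restricts to a bounded operator $\widetilde{P}\colon W^m_2(G)\to L_2(G)$, which moreover coincides with $P$ on $\Sc(G)$ by the observation following Definition \ref{def-differential-op}. Combined with the continuous inclusion $W^m_2(G)\hookrightarrow L_2(G)$ (property (iii) of the Sobolev scale), this makes $\widetilde{P}+ic\colon W^m_2(G)\to L_2(G)$ bounded for each fixed $c$.

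Next, fix $u\in W^m_2(G)$ and $c\in\mathbb{R}$ with $|c|\geqslant R_P$, and choose $u_k\in\Sc(G)$ with $u_k\to u$ in $W^m_2(G)$. Then $\|u_k\|_{W^m_2(G)}\to\|u\|_{W^m_2(G)}$ by continuity of the norm, while $(\widetilde{P}+ic)u_k=(P+ic)u_k\to(\widetilde{P}+ic)u$ in $L_2(G)$ by the boundedness just noted, so $\|(P+ic)u_k\|_{L_2(G)}\to\|(\widetilde{P}+ic)u\|_{L_2(G)}$. Applying Theorem \ref{forward elliptic estimate} to each $u_k$ gives $\|(P+ic)u_k\|_{L_2(G)}\geqslant\widetilde{c_P}\|u_k\|_{W^m_2(G)}$, and letting $k\to\infty$ yields $\|(\widetilde{P}+ic)u\|_{L_2(G)}\geqslant\widetilde{c_P}\|u\|_{W^m_2(G)}$, as required. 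Equivalently, one observes that the set of $u\in W^m_2(G)$ for which the inequality holds is closed in $W^m_2(G)$ and contains the dense subspace $\Sc(G)$.

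I do not expect any genuine obstacle here: this is the standard mechanism for upgrading an \emph{a priori} estimate from a dense core to the full Sobolev space. The only point that needs verification — that $\widetilde{P}$ agrees with $P$ on Schwartz functions, so that $(\widetilde{P}+ic)u_k$ is literally $(P+ic)u_k$ — is immediate from the definition \eqref{def-P-dagger} of $\widetilde{P}$.
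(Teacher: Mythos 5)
Your argument is correct and is essentially the same as the paper's: both deduce the $W^m_2(G)$ estimate from the Schwartz-function estimate of Theorem \ref{forward elliptic estimate} via density of $\Sc(G)$ in $W^m_2(G)$ together with the boundedness of $\widetilde{P}+ic\colon W^m_2(G)\to L_2(G)$ from Lemma \ref{differential_operators_are_bounded}, preserving the same constants. The paper writes the limiting step out with an explicit $\epsilon$--triangle-inequality computation while you phrase it as convergence of a sequence $u_k\to u$, but these are the same mechanism.
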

\begin{proof}
For any $u'\in W^m_2(G)$, choose $u \in \Sc(G)$, such that $\Vert u' - u\Vert_{W^m_2(G)}\leqslant  \epsilon $ \cite[Theorem 4.5]{Folland1975}. By triangle inequality and Theorem \ref{forward elliptic estimate}, we have
\begin{multline}\label{eq-schwartz-to-sobolev-cor-1}
\Vert (\widetilde{P}+i c)(u-u') \Vert_{L^2(G)} 
+\Vert (\widetilde{P}+i c)u' \Vert_{L^2(G)}\\
\geqslant \Vert (\widetilde{P}+i c)u \Vert_{L^2(G)}
\geqslant \widetilde{c_P}\Vert u \Vert_{W^m_2(G)}  \\
\geqslant \widetilde{c_P}(\Vert u' \Vert_{W^m_2(G)} - \Vert u'-u \Vert_{W^m_2(G)}) \\
\geqslant \widetilde{c_P}\Vert u' \Vert_{W^m_2(G)} - \epsilon \widetilde{c_P}.
\end{multline}
By Lemma \ref{differential_operators_are_bounded}, 
\begin{multline}\label{eq-schwartz-to-sobolev-cor-2}
\Vert (\widetilde{P}+i c)(u-u') \Vert_{L^2(G)} 
\leqslant  \Vert \widetilde{P}(u-u') \Vert_{L^2(G)} + |c|\Vert u-u' \Vert_{L^2(G)}\\
\leqslant  \|P\|_{W^m_2(G)\to L_2(G)}\Vert u-u' \Vert_{W^m_2(G)} + |c| \Vert u-u' \Vert_{L^2(G)}\\
\leqslant  (\|P\|_{W^m_2(G)\to L_2(G)} + |c|)\epsilon.
\end{multline}
Combining \eqref{eq-schwartz-to-sobolev-cor-1} and \eqref{eq-schwartz-to-sobolev-cor-2}, we get
\begin{equation*}
\Vert (\widetilde{P}+i c)u' \Vert_{L^2(G)} \geqslant \widetilde{c_P}\Vert u' \Vert_{W^m_2(G)} - \epsilon (\widetilde{c_P}+ |c| +\|P\|_{W^m_2(G)\to L_2(G)}).
\end{equation*}
Letting $\epsilon \to 0$, we obtain the result.
\end{proof}

Now we prove Theorem \ref{forward elliptic estimate}. First, we "freeze" the coefficients of the differential operator $P$. That is, for constant coefficient operator $P_g$, we prove the forward elliptic estimates up to a lower order term.
\begin{lemma}\label{fee constant coefficients} If $P=P^{\dagger}$ is an  uniformly Rockland  order $m$ differential operator on $G,$  then there exists a constant $c_{P,1}$, such that
$$\|(P_g+ic)u\|_{L_2(G)}\geqslant c_P\|u\|_{W^m_2(G)}-c_{P,1} \|u\|_{W^{m-1}_2(G)},\quad u\in \Sc(G),\quad g\in G,\quad c\in\mathbb{R},$$
here, $c_P$ is the constant in Definition \ref{definition of ellipticity_1}.
\end{lemma}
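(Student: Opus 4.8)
The plan is to reduce the claim for the constant-coefficient (frozen) operator $P_g$ to the definition of uniform Rocklandness applied to its top-order part $P_g^{\rm top}$, absorbing the difference $P_g - P_g^{\rm top}$ and the imaginary shift $ic$ into the lower-order error term. Fix $g \in G$ and $c \in \mathbb{R}$, and let $u \in \Sc(G)$. First I would write $P_g + ic = P_g^{\rm top} + (P_g - P_g^{\rm top}) + ic$, so that by the triangle inequality
\[
\|(P_g+ic)u\|_{L_2(G)} \geqslant \|P_g^{\rm top}u\|_{L_2(G)} - \|(P_g - P_g^{\rm top})u\|_{L_2(G)} - |c|\,\|u\|_{L_2(G)}.
\]
By Definition \ref{definition of ellipticity_1}, $\|P_g^{\rm top}u\|_{L_2(G)} \geqslant c_P\|(-\Delta)^{m/2v}u\|_{L_2(G)} = c_P\|u\|_{\mathring{W}^m_2(G)}$.

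The difficulty is that the desired right-hand side features the \emph{inhomogeneous} norm $c_P\|u\|_{W^m_2(G)}$, whereas what the ellipticity hypothesis gives directly is the \emph{homogeneous} seminorm $c_P\|u\|_{\mathring{W}^m_2(G)}$; and moreover the constant in front must be exactly $c_P$, not $c_P$ times a smaller factor. The standard device here is the elementary comparison between homogeneous and inhomogeneous Sobolev norms: there is a constant $C$ (depending only on $G$ and $m$) with $\|u\|_{W^m_2(G)} \leqslant \|u\|_{\mathring{W}^m_2(G)} + C\|u\|_{W^{m-1}_2(G)}$, which follows by spectral calculus from the inequality $(1+\lambda)^{m/2v} \leqslant \lambda^{m/2v} + C(1+\lambda)^{(m-1)/2v}$ for $\lambda \geqslant 0$ (split into $\lambda \leqslant 1$ and $\lambda \geqslant 1$). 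Hence $c_P\|u\|_{\mathring{W}^m_2(G)} \geqslant c_P\|u\|_{W^m_2(G)} - c_P C\|u\|_{W^{m-1}_2(G)}$.

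It remains to control the two error terms. Since $P_g - P_g^{\rm top} = \sum_{\len(\alpha) < m} a_\alpha(g) X^\alpha$ is a constant-coefficient operator of order at most $m-1$, Lemma \ref{lemma-X_alpha-bdd} gives $\|(P_g - P_g^{\rm top})u\|_{L_2(G)} \leqslant C'\|u\|_{W^{m-1}_2(G)}$, where $C'$ is uniform in $g$ because the coefficients $a_\alpha$ lie in $C^\infty_b(G)$, hence $\sup_g |a_\alpha(g)| < \infty$. For the term $|c|\,\|u\|_{L_2(G)}$: at first sight this is problematic since $|c|$ is unbounded, but in fact we do \emph{not} need to absorb it into $W^{m-1}_2$ — wait, we do, since the statement asks for all $c \in \mathbb{R}$. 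So instead I would use the identity $\|(P_g + ic)u\|_{L_2(G)}^2 = \|P_g u\|_{L_2(G)}^2 + 2\,\Real\langle P_g u, icu\rangle + |c|^2\|u\|_{L_2(G)}^2$; since $P_g = P_g^\dagger$ (because $P = P^\dagger$ forces equality of the frozen operators with their formal adjoints — noting $(P^\dagger)_g = (P_g)^\dagger$), the cross term $2\,\Real\langle P_g u, icu\rangle = 2c\,\Real(i\langle P_g u, u\rangle) = -2c\,\Imag\langle P_g u, u\rangle = 0$ because $\langle P_g u, u\rangle$ is real. Therefore $\|(P_g + ic)u\|_{L_2(G)}^2 = \|P_g u\|_{L_2(G)}^2 + |c|^2\|u\|_{L_2(G)}^2 \geqslant \|P_g u\|_{L_2(G)}^2$, so $\|(P_g+ic)u\|_{L_2(G)} \geqslant \|P_g u\|_{L_2(G)}$, and the parameter $c$ disappears entirely.

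With this simplification the chain is clean: $\|(P_g+ic)u\|_{L_2(G)} \geqslant \|P_g u\|_{L_2(G)} \geqslant \|P_g^{\rm top}u\|_{L_2(G)} - \|(P_g - P_g^{\rm top})u\|_{L_2(G)} \geqslant c_P\|u\|_{\mathring{W}^m_2(G)} - C'\|u\|_{W^{m-1}_2(G)} \geqslant c_P\|u\|_{W^m_2(G)} - (c_P C + C')\|u\|_{W^{m-1}_2(G)}$, so one sets $c_{P,1} = c_P C + C'$. The main point to be careful about — the "obstacle", such as it is — is twofold: making sure the constant multiplying $\|u\|_{W^m_2(G)}$ is exactly $c_P$ (handled by the homogeneous-to-inhomogeneous comparison, which costs only a lower-order term and no loss in the leading constant), and checking that the $c$-dependence genuinely drops out, which requires the self-adjointness $P_g = P_g^\dagger$ and hence the observation that freezing commutes with taking formal adjoints.
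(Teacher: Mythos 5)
Your proposal has a genuine gap: the claim that $(P^{\dagger})_g = (P_g)^{\dagger}$, and hence that $P_g = P_g^{\dagger}$ whenever $P=P^{\dagger}$, is false in general. Freezing the coefficients does \emph{not} commute with taking the formal adjoint, because $P^{\dagger}=\sum_{\alpha}(X^{\alpha})^{\dagger}M_{\overline{a_{\alpha}}} = \sum_{\alpha}M_{\overline{a_{\alpha}}}(X^{\alpha})^{\dagger} + \sum_{\alpha}[(X^{\alpha})^{\dagger},M_{\overline{a_{\alpha}}}]$, and the commutator terms involve derivatives $X^{\beta}\overline{a_{\alpha}}$ of the coefficients; freezing these at $g$ gives extra constant-coefficient contributions which are generically nonzero. (A concrete $1$-dimensional example: $P=-M_a\partial^2 + M_b\partial + M_c$ with $a$ real and $\operatorname{Re}(b)=-\partial a$ is formally self-adjoint, but $(P_g)^{\dagger}=P_g$ would require $\operatorname{Re}(b)(g)=0$, which fails wherever $\partial a(g)\neq 0$.) What \emph{is} true, and what the paper proves as Lemma~\ref{top dag commute}, is that freezing commutes with $\dagger$ \emph{on the top-order part}, $(P_g^{\rm top})^{\dagger}=(P^{\dagger})_g^{\rm top}$, precisely because the commutator terms all have strictly lower weighted length and hence drop out when one takes $(\cdot)^{\rm top}_g$. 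Consequently the orthogonality identity $\|(A+ic)u\|^2=\|Au\|^2+c^2\|u\|^2$ holds for $A=P_g^{\rm top}$ but not for $A=P_g$, and your chain of inequalities starting from $\|(P_g+ic)u\|\geqslant\|P_gu\|$ does not go through.

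The fix is to reorganize the decomposition the way the paper does: write $P_g+ic=(P_g^{\rm top}+ic)+(P_g-P_g^{\rm top})$, use $(P_g^{\rm top})^{\dagger}=P_g^{\rm top}$ to get $\|(P_g^{\rm top}+ic)u\|_{L_2(G)}=\bigl(\|P_g^{\rm top}u\|_{L_2(G)}^2+c^2\|u\|_{L_2(G)}^2\bigr)^{1/2}\geqslant\|P_g^{\rm top}u\|_{L_2(G)}\geqslant c_P\|u\|_{\mathring{W}^m_2(G)}$, peel off $P_g-P_g^{\rm top}$ as a lower-order error exactly as you did, and then finish with the homogeneous-to-inhomogeneous comparison, which you had correctly. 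Your treatment of those last two ingredients (uniform $W^{m-1}_2\to L_2$ bound on $P_g-P_g^{\rm top}$ via $a_{\alpha}\in C^{\infty}_b(G)$, and the spectral-calculus inequality relating $\|\cdot\|_{W^m_2}$ to $\|\cdot\|_{\mathring W^m_2}+\|\cdot\|_{W^{m-1}_2}$) matches the paper's argument; the only defective step is where the $ic$-absorption is applied.
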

\begin{proof} By Lemma \ref{top dag commute} and the assumption, $P_g^{{\rm top}}=(P^{\dagger})_g^{{\rm top}} =(P_g^{{\rm top}})^{\dagger},$ it follows that
\begin{align}\label{eq-lemma-foraward-1}
\|(P_g^{{\rm top}}+ic)u\|_{L_2(G)}& =\big(\|P_g^{{\rm top}}u\|_{L_2(G)}^2+|c|^2\|u\|_{L_2(G)}^2\big)^{\frac12}\nonumber \\
& \geqslant\|P_g^{{\rm top}}u\|_{L_2(G)}\geqslant c_P\|u\|_{\mathring{W}^m_2(G)}.
\end{align}
The last inequality is the Definition \ref{definition of ellipticity_1} of ellipticity of $P$.

Since $P_g-P_g^{{\rm top}}$ has order at most $m-1$, and coefficients $a_\alpha$ are bounded, it follows that
$$c_P'\stackrel{def}{=}\sup_{g\in G}\|P_g-P_g^{{\rm top}}\|_{W^{m-1}_2(G)\to L_2(G)}<\infty.$$
We clearly have
\begin{equation}\label{eq-lemma-foraward-2}
\|(P_g-P_g^{{\rm top}})u\|_{L_2(G)}\leqslant c_P'\|u\|_{W^{m-1}_2(G)}.
\end{equation}
By triangle inequality, \eqref{eq-lemma-foraward-1} and \eqref{eq-lemma-foraward-2}, we have
\begin{align}\label{eq-lemma-foraward-3}
\|(P_g+ic)u\|_{L_2(G)}
&\geqslant\|(P_g^{{\rm top}}+ic)u\|_{L_2(G)}-\|(P_g-P_g^{{\rm top}})u\|_{L_2(G)} \nonumber \\
&\geqslant c_P\|u\|_{\mathring{W}^m_2(G)}-c_P'\|u\|_{W^{m-1}_2(G)}.
\end{align}
Recall from the definition of Sobolev norms \eqref{eq-sobolev-norm} and \eqref{eq-sobolev-norm-homogeneous}, we have 
$$\|u\|_{\mathring{W}^m_2(G)}\geqslant \|u\|_{W^m_2(G)}-c_m\|u\|_{W^{m-1}_2(G)},$$
Substitute into \eqref{eq-lemma-foraward-3}, we have
\begin{equation*}
\|(P_g+ic)u\|_{L_2(G)} \geqslant c_P\|u\|_{W^m_2(G)} - (c_P \cdot c_m + c'_P) \|u\|_{W^{m-1}_2(G)}.
\end{equation*}
\end{proof}

Now we "unfreeze" the coefficients, but only let them vary within a small set, and prove the same estimate. {Recall Definition \ref{C_infty_definition} and Definition \ref{def-differential-op}, all coefficients of a differential operator and their derivatives are assumed to be uniformly bounded over $G.$ Thus, the operator will preserve the same property when we "unfreeze" the coefficients.} 
\begin{lemma}\label{fee small diameter} 
If $P=P^{\dagger}$ is an  uniformly Rockland  order $m$ differential operator on $G,$ then there exists $\epsilon_P>0$, such that for the same constants $c_p$, $c_{P,1}$ as in Lemma  \ref{fee constant coefficients}, we have
$$\|(P+ic)u\|_{L_2(G)}\geqslant\frac12c_P\|u\|_{W^m_2(G)}-c_{P,1}\|u\|_{W^{m-1}_2(G)},\quad g\in G,\quad c\in\mathbb{R},$$
for every $u\in\Sc(G)$ with ${\rm diam}({\rm supp}(u))\leqslant\epsilon_P.$ Here, the diameter is understood with respect to any translation-invariant metric on $G$. 
\end{lemma}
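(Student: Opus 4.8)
The plan is to deduce Lemma \ref{fee small diameter} from Lemma \ref{fee constant coefficients} by exploiting that, on a small support, the varying-coefficient operator $P$ is close to the frozen operator $P_{g_0}$ for a suitable base point $g_0$. Concretely, fix $u\in\Sc(G)$ with $\diam(\supp(u))\leqslant\epsilon$ and pick any $g_0\in\supp(u)$. Write
\[
  (P+ic)u = (P_{g_0}+ic)u + (P-P_{g_0})u,
\]
where here $(P-P_{g_0})u$ means $\sum_{\len(\alpha)\leqslant m}(a_\alpha-a_\alpha(g_0))\,X^\alpha u$, a differential operator of order $m$ whose coefficients all vanish at $g_0$. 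Since every $a_\alpha\in C^\infty_b(G)$, the mean value inequality along a path in $\supp(u)$ (of length comparable to $\epsilon$ in a fixed translation-invariant Riemannian metric) gives $\sup_{\supp(u)}|a_\alpha-a_\alpha(g_0)|\leqslant L_P\,\epsilon$, where $L_P$ bounds the relevant first derivatives of the $a_\alpha$'s uniformly over $G$ — this is exactly where the uniform-smoothness hypothesis on the coefficients is used. Consequently, applying Lemma \ref{lemma-X_alpha-bdd} to each $X^\alpha$,
\[
  \|(P-P_{g_0})u\|_{L_2(G)} \leqslant C_P\,L_P\,\epsilon\,\|u\|_{W^m_2(G)},
\]
for a constant $C_P$ depending only on $P$ (the number of terms and the operator norms $\|X^\alpha\colon W^m_2\to L_2\|$).

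Combining this with Lemma \ref{fee constant coefficients} applied at $g=g_0$ and the triangle inequality,
\[
  \|(P+ic)u\|_{L_2(G)} \geqslant \|(P_{g_0}+ic)u\|_{L_2(G)} - \|(P-P_{g_0})u\|_{L_2(G)}
  \geqslant \bigl(c_P - C_P L_P\epsilon\bigr)\|u\|_{W^m_2(G)} - c_{P,1}\|u\|_{W^{m-1}_2(G)}.
\]
Now choose $\epsilon_P>0$ small enough that $C_P L_P\,\epsilon_P\leqslant \tfrac12 c_P$; then for every $u$ with $\diam(\supp(u))\leqslant\epsilon_P$ we obtain the claimed estimate
\[
  \|(P+ic)u\|_{L_2(G)}\geqslant \tfrac12 c_P\|u\|_{W^m_2(G)} - c_{P,1}\|u\|_{W^{m-1}_2(G)}.
\]
Note $\epsilon_P$ depends only on $P$ (through $c_P$, $C_P$, $L_P$), not on $u$, $g$ or $c$, as required, and the constants $c_P$, $c_{P,1}$ are literally those from Lemma \ref{fee constant coefficients}.

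The main technical point to get right is the bound on $\|(P-P_{g_0})u\|_{L_2(G)}$: one must be careful that "$P_{g_0}$" is being used as a well-defined element of $\Uc(\gf)$ (this is exactly the well-definedness established after Definition \ref{def-P_g}), and that the difference $P-P_{g_0}$, though only defined as an operator on functions supported near $g_0$, can be estimated term-by-term after choosing a representation $P=\sum M_{a_\alpha}X^\alpha$ — the representation-dependence is harmless because we only need an upper bound. A secondary subtlety is the passage from the $C^\infty_b$ bound on coefficients to the oscillation bound $|a_\alpha(x)-a_\alpha(g_0)|\lesssim \epsilon$ on $\supp(u)$: this requires connecting $x$ to $g_0$ by a curve of controlled length inside a neighbourhood on which the first-order derivative bound $\|X^j a_\alpha\|_\infty\leqslant\|a_\alpha\|_{v_\gf,b}$ holds, which is automatic since these bounds are global. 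Neither step is deep, but both rely essentially on the standing assumption that the coefficients are \emph{uniformly} smooth over all of $G$, which is what makes $\epsilon_P$ independent of the base point.
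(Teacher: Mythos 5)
Your proof is correct and follows essentially the same route as the paper's: decompose $(P+ic)u=(P_{g_0}+ic)u+(P-P_{g_0})u$ for $g_0\in\supp(u)$, bound the error term via the boundedness of $X^\alpha\colon W^m_2\to L_2$ and a mean-value estimate on the oscillation of the $a_\alpha$, then apply Lemma \ref{fee constant coefficients} and the triangle inequality, choosing $\epsilon_P$ small enough that the perturbation absorbs half of $c_P$. The only cosmetic difference is that the paper invokes the mean value theorem for homogeneous groups (\cite[Proposition 3.1.46]{FischerRuzhansky2016}) directly rather than arguing via a Riemannian path, and keeps the per-word constants $c_\alpha$ explicit instead of aggregating them.
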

\begin{proof} 
Recall $P$ is given by the formula \eqref{eq-P-expression}. For each $X^{\alpha}$ appearing in $P,$ we have $\len(\alpha)\leqslant m.$ We denote $c_{\alpha}=\|X^{\alpha}\|_{W^m_2(G)\to L_2(G)}.$
By Mean Value Theorem \cite[Proposition 3.1.46.]{FischerRuzhansky2016}, there exist a constant $C$, such that for any $g_1,g_2\in G$ with $\dist(g_1,g_2)<1$ (recall Definition \ref{def-homo-metric}), for any coefficient function $a_{\alpha}$ in $P$, we have
\begin{equation*}
	|a_{\alpha}(g_1)-a_{\alpha}(g_2)|
	\leqslant C \Vert a_{\alpha} \Vert_{s, b} \cdot d(g_1, g_2).
\end{equation*}
Here, $s$ is the highest degree of homogeneous vectors, as in Definition \ref{def-stratfied-g}. 
Since for all $\alpha$, $\Vert a_{\alpha} \Vert_{s, b}$ is always finite, recall Definition \ref{def-differential-op}, there exists a small $\epsilon_P >0 $ such that
$$\sum_{\len(\alpha)\leqslant m}c_{\alpha}|a_{\alpha}(g_1)-a_{\alpha}(g_2)|\leqslant\frac12c_P,
\quad \text{for all}\ g_1, g_2 \in G, \ d(g_1,g_2)\leqslant \epsilon_P .$$
Here, $c_P$ is the uniformly Rockland constant given by Definition \ref{definition of ellipticity_1}.

Fix $u\in \Sc(G)$ such that ${\rm diam}({\rm supp}(u))\leqslant\epsilon_P.$ Also, fix some $g\in{\rm supp}(u).$ Triangle inequality says
$$\|(P+ic)u\|_{L_2(G)}\geqslant\|(P_g+ic)u\|_{L_2(G)}-\|(P-P_g)u\|_{L_2(G)}.$$
Clearly,
\begin{align*}
\|(P-P_g)u\|_{L_2(G)}
&\leqslant\sum_{\len(\alpha)\leqslant m}\|M_{a_{\alpha}-a_{\alpha}(g)}X^{\alpha}u\|_{L_2(G)}\\
&\leqslant\sum_{\len(\alpha)\leqslant m}\|a_{\alpha}-a_{\alpha}(g)\|_{L_{\infty}({\rm supp}(u))}\|X^{\alpha}u\|_{L_2(G)}\\
&\leqslant\sum_{\len(\alpha)\leqslant m}c_{\alpha}\|a_{\alpha}-a_{\alpha}(g)\|_{L_{\infty}({\rm supp}(u))}\cdot \|u\|_{W^m_2(G)}.
\end{align*}
The last inequality is due to \eqref{integer sobolev norm}.
By the choice of $\epsilon_P,$ we have
$$\|(P+ic)u\|_{L_2(G)}\geqslant\|(P_g+ic)u\|_{L_2(G)}-\frac12c_P\|u\|_{W^m_2(G)}.$$
The assertion follows now from Lemma \ref{fee constant coefficients}.
\end{proof}

The following lemma shows that local estimates of commutators can be promoted to global ones.
\begin{lemma}\label{lemma-local-global-estimates-commutator}
Let $P$ be a differential operator of order $m$ on $G.$ Let $\psi\in C^{\infty}_c(G)$ {be} $1$ in some neighbourhood of $1_G.$ Let $\Psi=(\psi_n)_{n\geqslant0}$ be the partition of unity defined in Construction \ref{partition_of_unity}. For any $s\in \R$, we have
$$\Big(\sum_{n\geqslant0}\|[P,M_{\psi_n}]u\|_{W^s_2(G)}^2\Big)^{\frac12}\leqslant c_{P,\Psi,s}\|u\|_{W^{s+m-1}_2(G)},\quad u\in\Sc(G).$$
\end{lemma}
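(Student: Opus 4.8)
The plan is to reduce the global estimate to a uniform-in-$n$ local estimate for each commutator $[P,M_{\psi_n}]$, and then control the overlap sum using the finite-overlap property of the partition of unity (Lemma \ref{quasi_metric_space_construction}) together with Theorem \ref{sobolev localization theorem}. First I would observe that $[P,M_{\psi_n}]$ is again a differential operator, but of order at most $m-1$: writing $P=\sum_{\len(\alpha)\leqslant m}M_{a_\alpha}X^\alpha$ and applying the Leibniz rule \eqref{eq-X-alpha-on-varphi-function}, the commutator $[M_{a_\alpha}X^\alpha,M_{\psi_n}]=M_{a_\alpha}[X^\alpha,M_{\psi_n}]$ expands as a sum of terms $M_{a_\alpha}\cdot M_{X^\beta\psi_n}\cdot X^{\beta'}$ with $\beta$ a nonempty prefix of $\alpha$ and $\beta'$ the complementary suffix, so $\len(\beta')\leqslant\len(\alpha)-1\leqslant m-1$. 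Thus
\[
[P,M_{\psi_n}]=\sum_{\len(\alpha)\leqslant m}\ \sum_{\substack{\alpha=\beta\beta'\\ \beta\ \text{nonempty}}}c_{\alpha,\beta}\,M_{a_\alpha}\,M_{X^\beta\psi_n}\,X^{\beta'}.
\]

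Next I would estimate the $W^s_2$-norm of each term. By Lemma \ref{lemma-X_alpha-bdd}, $X^{\beta'}$ maps $W^{s+m-1}_2(G)\to W^{s+m-1-\len(\beta')}_2(G)\subseteq W^s_2(G)$ (using the Sobolev embedding (iii), since $\len(\beta')\leqslant m-1$), and by Lemma \ref{multiplication_lemma} the multiplier $M_{a_\alpha}$ is bounded on $W^s_2(G)$ with norm controlled by $\|a_\alpha\|_{\lceil|s|\rceil,b}<\infty$. The only $n$-dependence sits in the multipliers $M_{X^\beta\psi_n}$, and Corollary \ref{cor-operator-estimate-partition-unity} gives $\sup_n\|M_{X^\beta\psi_n}\|_{W^s_2(G)\to W^s_2(G)}<\infty$ for every word $\beta$. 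Hence each summand is bounded from $W^{s+m-1}_2(G)$ to $W^s_2(G)$ uniformly in $n$. This already yields $\|[P,M_{\psi_n}]u\|_{W^s_2(G)}\leqslant C\|u\|_{W^{s+m-1}_2(G)}$ with $C$ independent of $n$ — but that alone is not enough, since summing the squares over $n$ would diverge.

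To get the square-summable bound, I would exploit that for fixed $n$ the function $X^\beta\psi_n$ is supported in the ball $B(g_n,N\epsilon)$, so $M_{X^\beta\psi_n}X^{\beta'}u$ depends only on $u$ restricted to a neighbourhood of that ball; more usefully, I would insert a cutoff: choose $\chi\in C^\infty_c(G)$ equal to $1$ on $B(1_G,N\epsilon)$ and supported in $B(1_G,2N\epsilon)$, set $\chi_n(g)=\chi(gg_n^{-1})$, and note $M_{X^\beta\psi_n}=M_{X^\beta\psi_n}M_{\chi_n}$. Then $\|M_{X^\beta\psi_n}X^{\beta'}u\|_{W^s_2(G)}\leqslant C\|M_{\chi_n}X^{\beta'}u\|_{W^s_2(G)}$, and a commutation argument (moving $M_{\chi_n}$ past $X^{\beta'}$ as in Lemma \ref{a2m lemma}, all the resulting multipliers $M_{X^\gamma\chi_n}$ being uniformly bounded by the translation-invariance of their construction) gives $\|M_{\chi_n}X^{\beta'}u\|_{W^s_2(G)}\leqslant C'\|\chi_n'u\|_{W^{s+m-1}_2(G)}$ for a slightly larger cutoff $\chi_n'$. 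Since $\{\chi_n'\}$ is a uniformly bounded family of cutoffs with the finite-overlap property of Lemma \ref{quasi_metric_space_construction}, the family $\{\chi_n'\}$ is comparable (after renormalising) to a partition of unity of the type in Construction \ref{partition_of_unity}, so $\sum_n\|\chi_n'u\|_{W^{s+m-1}_2(G)}^2\leqslant c\|u\|_{W^{s+m-1}_2(G)}^2$ by Theorem \ref{sobolev localization theorem} (or directly from the boundedness of the operator $B_0$ of Lemma \ref{first localization lemma} combined with $A_z$-boundedness of Lemma \ref{az lemma}). Combining, $\sum_n\|[P,M_{\psi_n}]u\|_{W^s_2(G)}^2\leqslant c_{P,\Psi,s}^2\|u\|_{W^{s+m-1}_2(G)}^2$.

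The main obstacle is the square-summability: a naive bound on each commutator is uniform in $n$ but not summable, so the argument must genuinely use the localisation — the spatial disjointness (finite overlap) of the supports of the $\psi_n$ — to convert $\sum_n\|\cdot\|^2$ of localised pieces of $u$ back into a single global Sobolev norm. The technical care needed is in the fractional/negative range of $s$, where $X^{\beta'}u$ and the cutoffs do not interact by elementary Leibniz expansions; there I would handle it exactly as in the proofs of Lemmas \ref{a2m lemma}--\ref{az lemma}, writing $(1-\Delta)^{s/2v}$ in terms of the $A_z$ operators and invoking the Hadamard three-lines theorem, so that the whole estimate is reduced to the already-established boundedness of the operators $B_\alpha$ and $A_z$.
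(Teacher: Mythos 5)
Your proposal identifies the right obstacle (a uniform-in-$n$ bound is not square-summable; one must exploit locality) and uses the right ingredients (Lemmas \ref{quasi_metric_space_construction}, \ref{multiplication_lemma}, Corollary \ref{cor-operator-estimate-partition-unity}, Theorem \ref{sobolev localization theorem}), so the strategy matches the paper's. The implementation, however, is more circuitous than it needs to be. The paper does not expand the commutator first and then insert cutoffs term by term with a subsequent "commutation" step; instead it chooses $\phi\in C^\infty_c(G)$ with $\phi\psi=\psi$, sets $\phi_n\theta$ to be the corresponding translated cutoff (so $\phi_n\theta\equiv1$ on $\supp\psi_n$), and observes the exact operator identity
\[
[P,M_{\psi_n}]=[P,M_{\psi_n}M_{\phi_n\theta}]=[P,M_{\psi_n}]M_{\phi_n\theta}+M_{\psi_n}[P,M_{\phi_n\theta}]=[P,M_{\psi_n}]M_{\phi_n\theta},
\]
since the Leibniz expansion of $M_{\psi_n}[P,M_{\phi_n\theta}]$ only produces multipliers by derivatives $X^\beta(\phi_n\theta)$, $\beta\neq\emptyset$, which vanish on $\supp\psi_n$. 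Once this is in hand one simply applies the uniform operator bound $\sup_n\|[P,M_{\psi_n}]\|_{W^{s+m-1}_2\to W^s_2}<\infty$ (a one-step composition of bounded operators, valid for every $s\in\R$ by Lemma \ref{multiplication_lemma} and Corollary \ref{cor-operator-estimate-partition-unity}) to $\phi_n\theta u$, and then Theorem \ref{sobolev localization theorem} gives $\sum_n\|\phi_n\theta u\|_{W^{s+m-1}_2}^2\lesssim\|u\|_{W^{s+m-1}_2}^2$. In particular, there is no need for the separate Hadamard three-lines treatment of fractional $s$ that you flag: the fractional-order machinery is already encapsulated in Lemma \ref{multiplication_lemma} and Theorem \ref{sobolev localization theorem}, and the cutoff identity above avoids any ad hoc commutation of $M_{\chi_n}$ past $X^{\beta'}$. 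Your version of that commutation step is stated loosely (what you actually want is the locality fact $M_{\chi_n}X^{\beta'}u=M_{\chi_n}X^{\beta'}(\chi_n'u)$ for nested cutoffs, not an iterative Leibniz exchange); this is fixable, but the paper's identity does it in one stroke.
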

\begin{proof} Fix a real-valued $\phi\in C^{\infty}_c(G)$ such that $\phi\psi=\psi.$ Let $\{g_n\}_{n\geqslant0}$ be a sequence given by Construction \ref{partition_of_unity}. Set
$$\theta(g)=(\sum_{n\geqslant0}\phi^2(g_n^{-1}g))^{\frac12}, \qquad g\in G,$$
which is the denominator in \eqref{eq-partition-of-unity}. As stated in Construction \ref{partition_of_unity}, the sum is actually finite. Define $\phi_n$ similarly to \eqref{eq-partition-of-unity}, that is, $\phi_n (g)= \phi(g_n^{-1}g)/\theta(g)$ for $g\in G,$ $n\geqslant0.$ We have $\phi_n\theta\psi_n=\psi_n$ for every $n\geqslant0.$ 
Thus, for $n\geqslant0$,
\begin{equation}\label{eq-local-global-estimates-commutator-1}
[P,M_{\psi_n}]=[P,M_{\psi_n}M_{\phi_n\theta}] 
= [P,M_{\psi_n}]M_{\phi_n\theta} + M_{\psi_n}[P,M_{\phi_n\theta}] 
\end{equation}

By the Leibniz rule, there are constants $c_{\alpha,\beta,\beta'}$, such that
\begin{equation}\label{eq-commutation-P-f}
[X^{\alpha}, M_{f}] = \sum_{\substack{{\rm len}(\beta)+{\rm len}(\beta')={\rm len}(\alpha)\\ {\rm len}(\beta)\neq0}}  c_{\alpha,\beta,\beta'} \cdot M_{X^{\beta}(f)} \cdot X^{\beta'},\quad f\in C^{\infty}_c(G).
\end{equation}
Setting $f=\phi_n \theta$ in \eqref{eq-commutation-P-f}, we obtain
\begin{multline*}
M_{\psi_n}[P,M_{\phi_n\theta}]
=\sum_{{\rm len}(\alpha)\leqslant m}M_{\psi_n a_{\alpha}}[X^{\alpha},M_f]\\
=\sum_{{\rm len}(\alpha)\leqslant m}\sum_{\substack{{\rm len}(\beta)\neq0\\ {\rm len}(\beta)+{\rm len}(\beta')={\rm len}(\alpha)}}  c_{\alpha,\beta,\beta'} M_{\psi_n a_{\alpha}} \cdot M_{X^{\beta}(\phi_n \theta)} \cdot X^{\beta'}
=0.
\end{multline*}
Here, the last equality holds due to the  $\phi_n \theta = 1$ on the support of $\psi_n.$ 
Thus \eqref{eq-local-global-estimates-commutator-1} becomes
\begin{equation*}
[P,M_{\psi_n}] =[P,M_{\psi_n}]M_{\phi_n\theta} .
\end{equation*}
Hence, for $u \in \Sc(G)$,
$$\|[P,M_{\psi_n}]u\|_{W^s_2(G)}\leqslant\|[P,M_{\psi_n}]\|_{W^{s+m-1}_2(G)\to W^s_2(G)}\|\phi_n\theta u\|_{W^{s+m-1}_2(G)},\quad n\geqslant0.$$
Thus,
\begin{multline}\label{eq-local-commutator-P-psi-estimate}
\Big(\sum_{n\geqslant0}\|[P,M_{\psi_n}]u\|_{W^s_2(G)}^2\Big)^{\frac12}\\
\leqslant\sup_{n\geqslant0}\|[P,M_{\psi_n}]\|_{W^{s+m-1}_2(G)\to W^s_2(G)}\cdot \Big(\sum_{n\geqslant0}\|\phi_n\theta u\|_{W^{s+m-1}_2(G)}^2\Big)^{\frac12}.
\end{multline}

Setting $f=\phi_n \theta$ in \eqref{eq-commutation-P-f}, we obtain
\begin{multline*}
[P,M_{\psi_n}]=\sum_{{\rm len}(\alpha)\leqslant m}M_{a_{\alpha}}[X^{\alpha},M_f]\\
=\sum_{{\rm len}(\alpha)\leqslant m}\sum_{\substack{{\rm len}(\beta)\neq0\\ {\rm len}(\beta)+{\rm len}(\beta')={\rm len}(\alpha)}}  c_{\alpha,\beta,\beta'} M_{a_{\alpha}}\cdot M_{X^{\beta}(\psi_n)} \cdot X^{\beta'}.
\end{multline*}
By Definition \ref{def-differential-op}, $a_{\alpha}\in C_b^{\infty}(G)$. By Lemma \ref{multiplication_lemma}, $\Vert M_{a_\alpha}\Vert_{W^s_2(G)\to W^s_2(G)}$ are bounded.
By Corollary \ref{cor-operator-estimate-partition-unity}, $\Vert M_{X^{\beta}(\psi_n)}\Vert_{W^s_2(G)\to W^s_2(G)}$ are bounded for each $\beta$, uniformly in $n\in \mathbb{N}.$ Together with Lemma \ref{differential_operators_are_bounded}, we have 
\begin{equation*}
\sup_{n\geqslant0}\|[P,M_{\psi_n}]\|_{W^{s+m-1}_2(G)\to W^s_2(G)}< \infty .
\end{equation*}

Applying Theorem \ref{sobolev localization theorem} and Corollary \ref{cor-operator-estimate-partition-unity} to \eqref{eq-local-commutator-P-psi-estimate}, we complete the proof.
\end{proof}

\begin{lemma}\label{fee final}
If $P=P^{\dagger}$ is an  uniformly Rockland  order $m$ differential operator on $G,$ then there exist constants $c_{P,2}$ and $c_{P,3}$, such that
$$\|(P+ic)u\|_{L_2(G)}\geqslant c_{P,2}\|u\|_{W^m_2(G)}-c_{P,3}\|u\|_{W^{m-1}_2(G)},\quad c\in\mathbb{R},\quad u\in\Sc(G).$$
\end{lemma}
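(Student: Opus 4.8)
The plan is to globalize the frozen-coefficient estimate of Lemma \ref{fee small diameter} by means of the partition of unity of Construction \ref{partition_of_unity}, the Sobolev-norm localization of Theorem \ref{sobolev localization theorem}, and the local-to-global commutator bound of Lemma \ref{lemma-local-global-estimates-commutator}.

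First I would choose a partition of unity $\Psi=\{\psi_n\}_{n\geqslant 0}$ as in Construction \ref{partition_of_unity} that is fine enough that $\mathrm{diam}(\mathrm{supp}(\psi_n))\leqslant\epsilon_P$ for every $n$, where $\epsilon_P$ is the radius furnished by Lemma \ref{fee small diameter}; this costs nothing, since the bump function underlying the construction may be taken supported in an arbitrarily small ball about $1_G$, and $\mathrm{supp}(\psi_n)$ sits in a fixed ball about $g_n$. It is important that this choice is made \emph{before} fixing $u$ and $c$, so that all ensuing constants are uniform. Then, for $u\in\Sc(G)$, each $\psi_n u$ lies in $C^\infty_c(G)\subseteq\Sc(G)$ and has support of diameter at most $\epsilon_P$, so Lemma \ref{fee small diameter} applies to $\psi_n u$ and gives, uniformly in $n$ and $c\in\mathbb{R}$,
$$\tfrac12 c_P\|\psi_n u\|_{W^m_2(G)}\leqslant\|(P+ic)(\psi_n u)\|_{L_2(G)}+c_{P,1}\|\psi_n u\|_{W^{m-1}_2(G)}.$$

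Next I would take the $\ell^2$-norm in $n$ of this family of inequalities (Minkowski), and on the right split off a commutator term using the identity $(P+ic)(\psi_n u)=M_{\psi_n}(P+ic)u+[P,M_{\psi_n}]u$, valid since $ic$ commutes with $M_{\psi_n}$. Three facts then finish the estimate: by Theorem \ref{sobolev localization theorem} with $s=0$, the term $\big(\sum_n\|M_{\psi_n}(P+ic)u\|_{L_2(G)}^2\big)^{1/2}$ is bounded by $c_{\Psi,0}\|(P+ic)u\|_{L_2(G)}$; by Lemma \ref{lemma-local-global-estimates-commutator} with $s=0$, the term $\big(\sum_n\|[P,M_{\psi_n}]u\|_{L_2(G)}^2\big)^{1/2}$ is bounded by $c_{P,\Psi,0}\|u\|_{W^{m-1}_2(G)}$; and again by Theorem \ref{sobolev localization theorem}, on the left $\big(\sum_n\|\psi_n u\|_{W^m_2(G)}^2\big)^{1/2}\geqslant c_{\Psi,m}^{-1}\|u\|_{W^m_2(G)}$, while $\big(\sum_n\|\psi_n u\|_{W^{m-1}_2(G)}^2\big)^{1/2}\leqslant c_{\Psi,m-1}\|u\|_{W^{m-1}_2(G)}$. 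Rearranging the resulting inequality yields the claim with $c_{P,2}=\frac{c_P}{2c_{\Psi,0}c_{\Psi,m}}$ and $c_{P,3}=\frac{c_{P,\Psi,0}+c_{P,1}c_{\Psi,m-1}}{c_{\Psi,0}}$, both independent of $c$.

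There is no real obstacle remaining at this stage: the genuine work lies upstream, in Lemma \ref{fee small diameter} (the frozen-coefficient estimate on sets of small diameter) and in Lemma \ref{lemma-local-global-estimates-commutator} (promotion of the local commutator bound to a global one), while Theorem \ref{sobolev localization theorem} and Corollary \ref{cor-operator-estimate-partition-unity} supply exactly the uniform comparability of $\|u\|_{W^s_2(G)}$ with $\big(\sum_n\|\psi_n u\|_{W^s_2(G)}^2\big)^{1/2}$ that the patching argument requires. The only point demanding mild care is bookkeeping of the uniformity: fixing $\Psi$ once ensures the constant $\tfrac12 c_P$ and the localization constants do not depend on $n$, on $u\in\Sc(G)$, or on $c\in\mathbb{R}$.
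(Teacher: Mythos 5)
Your argument is correct and follows essentially the same route as the paper's proof: fix a fine enough partition of unity $\Psi$ from Construction \ref{partition_of_unity}, apply Lemma \ref{fee small diameter} to each $\psi_n u$, peel off the commutator via $(P+ic)M_{\psi_n} = M_{\psi_n}(P+ic) + [P,M_{\psi_n}]$, and then close the estimate with Theorem \ref{sobolev localization theorem} and Lemma \ref{lemma-local-global-estimates-commutator}. The only cosmetic difference is the direction in which you arrange the triangle inequality (you bound $\|(P+ic)(\psi_n u)\|$ from above, the paper bounds $\|M_{\psi_n}(P+ic)u\|$ from below), which is an algebraically equivalent reshuffling of the same steps.
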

\begin{proof} Let $\epsilon_P$ be as in Lemma \ref{fee small diameter}. Fix a real-valued $\psi\in C^{\infty}_c(G)$ supported in $B(1_G,\frac12\epsilon_P)$ and equal to $1$ in some neighbourhood of $1_G.$ Let $(\psi_n)_{n\geqslant0}$ be the partition of unity defined in Construction \ref{partition_of_unity}.

Using Theorem \ref{sobolev localization theorem} with $s=0,$ we localise the norm in $L_2(G)$ as follows:
\begin{equation}\label{eq-foraward-elliptic-pf-with-lower-1}
c_{0,\Psi}\|(P+ic)u\|_{L_2(G)}\geqslant \Big(\sum_{n\geqslant0}\|M_{\psi_n}(P+ic)u\|_{L_2(G)}^2\Big)^{\frac12}.
\end{equation}
By triangle inequality in $L_2(G),$
\begin{equation}\label{eq-foraward-elliptic-pf-with-lower-2}
\|M_{\psi_n}(P+ic)u\|_{L_2(G)}\geqslant \|(P+ic)M_{\psi_n}u\|_{L_2(G)}-\|[P,M_{\psi_n}]u\|_{L_2(G)}.
\end{equation}
Since $\psi_n u$ has small support, it follows from Lemma \ref{fee small diameter} that
$$\|(P+ic)M_{\psi_n}u\|_{L_2(G)} \geqslant \frac12c_P\|\psi_n u\|_{W^m_2(G)}-c_{P,1}\|\psi_n u\|_{W^{m-1}_2(G)}.$$
Substituting back into \eqref{eq-foraward-elliptic-pf-with-lower-2}, we have
$$\|M_{\psi_n}(P+ic)u\|_{L_2(G)}\geqslant \frac12c_P\|\psi_n u\|_{W^m_2(G)}-c_{P,1}\|\psi_n u\|_{W^{m-1}_2(G)}-\|[P,M_{\psi_n}]u\|_{L_2(G)}.$$
By triangle inequality in $l_2,$
\begin{multline}\label{eq-foraward-elliptic-pf-with-lower-3}
\Big(\sum_{n\geqslant0}\|M_{\psi_n}(P+ic)u\|_{L_2(G)}^2\Big)^{\frac12}\geqslant \frac12c_P\Big(\sum_{n\geqslant0}\|\psi_n u\|_{W^m_2(G)}^2\Big)^{\frac12}\\
-c_{P,1}\Big(\sum_{n\geqslant0}\|\psi_n u\|_{W^{m-1}_2(G)}^2\Big)^{\frac12}-\Big(\sum_{n\geqslant0}\|[P,M_{\psi_n}]u\|_{L_2(G)}^2\Big)^{\frac12}.
\end{multline}

Combining \eqref{eq-foraward-elliptic-pf-with-lower-1} and \eqref{eq-foraward-elliptic-pf-with-lower-3}, we obtain
\begin{multline*}
c_{0,\Psi}\|(P+ic)u\|_{L_2(G)}\geqslant \frac12c_P\Big(\sum_{n\geqslant0}\|\psi_n u\|_{W^m_2(G)}^2\Big)^{\frac12}\\
-c_{P,1}\Big(\sum_{n\geqslant0}\|\psi_n u\|_{W^{m-1}_2(G)}^2\Big)^{\frac12}-\Big(\sum_{n\geqslant0}\|[P,M_{\psi_n}]u\|_{L_2(G)}^2\Big)^{\frac12}.
\end{multline*}
The first two term are localised norms, they are estimated by Theorem \ref{sobolev localization theorem}. The last term is estimated by Lemma \ref{lemma-local-global-estimates-commutator}.
\end{proof}

\begin{proof}[Proof of Theorem \ref{forward elliptic estimate}] Since $P=P^{\dagger},$ it follows that
$$\|(P+ic)u\|_{L_2(G)}=\big(\|Pu\|_{L_2(G)}^2+|c|^2\|u\|_{L_2(G)}^2\big)^{\frac12}\geqslant|c|\|u\|_{L_2(G)},\quad u\in\Sc(G).$$
By Lemma \ref{fee final},
$$\|(P+ic)u\|_{L_2(G)}\geqslant c_{P,2}\|u\|_{W^m_2(G)}-c_{P,3}\|u\|_{W^{m-1}_2(G)}.$$
Summing these inequalities, we arrive at
$$\|(P+ic)u\|_{L_2(G)}\geqslant \frac12c_{P,2}\|u\|_{W^m_2(G)}-\frac12c_{P,3}\|u\|_{W^{m-1}_2(G)}+\frac12|c|\|u\|_{L_2(G)}.$$
By \eqref{eq-interpolation-norm}, we have for each $\delta > 0$,
$$\|u\|_{W^{m-1}_2(G)}\leqslant\|u\|_{W^m_2(G)}^{1-\frac1m}\|u\|_{L_2(G)}^{\frac1m}\leqslant \delta \|u\|_{W^m_2(G)}+\delta^{1-m}\|u\|_{L_2(G)}.$$
Thus,
$$\|(P+ic)u\|_{L_2(G)}\geqslant \frac12(c_{P,2}-\delta c_{P,3})\|u\|_{W^m_2(G)}+\frac12(|c|-\delta^{1-m}c_{P,3})\|u\|_{L_2(G)}.$$
Setting
$$\widetilde{c_P}=\frac13c_{P,2},\quad \delta =\frac{c_{P,2}}{3c_{P,3}},\quad R_P=\frac{3^{m-1}c_{P,3}^m}{c_{P,2}^{m-1}},$$
we complete the proof.
\end{proof}


\subsection{Backward elliptic estimate}\label{sub-sec-backward-elliptic}

Now we are going to prove backward elliptic estimate, similar to Theorem \ref{forward elliptic estimate}, but with negative Sobolev degree.

\begin{theorem}\label{backward elliptic estimate} Let $P=P^{\dagger}$ be an  uniformly Rockland  order $m$ differential operator on $G.$ There exist $R_P,\widetilde{c_P}\in(0,\infty)$ such that
$$\|(P+ic)u\|_{W^{-m}_2(G)}\geqslant \widetilde{c_P}\|u\|_{L_2(G)},\quad u\in\Sc(G),\quad c\in\mathbb{R},\quad |c|\geqslant R_P.$$
\end{theorem}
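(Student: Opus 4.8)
The plan is to obtain the backward estimate by duality from the forward estimate of Theorem~\ref{forward elliptic estimate} (and its extension Corollary~\ref{forward elliptic estimate 2}). Recall from the properties of the Sobolev scale that for $s \in 2v\cdot\Z_+$ the space $W^{-s}_2(G)$ is the Banach dual of $W^s_2(G)$ under the $L_2$ pairing, and $\Sc(G)$ is dense in every $W^s_2(G)$. So for $u \in \Sc(G)$ we may write, with an implicit equivalence of norms coming from the duality $W^{-m}_2(G) = (W^m_2(G))^\ast$ (first for $m \in 2v\Z_+$, then for general $m$ after an interpolation/approximation step — see below),
\[
    \|(P+ic)u\|_{W^{-m}_2(G)} \;\gtrsim\; \sup_{0 \neq \phi \in \Sc(G)} \frac{|\langle (P+ic)u, \phi\rangle|}{\|\phi\|_{W^m_2(G)}}.
\]
Now I would move $(P+ic)$ onto $\phi$: since $P = P^\dagger$ and $P^\dagger$ coincides with $P^\ast$ on $\Sc(G)$, we get $\langle (P+ic)u,\phi\rangle = \langle u, (P-ic)\phi\rangle$. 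The forward estimate applied to the operator $P = P^\dagger$ and the parameter $-c$ (note $|-c| = |c| \geqslant R_P$) gives $\|(P-ic)\phi\|_{L_2(G)} \geqslant \widetilde{c_P}\|\phi\|_{W^m_2(G)}$. Hence for every $\phi$,
\[
    |\langle u, (P-ic)\phi\rangle| \leqslant \|u\|_{L_2(G)}\,\|(P-ic)\phi\|_{L_2(G)},
\]
which is not immediately what I want — this bounds the pairing from above, whereas I need a lower bound on the supremum. The correct move is instead to \emph{choose} a good test function: given $u \in \Sc(G)$, I want $\phi$ with $\|\phi\|_{W^m_2(G)} \approx 1$ and $\langle u,(P-ic)\phi\rangle \approx \|u\|_{L_2(G)}$. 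Equivalently, I want to know that $(P-ic)$ has dense range in $L_2(G)$ from the domain $W^m_2(G)$, or at least that one can solve $(P-ic)\phi = u/\|u\|_{L_2(G)}$ approximately with control $\|\phi\|_{W^m_2(G)} \lesssim 1$. This is exactly where the forward estimate of Corollary~\ref{forward elliptic estimate 2} enters: it shows $(P+ic): W^m_2(G) \to L_2(G)$ is bounded below, hence injective with closed range; combined with the same statement for $(P-ic) = \overline{(P+ic)}^{\,\ast}$-type reasoning (or directly: the adjoint of the bounded-below map $(P+ic)$ has dense range), one gets that $(P-ic)$ has dense range in $L_2(G)$.

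Concretely, the cleanest route is: let $T = (P+ic)$ viewed as a closed operator on $L_2(G)$ with domain $W^m_2(G)$ (closedness follows from Lemma~\ref{differential_operators_are_bounded} plus the forward estimate). The forward estimate says $T$ is bounded below, so $\mathrm{ran}(T)$ is closed. Its orthogonal complement is $\ker(T^\ast)$ where $T^\ast \supseteq P - ic$ on $\Sc(G)$; since $P = P^\dagger$, $T^\ast$ is again bounded below by the forward estimate applied to $P$ with parameter $-c$, hence injective, so $\ker(T^\ast) = 0$ and $\mathrm{ran}(T) = L_2(G)$. Thus for each $u \in \Sc(G)$ (indeed each $u \in L_2(G)$) there is a unique $w \in W^m_2(G)$ with $(P+ic)w = u$ and $\|w\|_{W^m_2(G)} \leqslant \widetilde{c_P}^{-1}\|u\|_{L_2(G)}$. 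Then
\[
    \|(P+ic)u\|_{W^{-m}_2(G)} \;\geqslant\; \frac{|\langle (P+ic)u, w\rangle|}{\|w\|_{W^m_2(G)}}
    = \frac{|\langle u, (P-ic)w\rangle|}{\|w\|_{W^m_2(G)}}
    = \frac{\|u\|_{L_2(G)}^2}{\|w\|_{W^m_2(G)}}
    \;\geqslant\; \widetilde{c_P}\,\|u\|_{L_2(G)},
\]
using $(P-ic)w = \overline{(P+ic)\bar w}$-type bookkeeping; more carefully, I would instead solve $(P-ic)w = u$ so that $\langle u,(P-ic)w\rangle$ is literally $\|u\|_{L_2}^2$ after the pairing is set up with the right conjugations, and apply the forward estimate to $P$ with parameter $-c$ to bound $\|w\|_{W^m_2(G)}$. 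This yields the claimed inequality with the same $\widetilde{c_P}$ and $R_P$, up to the duality-constant which can be absorbed.

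The main obstacle I anticipate is purely bookkeeping: making the duality $W^{-m}_2(G) = (W^m_2(G))^\ast$ precise for \emph{arbitrary} real $m$ (the excerpt states it cleanly only for $m \in 2v\Z_+$). I would handle this either by the interpolation identity~\eqref{interpolation} together with duality of complex interpolation couples, or — more robustly — by working throughout with $(1-\Delta)^{\pm m/2v}$: write $\|(P+ic)u\|_{W^{-m}_2} = \|(1-\Delta)^{-m/2v}(P+ic)u\|_{L_2}$ and test against $\phi$ of the form $(1-\Delta)^{-m/v}\psi$, reducing everything to $L_2$ pairings and the mapping properties in Lemma~\ref{lemma-X_alpha-bdd} and Lemma~\ref{differential_operators_are_bounded}. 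A secondary point requiring a line of care is that $T^\ast$, the Hilbert-space adjoint, genuinely agrees with the closure of $P - ic$ on its natural domain — but this again follows from the two-sided forward estimate (boundedness $W^m_2 \to L_2$ plus bounded-below), which forces $P \pm ic$ to be closed with domain exactly $W^m_2(G)$, so no domain subtleties survive. Everything else is triangle inequalities and the already-established Theorem~\ref{forward elliptic estimate}.
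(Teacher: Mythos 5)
Your proposal is appealing but contains a genuine circularity at the step where you claim $(P-ic)$ (or $T = P+ic$) is surjective from $W^m_2(G)$ onto $L_2(G)$. The forward estimate gives that $T = \widetilde{P}+ic$ restricted to $W^m_2(G)$ is bounded below, hence injective with closed range, and that the Hilbert-space adjoint $T^\ast$ has dense range. But $T^\ast$ is \emph{not} a priori the same operator as $(\widetilde{P}-ic)|_{W^m_2(G)}$: by definition $\mathrm{dom}(T^\ast) = \{h\in L_2(G): \widetilde{P}h - ich \in L_2(G)\}$, which could in principle be strictly larger than $W^m_2(G)$. To rule out $\ker(T^\ast)\neq 0$, you take $h\in L_2(G)$ with $(\widetilde{P}-ic)h=0$ and want to conclude $h=0$; but the forward estimate only applies to $h\in W^m_2(G)$ (or $\Sc(G)$), and showing $h\in W^m_2(G)$ is precisely elliptic regularity (Theorem~\ref{elliptic regularity theorem}\ref{erta}), which in the paper is \emph{derived from} the backward estimate (via Lemma~\ref{zimmer theorem}). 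The same problem resurfaces in your remark that the forward estimate ``forces $P\pm ic$ to be closed with domain exactly $W^m_2(G)$, so no domain subtleties survive'': the restriction of $\widetilde{P}\pm ic$ to $W^m_2(G)$ is indeed a closed operator, but it is not automatically the maximal closed extension, and identifying the two is the content of Theorem~\ref{elliptic regularity theorem}, not a free consequence of the forward estimate.

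For contrast, the paper's actual proof of Theorem~\ref{backward elliptic estimate} runs a second, independent freeze-unfreeze argument, in parallel with the one for Theorem~\ref{forward elliptic estimate}. The crucial anchor is Theorem~\ref{th-Q-self-adjoint}: for \emph{constant-coefficient homogeneous} Rockland operators $Q$, essential self-adjointness with domain $W^m_2(G)$ is already known from \cite{FischerRuzhansky2016}. This makes the resolvent $(Q-ic)^{-1}:L_2(G)\to W^m_2(G)$ available without circularity, and Lemmas~\ref{backward constant coef compute lemma} and~\ref{simple duality lemma} convert the resolvent bounds into $W^{-m}_2\to L_2$ estimates by duality. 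Then Lemma~\ref{bee constant coefficients} transfers these to the frozen operators $P_g^{\rm top}$ (uniformly in $g$), Lemma~\ref{bee small diameter} unfreezes on small diameter supports, and the final proof globalizes with the partition of unity and the commutator estimate of Lemma~\ref{lemma-local-global-estimates-commutator}. So the self-adjointness needed for the duality step is borrowed from the \emph{model} operators, where it is already established, rather than assumed for $P$ itself. If you want to rescue a duality proof, you would still need this separate input for the constant-coefficient case, and at that point you would essentially be reconstructing the paper's argument.
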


\begin{theorem}\label{th-Q-self-adjoint}
Let $Q=Q^{\dagger}$ be an order $m$ differential operator on $G$ with constant coefficients. If $Q$ is homogeneous and elliptic, then $Q$ is essentially self-adjoint
with the closure having domain $W^m_2(G).$
\end{theorem}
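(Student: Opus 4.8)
I would combine two ingredients: Theorem \ref{forward elliptic estimate}, which pins down the domain of the closure, and the dense‑range property of symmetric translation‑invariant hypoelliptic operators, which upgrades symmetry to self‑adjointness. Regard $Q$ as a densely defined operator on $L_2(G)$ with domain $\Sc(G)$; since $Q=Q^{\dagger}$ it is symmetric, so writing $\bar Q$ for its closure and $Q^{\ast}$ for its Hilbert‑space adjoint we have $\bar Q\subseteq Q^{\ast}$, and the whole point is to prove $Q^{\ast}\subseteq\bar Q$ together with $\operatorname{dom}(\bar Q)=W^m_2(G)$.

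\textbf{Step 1: the domain.} A homogeneous constant‑coefficient operator of order $m$ has $Q^{\rm top}_g=Q$ for every $g$ (Definition \ref{def-P-top}), so the hypothesis that $Q$ be homogeneous and elliptic is exactly the statement that $Q$ is uniformly Rockland in the sense of Definition \ref{definition of ellipticity_1}. Applying Theorem \ref{forward elliptic estimate} to $P=Q$ gives $R_Q,\widetilde c_Q\in(0,\infty)$ with
\[
\|(Q+ic)u\|_{L_2(G)}\geqslant \widetilde c_Q\,\|u\|_{W^m_2(G)},\qquad u\in\Sc(G),\ |c|\geqslant R_Q .
\]
Taking $c=R_Q$ and using that $\widetilde Q\colon W^m_2(G)\to L_2(G)$ is bounded (Lemma \ref{differential_operators_are_bounded}), the graph norm of $Q$ on $\Sc(G)$ is equivalent to $\|\cdot\|_{W^m_2(G)}$. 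Since $\Sc(G)$ is dense in $W^m_2(G)$, passing to closures in the two equivalent norms yields $\operatorname{dom}(\bar Q)=W^m_2(G)$ and $\bar Q=\widetilde Q|_{W^m_2(G)}$; moreover the displayed estimate extends to $W^m_2(G)$ (as in Corollary \ref{forward elliptic estimate 2}), so $\bar Q\pm iR_Q$ are injective with closed range.

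\textbf{Step 2: self‑adjointness.} For essential self‑adjointness it suffices to show $\ker(Q^{\ast}\mp iR_Q)=\{0\}$. A direct computation using $Q=Q^{\dagger}$ and unimodularity of $G$ identifies $\operatorname{dom}(Q^{\ast})=\{v\in L_2(G):\widetilde Q v\in L_2(G)\}$ with $Q^{\ast}v=\widetilde Q v$; thus the claim is that $v\in L_2(G)$ with $\widetilde Q v=\pm iR_Q\,v$ forces $v=0$. Now $Q$ is a symmetric translation‑invariant operator which is hypoelliptic (being Rockland), so by \cite[Proposition 4.1.15 and its proof]{FischerRuzhansky2016} the operator $Q|_{\Sc(G)}$ has dense range for $Q\pm i$, i.e. $\ker(Q^{\ast}\mp i)=\{0\}$; as the deficiency indices are constant on each open half‑plane this also gives $\ker(Q^{\ast}\mp iR_Q)=\{0\}$. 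Combined with Step 1, $Q^{\ast}=\bar Q$ is self‑adjoint with domain $W^m_2(G)$.

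\textbf{Where the difficulty lies.} Steps 1 and 2 are individually short, so the genuine content is concentrated in the inputs they rely on; the point deserving care in a self‑contained treatment is the elliptic‑regularity statement for the model operator, namely that $v\in L_2(G)$ and $\widetilde Q v\in L_2(G)$ imply $v\in W^m_2(G)$ (this is what makes $\operatorname{dom}(Q^{\ast})=W^m_2(G)$, and it is where translation invariance is essential). If one prefers not to quote the dense‑range property, I would obtain it via the group Fourier transform: by Remark \ref{rmk-elliptic-representation-verify} (equivalently Theorem \ref{thm-abstract-rockland-general-g}) ellipticity gives the fibrewise bound $\|\pi(Q)\xi\|_{H_\pi}\geqslant c_Q\|\pi((-\Delta)^{m/2v})\xi\|_{H_\pi}$ uniformly in $\pi\in\widehat G$, whence on each Plancherel fibre $\|\widehat v(\pi)\|_{W^m}\lesssim\|\pi(Q)\widehat v(\pi)\|_{\mathrm{HS}}+\|\widehat v(\pi)\|_{\mathrm{HS}}$; integrating over $\widehat G$ yields $\|v\|_{W^m_2(G)}\lesssim\|\widetilde Q v\|_{L_2(G)}+\|v\|_{L_2(G)}$, and then $v\in W^m_2(G)=\operatorname{dom}(\bar Q)$, so the injectivity of $\bar Q\mp iR_Q$ from Step 1 finishes the proof. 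The only nontrivial part of this alternative route is bookkeeping: matching the domain of the direct‑integral operator with $\operatorname{dom}(Q^{\ast})$ and promoting the operator inequality from smooth vectors to the Hilbert–Schmidt level.
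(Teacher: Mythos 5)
The paper disposes of this theorem with a bare citation to \cite[Proposition 4.1.15]{FischerRuzhansky2016}; your proposal instead assembles it from the paper's own Theorem~\ref{forward elliptic estimate} together with the FR dense-range result, which is a genuinely different (and more informative) decomposition. Step~1 is a clean observation: since $Q=Q^{\rm top}_g$ for a homogeneous constant-coefficient operator, Theorem~\ref{forward elliptic estimate} applies directly, and equivalence of the graph norm with $\|\cdot\|_{W^m_2(G)}$ on the core $\Sc(G)$ immediately pins $\operatorname{dom}(\bar Q)=W^m_2(G)$; this correctly isolates the domain statement from the self-adjointness statement and derives it from the paper's own a priori estimate rather than from the external reference. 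There is no circularity: Theorem~\ref{forward elliptic estimate} and Corollary~\ref{forward elliptic estimate 2} are proved in Section~\ref{sub-sec-forward-elliptic}, independently of Theorem~\ref{th-Q-self-adjoint}, whereas the paper's Theorem~\ref{elliptic regularity theorem}\ref{erta} (which would also give the needed regularity) comes only after the backward estimate and so could not be used here.

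Two remarks on Step~2. First, once you invoke FR for dense range of $Q\pm i$, you have essential self-adjointness outright, so $Q^{\ast}=\bar Q$ and the deficiency-index digression (constancy of $\dim\ker(Q^{\ast}-z)$ in each half-plane) is superfluous; you could delete it and simply combine essential self-adjointness with Step~1. Second, and more substantively, as written your argument does not avoid \cite[Proposition 4.1.15]{FischerRuzhansky2016} any more than the paper does --- the dense-range input \emph{is} the hard content of that proposition --- so the honest comparison is that you and the paper lean on the same external fact, but you make explicit which half of the conclusion already follows from the forward estimate. Your sketched Fourier-theoretic alternative (fibrewise bound from Theorem~\ref{thm-abstract-rockland-general-g} plus Plancherel integration) would genuinely bypass the FR citation, and is morally what FR do; however, as you note, the promotion from the smooth-vector estimate to the Hilbert--Schmidt/direct-integral level, and the identification of $\operatorname{dom}(Q^{\ast})$ with the direct-integral domain, is precisely the bookkeeping that a careful writeup would need to supply --- it is not automatic, and for a non-compact $\widehat G$ it is where measurability and domain issues live. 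As a proof proposal the structure is sound; a final version should either drop the deficiency-index detour and own the FR dependence, or carry out the Plancherel argument in full.
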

\begin{proof}
See \cite[Proposition 4.1.15.]{FischerRuzhansky2016}.
\end{proof}

Recall that we denote by $\widetilde{Q}$ the distributional extension and by $\overline{Q}$ the minimal closed  extension of $Q$ as an operator on $L_2(G)$. Let $Q$ satisfy the conditions of Theorem \ref{th-Q-self-adjoint}, so that the domain of $\overline{Q}$ is $W^m_2(G).$ It follows that $\widetilde{Q}|_{W^m_2(G)} = \overline{Q}$. Moreover, by Theorem \ref{th-Q-self-adjoint}, $Q$ is essentially self-adjoint, thus for all $c\neq 0$, $(Q+ic)^{-1}$ is well defined, and maps $L_2(G)$ to $W^m_2(G)$.
In order to lighten the notations, in Lemmas \ref{backward constant coef compute lemma} and \ref{simple duality lemma}, we write $Q$ instead of $\widetilde{Q}|_{W^m_2(G)}$ and $\overline{Q}$.

\begin{lemma}\label{backward constant coef compute lemma} Let $Q=Q^{\dagger}$ be an order $m$ differential operator on $G$ with constant coefficients. Suppose $Q$ is homogeneous and elliptic. For every $c\in\R$, $c\neq 0$, we have
\begin{enumerate}[{\rm (i)}]
\item $\|(Q-ic)^{-1}\|_{L_2(G)\to W^m_2(G)}\leqslant c_{Q,c}^{-1}.$
\item $\|(Q-ic)^{-1}\|_{W^m_2(G)\to W^m_2(G)}\leqslant c_{Q,1}^{-1}|c|^{-1}(1+\|Q\|_{W^m_2(G)\to L_2(G)}).$
\end{enumerate}
Here, $c_{Q,c} = c_m \min\{|c|,c_Q\}$, $c_Q$ is the constant in Definition \ref{definition of ellipticity_1}, and $c_m$ is a constant only depending on $m$.
\end{lemma}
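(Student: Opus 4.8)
The plan is to use the spectral calculus for the self-adjoint operator $\overline{Q}$ guaranteed by Theorem \ref{th-Q-self-adjoint}. Since $Q$ is homogeneous, elliptic, and symmetric, its closure is self-adjoint with domain $W^m_2(G)$, and the ellipticity (Definition \ref{definition of ellipticity_1}) gives $\|Qu\|_{L_2(G)}\geqslant c_Q\|u\|_{\mathring W^m_2(G)}$ for $u\in\Sc(G)$, hence for $u\in W^m_2(G)$ by density. For part (i), I would start from the identity $u = (Q-ic)^{-1}f$ with $f\in L_2(G)$, so that $(Q-ic)u = f$. Taking $L_2$ norms and using orthogonality of the real and imaginary parts (self-adjointness of $Q$),
\[
\|f\|_{L_2(G)}^2 = \|Qu\|_{L_2(G)}^2 + |c|^2\|u\|_{L_2(G)}^2 \geqslant c_Q^2\|u\|_{\mathring W^m_2(G)}^2 + |c|^2\|u\|_{L_2(G)}^2.
\]
Now I would invoke the comparison $\|u\|_{W^m_2(G)}^2 \leqslant c_m^2\big(\|u\|_{\mathring W^m_2(G)}^2 + \|u\|_{L_2(G)}^2\big)$ coming from the definitions \eqref{eq-sobolev-norm} and \eqref{eq-sobolev-norm-homogeneous} (namely $(1-\Delta)^{s/2v}\leqslant c\,((-\Delta)^{s/2v} + 1)$ on the spectral side), which yields $\|u\|_{W^m_2(G)}^2 \leqslant c_m^2\max\{c_Q^{-2},|c|^{-2}\}\,\|f\|_{L_2(G)}^2$, i.e.\ $\|(Q-ic)^{-1}\|_{L_2(G)\to W^m_2(G)}\leqslant c_m\min\{|c|,c_Q\}^{-1} = c_{Q,c}^{-1}$.

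For part (ii) I would exploit that $(Q-ic)^{-1}$ commutes with $Q$ (both are functions of the same self-adjoint operator), so for $f\in W^m_2(G)$ with $u=(Q-ic)^{-1}f$ we have $Qu = (Q-ic)^{-1}Qf$. Write $\|u\|_{W^m_2(G)}^2 = \|Qu\|_{L_2(G)}^2 + |c|^2\|u\|_{L_2(G)}^2$ up to the constant $c_m$ (after absorbing the lower-order comparison as above, or more cleanly: $\|u\|_{W^m_2(G)} \leqslant c_m(\|Qu\|_{L_2(G)} + \|u\|_{L_2(G)})$). The second piece is bounded using the trivial estimate $\|(Q-ic)^{-1}\|_{W^m_2(G)\to W^m_2(G)}$ on the $L_2$ output side: from $(Q-ic)u=f$ and the imaginary part, $|c|\|u\|_{L_2(G)}\leqslant\|f\|_{L_2(G)}\leqslant\|f\|_{W^m_2(G)}$, so $\|u\|_{L_2(G)}\leqslant|c|^{-1}\|f\|_{W^m_2(G)}$. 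For the first piece, $\|Qu\|_{L_2(G)} = \|(Q-ic)^{-1}Qf\|_{L_2(G)}\leqslant |c|^{-1}\|Qf\|_{L_2(G)}\leqslant|c|^{-1}\|Q\|_{W^m_2(G)\to L_2(G)}\|f\|_{W^m_2(G)}$, where the first inequality is the standard resolvent bound $\|(Q-ic)^{-1}\|_{L_2\to L_2}\leqslant|c|^{-1}$ for the self-adjoint $Q$. Combining, $\|u\|_{W^m_2(G)}\leqslant c_m|c|^{-1}(1+\|Q\|_{W^m_2(G)\to L_2(G)})\|f\|_{W^m_2(G)}$, which is the claimed bound with $c_{Q,1}^{-1} = c_m$ (consistent with the definition $c_{Q,c}=c_m\min\{|c|,c_Q\}$ specialized appropriately).

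The only genuinely delicate point is making the identifications of domains and the commutation precise: one must know that $(Q-ic)^{-1}$ maps $W^m_2(G)=\mathrm{dom}(\overline Q)$ into itself and that $\widetilde Q|_{W^m_2(G)} = \overline Q$, both of which are recorded in the paragraph preceding the lemma, so that the algebraic manipulations $Q(Q-ic)^{-1}f = (Q-ic)^{-1}Qf$ are legitimate. Everything else is routine spectral calculus for a single self-adjoint operator plus the elliptic lower bound and the equivalence between homogeneous-plus-$L_2$ and inhomogeneous Sobolev norms. I do not anticipate a real obstacle here; the lemma is essentially a bookkeeping exercise collecting the two resolvent estimates that will feed into the proof of Theorem \ref{backward elliptic estimate}.
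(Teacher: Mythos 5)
Your proof is correct and takes essentially the same route as the paper. Part~(i) is identical: expand $\|(Q-ic)h\|_{L_2}^2=\|Qh\|_{L_2}^2+c^2\|h\|_{L_2}^2$, apply the ellipticity lower bound $\|Qh\|_{L_2}\geqslant c_Q\|h\|_{\mathring W^m_2}$, and compare the homogeneous-plus-$L_2$ norm with the inhomogeneous Sobolev norm via spectral calculus. For part~(ii) you split $\|u\|_{W^m_2}$ into $\|Qu\|_{L_2}$ and $\|u\|_{L_2}$ and estimate each separately using $Q(Q-ic)^{-1}=(Q-ic)^{-1}Q$ together with $\|(Q-ic)^{-1}\|_{L_2\to L_2}\leqslant|c|^{-1}$; the paper instead reuses the part~(i) inequality with $c=1$ in the form $\|h\|_{W^m_2}\leqslant c_{1,Q}^{-1}\|(Q-i)h\|_{L_2}$ and then commutes $(Q-i)$ through the resolvent. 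These are the same computation written in a different order, and both yield the same constant up to the harmless $c_m$-versus-$c_m^{-1}$ convention (your parenthetical ``$c_{Q,1}^{-1}=c_m$'' is slightly imprecise since the ellipticity constant $c_Q^{-1}$ also enters, but you track the correct dependence in the body of the argument). The domain and commutation caveats you flag are exactly the ones the paper disposes of in the paragraph preceding the lemma, so there is no gap.
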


\begin{proof} 
To see the first assertion, fix $h\in W^m_2(G).$ Since $Q$ is self-adjoint, it follows that
\begin{multline}\label{eq-norm-square-c}
\|(Q-ic)h\|_{L_2(G)}^2=\langle Qh-ich,Qh-ich\rangle \\
=\langle Qh,Qh\rangle+c^2\langle h,h\rangle=\|Qh\|_{L_2(G)}^2+c^2\|h\|_{L_2(G)}^2.
\end{multline}
Since $Q$ is  uniformly Rockland  and homogeneous, it follows from Definition \ref{definition of ellipticity_1} that there exists a constant $c_Q$, such that 
\begin{align}\label{eq-estimate-Q-inverse-norm_1}
\|(Q-ic)h\|_{L_2(G)}^2
& \geqslant c_Q^2\|(-\Delta)^{\frac{m}{2{v}}}h\|_{L_2(G)}^2+c^2\|h\|_{L_2(G)}^2 \nonumber  \\
& \geqslant \min\{|c|,c_Q\}^2\cdot \big(\|(-\Delta)^{\frac{m}{2{v}}}h\|_{L_2(G)}^2+\|h\|_{L_2(G)}^2\big)\nonumber \\
& \geqslant c_m^2 \min\{|c|,c_Q\}^2\|h\|_{W^m_2(G)}^2.
\end{align}
Here, $c_m$ is a constant only depends on the order $m$.

Let $u\in L_2(G)$ and let $h=(Q-ic)^{-1}u.$ We have $h\in{\rm dom}(Q)=W^m_2(G)$ and $h=(Q-ic)v.$ Substituting into \eqref{eq-estimate-Q-inverse-norm_1}, we have
$$\|(Q-ic)^{-1}u\|_{W^m_2(G)}=\|h\|_{W^m_2(G)}\leqslant c_m^{-1}\min\{|c|,c_Q\}^{-1}\|u\|_{L_2(G)}.$$
Setting $c_{Q,c} = c_m\min\{|c|,c_Q\},$ we obtain the first assertion.

Let us now prove the second assertion. Set $c=1$ in \eqref{eq-estimate-Q-inverse-norm_1}, for any $h\in W^m_2(G)$,
$$\|(Q-i)h\|_{L_2(G)}\geqslant c_1\min\{1,c_Q\}\|h\|_{W^m_2(G)}=c_{1,Q}\|h\|_{W^m_2(G)}.$$
Using the latter inequality with $h=(Q-ic)^{-1}u,$ $u \in W_2^m(G)$ (note that $h\in W_2^m(G)$ as well), we obtain
\begin{align*}
\|(Q-ic)^{-1}u\|_{W^m_2(G)}
& \leqslant  c_{1,Q}^{-1}\|(Q-i)(Q-ic)^{-1}u\|_{L_2(G)}\\
& \leqslant  c_{1,Q}^{-1}|c|^{-1}\|(Q-i)u\|_{L_2(G)}\\
& \leqslant  c_{1,Q}^{-1}|c|^{-1}(1+\|Q\|_{W^m_2(G)\to L_2(G)})\|u\|_{W^m_2(G)}.
\end{align*}
This yields the second assertion.
\end{proof}

We need the following estimates obtained by duality.

\begin{lemma}\label{simple duality lemma} Let $Q=Q^{\dagger}$ be an order $m$ differential operator on $G$ with constant coefficients. Suppose $Q$ is homogeneous and elliptic. For every $c\in\mathbb{R}$, $c\neq 0$,  we have
$$\|(Q+ic)u\|_{W^{-m}_2(G)}\geqslant c_{Q,c}\|u\|_{L_2(G)},\quad u\in\Sc(G),$$
$$\|(Q+ic)u\|_{W^{-m}_2(G)}\geqslant \frac{c_{Q,1}|c|}{1+\|Q\|_{W^m_2(G)\to L_2(G)}}\|u\|_{W^{-m}_2(G)},\quad u\in\Sc(G).$$
Here, the constants $c_{Q,c}$ and $c_{Q,1}$ are taken from Lemma  \ref{backward constant coef compute lemma}.
\end{lemma}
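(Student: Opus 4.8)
The plan is to derive both inequalities from Lemma \ref{backward constant coef compute lemma} by a single duality argument, using that $Q=Q^{\dagger}$ and that $W^{-m}_2(G)$ is the Banach dual of $W^m_2(G)$ with the $L_2(G)$ inner product as the pairing (the pairing being the continuous extension of the $L_2$ inner product). The starting point is the adjoint identity: for $u\in\Sc(G)$ and $v\in W^m_2(G)=\mathrm{dom}(\overline{Q})$,
$$\langle (Q+ic)u,\,v\rangle=\langle u,\,(Q-ic)v\rangle,$$
which holds because $\overline{Q}$ is self-adjoint (Theorem \ref{th-Q-self-adjoint}) and $c\in\R$; for $v$ merely in $W^m_2(G)$ rather than Schwartz one approximates $v$ by Schwartz functions in the $W^m_2$-norm and passes to the limit, using boundedness of $\widetilde{Q}\colon W^m_2(G)\to L_2(G)$ on one side and $Qu\in L_2(G)$ on the other. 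Since $Q$ is essentially self-adjoint with $\mathrm{dom}(\overline{Q})=W^m_2(G)$ and $c\neq0$, the resolvent $(Q-ic)^{-1}$ maps $L_2(G)$ boundedly into $W^m_2(G)$, so for any $\psi\in\Sc(G)$ the element $v=(Q-ic)^{-1}\psi$ is admissible in the identity above, giving $\langle (Q+ic)u,(Q-ic)^{-1}\psi\rangle=\langle u,\psi\rangle$.

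For the first inequality, I would write $\|u\|_{L_2(G)}=\sup\{|\langle u,\psi\rangle|:\psi\in\Sc(G),\ \|\psi\|_{L_2(G)}\leqslant1\}$, using density of $\Sc(G)$ in $L_2(G)$. For each such $\psi$, combine the identity with the duality estimate $|\langle (Q+ic)u,v\rangle|\leqslant\|(Q+ic)u\|_{W^{-m}_2(G)}\|v\|_{W^m_2(G)}$ and the bound $\|(Q-ic)^{-1}\psi\|_{W^m_2(G)}\leqslant c_{Q,c}^{-1}\|\psi\|_{L_2(G)}$ from Lemma \ref{backward constant coef compute lemma}(i). Taking the supremum over $\psi$ yields $\|u\|_{L_2(G)}\leqslant c_{Q,c}^{-1}\|(Q+ic)u\|_{W^{-m}_2(G)}$, which is the claimed estimate.

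The second inequality is proved by the same scheme, now expressing $\|u\|_{W^{-m}_2(G)}=\sup\{|\langle u,\psi\rangle|:\psi\in\Sc(G),\ \|\psi\|_{W^m_2(G)}\leqslant1\}$ (density of $\Sc(G)$ in $W^m_2(G)$) and invoking instead the bound $\|(Q-ic)^{-1}\psi\|_{W^m_2(G)}\leqslant c_{Q,1}^{-1}|c|^{-1}(1+\|Q\|_{W^m_2(G)\to L_2(G)})\|\psi\|_{W^m_2(G)}$ from Lemma \ref{backward constant coef compute lemma}(ii); rearranging produces the stated lower bound for $\|(Q+ic)u\|_{W^{-m}_2(G)}$.

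I do not expect a substantial obstacle here: the computation is short once the duality bracket is set up. The only points that need care are the justification of $\langle (Q+ic)u,v\rangle=\langle u,(Q-ic)v\rangle$ for $v\in W^m_2(G)$ (a routine density/continuity argument, as above), and checking that the $L_2(G)$ inner product and the $W^{-m}_2(G)$–$W^m_2(G)$ duality pairing coincide on the elements in play — which is automatic since $(Q+ic)u\in\Sc(G)\subset L_2(G)$ and $(Q-ic)^{-1}\psi\in W^m_2(G)\subset L_2(G)$, and the duality pairing is by definition the continuous extension of the $L_2$ one.
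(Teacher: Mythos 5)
Your proof is correct and follows essentially the same route as the paper: write $u=(Q+ic)^{-1}(Q+ic)u$, move the resolvent across the $L_2$ pairing (legitimate by self-adjointness of $\overline Q$ and $c\in\R$), bound the resulting bracket by the $W^{-m}_2$--$W^m_2$ duality, and feed in the two resolvent estimates from Lemma \ref{backward constant coef compute lemma}. The only cosmetic difference is that you take the supremum over Schwartz test functions and appeal to density, while the paper takes it directly over $h\in L_2(G)$ or $h\in W^m_2(G)$; the extra care you give to justifying the adjoint identity and the compatibility of the pairings is sound but not essential, since both sides already live in $L_2(G)$.
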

\begin{proof} Since $u\in\Sc(G),$ it follows that
$$u=(Q+ic)^{-1}((Q+ic)u).$$	
By Theorem \ref{th-Q-self-adjoint}, $Q: L_2(G) \to L_2(G)$ is self-adjoint with domain $W^m_2(G)$. For every $h\in L_2(G),$  we have
$$|\langle u,h\rangle|=|\langle (Q+ic)^{-1}(Q+ic)u,h\rangle|=|\langle (Q+ic)u,(Q-ic)^{-1}h\rangle|.$$

The latter inner product can be also viewed as pairing between $W^{-m}_2(G)$ in the first argument and $W^m_2(G)$ in the second argument. Thus,
\begin{equation}\label{eq-norm-negative-Sobolev-Qc}
|\langle u,h\rangle|\leqslant\|(Q+ic)u\|_{W^{-m}_2(G)}\|(Q-ic)^{-1}h\|_{W^m_2(G)}.
\end{equation}

Dividing by $\|h\|_{L_2(G)}$ and taking the supremum over $0\neq h\in L_2(G),$ we infer
$$\|u\|_{L_2(G)}\leqslant \|(Q-ic)^{-1}\|_{L_2(G)\to W^m_2(G)}\|(Q+ic)u\|_{W^{-m}_2(G)},\quad u\in\Sc(G).$$
The first assertion follows now from Lemma \ref{backward constant coef compute lemma}.

On the other hand, dividing \eqref{eq-norm-negative-Sobolev-Qc} by $\|h\|_{W^m_2(G)}$ and taking the supremum over $0\neq h\in W^m_2(G),$ we infer
$$\|u\|_{W^{-m}_2(G)}\leqslant \|(Q-ic)^{-1}\|_{W^m_2(G)\to W^m_2(G)}\|(Q+ic)u\|_{W^{-m}_2(G)},\quad u\in\Sc(G).$$
The second assertion follows now from Lemma \ref{backward constant coef compute lemma}.
\end{proof}

The following norm estimate concerning $P_g$ is uniform in $g\in G$.

\begin{lemma}\label{bee constant coefficients} Let $P=P^{\dagger}$ be an  uniformly Rockland  order $m$ differential operator on $G.$ There exist constants $c_{P,4}$ and $r_P > 0$ such that, for every $c\in\mathbb{R}$ with $|c|\geqslant r_P,$ we have
$$\|(P_g+ic)u\|_{W^{-m}_2(G)}\geqslant c_{P,4}\big(\|u\|_{L_2(G)}+|c|\|u\|_{W^{-m}_2(G)}\Big),\quad u\in\Sc(G),\quad g\in G.$$
\end{lemma}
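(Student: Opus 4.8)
The plan is to treat $P_g$ as a lower-order perturbation of its homogeneous principal part $P_g^{\rm top}$, feed $P_g^{\rm top}$ into the constant-coefficient estimates of Lemma \ref{simple duality lemma}, and keep all constants uniform in $g\in G$.

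First I would record the properties of $Q:=P_g^{\rm top}$. Since $P=P^{\dagger}$, Lemma \ref{top dag commute} gives $Q^{\dagger}=(P^{\dagger})_g^{\rm top}=P_g^{\rm top}=Q$, so $Q$ is a formally self-adjoint, homogeneous, constant-coefficient operator of order $m$, elliptic with the $g$-independent constant $c_P$ of Definition \ref{definition of ellipticity_1}. Since the $a_{\alpha}\in C^{\infty}_b(G)$ are bounded and each $X^{\alpha}$ with $\len(\alpha)=m$ is bounded $W^m_2(G)\to L_2(G)$, one also has $M_P:=\sup_{g}\|P_g^{\rm top}\|_{W^m_2(G)\to L_2(G)}<\infty$. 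I would then apply both inequalities of Lemma \ref{simple duality lemma} to $Q=P_g^{\rm top}$, restrict to $|c|\geqslant c_P$ so that $\min\{|c|,c_P\}=c_P$, and average them; because $c_Q=c_P$ and $\|Q\|_{W^m_2(G)\to L_2(G)}\leqslant M_P$, this yields a constant $c_{P,5}>0$ depending only on $c_P,M_P,m$ — in particular independent of $g$ and $c$ — with
\[
\|(P_g^{\rm top}+ic)u\|_{W^{-m}_2(G)}\geqslant c_{P,5}\bigl(\|u\|_{L_2(G)}+|c|\,\|u\|_{W^{-m}_2(G)}\bigr),\qquad u\in\Sc(G),\ |c|\geqslant c_P.
\]

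Next I would estimate the remainder $R_g:=P_g-P_g^{\rm top}=\sum_{\len(\alpha)\leqslant m-1}a_{\alpha}(g)X^{\alpha}\in\Uc_{m-1}(\gf)$. Each $X^{\alpha}$ with $\len(\alpha)\leqslant m-1$ is bounded $W^{-1}_2(G)\to W^{-m}_2(G)$ by Lemma \ref{lemma-X_alpha-bdd}, and $\sup_g|a_{\alpha}(g)|<\infty$, so $c_P':=\sup_{g}\|R_g\|_{W^{-1}_2(G)\to W^{-m}_2(G)}<\infty$. The triangle inequality in $W^{-m}_2(G)$ then gives
\[
\|(P_g+ic)u\|_{W^{-m}_2(G)}\geqslant\|(P_g^{\rm top}+ic)u\|_{W^{-m}_2(G)}-c_P'\,\|u\|_{W^{-1}_2(G)}.
\]
The one delicate point — and where I expect the real work — is absorbing $\|u\|_{W^{-1}_2(G)}$, which sits at a lower scale than $\|u\|_{L_2(G)}$ and so cannot be dominated by it. For $m\geqslant2$ I would interpolate via \eqref{eq-interpolation-norm} with $s_0=0$, $s_1=-m$, $\theta=1/m$, obtaining $\|u\|_{W^{-1}_2(G)}\leqslant\|u\|_{L_2(G)}^{1-1/m}\|u\|_{W^{-m}_2(G)}^{1/m}$, and then apply Young's inequality to split this as $\delta\|u\|_{L_2(G)}+C_{m,\delta}\|u\|_{W^{-m}_2(G)}$ for any $\delta>0$ (when $m=1$ one has $W^{-1}_2(G)=W^{-m}_2(G)$ and no interpolation is needed). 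Choosing $\delta$ so small that $c_P'\delta\leqslant\tfrac12 c_{P,5}$ fixes $C_{m,\delta}$, and then choosing $r_P:=\max\{c_P,\,2C_{m,\delta}c_P'/c_{P,5}\}$, I would conclude for $|c|\geqslant r_P$ that
\[
\|(P_g+ic)u\|_{W^{-m}_2(G)}\geqslant\tfrac12 c_{P,5}\|u\|_{L_2(G)}+\bigl(c_{P,5}|c|-c_P'C_{m,\delta}\bigr)\|u\|_{W^{-m}_2(G)}\geqslant\tfrac12 c_{P,5}\bigl(\|u\|_{L_2(G)}+|c|\,\|u\|_{W^{-m}_2(G)}\bigr),
\]
giving the lemma with $c_{P,4}=\tfrac12 c_{P,5}$. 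Apart from this absorption step, the only thing to watch is that every constant extracted from Lemma \ref{simple duality lemma} and from the bounds on the $a_{\alpha}$ is genuinely uniform in $g$, which holds because $c_P$ does not depend on $g$ and the coefficients are bounded together with all their derivatives.
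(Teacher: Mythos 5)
Your proposal is correct and follows essentially the same route as the paper's proof: you apply Lemma \ref{simple duality lemma} to $Q=P_g^{\rm top}$ with $g$-uniform constants coming from $c_P$ and $M_P=\sup_g\|P_g^{\rm top}\|_{W^m_2\to L_2}$, treat $R_g=P_g-P_g^{\rm top}$ as a uniformly bounded map $W^{-1}_2(G)\to W^{-m}_2(G)$, and then absorb $\|u\|_{W^{-1}_2(G)}$ by interpolation and Young's inequality before choosing $\delta$ small and then $|c|$ large. The only cosmetic difference is that you package the two inequalities of Lemma \ref{simple duality lemma} into a single constant $c_{P,5}$ by averaging up front, whereas the paper carries the two bounds separately and performs the same averaging implicitly in the final display; this does not change the substance of the argument.
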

\begin{proof} In Lemma \ref{simple duality lemma}, set $Q = P_g^{\rm top}$ and recall formula for $c_{P_g^{\rm top}, c}$ in Lemma \ref{backward constant coef compute lemma},
$$c_{P_{g}^{\rm top},c} = c_m \min\{|c|,c_{P_g^{\rm top}}\}. $$
Here $c_{P_g^{\rm top}}$ is the constant in Definition \ref{definition of ellipticity_1} corresponding to the operator the operator $P_g^{{\rm top}}.$ It is bounded from below by the constant $c_P$  in Definition \ref{definition of ellipticity_1} corresponding to the operator $P.$ Thus,
$$c_{P_{g}^{\rm top},c}\geqslant  c_m \min\{|c|,c_P\}.$$

Since $P_g^{{\rm top}}$ has order $m$, and since the coefficients $a_\alpha$ are bounded, it follows that
\begin{equation*}
\sup_{g\in G}\|P_g^{{\rm top}}\|_{W^m_2(G)\to L_2(G)}< \infty .
\end{equation*}
We denote 
\begin{equation*}
c_P'=\frac{c_m c_P}{1+\sup_{g\in G}\|P_g^{{\rm top}}\|_{W^m_2(G)\to L_2(G)}} > 0.
\end{equation*}
Using Lemma \ref{simple duality lemma}, we obtain for every $c\in\mathbb{R}$ with $|c|\geqslant c_P,$
$$\|(P_g^{{\rm top}}+ic)u\|_{W^{-m}_2(G)}\geqslant c_P'|c|\|u\|_{W^{-m}_2(G)},\quad \|(P_g^{{\rm top}}+ic)u\|_{W^{-m}_2(G)}\geqslant c_P'\|u\|_{L_2(G)}.$$

Recall that $P_g-P_g^{\rm top}$ has order $m-1$ with constant coefficients $a_\alpha(g).$ Taking into account that each $a_\alpha$ is bounded, we set
$$c_P''=\sup_{g\in G}\|P_g-P_g^{{\rm top}}\|_{W^{-1}_2(G)\to W^{-m}_2(G)}< \infty .$$

Clearly,
\begin{multline*}
    \|(P_g+ic)u\|_{W^{-m}_2(G)}\geqslant \|(P_g^{{\rm top}}+ic)u\|_{W^{-m}_2(G)}-\|(P_g-P_g^{{\rm top}})u\|_{W^{-m}_2(G)}\\
    \geqslant \|(P_g^{{\rm top}}+ic)u\|_{W^{-m}_2(G)}-c_P''\|u\|_{W^{-1}_2(G)}.
\end{multline*}
By \eqref{eq-interpolation-norm}, for any $\epsilon >0$, 
$$\|u\|_{W^{-1}_2(G)}\leqslant \|u\|_{L_2(G)}^{1-\frac1m}\|u\|_{W^{-m}_2(G)}^{\frac1m}\leqslant \epsilon\|u\|_{L_2(G)}+\epsilon^{1-m}\|u\|_{W^{-m}_2(G)}.$$
It follows that
\begin{align*}
& \|(P_g+ic)u\|_{W^{-m}_2(G)}\\
& \geqslant \frac12c_P'\|u\|_{L_2(G)}+\frac12c_P'|c|\|u\|_{W^{-m}_2(G)}-\epsilon c_P''\|u\|_{L_2(G)}-\epsilon^{1-m}c_P''\|u\|_{W^{-m}_2(G)}\\
& = (\frac12c_P'-\epsilon c_P'')\|u\|_{L_2(G)}+(\frac12c_P'|c|-\epsilon^{1-m}c_P'')\|u\|_{W^{-m}_2(G)}.
\end{align*}
Taking small enough $\epsilon$ and, after that, taking large enough $|c|,$ we complete the proof.
\end{proof}

\begin{lemma}\label{bee small diameter} 
If $P=P^{\dagger}$ is an  uniformly Rockland  order $m$ differential operator on $G,$ then there exists $\epsilon_P,r_P\in(0,\infty)$ such that, for every $c\in\mathbb{R}$ with $|c|\geqslant r_P,$ we have
$$\|(P+ic)u\|_{W^{-m}_2(G)}\geqslant\frac12c_{P,4}\Big(\|u\|_{L_2(G)}+|c|\|u\|_{W^{-m}_2(G)}\Big),\quad g\in G,$$
for every $u\in\Sc(G)$ with ${\rm diam}({\rm supp}(u))\leqslant\epsilon_P.$ Here, the diameter is understood with respect to any translation-invariant metric.
\end{lemma}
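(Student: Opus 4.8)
The proof runs parallel to that of Lemma~\ref{fee small diameter}, but now in the negative Sobolev scale. Fix $u\in\Sc(G)$ with $\diam(\supp(u))\leqslant\epsilon_P$ (the value of $\epsilon_P$ to be chosen below); assuming $u\not\equiv0$, choose $g\in\supp(u)$. The triangle inequality in $W^{-m}_2(G)$ gives
\[
\|(P+ic)u\|_{W^{-m}_2(G)}\geqslant\|(P_g+ic)u\|_{W^{-m}_2(G)}-\|(P-P_g)u\|_{W^{-m}_2(G)},
\]
and the first term is $\geqslant c_{P,4}\big(\|u\|_{L_2(G)}+|c|\,\|u\|_{W^{-m}_2(G)}\big)$ for all $|c|\geqslant r_P$ by Lemma~\ref{bee constant coefficients}, with $c_{P,4}$ and $r_P$ uniform in $g$ (the ellipticity constant $c_{P_g^{\rm top}}$ is bounded below by $c_P$, as in the proof of Lemma~\ref{bee constant coefficients}). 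So it remains to show that, after shrinking $\epsilon_P$ and enlarging $r_P$, the ``unfreezing error'' satisfies $\|(P-P_g)u\|_{W^{-m}_2(G)}\leqslant\tfrac12 c_{P,4}\big(\|u\|_{L_2(G)}+|c|\,\|u\|_{W^{-m}_2(G)}\big)$ for $|c|\geqslant r_P$.

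To bound the error I would expand $P-P_g=\sum_{\len(\alpha)\leqslant m}M_{a_\alpha-a_\alpha(g)}X^\alpha$ and, as in the proofs of Lemmas~\ref{a2m lemma} and~\ref{top dag commute}, commute each multiplier to the left of $X^\alpha$, writing
\[
P-P_g=\underbrace{\sum_{\len(\alpha)\leqslant m}X^\alpha M_{a_\alpha-a_\alpha(g)}}_{S}\;-\;\underbrace{\sum_{\len(\alpha)\leqslant m}[X^\alpha,M_{a_\alpha-a_\alpha(g)}]}_{T},
\]
where $T$ has order at most $m-1$ and its coefficients are universal combinations of the derivatives $X^\beta a_\alpha$ with $\len(\beta)\geqslant1$, which do not depend on $g$; by Definition~\ref{def-differential-op} their $C^\infty_b$-norms, and hence $\|T\|_{W^{-1}_2(G)\to W^{-m}_2(G)}$, are finite and uniform in $g$. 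For $S$ I would use $\|M_{a_\alpha-a_\alpha(g)}u\|_{L_2(G)}\leqslant\|a_\alpha-a_\alpha(g)\|_{L_\infty(\supp(u))}\|u\|_{L_2(G)}$ together with the Mean Value Theorem (\cite[Proposition~3.1.46]{FischerRuzhansky2016}, as in Lemma~\ref{fee small diameter}), giving $\|a_\alpha-a_\alpha(g)\|_{L_\infty(\supp(u))}\leqslant C\|a_\alpha\|_{s,b}\,\epsilon_P$, and the boundedness $X^\alpha:L_2(G)\to W^{-m}_2(G)$ for $\len(\alpha)\leqslant m$ (Lemma~\ref{lemma-X_alpha-bdd}); choosing $\epsilon_P$ small enough that $\sum_{\len(\alpha)\leqslant m}\|X^\alpha\|_{L_2(G)\to W^{-m}_2(G)}\,C\|a_\alpha\|_{s,b}\,\epsilon_P\leqslant\tfrac14 c_{P,4}$ yields $\|Su\|_{W^{-m}_2(G)}\leqslant\tfrac14 c_{P,4}\|u\|_{L_2(G)}$. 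For $T$ one only has $\|Tu\|_{W^{-m}_2(G)}\leqslant\|T\|_{W^{-1}_2(G)\to W^{-m}_2(G)}\|u\|_{W^{-1}_2(G)}$, so I would interpolate as in the proof of Lemma~\ref{bee constant coefficients}, namely $\|u\|_{W^{-1}_2(G)}\leqslant\delta\|u\|_{L_2(G)}+\delta^{1-m}\|u\|_{W^{-m}_2(G)}$ (from \eqref{eq-interpolation-norm} with exponents $(s_0,s_1,\theta)=(0,-m,\tfrac1m)$ and Young's inequality), choose $\delta$ so small that $\|T\|_{W^{-1}_2(G)\to W^{-m}_2(G)}\delta\leqslant\tfrac14 c_{P,4}$, and finally enlarge $r_P$ so that $|c|\geqslant r_P$ forces $\|T\|_{W^{-1}_2(G)\to W^{-m}_2(G)}\delta^{1-m}\leqslant\tfrac12 c_{P,4}|c|$. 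Adding the two bounds gives the required estimate for $\|(P-P_g)u\|_{W^{-m}_2(G)}$, and substituting into the triangle inequality finishes the proof.

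The step I expect to be genuinely delicate is the treatment of $T$. In the forward estimate (Lemma~\ref{fee small diameter}) the difference $P-P_g$ keeps its multipliers on the left, so only the \emph{small} sup-norms $\|a_\alpha-a_\alpha(g)\|_{L_\infty(\supp(u))}$ enter and no coefficient is ever differentiated; working in $W^{-m}_2(G)$, however, we are forced to move $X^\alpha$ past $M_{a_\alpha-a_\alpha(g)}$, and the resulting commutators carry the \emph{full-size} coefficients $X^\beta a_\alpha$, which do not become small as $\epsilon_P\to0$. What rescues the argument is that these commutators drop the order by at least one, so their $W^{-m}_2$-contribution can be measured in $W^{-1}_2(G)$, traded by interpolation against an arbitrarily small multiple of $\|u\|_{L_2(G)}$ plus a large multiple of $\|u\|_{W^{-m}_2(G)}$, and then absorbed into $|c|\,\|u\|_{W^{-m}_2(G)}$ via the hypothesis $|c|\geqslant r_P$ — exactly the mechanism already used inside Lemma~\ref{bee constant coefficients} to dispose of the lower-order part $P_g-P_g^{\rm top}$.
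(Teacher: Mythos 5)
Your proof is correct, and it supplies genuine content that the paper's one-line appeal to ``mutatis mutandi'' glosses over. The subtle point you identify is real: the forward argument in Lemma~\ref{fee small diameter} rests on the pointwise bound $\|M_{a_\alpha-a_\alpha(g)}X^\alpha u\|_{L_2}\leqslant\|a_\alpha-a_\alpha(g)\|_{L_\infty(\supp u)}\|X^\alpha u\|_{L_2}$, and it is precisely this $L_\infty$-localisation that converts the smallness of $\diam(\supp u)$ into a small constant. In $W^{-m}_2$ there is no such pointwise estimate; Lemma~\ref{multiplication_lemma} controls a multiplier on $W^{-m}_2$ by its $C^{\lceil m\rceil}_b$-norm, which involves derivatives $X^\beta a_\alpha$ whose sup-norm does not shrink as $\epsilon_P\to 0$, so the verbatim translation fails. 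Your decomposition $P-P_g=S-T$ with $S=\sum X^\alpha M_{a_\alpha-a_\alpha(g)}$ and $T=\sum[X^\alpha,M_{a_\alpha-a_\alpha(g)}]$ resolves this cleanly: $S$ is handled by the still-valid pointwise estimate on $\supp u$ together with $X^\alpha\colon L_2\to W^{-m}_2$, while $T$ — which, as you note, carries only the ``full-size'' coefficients $X^\beta a_\alpha$ and in fact equals $\sum_\alpha[X^\alpha,M_{a_\alpha}]$ and so is literally independent of $g$ — drops order by at least one and is disposed of by the interpolation plus $|c|$-absorption mechanism that the paper already uses inside Lemma~\ref{bee constant coefficients} for $P_g-P_g^{\rm top}$. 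The bookkeeping of constants (reserving $\tfrac14 c_{P,4}\|u\|_{L_2}$ for $S$, $\tfrac14 c_{P,4}\|u\|_{L_2}$ for the interpolated $L_2$-part of $T$, and $\tfrac12 c_{P,4}|c|\|u\|_{W^{-m}_2}$ for the remaining part of $T$ after enlarging $r_P$) checks out and yields the stated $\tfrac12 c_{P,4}$.
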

\begin{proof} The argument follows that in Lemma \ref{fee small diameter} {\it mutatis mutandi}.
\end{proof}

\begin{proof}[Proof of Theorem \ref{backward elliptic estimate}] Let $\epsilon_P$ be as in Lemma \ref{fee small diameter}. Fix a real-valued $\psi\in C^{\infty}_c(G)$ supported in $B(1_G,\frac12\epsilon_P)$ and equal to $1$ in some neighbourhood of $1_G.$ Let $(\psi_n)_{n\geqslant0}$ be the partition of unity defined in Construction \ref{partition_of_unity}.

We have
$$c_{-m,\Psi}\|(P+ic)u\|_{W^{-m}_2(G)}\geqslant \Big(\sum_{n\geqslant0}\|M_{\psi_n}(P+ic)u\|_{W^{-m}_2(G)}^2\Big)^{\frac12}.$$
By triangle inequality in $W^{-m}_2(G),$
$$\|M_{\psi_n}(P+ic)u\|_{W^{-m}_2(G)}\geqslant \|(P+ic)M_{\psi_n}u\|_{W^{-m}_2(G)}-\|[P,M_{\psi_n}]u\|_{W^{-m}_2(G)}.$$
By Lemma \ref{bee small diameter}, we have
\begin{multline*}
    \|M_{\psi_n}(P+ic)u\|_{W^{-m}_2(G)}\\
    \geqslant \frac12c_{P,4}\Big(\|\psi_n u\|_{L_2(G)}+|c|\|\psi_n u\|_{W^{-m}_2(G)}\Big)-\|[P,M_{\psi_n}]u\|_{W^{-m}_2(G)}.
\end{multline*}

By triangle inequality in $l_2,$
\begin{multline*}
    \Big(\sum_{n\geqslant0}\|M_{\psi_n}(P+ic)u\|_{W^{-m}_2(G)}^2\Big)^{\frac12}\\
    \geqslant \frac14c_{P,4}\Big(\sum_{n\geqslant0}\|\psi_n u\|_{L_2(G)}^2\Big)^{\frac12}+\frac14c_{P,4}|c|\Big(\sum_{n\geqslant0}\|\psi_n u\|_{W^{-m}_2(G)}^2\Big)^{\frac12}\\
-\Big(\sum_{n\geqslant0}\|[P,M_{\psi_n}]u\|_{W^{-m}_2(G)}^2\Big)^{\frac12}.
\end{multline*}

Thus,
\begin{multline*}
    c_{-m,\Psi}\|(P+ic)u\|_{W^{-m}_2(G)}\\
\geqslant \frac14c_{P,4}\Big(\sum_{n\geqslant0}\|\psi_n u\|_{L_2(G)}^2\Big)^{\frac12}+\frac14c_{P,4}|c|\Big(\sum_{n\geqslant0}\|\psi_n u\|_{W^{-m}_2(G)}^2\Big)^{\frac12}\\
-\Big(\sum_{n\geqslant0}\|[P,M_{\psi_n}]u\|_{W^{-m}_2(G)}^2\Big)^{\frac12}.
\end{multline*}

Using Lemma \ref{lemma-local-global-estimates-commutator} and Theorem \ref{sobolev localization theorem}, we obtain
\begin{multline*}
c_{-m,\Psi}\|(P+ic)u\|_{W^{-m}_2(G)}\\
\geqslant \frac14c_{P,4}c_{0,\Psi}^{-1}\|u\|_{L_2(G)}+\frac14c_{P,4}c_{-m,\Psi}^{-1}|c|\|u\|_{W^{-m}_2(G)}-c_{P,\Psi}\|u\|_{W^{-1}_2(G)}.
\end{multline*}
Recall complex interpolation \eqref{eq-interpolation-norm} and Young's inequality, for any $\delta >0$, we have
$$\|u\|_{W^{-1}_2(G)}\leqslant \|u\|_{L_2(G)}^{1-\frac1m}\|u\|_{W^{-m}_2(G)}^{\frac1m}\leqslant \delta \|u\|_{L_2(G)}+\delta^{1-m}\|u\|_{W^{-m}_2(G)}.$$
It follows that
\begin{multline*}
c_{-m,\Psi}\|(P+ic)u\|_{W^{-m}_2(G)}\\
\geqslant (\frac14c_{P,4}c_{0,\Psi}^{-1}-c_{P,\Psi}\delta)\|u\|_{L_2(G)}+(\frac14c_{P,4}c_{-m,\Psi}^{-1}|c|-c_{P,\Psi}\delta^{1-m})\|u\|_{W^{-m}_2(G)}.
\end{multline*}
Taking small enough $\delta$ and, after that, taking large enough $|c|,$ we complete the proof.
\end{proof}

\subsection{Elliptic regularity and self-adjointness}\label{sub-sec-self-adjoint}

\begin{lemma}\label{zimmer theorem} Let $P=P^{\dagger}$ be an  uniformly Rockland  order $m$ differential operator on $G.$ For every $c\in\mathbb{R}$ with sufficiently large $|c|,$ the mapping
$$\widetilde{P}+ic:W^m_2(G)\to L_2(G)$$
is an isomorphism.
\end{lemma}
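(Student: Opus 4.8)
The plan is to establish that, for every real $c$ with $|c|$ large, the bounded operator $\widetilde{P}+ic\colon W^m_2(G)\to L_2(G)$ (bounded by Lemma \ref{differential_operators_are_bounded}) is injective with closed range and also has dense range, and then to invoke the bounded inverse theorem. For the first half I would fix $|c|\geqslant R_P$ and quote Corollary \ref{forward elliptic estimate 2}: the bound $\|(\widetilde{P}+ic)u\|_{L_2(G)}\geqslant\widetilde{c_P}\|u\|_{W^m_2(G)}$, valid for all $u\in W^m_2(G)$, immediately forces $\ker(\widetilde{P}+ic)=\{0\}$ and shows that any Cauchy sequence in the range lifts to a Cauchy sequence in $W^m_2(G)$, so the range is closed; it also gives the operator norm bound $\|(\widetilde{P}+ic)^{-1}\|_{L_2(G)\to W^m_2(G)}\leqslant\widetilde{c_P}^{-1}$ once surjectivity is known.

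The substantive point is surjectivity, and here the backward estimate (Theorem \ref{backward elliptic estimate}) is essential: one cannot instead bootstrap regularity, since the elliptic regularity statement (Theorem \ref{elliptic regularity theorem}) lies downstream of this lemma. Since the range is closed, it suffices to show its orthogonal complement in $L_2(G)$ is trivial. Let $v\in L_2(G)$ satisfy $\langle(\widetilde{P}+ic)u,v\rangle_{L_2(G)}=0$ for all $u\in\Sc(G)$ (this is enough, since $\Sc(G)$ is dense in $W^m_2(G)$ and $\widetilde{P}+ic$ is continuous). A routine duality computation using the defining relation \eqref{def-P-dagger}, the hypothesis $P=P^{\dagger}$, and $c\in\R$ rewrites this as $\langle u,\widetilde{(P-ic)}v\rangle_{L_2(G)}=0$ for all $u\in\Sc(G)$, hence $\widetilde{(P-ic)}v=0$ in $\Sc'(G)$.

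To finish, I would promote the backward estimate to all of $L_2(G)$: applying Theorem \ref{backward elliptic estimate} with $c$ replaced by $-c$ (legitimate since $|-c|=|c|\geqslant R_P$ and $P=P^{\dagger}$) gives $\|\widetilde{(P-ic)}u\|_{W^{-m}_2(G)}\geqslant\widetilde{c_P}\|u\|_{L_2(G)}$ for $u\in\Sc(G)$; since $\widetilde{(P-ic)}\colon L_2(G)\to W^{-m}_2(G)$ is bounded (Lemma \ref{differential_operators_are_bounded}) and $\Sc(G)$ is dense in $L_2(G)$, this inequality passes to all of $L_2(G)$. Taking $u=v$ and using $\widetilde{(P-ic)}v=0$ yields $v=0$, so the range is dense and therefore equal to $L_2(G)$. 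Thus $\widetilde{P}+ic$ is a continuous bijection between Hilbert spaces, hence a topological isomorphism. The only delicate bookkeeping is tracking complex conjugates through the sesquilinear pairing in the step $\langle(\widetilde{P}+ic)u,v\rangle=\langle u,\widetilde{(P-ic)}v\rangle$, and confirming that $\widetilde{(P-ic)}$ is precisely the distributional extension to which Theorem \ref{backward elliptic estimate} applies; I expect this, rather than any analytic difficulty, to be the main thing to get right.
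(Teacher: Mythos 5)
Your proof is correct and follows essentially the same route as the paper's: injectivity and closed range from Corollary \ref{forward elliptic estimate 2}, then triviality of the orthogonal complement via the duality identity $\langle(\widetilde{P}+ic)u,v\rangle=\langle u,(\widetilde{P}-ic)v\rangle$ together with the backward estimate of Theorem \ref{backward elliptic estimate}. You are somewhat more explicit than the paper about two bookkeeping points that it leaves implicit — that the lower bound forces closed range, and that the backward estimate extends from $\Sc(G)$ to $L_2(G)$ by density and boundedness of $\widetilde{P}-ic\colon L_2(G)\to W^{-m}_2(G)$ — but these are exactly the steps the paper is silently using, so the substance is the same.
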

\begin{proof} 
Let $c\in\mathbb{R}$ be such that $|c|$ is sufficiently large. It follows from Theorem \ref{forward elliptic estimate} that
$(\widetilde{P}+ic):W^m_2(G)\to L_2(G)$ is injective. It remains to prove surjectivity of $(\widetilde{P}+ic):W^m_2(G)\to L_2(G).$

Select $h \in L_2(G)$ such that
$$\langle (P+ic)u,h\rangle=0,\quad u\in\Sc(G).$$
Here, $\langle\cdot,\cdot\rangle$ can be viewed as a pairing between $\Sc(G)$ in the first argument and $\Sc'(G)$ in the second argument. {By assumption, $P=P^{\dagger}$, also recall from \eqref{def-P-dagger} and Remark \ref{rmk-P-P-dagg-P-tilde-bdd} that we have
    \begin{equation*}
        \langle u,(\widetilde{P}-ic)h\rangle
        = \langle (P^{\dagger}+ic) u,h\rangle 
        = \langle (P+ic)u,h\rangle
        = 0,
        \quad u\in\Sc(G).
    \end{equation*}
    }
In other words, $(\widetilde{P}-ic)h=0.$ By Theorem \ref{backward elliptic estimate}, $(\widetilde{P}-ic):L_2(G)\to W^{-m}_2(G)$ is an injection. Hence, $h=0.$ Therefore, $(\widetilde{P}+ic):W^m_2(G)\to L_2(G)$ is surjective.
\end{proof}

\begin{proof}[Proof of Theorem \ref{elliptic regularity theorem} \ref{erta}] By Lemma \ref{zimmer theorem}, for every $c\in\mathbb{R}$ with sufficiently large $|c|,$ the mapping $\widetilde{P}+ic:W^m_2(G)\to L_2(G)$ is an isomorphism. Let $w=(\widetilde{P}+ic)u\in L_2(G).$ There exists $h \in W^m_2(G)$ such that $w=(\widetilde{P}+ic)h.$ Thus, $(\widetilde{P}+ic)(u-h)=0.$ Since $u-h\in L_2(G)$ and since $\widetilde{P}+ic:L_2(G)\to W^{-m}_2(G)$ is an isomorphic embedding, it follows that $u-h=0.$ Since $h\in W^m_2(G),$ it follows that $u\in W^m_2(G).$
\end{proof}

\begin{proof}[Proof of Theorem \ref{elliptic regularity theorem} \ref{ertb}] Denote the restriction of $\widetilde{P}$ to $W^m_2(G)$ by $Q.$

It follows from Lemma \ref{zimmer theorem} that $Q+ic:W^m_2(G)\to L_2(G)$ is an isomorphism. In particular, $Q:W^m_2(G)\to L_2(G)$ is closed.

By definition, $Q^{\ast}$ is defined on the domain
\begin{align*}
\mathrm{dom}(Q^{\ast}):= \{h \in L_2(G):\ &\text{There exists }C_h>0\text{ such that for all }\\
&u\in W^m_2(G)\text{ we have }|\langle h,Qu\rangle|\leqslant C_h\|u\|\}
\end{align*}
and $Q^{\ast}h$ is the unique element of $L_2(G)$ such that
$$\langle Q^{\ast}h,u\rangle = \langle h,Qu\rangle,\quad u\in W^m_2(G).$$
Since $\Sc(G)\subset W^m_2(G),$ it follows that
$$\langle Q^{\ast}h,\phi\rangle = \langle h,P\phi\rangle,\quad \phi\in \Sc(G),\; h \in \mathrm{dom}(Q^{\ast}).$$
Recall that the action of $\widetilde{P^{\dagger}}$ on distributions is defined by
$$\langle \widetilde{P^{\dagger}}\omega,\phi\rangle=\langle \omega,P\phi\rangle,\quad \phi\in \Sc(G),\quad \omega \in \Sc'(G).$$
Thus,
$$\langle Q^{\ast}h,\phi\rangle = \langle \widetilde{P^{\dagger}}h,\phi\rangle,\quad \phi\in \Sc(G),\; h \in \mathrm{dom}(Q^{\ast}).$$
Hence,
$$Q^{\ast}h=\widetilde{P^{\dagger}}h,\quad h \in \mathrm{dom}(Q^{\ast}).$$
Since $Q^{\ast}h\in L_2(G)$ for every $h \in \mathrm{dom}(Q^{\ast}),$ it follows that $\widetilde{P^{\dagger}}h\in L_2(G)$ for every $h \in \mathrm{dom}(Q^{\ast}).$ By assumption, $P^{\dagger}=P.$ Hence, $\widetilde{P}h\in L_2(G)$ for every $h \in \mathrm{dom}(Q^{\ast}).$  By Theorem \ref{elliptic regularity theorem} \ref{erta}, $h\in W^m_2(G)$ for every $h \in \mathrm{dom}(Q^{\ast}).$

It follows from the preceding paragraph that ${\rm dom}(Q^{\ast})\subset W^m_2(G).$ Since $P^{\dagger}=P,$ it follows that $Q^{\ast}\subset Q$ and $Q\subset Q^{\ast}.$ Thus, $Q^{\ast}=Q.$
\end{proof}

\subsection{General elliptic estimates}

Theorem \ref{forward elliptic estimate} implies, for an  uniformly Rockland  order $m$ differential operator $P=P^{\dagger},$ is elliptic, then for sufficiently large $|c|$ we have
$$\|h\|_{W^m_2(G)}\lesssim_P \|(P+ic)h\|_{L_2(G)},\quad h\in \Sc(G).$$
Similarly, Theorem \ref{backward elliptic estimate} implies that for sufficiently large $|c|,$
$$\|h\|_{L_2(G)}\lesssim_P \|(P+ic)h\|_{W^{-m}_2(G)},\quad h\in \Sc(G).$$
We now deduce the following generalisation.

\begin{theorem}\label{general elliptic estimate theorem} 
Let $P=P^{\dagger}$ be an  uniformly Rockland  differential operator of order $m.$ For every $s\in \mathbb{R},$ there exist constants $c_{P,s}>0$ and $R_{P,s}$ such that for all real $|c|>R_{P,s}$ we have
$$c_{P,s}\|h\|_{W^{s+m}_2(G)}\leqslant \|(P+ic)h\|_{W^s_2(G)},\quad h\in \Sc(G).$$
\end{theorem}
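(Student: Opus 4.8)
The plan is to treat Theorem~\ref{forward elliptic estimate} (with Corollary~\ref{forward elliptic estimate 2}) as the case $s=0$ and Theorem~\ref{backward elliptic estimate} as the case $s=-m$, and to propagate these two estimates along the whole Sobolev scale: \emph{upward} by commuting $P$ past a Laplace-type operator, \emph{downward} (to the exponents $s<-m$) by duality, and to the intermediate exponents by complex interpolation. The only genuinely new ingredient is an auxiliary \emph{left}-invariant Laplacian. I would take $\widehat\Delta$ to be the image of $\Delta_G$ under the inversion isometry $u\mapsto u(\,\cdot^{-1})$ of $L_2(G)$; concretely $\widehat\Delta=-\sum_j(-1)^{v/v_j}\widehat X_j^{\,2v/v_j}$ for left-invariant vector fields $\widehat X_j$ attached to the preferred generators $X_j$ (the exponents $2v/v_j$ are even, so no signs intervene). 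Then $\widehat\Delta$ is again a positive self-adjoint Rockland operator of order $2v$, so by independence of the Sobolev scale from the defining Rockland operator (\cite[Theorem~4.4.20]{FischerRuzhansky2016}, applied together with the inversion) one has $\|(1+\widehat\Delta)^{k}u\|_{W^m_2(G)}\asymp\|u\|_{W^{m+2vk}_2(G)}$ for all $k\in\Z_+$. The crucial feature is that $\widehat\Delta$ commutes with every right-invariant operator, in particular with each $X_j$, so $[\widetilde P,\widehat\Delta]=\sum_{\len(\alpha)\leqslant m}[M_{a_\alpha},\widehat\Delta]X^\alpha$ sees only derivatives of the (uniformly smooth) coefficients and has order at most $m+2v-1$; iterating, the order of $[\widetilde P,(1+\widehat\Delta)^k]$ is at most $m+2vk-1$.

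For the upward step, fix $k\in\Z_+$ and write $Q=(1+\widehat\Delta)^{k}$. For $h\in\Sc(G)$ we have $(P+ic)Qh=Q(P+ic)h+[P,Q]h$, so Corollary~\ref{forward elliptic estimate 2} applied to $Qh$ (valid for $|c|\geqslant R_P$, with $\widetilde{c_P}$ independent of $c$) together with the norm equivalence above gives
\[
\|h\|_{W^{2vk+m}_2(G)}\ \lesssim\ \|Qh\|_{W^m_2(G)}\ \lesssim\ \|(P+ic)h\|_{W^{2vk}_2(G)}+\|h\|_{W^{2vk+m-1}_2(G)} .
\]
The lower-order tail is absorbed by \eqref{eq-interpolation-norm} (so $\|h\|_{W^{2vk+m-1}_2}\leqslant\delta\|h\|_{W^{2vk+m}_2}+C_\delta\|h\|_{L_2}$ for small $\delta$) together with $\|h\|_{L_2(G)}\leqslant|c|^{-1}\|(P+ic)h\|_{L_2(G)}\leqslant|c|^{-1}\|(P+ic)h\|_{W^{2vk}_2(G)}$, which follows from $P=P^{\dagger}$ and the Sobolev embedding. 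This establishes the estimate at every grid exponent $s=2vk$, $k\in\Z_+$, with constant and threshold independent of $c$ for $|c|$ large.

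Next I would upgrade these estimates to isomorphisms $\widetilde P+ic\colon W^{s+m}_2(G)\to W^s_2(G)$. Injectivity is inherited from Lemma~\ref{zimmer theorem}; surjectivity onto $W^{2vk}_2(G)$ is an elliptic-regularity bootstrap run with $\widehat\Delta$: if $h\in W^m_2(G)$ and $(\widetilde P+ic)h\in W^{2v(k+1)}_2(G)$, then $(\widetilde P+ic)(\widehat\Delta h)=\widehat\Delta(\widetilde P+ic)h+[\widetilde P,\widehat\Delta]h\in W^{2vk}_2(G)$, so $\widehat\Delta h\in W^{2vk+m}_2(G)$ by induction and hence $h\in W^{2v(k+1)+m}_2(G)$. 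Lemma~\ref{zimmer theorem} gives the case $s=0$, and taking Banach-space adjoints (note $(\widetilde P+ic)^{\ast}=\widetilde P-ic$ since $P=P^{\dagger}$ and $c\in\R$) turns Theorem~\ref{backward elliptic estimate} together with Lemma~\ref{zimmer theorem} into the isomorphism at $s=-m$, and turns the grid isomorphisms at $s=2vk$ into isomorphisms at $s=-m-2vk$. Since $\{W^s_2(G)\}_s$ is a complex interpolation scale by \eqref{interpolation} and interpolation preserves isomorphisms while bounding the inverse by the geometric mean of the endpoint inverse-norms, interpolating across the consecutive points $\dots,-m-2v,-m,0,2v,4v,\dots$ produces the isomorphism for every $s\in\R$, with $\|(\widetilde P+ic)^{-1}\|_{W^s_2(G)\to W^{s+m}_2(G)}$ bounded uniformly in $c$ for $|c|$ large (the endpoint inverse-norms being uniform by the estimates above and their duals). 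Restricting to $h\in\Sc(G)$, with $c_{P,s}$ the reciprocal of this interpolated bound and $R_{P,s}$ the larger of the two bracketing thresholds, is exactly the assertion.

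The hard part is precisely the passage beyond the interval $[-m,0]$: unlike in the Euclidean case, one cannot simply conjugate $P$ by a power of $1-\Delta_G$ and absorb the commutator, because $\Delta_G$ and $P$ are built from the \emph{same} non-commuting right-invariant fields and $[\Delta_G,P]$ is generically of full order $m+2v$. Replacing $\Delta_G$ by the left-invariant $\widehat\Delta$, whose principal part commutes with that of $P$, is what makes the relevant commutator lower order and unlocks the induction. The two points that require genuine care are verifying that $\widehat\Delta$ defines the same Sobolev scale (so it is legitimate to measure $\|\cdot\|_{W^s_2}$ with it) and keeping every constant uniform in $c$ through the interpolation; once $\widehat\Delta$ is in hand, the regularity bootstrap and the interpolation step themselves are routine.
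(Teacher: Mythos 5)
Your plan is structurally different from the paper's and, unfortunately, the auxiliary left-invariant Laplacian $\widehat\Delta$ that you introduce to make the upward commutator cheap is exactly where the argument breaks.

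The paper's proof of Theorem~\ref{general elliptic estimate theorem} never commutes $P$ past a Laplacian at all. It observes (Lemma~\ref{ellipticity of the product}) that $P^k$ is again uniformly Rockland, so $\mathrm{dom}(P^k)=W^{mk}_2(G)$ with equivalent norms by Theorem~\ref{elliptic regularity theorem}, and then uses the self-adjoint functional calculus to deduce that $(P+i)^{-1}:W^{mk}_2(G)\to W^{m(k+1)}_2(G)$ is an isomorphism (Lemma~\ref{gee first lemma}); the positive exponents follow by interpolation (Lemma~\ref{gee second lemma}), the exponents $\leqslant-m$ by duality (Lemma~\ref{gee third lemma}), and the gap $[-m,0]$ by interpolating the isomorphisms from Lemmas~\ref{zimmer theorem} and \ref{similar to zimmer lemma} (Lemma~\ref{gee fourth lemma}). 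No auxiliary operator, no commutator bound, and therefore none of the technical issues that you flag as requiring ``genuine care.''

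Your route, by contrast, hinges on the estimate $\|[\widetilde P,(1+\widehat\Delta)^k]h\|_{L_2(G)}\lesssim\|h\|_{W^{m+2vk-1}_2(G)}$. Expanding, $[\widetilde P,(1+\widehat\Delta)^k]=\sum_\alpha[M_{a_\alpha},(1+\widehat\Delta)^k]X^\alpha$, and $[M_{a_\alpha},(1+\widehat\Delta)^k]$ is a differential operator whose coefficients are \emph{left}-invariant derivatives $\widehat X^\beta a_\alpha$ with $\len(\beta)\geqslant 1$. For this commutator to map $W^{m+2vk-1}_2(G)$ boundedly into $L_2(G)$ via Lemma~\ref{multiplication_lemma}, you need these functions $\widehat X^\beta a_\alpha$ to be bounded. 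But Definition~\ref{C_infty_definition} defines $C^\infty_b(G)$ using the \emph{right}-invariant generators $X_j$, and that class is \emph{not} closed under left-invariant differentiation on a non-abelian graded group. Here is an explicit counterexample on the Heisenberg group $\mathbb{H}^1$ with $X_R=\partial_x+\tfrac{y}{2}\partial_t$, $Y_R=\partial_y-\tfrac{x}{2}\partial_t$, $T_R=\partial_t$ and $X_L=\partial_x-\tfrac{y}{2}\partial_t=X_R-yT_R$. Take
\[
a(x,y,t)=f\bigl(t-\tfrac{xy}{2}\bigr)\,g(x),
\]
with $f\in C^\infty(\R)$ bounded with all derivatives bounded and nonconstant, and $g$ Schwartz. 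Since $X_R(t-\tfrac{xy}{2})=0$, $Y_R(t-\tfrac{xy}{2})=-x$, $T_R(t-\tfrac{xy}{2})=1$, one computes $X_R a=fg'$, $Y_R a=-xf'g$, $T_R a=f'g$, and more generally $X_R^iY_R^jT_R^k a$ is a finite combination of terms $f^{(j+k-\ell)}\cdot x^{j-\ell'}g^{(i-\ell'')}$, all bounded because $g$ is Schwartz; so $a\in C^\infty_b(G)$ in the sense of Definition~\ref{C_infty_definition}. Yet
\[
X_L a = X_R a - yT_R a = fg' - y\,f'g,
\]
which is unbounded (take $x=0$, $t=0$, $y\to\infty$, with $f'(0)g(0)\neq0$). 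Consequently $[M_a,\widehat\Delta]$ is not a bounded multiplier-coefficient operator in the paper's framework, and the commutator estimate that drives your upward step is simply not available from the paper's hypotheses. The same obstruction propagates into your regularity bootstrap for surjectivity.

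A secondary, more repairable issue: even if the commutator estimate were available, the displayed bootstrap step is circular as written. From $h\in W^m_2(G)$ alone, $[\widetilde P,\widehat\Delta]h$ only lies in $W^{1-2v}_2(G)$, not $L_2(G)$, so you cannot conclude $(\widetilde P+ic)\widehat\Delta h\in W^{2vk}_2(G)$ at the first pass; the induction would need to be restructured (e.g., gaining regularity one degree at a time, or carrying the stronger a priori membership $h\in W^{2vk+m}_2(G)$ into each step). But the fundamental gap is the first one: you cannot take unbounded left-invariant derivatives of coefficients that are only assumed to be uniformly smooth with respect to the right-invariant generators. The paper's route through $P^k$, Lemma~\ref{ellipticity of the product}, and the self-adjoint functional calculus is what makes this difficulty disappear.
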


\begin{lemma}\label{gee first lemma} Let $P=P^{\dagger}$ be an  uniformly Rockland  differential operator of order $m.$
\begin{enumerate}[{\rm (i)}]
\item\label{geefla} $$(P+i)^{-1}:W^{mk}_2(G)\to W^{m(k+1)}_2(G),\quad {k\in \Z^{+}},$$
is an isomorphism of Hilbert spaces.
\item\label{geeflb} $$(P+i)^{-1}:\bigcap_{k\geqslant0}W^{mk}_2(G)\to \bigcap_{k\geqslant0}W^{mk}_2(G)$$
is a bijection.
\end{enumerate}
\end{lemma}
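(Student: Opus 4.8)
The plan is to reduce the whole lemma to the spectral theory of the self-adjoint operator $\overline{P}$ on $L_{2}(G)$ obtained from $\widetilde{P}$ with domain $W^{m}_{2}(G)$ (self-adjoint by Theorem \ref{elliptic regularity theorem}\ref{ertb}), together with the single structural fact
\[
\mathrm{dom}(\overline{P}^{\,N}) = W^{mN}_{2}(G),\qquad N\geqslant 0,
\]
with equivalent norms. I read $(P+i)^{-1}$ as the inverse of the bijection $\widetilde{P}+i\colon W^{m}_{2}(G)\to L_{2}(G)$, which is a bijection precisely because $i$ lies in the resolvent set of $\overline{P}$; thus $(P+i)^{-1}=(\overline{P}+i)^{-1}$.

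The first step is the displayed domain identity. The inclusion $W^{mN}_{2}(G)\subseteq\mathrm{dom}(\overline{P}^{\,N})$ is elementary: if $u\in W^{mN}_{2}(G)$ then $u\in W^{m}_{2}(G)=\mathrm{dom}(\overline{P})$, and by Lemma \ref{differential_operators_are_bounded}, $\overline{P}u=\widetilde{P}u\in W^{m(N-1)}_{2}(G)\subseteq W^{m}_{2}(G)$; iterating gives $u\in\mathrm{dom}(\overline{P}^{\,N})$ with $\overline{P}^{\,j}u=\widetilde{P^{j}}u$ for $j\leqslant N$ and $\|\overline{P}^{\,N}u\|_{L_{2}(G)}\lesssim\|u\|_{W^{mN}_{2}(G)}$. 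For the reverse inclusion, note that $P^{N}$ is again a differential operator, of order $mN$, that it is its own formal adjoint since $(P^{N})^{\dagger}=(P^{\dagger})^{N}=P^{N}$, and that it is uniformly Rockland by $N-1$ applications of Lemma \ref{ellipticity of the product}; hence Theorem \ref{elliptic regularity theorem}\ref{erta} applies to $P^{N}$. If $u\in\mathrm{dom}(\overline{P}^{\,N})$, then $u\in L_{2}(G)$ and $\widetilde{P^{N}}u=\overline{P}^{\,N}u\in L_{2}(G)$ (this uses the bookkeeping identity $\overline{P}^{\,j}u=\widetilde{P^{j}}u$, valid since $\overline{P}=\widetilde{P}|_{W^{m}_{2}(G)}$ and $(P^{j})^{\dagger}=(P^{\dagger})^{j}$), so $u\in W^{mN}_{2}(G)$. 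The identity map $W^{mN}_{2}(G)\to\mathrm{dom}(\overline{P}^{\,N})$ is thus a continuous bijection of Banach spaces, and the open mapping theorem delivers the norm equivalence.

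With this in hand, part \ref{geefla} is functional calculus for $\overline{P}$: if $E$ is the spectral resolution of $\overline{P}$, then $v\in\mathrm{dom}(\overline{P}^{\,k})$ iff $\int_{\R}(1+\lambda^{2})^{k}\,d\langle E_{\lambda}v,v\rangle<\infty$, and since $|\lambda+i|^{-2}=(1+\lambda^{2})^{-1}$, multiplication by $(\lambda+i)^{-1}$ carries this condition from $k$ to $k+1$. Thus $(\overline{P}+i)^{-1}$ maps $W^{mk}_{2}(G)=\mathrm{dom}(\overline{P}^{\,k})$ bijectively onto $W^{m(k+1)}_{2}(G)=\mathrm{dom}(\overline{P}^{\,k+1})$; continuity of $(\overline{P}+i)^{-1}$ in these norms follows from $\overline{P}^{\,k+1}(\overline{P}+i)^{-1}=(I-i(\overline{P}+i)^{-1})\,\overline{P}^{\,k}$ together with the norm equivalences, while continuity of the inverse $\widetilde{P}+i$ is Lemma \ref{differential_operators_are_bounded}. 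Part \ref{geeflb} is then immediate: $(\overline{P}+i)^{-1}$ sends each $W^{mk}_{2}(G)$ onto $W^{m(k+1)}_{2}(G)\subseteq W^{mk}_{2}(G)$, hence maps $\bigcap_{k\geqslant 0}W^{mk}_{2}(G)$ into itself; it is injective because it is injective on $L_{2}(G)$; and it is surjective because for $\phi$ in the intersection, $(\overline{P}+i)\phi$ again lies in every $W^{mk}_{2}(G)$ by part \ref{geefla}. I expect the only genuine obstacle to be the reverse inclusion $\mathrm{dom}(\overline{P}^{\,N})\subseteq W^{mN}_{2}(G)$ — more precisely, the justification that the abstract self-adjoint powers $\overline{P}^{\,j}$ coincide with the distributional action of the composed operator $P^{j}$, which is what lets Theorem \ref{elliptic regularity theorem}\ref{erta} for $P^{N}$ be brought to bear; everything else is bookkeeping and standard functional calculus.
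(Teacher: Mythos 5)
Your proof is correct and follows essentially the same route as the paper's: apply Lemma \ref{ellipticity of the product} to see that $P^k$ is uniformly Rockland, invoke Theorem \ref{elliptic regularity theorem} to identify $\mathrm{dom}(\overline{P}^{\,k})$ with $W^{mk}_2(G)$ (with equivalent norms), and then read off the isomorphisms from the self-adjoint functional calculus for $\overline{P}$. The only difference is one of completeness: the paper states $\mathrm{dom}(P^k)=W^{mk}_2(G)$ without comment, whereas you spell out both inclusions, including the bookkeeping identity $\overline{P}^{\,j}u=\widetilde{P^{j}}u$ (resting on $(P^{j})^{\dagger}=(P^{\dagger})^{j}$ and $\overline{P}=\widetilde{P}|_{W^m_2(G)}$) needed to transfer Theorem \ref{elliptic regularity theorem}\ref{erta} for $P^N$ to the abstract power $\overline{P}^{\,N}$ — a genuine subtlety that the paper leaves implicit.
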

\begin{proof} For every self-adjoint operator $A:{\rm dom}(A)\to H,$ let us make the set ${\rm dom}(A^k)$ a Hilbert space by equipping it with the natural norm $u\to\|(A^k+i)u\|_H.$ By the self-adjoint functional calculus, the mapping
$$(A+i)^{-1}:{\rm dom}(A^k)\to {\rm dom}(A^{k+1})$$
is an isomorphism of Hilbert spaces. In particular, 
$$(A+i)^{-1}:\bigcap_{k\geqslant0}{\rm dom}(A^k)\to \bigcap_{k\geqslant0}{\rm dom}(A^k)$$
is a bijection.

If the operator $P$ satisfies the assumptions of the theorem, then, by Lemma \ref{ellipticity of the product}, so does the operator $P^k.$ By Theorem \ref{elliptic regularity theorem}, ${\rm dom}(P^k)=W^{mk}_2(G)$ with equivalent norms. The assertion follows immediately from the preceding paragraph.
\end{proof}

\begin{lemma}\label{gee second lemma} Let $P=P^{\dagger}$ be an  uniformly Rockland  differential operator of order $m.$ For every $s\geqslant0,$ there exists a constant $c_{P,s}>0$ such that
$$\|h\|_{W^{s+m}_2(G)}\leqslant c_{P,s}\|(P+i)h\|_{W^s_2(G)},\quad h\in \Sc(G).$$
\end{lemma}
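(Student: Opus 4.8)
\textit{Proof proposal.} The plan is to obtain the estimate from Lemma \ref{gee first lemma} by complex interpolation along the Folland--Stein scale. By the first assertion of Lemma \ref{gee first lemma} (whose proof also yields the case $k=0$, namely the boundedness of $(\widetilde P+i)^{-1}\colon L_2(G)\to W^m_2(G)$ coming from the self-adjointness of $\widetilde P$ on $W^m_2(G)$ in Theorem \ref{elliptic regularity theorem}\,\ref{ertb}), the operator $(\widetilde P+i)^{-1}$ restricts to a bounded linear map $W^{mk}_2(G)\to W^{m(k+1)}_2(G)$ for every integer $k\geqslant 0$. Since $W^{m(k+1)}_2(G)\subseteq W^{mk}_2(G)$ and $\widetilde P+i$ is injective on $W^{m(k+1)}_2(G)$, these restrictions are mutually compatible, so $(\widetilde P+i)^{-1}$ is a single operator on $L_2(G)$ that is simultaneously bounded on all of the pairs above; in particular the hypotheses of the complex interpolation theorem are met.

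If $s$ is an integer multiple of $m$, the estimate is exactly the first assertion of Lemma \ref{gee first lemma}. Otherwise, fix the integer $k\geqslant 0$ with $mk<s<m(k+1)$ and set $\theta=(s-mk)/m\in(0,1)$. By the interpolation identity \eqref{interpolation}, up to equivalence of norms,
\[
    \bigl(W^{mk}_2(G),W^{m(k+1)}_2(G)\bigr)_{\theta}=W^{s}_2(G),\qquad
    \bigl(W^{m(k+1)}_2(G),W^{m(k+2)}_2(G)\bigr)_{\theta}=W^{s+m}_2(G).
\]
As $(\widetilde P+i)^{-1}$ is bounded from $W^{mk}_2(G)$ to $W^{m(k+1)}_2(G)$ and from $W^{m(k+1)}_2(G)$ to $W^{m(k+2)}_2(G)$, the complex interpolation functor produces a constant $c_{P,s}>0$ with $\|(\widetilde P+i)^{-1}w\|_{W^{s+m}_2(G)}\leqslant c_{P,s}\|w\|_{W^{s}_2(G)}$ for all $w\in W^{s}_2(G)$.

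Finally, given $h\in\Sc(G)$, put $w=(P+i)h\in\Sc(G)\subseteq W^{s}_2(G)$ (using $s\geqslant 0$). Since $P$ coincides with $\widetilde P$ on $\Sc(G)$ and $\widetilde P+i\colon W^m_2(G)\to L_2(G)$ is a bijection, we have $h=(\widetilde P+i)^{-1}w$, whence
\[
    \|h\|_{W^{s+m}_2(G)}=\|(\widetilde P+i)^{-1}(P+i)h\|_{W^{s+m}_2(G)}\leqslant c_{P,s}\|(P+i)h\|_{W^{s}_2(G)},
\]
which is the claimed estimate. I do not anticipate a genuine obstacle: the analytic substance is already contained in Theorems \ref{forward elliptic estimate}, \ref{backward elliptic estimate} and \ref{elliptic regularity theorem} (via Lemma \ref{gee first lemma}), and the only mild points are checking that the inverses on the various $W^{mk}_2(G)$ glue into one operator and incorporating the endpoint $k=0$, both handled above. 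An equivalent alternative avoiding interpolation of operators is to conjugate by $(1-\Delta)^{s/2v}$, bound the commutator $[P,(1-\Delta)^{s/2v}]$ (which has order $s+m-1$) by a multiple of $\|h\|_{W^{s+m-1}_2(G)}$, and absorb this lower-order term using \eqref{eq-interpolation-norm} together with the already established $s=0$ case.
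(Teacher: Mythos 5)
Your proof is correct and follows essentially the same route as the paper: both obtain from Lemma \ref{gee first lemma}\,\ref{geefla} by complex interpolation along the Folland--Stein scale the boundedness of $(\widetilde P+i)^{-1}\colon W^s_2(G)\to W^{s+m}_2(G)$, and then apply this to $(P+i)h$ for Schwartz $h$. The small extra care you take (verifying that the restrictions of $(\widetilde P+i)^{-1}$ to the various $W^{mk}_2(G)$ are compatible, and explicitly handling the endpoint $k=0$) is left implicit in the paper but is a worthwhile check; the paper instead invokes Lemma \ref{gee first lemma}\,\ref{geeflb} for the final inversion on $\bigcap_{k\geqslant0}W^{mk}_2(G)$, which amounts to the same thing.
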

\begin{proof} By Lemma \ref{gee first lemma} \ref{geefla} and complex interpolation, for every $s\geqslant0,$ $$(P+i)^{-1}:W^s_2(G)\to W^{s+m}_2(G)$$
is a bounded mapping. In particular,
$$\|(P+i)^{-1}h\|_{W^{s+m}_2(G)}\leqslant c_{P,s}\|h\|_{W^s_2(G)},\quad h\in \bigcap_{k\geqslant0}W^{mk}_2(G).$$
By Lemma \ref{gee first lemma} \ref{geeflb}, an arbitrary element $h \in \cap_{k\geqslant0}W^{mk}_2(G)$ can be represented as $h=(P+i)^{-1}u,$ $u\in\cap_{k\geqslant0}W^{mk}_2(G).$ Hence,
$$\|h\|_{W^{s+m}_2(G)}\leqslant c_{P,s}\|(P+i)h\|_{W^s_2(G)},\quad h\in \bigcap_{k\geqslant0}W^{mk}_2(G).$$
The assertion follows immediately.
\end{proof}

\begin{lemma}\label{gee third lemma} Let $P=P^{\dagger}$ be an  uniformly Rockland  differential operator of order $m.$ For every $s\leqslant -m,$
$$\|h\|_{W^{s+m}_2(G)}\leqslant c_{P,-s-m}\|(P+i)h\|_{W^s_2(G)},\quad h\in \Sc(G).$$
\end{lemma}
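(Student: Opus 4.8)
The plan is to derive this from Lemma~\ref{gee second lemma} by duality, using the self-adjointness of $\widetilde{P}$ on $W^m_2(G)$ established in Theorem~\ref{elliptic regularity theorem}. Put $t=-s-m\geqslant 0$, so the claim reads
\[
\|h\|_{W^{-t}_2(G)}\leqslant c_{P,t}\,\|(P+i)h\|_{W^{-(m+t)}_2(G)},\qquad h\in\Sc(G).
\]
Since $W^{-\sigma}_2(G)$ is the dual of $W^{\sigma}_2(G)$ under the $L_2$ pairing (property (vi) of the Sobolev scale for $\sigma\in 2v\Z_{+}$, and in general an immediate consequence of the spectral definition of the norms in terms of $1-\Delta$) and $\Sc(G)$ is dense in $W^{t}_2(G)$, it is enough to show
\[
|\langle h,\phi\rangle|\leqslant c_{P,t}\,\|(P+i)h\|_{W^{-(m+t)}_2(G)}\,\|\phi\|_{W^{t}_2(G)},\qquad \phi\in\Sc(G),
\]
and then take the supremum over $0\neq\phi\in\Sc(G)$.

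Fix $\phi\in\Sc(G)$ and set $\psi:=(\widetilde{P}-i)^{-1}\phi$, using that $\widetilde{P}-i$ is an invertible self-adjoint operator on $L_2(G)$ whose inverse maps $L_2(G)$ into $\mathrm{dom}(\widetilde{P})=W^m_2(G)$; thus $\psi\in W^m_2(G)$ and $(\widetilde{P}-i)\psi=\phi$. The operator $-P$ is again uniformly Rockland, with the same constant in Definition~\ref{definition of ellipticity_1} (as $\|(-P)^{\rm top}_g u\|_{L_2(G)}=\|P^{\rm top}_g u\|_{L_2(G)}$), and satisfies $(-P)^\dagger=-P$; since $(-P)+i=-(P-i)$, Lemmas~\ref{gee first lemma} and~\ref{gee second lemma} apply to $-P$ and describe $(P-i)^{-1}$. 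As $\phi\in\Sc(G)\subseteq\bigcap_{k\geqslant0}W^{mk}_2(G)$, Lemma~\ref{gee first lemma}\ref{geeflb} gives $\psi\in\bigcap_{k\geqslant0}W^{mk}_2(G)\subseteq W^{t+m}_2(G)$, and Lemma~\ref{gee second lemma} applied to $-P$ (extended from $\Sc(G)$ to $W^{t+m}_2(G)$ by density, using the boundedness of $\widetilde{(P-i)}\colon W^{t+m}_2(G)\to W^{t}_2(G)$) yields
\[
\|\psi\|_{W^{t+m}_2(G)}\leqslant c_{P,t}\,\|(P-i)\psi\|_{W^{t}_2(G)}=c_{P,t}\,\|\phi\|_{W^{t}_2(G)},
\]
where $c_{P,t}$ denotes the constant supplied by Lemma~\ref{gee second lemma} for $-P$.

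It remains to pair. Because $P=P^\dagger$, the formal adjoint of $P+i$ is $P-i$, so $\langle(P+i)h,\eta\rangle=\langle h,(P-i)\eta\rangle$ for all $h,\eta\in\Sc(G)$; both sides are continuous in $\eta$ with respect to $\|\cdot\|_{W^{t+m}_2(G)}$ — the left because $(P+i)h\in\Sc(G)\subseteq W^{-(m+t)}_2(G)=(W^{m+t}_2(G))'$, the right because $\widetilde{(P-i)}\colon W^{t+m}_2(G)\to W^{t}_2(G)$ is bounded and $h\in\Sc(G)\subseteq W^{-t}_2(G)=(W^{t}_2(G))'$ — so the identity holds for $\eta=\psi$. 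Since $\widetilde{(P-i)}\psi=(\widetilde{P}-i)\psi=\phi$ (here Theorem~\ref{elliptic regularity theorem} is used to identify $\widetilde{(P-i)}|_{W^m_2(G)}$ with the self-adjoint realization), we obtain $\langle h,\phi\rangle=\langle(P+i)h,\psi\rangle$, whence
\[
|\langle h,\phi\rangle|\leqslant\|(P+i)h\|_{W^{-(m+t)}_2(G)}\,\|\psi\|_{W^{t+m}_2(G)}\leqslant c_{P,t}\,\|(P+i)h\|_{W^{-(m+t)}_2(G)}\,\|\phi\|_{W^{t}_2(G)},
\]
and the supremum over $0\neq\phi\in\Sc(G)$ finishes the proof.

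I expect no genuinely difficult step here; the only care needed is in the duality bookkeeping — extending the $L_2$ pairing to the pairings $W^{-\sigma}_2(G)\times W^{\sigma}_2(G)$ for non-integer $\sigma$ (immediate from the functional calculus of $1-\Delta$), and justifying, by density, that both the adjoint identity $\langle(P+i)h,\eta\rangle=\langle h,(P-i)\eta\rangle$ and the estimate of Lemma~\ref{gee second lemma} for $-P$ pass from $\Sc(G)$ to $W^{t+m}_2(G)$.
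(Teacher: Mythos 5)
Your proof is correct and follows essentially the same duality route as the paper: both realize $\|h\|_{W^{s+m}_2}$ as a supremum over the dual space $W^{-s-m}_2$, move $(P\pm i)^{-1}$ onto the test function via the symmetry $P=P^\dagger$, and then invoke the already-established mapping bounds of Lemmas~\ref{gee first lemma} and~\ref{gee second lemma}. The only cosmetic difference is that you apply $(\widetilde{P}-i)^{-1}$ to $\phi$ so as to land directly on $\|(P+i)h\|_{W^s_2}$, whereas the paper applies $(P+i)^{-1}$ to $w$ and arrives at $\|(P-i)h\|_{W^s_2}$ (relying implicitly on the symmetry $P\mapsto -P$); your sign bookkeeping, and the explicit remark that $-P$ is uniformly Rockland with the same constant, is if anything slightly cleaner.
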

\begin{proof} By Lemma \ref{gee first lemma}\ref{geeflb}, an arbitrary element $w\in \cap_{k\geqslant0}W^{mk}_2(G)$ can be represented as $w=(P+i)u,$ $u\in\cap_{k\geqslant0}W^{mk}_2(G).$ Thus,
\begin{align}
    &\sup_{\substack{w\in\Sc(G)\\ \|w\|_{W^{-s-m}_2(G)}\leqslant1}}\|(P+i)^{-1}w\|_{W^{(-s-m)+m}_2(G)}\nonumber\\
    &\qquad \leqslant \sup_{\substack{w\in\cap_{k\geqslant0}W^{mk}_2(G)\\ \|w\|_{W^{-s-m}_2(G)}\leqslant1}}\|(P+i)^{-1}w\|_{W^{(-s-m)+m}_2(G)}\nonumber\\
    &\qquad =\sup_{\substack{u\in\cap_{k\geqslant0}W^{mk}_2(G)\\ \|(P+i)u\|_{W^{-s-m}_2(G)}\leqslant1}}\|u\|_{W^{(-s-m)+m}_2(G)}\nonumber\\
    &\qquad =\sup_{\substack{u\in\Sc(G)\\ \|(P+i)u\|_{W^{-s-m}_2(G)}\leqslant1}}\|u\|_{W^{(-s-m)+m}_2(G)}\stackrel{Lem.\ref{gee second lemma}}{\leqslant} c_{P,-s-m}.\label{gee_third_lemma_norm_comparison}
\end{align}
In the final step we have used $-s-m>0.$

We have
$$\|h\|_{W^{s+m}_2(G)}=\sup_{\substack{w\in\Sc(G)\\ \|w\|_{W^{-s-m}_2(G)}\leqslant1}}|\langle h, w\rangle|.$$
Since $P$ is symmetric,
$$\langle h,w\rangle=\langle (P-i)h,(P+i)^{-1}w\rangle,\quad v,w\in\Sc(G).$$
Thus,
$$|\langle h, w\rangle|\leqslant \|(P-i)h\|_{W^s_2(G)}\|(P+i)^{-1}w\|_{W^{-s}_2(G)},\quad v,w\in\Sc(G).$$
It follows that
$$\|h\|_{W^{s+m}_2(G)}\leqslant \|(P-i)h\|_{W^s_2(G)}\cdot\sup_{\substack{w\in\Sc(G)\\ \|w\|_{W^{-s-m}_2(G)}\leqslant1}}\|(P+i)^{-1}w\|_{W^{(-s-m)+m}_2(G)}.$$
The assertion follows now from \eqref{gee_third_lemma_norm_comparison}.
\end{proof}

\begin{lemma}\label{similar to zimmer lemma} Let $P=P^{\dagger}$ be an  uniformly Rockland  order $m$ differential operator on $G.$ For every $c\in\mathbb{R}$ with sufficiently large $|c|,$ the mapping
$$\widetilde{P}+ic:L_2(G)\to W^{-m}_2(G)$$
is an isomorphism.
\end{lemma}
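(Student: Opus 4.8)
The plan is to copy the structure of the proof of Lemma \ref{zimmer theorem}, interchanging the pair of spaces $(W^m_2(G),L_2(G))$ with $(L_2(G),W^{-m}_2(G))$ and interchanging the forward estimate (Theorem \ref{forward elliptic estimate}) with the backward estimate (Theorem \ref{backward elliptic estimate}). First I would fix $c\in\mathbb{R}$ with $|c|$ large enough that both Theorem \ref{forward elliptic estimate} and Theorem \ref{backward elliptic estimate} apply, and recall from Lemma \ref{differential_operators_are_bounded} that $\widetilde{P}+ic$ is bounded from $L_2(G)$ to $W^{-m}_2(G)$.

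For injectivity and closedness of the range I would apply Theorem \ref{backward elliptic estimate}: for $u\in\Sc(G)$ one has $\|(\widetilde{P}+ic)u\|_{W^{-m}_2(G)}\geqslant\widetilde{c_P}\|u\|_{L_2(G)}$, and since $\Sc(G)$ is dense in $L_2(G)$ and $\widetilde{P}+ic$ is continuous, this bound extends to all $u\in L_2(G)$; hence $\widetilde{P}+ic$ is injective with closed range. For surjectivity it then suffices to show that the range is dense, and for this I would run the Hahn--Banach / annihilator argument. Identifying $(W^{-m}_2(G))^{*}=W^m_2(G)$, take $h\in W^m_2(G)$ with $\langle(\widetilde{P}+ic)u,h\rangle=0$ for all $u\in\Sc(G)$; using $P=P^{\dagger}$ together with \eqref{def-P-dagger} and Remark \ref{rmk-P-P-dagg-P-tilde-bdd}, exactly as in the proof of Lemma \ref{zimmer theorem}, this rewrites as $\langle u,(\widetilde{P}-ic)h\rangle=\langle(P+ic)u,h\rangle=0$ for all $u\in\Sc(G)$, i.e. $(\widetilde{P}-ic)h=0$ in $\Sc'(G)$. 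Since $h\in W^m_2(G)$ and $|-c|=|c|$ is large, Lemma \ref{zimmer theorem} applied with $-c$ in place of $c$ shows that $\widetilde{P}-ic:W^m_2(G)\to L_2(G)$ is an isomorphism, in particular injective, so $h=0$. Thus the range is both closed and dense, hence all of $W^{-m}_2(G)$, and combined with the lower bound from the previous step this shows $\widetilde{P}+ic:L_2(G)\to W^{-m}_2(G)$ is a Banach-space isomorphism.

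The only delicate point is the bookkeeping around the duality $(W^{-m}_2(G))^{*}=W^m_2(G)$ and the transfer of the operator across the pairing, keeping track of the sesquilinear convention so that $P+ic$ on one side matches $\widetilde{P}-ic$ on the other; but this is precisely the computation already carried out in the proof of Lemma \ref{zimmer theorem}, so there is no genuinely new difficulty. One could equally well replace the final appeal to Lemma \ref{zimmer theorem} by observing that $(\widetilde{P}-ic)h=0$ with $h\in L_2(G)$ forces $\widetilde{P}h\in L_2(G)$, so Theorem \ref{elliptic regularity theorem} \ref{erta} yields $h\in W^m_2(G)$ and then Corollary \ref{forward elliptic estimate 2} yields $h=0$.
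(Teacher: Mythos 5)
Your proposal is correct and follows essentially the same route as the paper: Theorem \ref{backward elliptic estimate} gives injectivity and a lower bound (hence closed range), and a Hahn--Banach annihilator argument together with $P=P^{\dagger}$ and a forward estimate gives density, hence surjectivity. The only cosmetic differences are that the paper realises the annihilator condition via the Hilbert inner product on $W^{-m}_2(G)$ and the isometry $(1-\Delta)^{-\frac{m}{v}}:W^{-m}_2(G)\to W^m_2(G)$ rather than an abstract identification $(W^{-m}_2(G))^{*}\cong W^m_2(G)$, and the paper closes with Corollary \ref{forward elliptic estimate 2} applied to $(1-\Delta)^{-\frac{m}{v}}w$ rather than citing Lemma \ref{zimmer theorem} with $-c$ --- but these amount to the same thing, and indeed your second alternative ending is precisely the paper's.
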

\begin{proof} 
Let $c\in\mathbb{R}$ be such that $|c|$ is sufficiently large. It follows from Theorem \ref{backward elliptic estimate} that
$(\widetilde{P}+ic):L_2(G)\to W^{-m}_2(G)$ is injective. It remains to prove surjectivity of $(\widetilde{P}+ic):L_2(G)\to W^{-m}_2(G).$

Select $w\in W^{-m}_2(G)$ such that
$$\langle (\widetilde{P}+ic)u,w\rangle_{W^{-m}_2(G)}=0,\quad u\in L_2(G).$$
In particular,
$$\langle (P+ic)u,w\rangle_{W^{-m}_2(G)}=0,\quad u\in \Sc(G).$$
By definition, for $u, w\in \Sc(G)$,
$$\langle (P+ic)u,w\rangle_{W^{-m}_2(G)}=\langle (P+ic)u,(1-\Delta)^{-\frac{m}{{v}}} w\rangle.$$
Since $(1-\Delta)^{-\frac{m}{{v}} } w\in W^m_2(G)$ and since $P=P^{\dagger},$ it follows that
$$\langle u,(P-ic)\big((1-\Delta)^{-\frac{m}{{v}}} w\big)\rangle_{L_2(G)}=0,\quad u\in \Sc(G).$$
Hence,
$$(P-ic)\big((1-\Delta)^{-\frac{m}{{v}}} w\big)=0.$$
It follows from Theorem \ref{forward elliptic estimate 2} that $(1-\Delta)^{-\frac{m}{{v}}} w=0.$ This implies $w=0.$ Therefore, $(\widetilde{P}+ic):L_2(G)\to W^{-m}_2(G)$ is surjective.
\end{proof}

\begin{lemma}\label{gee fourth lemma} Let $P=P^{\dagger}$ be an  uniformly Rockland  differential operator of order $m.$ For every $s\in[-m,0]$ and for every $c\in\mathbb{R}$ with sufficiently large $|c|,$ we have
$$\|h\|_{W^{s+m}_2(G)}\leqslant c_{P,c}\|(P+ic)h\|_{W^s_2(G)},\quad h\in \Sc(G).$$
\end{lemma}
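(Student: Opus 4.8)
The plan is to derive the case $s\in[-m,0]$ by complex interpolation between the two endpoints $s=0$ and $s=-m$, which are already essentially settled by Lemmas \ref{zimmer theorem} and \ref{similar to zimmer lemma} (these in turn resting on Theorems \ref{forward elliptic estimate 2} and \ref{backward elliptic estimate}). First I would fix $|c|$ large enough that \emph{both} of these isomorphism statements hold simultaneously: $\widetilde{P}+ic:W^m_2(G)\to L_2(G)$ is an isomorphism by Lemma \ref{zimmer theorem}, and $\widetilde{P}+ic:L_2(G)\to W^{-m}_2(G)$ is an isomorphism by Lemma \ref{similar to zimmer lemma}. Write $R_c$ and $S_c$ for the two inverse maps.

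The key preliminary point is that $R_c$ and $S_c$ are two restrictions of one and the same linear operator $(P+ic)^{-1}$ on $\Sc'(G)$. To see this, take $w\in L_2(G)\subseteq W^{-m}_2(G)$ and put $h=S_c w\in L_2(G)$; then $\widetilde{P}h=w-ich\in L_2(G)$, so the elliptic regularity assertion of Theorem \ref{elliptic regularity theorem}\,\ref{erta} gives $h\in W^m_2(G)$, and hence $h=R_c w$ as well. Consequently $(P+ic)^{-1}$ is a well-defined operator that is bounded both as $L_2(G)\to W^m_2(G)$ and as $W^{-m}_2(G)\to L_2(G)$.

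Next I would interpolate. Writing $L_2(G)=W^0_2(G)$ and applying the complex interpolation identity \eqref{interpolation} of the Folland--Stein scale, for $\theta\in[0,1]$ the operator $(P+ic)^{-1}$ maps $(W^{-m}_2(G),W^0_2(G))_\theta=W^{-(1-\theta)m}_2(G)$ boundedly into $(W^0_2(G),W^m_2(G))_\theta=W^{\theta m}_2(G)$. Setting $s=-(1-\theta)m$, so that $\theta=(s+m)/m$ and $s$ ranges over $[-m,0]$, this says exactly that $(P+ic)^{-1}:W^s_2(G)\to W^{s+m}_2(G)$ is bounded; let $c_{P,c}$ denote its operator norm. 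Finally, for $h\in\Sc(G)$ we have $h\in W^m_2(G)$ and $(\widetilde{P}+ic)h=(P+ic)h\in\Sc(G)\subseteq W^s_2(G)$, so $(P+ic)^{-1}$ applied to $(P+ic)h$ returns $h$, and
$$\|h\|_{W^{s+m}_2(G)}=\|(P+ic)^{-1}(P+ic)h\|_{W^{s+m}_2(G)}\leqslant c_{P,c}\|(P+ic)h\|_{W^s_2(G)},$$
which is the claimed inequality.

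There is no serious obstacle here. The only step that requires genuine attention is the compatibility of the two inverses, which legitimizes interpolating a single operator between the endpoint couples; this is precisely where the elliptic regularity part of Theorem \ref{elliptic regularity theorem} is used. Everything else is a routine application of the interpolation property \eqref{interpolation}.
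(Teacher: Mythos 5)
Your proof is correct and follows the same complex-interpolation route as the paper: both interpolate the bounded maps $(\widetilde{P}+ic)^{-1}:L_2(G)\to W^m_2(G)$ (Lemma \ref{zimmer theorem}) and $(\widetilde{P}+ic)^{-1}:W^{-m}_2(G)\to L_2(G)$ (Lemma \ref{similar to zimmer lemma}) across the Folland--Stein scale. The one place you go beyond the paper is in spelling out that the two endpoint inverses are restrictions of a single linear operator: the paper simply writes $(\widetilde{P}+ic)^{-1}$ in both roles and invokes interpolation without comment, whereas you justify the compatibility by passing a vector $w\in L_2(G)$ through $S_c$, observing $\widetilde P h=w-ich\in L_2(G)$, and then applying the elliptic regularity statement Theorem \ref{elliptic regularity theorem}\,\ref{erta} to conclude $h\in W^m_2(G)$. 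This is a legitimate and worthwhile clarification, since compatibility of the two maps on $A_0\cap A_1$ is precisely what is required to form an interpolation couple of operators. Your closing step is also a little more direct: for $h\in\Sc(G)$ you apply the interpolated inverse to $(P+ic)h\in\Sc(G)\subseteq L_2(G)=A_0\cap A_1$ and recover $h$ immediately, rather than invoking the bijectivity statement in Lemma \ref{gee first lemma}\,\ref{geeflb} for $c^{-1}P$ on $\bigcap_k W^{mk}_2(G)$ as the paper does; both are fine, yours is slightly leaner.
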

\begin{proof} By Lemma \ref{similar to zimmer lemma},
$$(\widetilde{P}+ic)^{-1}:W^{-m}_2(G)\to L_2(G)$$
is a well defined bounded mapping. By Lemma \ref{zimmer theorem},
$$(\widetilde{P}+ic)^{-1}:L_2(G)\to W^m_2(G)$$
is a well defined bounded mapping. By complex interpolation,
$$(\widetilde{P}+ic)^{-1}:W^s_2(G)\to W^{s+m}_2(G),\quad s\in [-m,0],$$
is a bounded mapping (and its norm is bounded uniformly in $s\in[-m,0]$). In particular,
$$\|(P+ic)^{-1}u\|_{W^{s+m}_2(G)}\leqslant c_{P,c}\|u\|_{W^s_2(G)},\quad u\in\bigcap_{k\geqslant0}W^{km}_2(G).$$
Applying Lemma \ref{gee first lemma} \ref{geeflb} to the operator $c^{-1}P,$ we represent an arbitrary element $h \in \bigcap_{k\geqslant0}W^{mk}_2(G)$ can be represented as $h =(P+ic)^{-1}u,$ $u\in\bigcap_{k\geqslant0}W^{mk}_2(G).$ Thus,
$$\|h\|_{W^{s+m}_2(G)}\leqslant c_{P,c}\|(P+ic)h\|_{W^s_2(G)},\quad v\in \bigcap_{k\geqslant0}W^{mk}_2(G).$$
The assertion follows immediately.
\end{proof}

\begin{proof}[Proof of Theorem \ref{general elliptic estimate theorem}] The assertion follows from Lemma \ref{gee second lemma} (for $s\geqslant0$) or Lemma \ref{gee third lemma} (for $s\leqslant -m$) or Lemma \ref{gee fourth lemma} (for $s\in[-m,0]$) and triangle inequality.
\end{proof}

Now we remove the condition $P^{\dagger}=P$ and obtain a more general statement, which is stated in the introduction as Theorem \ref{general elliptic estimate theorem 2}.
\begin{theorem}
Let $P$ be a differential operator of order $m$. If $P$ is  uniformly Rockland, then for every $s\in \mathbb{R}$, there exist constants $c_{P,s,1}, c_{P,s,2}>0$ such that 
\begin{equation*}
	c_{P,s,1}\|h\|_{W^{s+m}_2(G)}\leqslant \|Ph\|_{W^s_2(G)} + c_{P,s,2}\|h\|_{L_2(G)} ,\quad h\in \Sc(G).
\end{equation*}
\end{theorem}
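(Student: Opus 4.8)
The idea is to reduce to the self‑adjoint case already handled in Theorem \ref{general elliptic estimate theorem} by replacing $P$ with $P^{\dagger}P$. First I would set $Q=P^{\dagger}P$. Since formal adjoint satisfies $(P^{\dagger})^{\dagger}=P$ and $(AB)^{\dagger}=B^{\dagger}A^{\dagger}$, we have $Q^{\dagger}=P^{\dagger}(P^{\dagger})^{\dagger}=P^{\dagger}P=Q$, so $Q$ is symmetric; it is a differential operator of order $2m$ with coefficients in $C^{\infty}_b(G)$; and by Lemma \ref{lem-P-elliptic-P*P} it is uniformly Rockland because $P$ is. Hence $Q$ satisfies the hypotheses of Theorem \ref{general elliptic estimate theorem}.

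Next, apply Theorem \ref{general elliptic estimate theorem} to $Q$ with Sobolev parameter $s-m$ (note $(s-m)+2m=s+m$): there are constants $c_{0}>0$ and $R$ such that for every real $c$ with $|c|>R$,
\[
c_{0}\,\|h\|_{W^{s+m}_2(G)}\leqslant \|(Q+ic)h\|_{W^{s-m}_2(G)},\qquad h\in\Sc(G).
\]
Fix one such $c$, say $c=R+1$, so that $|c|$ becomes a fixed constant. By the triangle inequality in $W^{s-m}_2(G)$,
\[
c_{0}\,\|h\|_{W^{s+m}_2(G)}\leqslant \|Qh\|_{W^{s-m}_2(G)}+|c|\,\|h\|_{W^{s-m}_2(G)}.
\]
For the first term, write $Qh=P^{\dagger}(Ph)$ with $Ph\in\Sc(G)\subset W^{s}_2(G)$. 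By Lemma \ref{differential_operators_are_bounded}, the distributional extension of the order‑$m$ operator $P^{\dagger}$ restricts to a bounded map $W^{s}_2(G)\to W^{s-m}_2(G)$, and it agrees with $P^{\dagger}$ on $\Sc(G)$; therefore $\|Qh\|_{W^{s-m}_2(G)}\leqslant C_{1}\|Ph\|_{W^{s}_2(G)}$ with $C_{1}=\|P^{\dagger}\|_{W^{s}_2(G)\to W^{s-m}_2(G)}$, a constant depending only on $P$ and $s$ (crucially, there is no $\|h\|$ on the right here).

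It remains to control $|c|\,\|h\|_{W^{s-m}_2(G)}$ by a small multiple of $\|h\|_{W^{s+m}_2(G)}$ plus a multiple of $\|h\|_{L_2(G)}$, which is the only point requiring a little care. If $s\leqslant m$, then $s-m\leqslant 0$ and the Sobolev embedding (property \ref{item-Sobolev-emb}) gives $\|h\|_{W^{s-m}_2(G)}\leqslant\|h\|_{L_2(G)}$ outright. If $s>m$, then $0<s-m<s+m$ and interpolation inequality \eqref{eq-interpolation-norm} with $s_{0}=0$, $s_{1}=s+m$ and $\theta=\tfrac{s-m}{s+m}$, followed by Young's inequality, yields for every $\delta>0$ a constant $C_{\delta}$ with $\|h\|_{W^{s-m}_2(G)}\leqslant\delta\|h\|_{W^{s+m}_2(G)}+C_{\delta}\|h\|_{L_2(G)}$. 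Choosing $\delta$ so that $|c|\,\delta\leqslant \tfrac12 c_{0}$, absorbing that term on the left, and dividing by $\tfrac12 c_{0}$ (or, in the case $s\leqslant m$, simply dividing by $c_{0}$), we obtain
\[
\|h\|_{W^{s+m}_2(G)}\leqslant \frac{2C_{1}}{c_{0}}\,\|Ph\|_{W^{s}_2(G)}+\frac{2|c|C_{\delta}}{c_{0}}\,\|h\|_{L_2(G)},
\]
which is the desired estimate with $c_{P,s,1}=\tfrac{c_{0}}{2C_{1}}$ and $c_{P,s,2}=\tfrac{2|c|C_{\delta}C_{1}^{-1}}{1}$ (in the case $s\leqslant m$ one simply takes $C_{\delta}=1$ and $\delta=0$). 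The main technical content is all borrowed from Theorem \ref{general elliptic estimate theorem}; the only genuinely new maneuvers are the passage to $P^{\dagger}P$ via Lemma \ref{lem-P-elliptic-P*P} and the interpolation–absorption bookkeeping just described.
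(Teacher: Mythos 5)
Your proof is correct and follows essentially the same route as the paper's: pass to $Q=P^{\dagger}P$ via Lemma \ref{lem-P-elliptic-P*P}, apply Theorem \ref{general elliptic estimate theorem} to $Q$, peel off one factor of $P^{\dagger}$ using its $W^{s}_2\to W^{s-m}_2$ boundedness, and absorb the intermediate Sobolev norm by interpolation (or simply the Sobolev embedding when $s\leqslant m$). The only cosmetic difference is that you apply the theorem directly at parameter $s-m$ rather than at $s$ and shifting afterward, and you correctly state the interpolation exponent $\theta=\tfrac{s-m}{s+m}$ where the paper's expression reads $\tfrac{2m}{s+m}$ (which is $1-\theta$).
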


\begin{proof}
	For uniformly Rockland operators $P$,  Lemma \ref{lem-P-elliptic-P*P} guarantees $P^{\dagger}P$ is also uniformly Rockland.
	For any fixed $s\in \R$, apply Theorem \ref{general elliptic estimate theorem} to $P^{\dagger}P$, there exist constants  $c_{P,s}>0$ and $R_{P,s}>0$ such that
	\begin{multline}\label{eq-gen2-1}
		c_{P,s}\|h\|_{W^{s+2m}_2(G)}
		\leqslant \|(P^{\dagger}P +iR_{P,s})h\|_{W^s_2(G)}\\
		\leqslant \Vert P^{\dagger}P h \Vert_{W^s_2(G)} + R_{P,s}\Vert h\Vert_{W^s_2(G)},
		\quad h\in \Sc(G).
	\end{multline}
	Recall Definition \ref{def-differential-op}, $P^\dagger = \sum_{\len(\alpha)\leqslant m} (X^{\alpha})^\dagger M_{\overline{a_{\alpha}}}$. By Lemma \ref{multiplication_lemma}, there exists a constant $C'_{P,s}>0$ such that
	\begin{equation}\label{eq-gen2-2}
		\Vert P^{\dagger} h\Vert_{W^s_2(G)} \leqslant C'_{P,s} \Vert h\Vert_{W^{s+m}_2(G)},
		\quad h\in \Sc(G).
	\end{equation}
	Combine \eqref{eq-gen2-1} and \eqref{eq-gen2-2}, we have
	\begin{equation*}
		c_{P,s}\|h\|_{W^{s+2m}_2(G)} \leqslant C'_{P,s} \Vert P h \Vert_{W^{s+m}_2(G)} + R_{P,s}\Vert h\Vert_{W^s_2(G)},
		\quad h\in \Sc(G).
	\end{equation*}
	Now, we divide constant $C'_{P,s}$ both sides. Since $s\in \R$ is arbitrary, we replace $s$ with $s-m$. Thus, there exist constants $c_{P,s,1}, c_{P,s,2}>0$ such that
	\begin{equation}\label{eq-pf-generalTh-s}
		c_{P,s,1}\|h\|_{W^{s+m}_2(G)} \leqslant  \Vert P h \Vert_{W^{s}_2(G)} + c_{P,s,2}\Vert h\Vert_{W^{s-m}_2(G)},
		\quad h\in \Sc(G).
	\end{equation}	
	When $s-m<0$, by property \eqref{item-Sobolev-emb} in Section \ref{sec-3} or \cite[Theorem 4.4.3]{FischerRuzhansky2016}, $\Vert h\Vert_{W^{s-m}_2(G)} \leqslant \Vert h\Vert_{L_2(G)}$, the assertion follows.
	Since order $m=0$ makes the assertion trivial, we assume $m > 0$ and $s-m>0$, recall interpolation of Sobolev space \eqref{eq-interpolation-norm} and Young's inequality, set $\theta = \frac{2m}{s+m}$, clearly $0< \theta < 1 $. For any $\epsilon >0$, the second term on the right hand side of \eqref{eq-pf-generalTh-s} become
	\begin{multline}\label{eq-pf-generalTh-interpolation}
		\Vert h\Vert_{W^{s-m}_2(G)} 
		\leqslant  \Vert h\Vert^{1-\theta}_{L_2(G)}\cdot \Vert h\Vert^{\theta}_{W^{s+m}_2(G)}\\
		\leqslant (1-\theta)(\frac{1}{\epsilon})^{\frac{1}{1-\theta}}\Vert h\Vert_{L_2(G)} 
			+ \theta \epsilon^{\frac{1}{\theta}} \Vert h\Vert_{W^{s+m}_2(G)}
	\end{multline}
	Substitute \eqref{eq-pf-generalTh-interpolation} back into \eqref{eq-pf-generalTh-s},	\begin{multline*}
		(c_{P,s,1}-c_{P,s,2}\theta \epsilon^{\frac{1}{\theta}}) \|h\|_{W^{s+m}_2(G)} \\
		\leqslant  \Vert P h \Vert_{W^{s}_2(G)} + c_{P,s,2} (1-\theta)(\frac{1}{\epsilon})^{\frac{1}{1-\theta}} \Vert h\Vert_{L_2(G)},
		\quad h\in \Sc(G).
	\end{multline*}
	Let $\epsilon$ be small enough, and rename constants,  we prove the assertion.
\end{proof}


\subsection{Example}\label{example 4 section}   

\begin{example}\label{example-Heins-op}
Let $\mathfrak{h}_1=\mathrm{span}\{X,Y,T\}$ be the three-dimensional Heisenberg algebra, and let $\mathbb{H}^1$ be the corresponding Heisenberg group. Let $f\in C^\infty_b(\mathbb{H}^1).$  Differential operator 
$$P=-X^2-Y^2+iM_fT$$
is  uniformly Rockland  if and only
$$\inf_{g\in\mathbb{H}^1}{\rm dist}(f(g),2\mathbb{Z}+1)>0.$$
\end{example}
\begin{proof} Consider irreducible representations $\pi_+$ and $\pi_-$ of $\mathbb{H}^1$ on $L_2(\mathbb{R})$ defined by the formulae
$$\pi_+(X)=ip,\quad \pi_+(Y)=iq,\quad \pi_+(T)=i,$$
$$\pi_-(X)=ip,\quad \pi_-(Y)=-iq,\quad \pi_-(T)=-i.$$

Recall Definition \ref{def-P-top}, we have
$$P_g=P_g^{{\rm top}} =-X^2-Y^2+if(g)T.$$
Thus,
$$\pi_{\pm}(P_g^{{\rm top}})=p^2+q^2\mp f(g).$$
If $P$ is elliptic, then
$$\sup_{g\in\mathbb{H}^1}\|(p^2+q^2\mp f(g))^{-1}\|_{\infty}<\infty.$$
Since ${\rm spec}(p^2+q^2)=2\mathbb{Z}_++1,$ it follows that
$$\sup_{g\in\mathbb{H}^1}\sup_{n\in 2\mathbb{Z}_++1}|(n- f(g))^{-1}|<\infty,\quad \sup_{g\in\mathbb{H}^1}\sup_{n\in 2\mathbb{Z}_++1}|(n+f(g))^{-1}|<\infty.$$
In other words,
$$\inf_{g\in\mathbb{H}^1}{\rm dist}(f(g),2\mathbb{Z}_++1)>0,\quad \inf_{g\in\mathbb{H}^1}{\rm dist}(-f(g),2\mathbb{Z}+1)>0.$$
This proves necessity.

Proof of sufficiency is similar.
\end{proof}


\appendix

\section{Higson's positivity criterion}\label{higson_appendix}
The results in this section were originally proved by Higson for the upcoming work \cite{HLMSZ}. With his kind permission, we reproduce them here.
\subsection{Positivity for differential operators on nilpotent Lie groups}

The following theorem characterises the positivity of right-invariant differential operators on a nilpotent Lie group in terms of their images under unitary representations.
\begin{theorem} \label{thm-abstract-rockland-general-g} Let $G$ be a nilpotent Lie group. If $D\in\mathcal{U}(\mathfrak{g})$ is such that
$$\langle u, Du \rangle_{L_2 (G)} \ge 0,\quad u\in\Sc(G),$$
then
$$\langle v, \pi(D) v \rangle_{H_\pi} \ge0,\quad v\in H^{\infty}_{\pi}$$
for every $\pi\in\widehat{G}.$ Here, $H^{\infty}_{\pi}$ denotes the set of all smooth vectors.
\end{theorem}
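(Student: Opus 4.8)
The plan is to route the argument through the group $C^{*}$-algebra, exploiting that a nilpotent Lie group is amenable. The starting point is the elementary commutation identity $\rho(D)\rho(f)=\rho(Df)$, valid for any unitary representation $\rho$ of $G$, any $f\in C^{\infty}_{c}(G)$, and any $D\in\mathcal{U}(\mathfrak{g})$; here $Df$ is the function obtained by letting $D$ act on $f$ via right-invariant vector fields as in \eqref{eq-def-right-action-Lie-alg}. For $D=X\in\mathfrak{g}$ this is the substitution $g\mapsto\exp(-tX)g$ in $\int f(g)\rho(\exp(tX)g)\,dg$ combined with left-invariance of Haar measure, and the general case follows by induction on the degree, since $\rho(f)w$ is a smooth vector. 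Applied to the left regular representation $\lambda$ on $L_{2}(G)$, and with $f^{*}(g)=\overline{f(g^{-1})}$ so that $\lambda(f)^{*}=\lambda(f^{*})$, this gives $\lambda(f)^{*}\lambda(D)\lambda(f)=\lambda(f^{*}*(Df))$, a \emph{bounded} operator because $f^{*}*(Df)\in C^{\infty}_{c}(G)\subseteq L^{1}(G)$. Since $f*u\in\Sc(G)$ whenever $u\in\Sc(G)$, the hypothesis yields
$$\langle u,\lambda(f^{*}*(Df))u\rangle_{L_{2}(G)}=\langle f*u,\,D(f*u)\rangle_{L_{2}(G)}\geqslant 0,$$
and by density this holds for all $u\in L_{2}(G)$; hence $f^{*}*(Df)$ is a positive element of the reduced $C^{*}$-algebra $C^{*}_{r}(G)$.

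Next I would invoke amenability: since $G$ is nilpotent, hence amenable, the canonical surjection $C^{*}(G)\to C^{*}_{r}(G)$ is an isomorphism, so $f^{*}*(Df)$ is positive already in $C^{*}(G)$. Consequently $\pi(f^{*}*(Df))\geqslant 0$ for every $\pi\in\widehat{G}$, because $\pi$ extends to a $*$-representation of $C^{*}(G)$ and $*$-representations preserve positivity. Reading this back through the commutation identity, for any $w\in H_{\pi}$ the vector $v:=\pi(f)w\in H^{\infty}_{\pi}$ satisfies
$$\langle v,\pi(D)v\rangle_{H_{\pi}}=\langle w,\pi(f)^{*}\pi(D)\pi(f)w\rangle_{H_{\pi}}=\langle w,\pi(f^{*}*(Df))w\rangle_{H_{\pi}}\geqslant 0.$$

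To reach an arbitrary $v\in H^{\infty}_{\pi}$ I would use a mollification argument. Fix an approximate identity $(\phi_{n})\subseteq C^{\infty}_{c}(G)$ with $\phi_{n}\geqslant 0$, $\int\phi_{n}=1$, and $\supp(\phi_{n})$ shrinking to $1_{G}$. Each $\pi(\phi_{n})v$ is of the special form just treated (with $f=\phi_{n}$ and $w=v$), so $\langle\pi(\phi_{n})v,\pi(D)\pi(\phi_{n})v\rangle_{H_{\pi}}\geqslant 0$. On the other hand $\pi(\phi_{n})v\to v$ in the smooth topology of $H^{\infty}_{\pi}$: from $\pi(X)\pi(g)=\pi(g)\pi(\Ad(g^{-1})X)$ one obtains, for every $D'\in\mathcal{U}(\mathfrak{g})$,
$$\pi(D')\pi(\phi_{n})v=\int_{G}\phi_{n}(g)\,\pi(g)\,\pi(\Ad(g^{-1})D')v\,dg\xrightarrow{\;n\to\infty\;}\pi(D')v .$$
In particular $\pi(\phi_{n})v\to v$ and $\pi(D)\pi(\phi_{n})v\to\pi(D)v$ in $H_{\pi}$, and passing to the limit in the preceding inequality gives $\langle v,\pi(D)v\rangle_{H_{\pi}}\geqslant 0$. (Alternatively, one may invoke the Dixmier--Malliavin factorisation $H^{\infty}_{\pi}=\operatorname{span}\{\pi(f)w\}$ and run the same $C^{*}$-argument over matrix algebras $M_{k}(C^{*}(G))$.)

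I expect the main obstacle to be conceptual rather than technical: the whole argument hinges on the identification $C^{*}(G)=C^{*}_{r}(G)$, which is exactly where nilpotency (amenability) enters, and the conclusion is false without it — the Casimir element of $SL_{2}(\mathbb{R})$, positive in the regular representation but negative on the complementary series, is the standard counterexample. The remaining ingredients — the commutation identities, the density arguments, and the bookkeeping with the unbounded operators $\pi(D)$ on smooth vectors — are routine once the factorisation $v=\pi(f)w$ is available.
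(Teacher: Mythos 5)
Your proof is correct and follows essentially the same strategy as the paper's Appendix: you establish $\lambda(f)^{*}\lambda(D)\lambda(f)=\lambda(f^{*}*(Df))\geqslant 0$ (the paper's Lemma~\ref{nigel computaitonal lemma}), transfer positivity to every $\pi\in\widehat{G}$ via amenability and $C^{*}(G)=C^{*}_{r}(G)$ (the paper's Theorem~\ref{group_positivity_theorem}), and extend from vectors of the form $\pi(f)w$ to all of $H^{\infty}_{\pi}$ by an approximate identity converging in the Fr\'echet topology (the paper's Lemma~\ref{poor_mans_dixmier_malliavin}). The only differences are expository — e.g.\ you spell out the $\mathrm{Ad}$-twist argument for the Fr\'echet convergence that the paper leaves implicit, and you add the $SL_2(\mathbb{R})$ counterexample as motivation.
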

We state the theorem for nilpotent groups since that is our focus, but the proof of the theorem only uses the fact that $G$ is amenable.

Notice that the Plancherel theorem would imply the assertion for Plancherel almost every irreducible unitary representation $\pi\in\widehat{G}.$ Theorem \ref{thm-abstract-rockland-general-g} proves it for every $\pi\in\widehat{G}.$

\subsection{Preliminaries on the group $C^{\ast}$-algebra}
The space $L_1(G)$ is an algebra under right-convolution $\ast.$ If $\pi:G\to U(H_{\pi})$ is an irreducible strongly continuous unitary representation of $G,$ then there exists an algebra representation
$$V_{\pi}:(L_1(G),\ast)\to \mathcal{B}(H_{\pi})$$
defined by the $H_{\pi}$-valued Bochner integral
$$(V_{\pi}f)\xi = \int_{G} f(g)\pi(g)\xi\, dg,\quad \xi\in H_{\pi}.$$
By the triangle inequality, we have
$$\|V_{\pi}f\|_{B(H_{\pi})} \leqslant \|f\|_1.$$

For $f\in L_1(G),$ set 
$$f^{\#}(g):=\overline{f(g^{-1})},\quad g \in G.$$
We have
$$(V_{\pi}f)^{\ast}=V_{\pi}(f^{\#}),\quad f\in L_1(G).$$
Hence, $V_{\pi}$ is a $\ast$-representation.

The {\it full group $C^{\ast}$-algebra} $C^{\ast}(G)$ is defined as the completion of $L_1(G)$ with respect to the norm
$$\|f\|_{C^{\ast}(G)}\stackrel{def}{=}\sup_{\pi\in\widehat{G}} \|V_{\pi}f\|_{B(H_{\pi})}.$$

The {\it left-regular representation}, $(\lambda,L_2(G))$ is defined by
$$(\lambda(g)u)(h) = f(g^{-1}h),\quad g,h\in G,\quad u \in L_2(G).$$
The {\it reduced group $C^{\ast}$-algebra} $C^{\ast}_r(G)$ is defined as the completion of $L_1(G)$ with respect to the norm
$$\|f\|_{C^{\ast}_r(G)}\stackrel{def}{=}\|V_{\lambda}f\|_{B(L_2(G))}.$$

\begin{theorem}\label{group_positivity_theorem} Let $G$ be a nilpotent Lie group and let $f \in L_1(G).$ If $V_{\lambda}f\in B(L_2(G))$ is positive, then so is $V_{\pi}f\in B(H_{\pi})$ for every $\pi\in\widehat{G}.$
\end{theorem}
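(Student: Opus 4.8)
The statement is, in essence, a reformulation of the amenability of $G$: it asserts that positivity of a convolution operator can be tested on the left-regular representation alone. The plan is to phrase everything inside the group $C^{\ast}$-algebras and use that positivity of a self-adjoint element is an intrinsic $C^{\ast}$-algebraic notion — preserved by every $\ast$-homomorphism and reflected by faithful ones — together with \emph{Hulanicki's theorem}: for an amenable locally compact group $G$ the canonical surjection $q\colon C^{\ast}(G)\to C^{\ast}_r(G)$ (the identity on $L_1(G)$) is a $\ast$-isomorphism. The hypothesis enters only here, since a nilpotent Lie group is solvable, hence amenable.

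\textbf{Setup.}
First I would record that every $\pi\in\widehat G$ integrates to a $\ast$-representation of $C^{\ast}(G)$ extending $V_{\pi}$ — this is immediate from the definition of $\|\cdot\|_{C^{\ast}(G)}$ as the supremum over $\widehat G$ — which I again denote by $\pi$; likewise the left-regular representation integrates to a \emph{faithful} $\ast$-representation $\bar\lambda\colon C^{\ast}_r(G)\hookrightarrow B(L_2(G))$, by definition of $\|\cdot\|_{C^{\ast}_r(G)}$. Writing $a\in C^{\ast}(G)$ for the class of $f$, one has $V_{\pi}f=\pi(a)$ and $V_{\lambda}f=\bar\lambda(q(a))$.

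\textbf{Main argument.}
Assume $V_{\lambda}f\ge 0$ as an operator on $L_2(G)$. Identifying $C^{\ast}_r(G)$ with its image under $\bar\lambda$, it is a $C^{\ast}$-subalgebra of $B(L_2(G))$, and a self-adjoint element of a $C^{\ast}$-subalgebra is positive in the subalgebra if and only if it is positive in the ambient algebra (spectral permanence, passing to unitizations since $G$ is non-compact). Hence $q(a)\ge 0$ in $C^{\ast}_r(G)$, i.e.\ $q(a)=b^{\ast}b$ for some $b\in C^{\ast}_r(G)$. By Hulanicki's theorem $q$ is a $\ast$-isomorphism, so $a=q^{-1}(b)^{\ast}q^{-1}(b)\ge 0$ in $C^{\ast}(G)$. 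Finally, for each $\pi\in\widehat G$ the map $\pi\colon C^{\ast}(G)\to B(H_{\pi})$ is a $\ast$-homomorphism, so it carries positive elements to positive operators; therefore $V_{\pi}f=\pi(a)\ge 0$, which is exactly the asserted inequality $\langle v,\pi(f)v\rangle_{H_{\pi}}\ge 0$ for all $v\in H_{\pi}$.

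\textbf{Main obstacle.}
The only substantive input is Hulanicki's theorem, $C^{\ast}(G)=C^{\ast}_r(G)$ for amenable $G$; this is where the hypothesis on $G$ is genuinely needed, and without it one obtains only the trivial converse implication. The remaining ingredients — integrating unitary representations to $\ast$-representations of the group $C^{\ast}$-algebra, spectral permanence for $C^{\ast}$-subalgebras, and preservation of positivity under $\ast$-homomorphisms — are standard, requiring only routine care with the non-unital case.
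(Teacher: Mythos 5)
Your proposal is correct and follows essentially the same route as the paper: nilpotent $\Rightarrow$ amenable $\Rightarrow$ $C^{\ast}(G)=C^{\ast}_r(G)$, positivity of $V_{\lambda}f$ gives positivity of the class of $f$ in $C^{\ast}_r(G)=C^{\ast}(G)$, and positivity is preserved under the $\ast$-representation attached to each $\pi\in\widehat G$. The only cosmetic difference is that the paper packages the last step as a $\ast$-isomorphism onto a subalgebra of $B\bigl(\bigoplus_{\pi}H_{\pi}\bigr)$, whereas you apply each $\pi$ individually as a $\ast$-homomorphism; these are interchangeable.
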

\begin{proof} Since $G$ is nilpotent (and is, therefore, amenable), it follows that $C^{\ast}_r(G)$ coincides with $C^{\ast}(G)$ \cite[Theorem VII.2.5]{Davidson}.
If $V_{\lambda}f$ is positive in $L_2(G),$ then $f$ is positive in $C^{\ast}_r(G).$ Hence, $f$ is positive in $C^{\ast}(G).$ The full group $C^*$-algebra
$C^{\ast}(G)$ is naturally isomorphic to the closure of the set
$$\Big\{\bigoplus_{\pi\in\widehat{G}}V_{\pi}f:\ f\in L_1(G)\Big\}\subset B\Big(\bigoplus_{\pi\in\widehat{G}}H_{\pi}\Big)$$
in the uniform norm. Since $f$ is positive in $C^{\ast}(G)$ and since positivity is preserved by $\ast$-isomorphisms of $C^{\ast}$-algebras, it follows that
$$\bigoplus_{\pi\in\widehat{G}}V_{\pi}f\geqslant 0.$$
This completes the proof.
\end{proof}

\subsection{Proof of Theorem \ref{thm-abstract-rockland-general-g}}
Given a unitary representation $(\pi,H_{\pi})$ of a Lie group $G,$ the subspace $H^{\infty}_{\pi}$ of smooth vectors is the joint domain of every $\pi(P),$ where $P\in \mathcal{U}(\mathfrak{g}).$ This is equipped with a
canonical Fr\'echet topology.
\begin{definition} The Fr\'echet topology on $H_{\pi}^{\infty}$ is given by the family of seminorms
$$\|\xi\|_{\pi,n} := \Big(\sum_{{\rm len}(\alpha)\leqslant n} \|\pi(X^{\alpha})\xi\|_{H_{\pi}}^2\Big)^{\frac12},\quad \xi\in H_{\pi}^\infty,\quad n\in\mathbb{Z}_+.$$
\end{definition}

\begin{lemma}\label{poor_mans_dixmier_malliavin} Let $(\phi_{\epsilon})_{\epsilon>0}$ be an approximate identity on $G.$ For every unitary representation $\pi\colon G \to U(H_\pi)$ and for every $\xi\in H^{\infty}_{\pi},$ we have $(V_{\pi}\phi_{\epsilon})\xi\to\xi$ as $\epsilon\downarrow0$ in the Fr\'echet topology on $H_{\pi}^{\infty}.$
\end{lemma}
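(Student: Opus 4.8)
The plan is to reduce the claim to the seminorms defining the Fr\'echet topology of $H^{\infty}_{\pi}$ and then transport to that space the classical approximate-identity estimate. Since that topology is generated by the seminorms $\|\cdot\|_{\pi,n}$ --- equivalently, by the seminorms $\xi\mapsto\|\pi(X^{\alpha})\xi\|_{H_{\pi}}$ as $\alpha$ runs over all words, there being only finitely many of a given weighted length --- it suffices to prove, for each fixed word $\alpha$, that $\pi(X^{\alpha})(V_{\pi}\phi_{\epsilon})\xi\to\pi(X^{\alpha})\xi$ in $H_{\pi}$ as $\epsilon\downarrow 0$. This is the content of the \emph{poor man's} version: we obtain it without invoking the full Dixmier--Malliavin theorem.

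The main input is that, for $\xi\in H^{\infty}_{\pi}$, the orbit map $g\mapsto\pi(g)\xi$ is continuous (indeed smooth) from $G$ into $H^{\infty}_{\pi}$ with its Fr\'echet topology. I would deduce this from the identity $\pi(X_{j})\pi(g)=\pi(g)\,\pi(\Ad(g^{-1})X_{j})$: because $G$ is nilpotent the entries of $\Ad(g^{-1})$ are polynomials in $g$, and because $\Ad$ preserves the grading, $\pi(\Ad(g^{-1})X^{\alpha})\xi$ is a polynomially-weighted finite combination of vectors $\pi(X^{\gamma})\xi$ with $\len(\gamma)=\len(\alpha)$; hence $g\mapsto\pi(X^{\alpha})\pi(g)\xi$ is continuous and obeys $\|\pi(X^{\alpha})\pi(g)\xi\|_{H_{\pi}}\leqslant C_{\alpha}(1+|g|_{\hom})^{N_{\alpha}}\|\xi\|_{\pi,\len(\alpha)}$, using also the unitarity and strong continuity of $\pi$. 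Granting this, $(V_{\pi}\phi_{\epsilon})\xi=\int_{G}\phi_{\epsilon}(g)\,\pi(g)\xi\,dg$ is the Gelfand--Pettis integral of a continuous, compactly supported $H^{\infty}_{\pi}$-valued map (we may and do assume $\phi_{\epsilon}\in C_{c}(G)$ with $\supp\phi_{\epsilon}$ shrinking to $1_{G}$); since $H^{\infty}_{\pi}$ is complete and the inclusion $H^{\infty}_{\pi}\hookrightarrow H_{\pi}$ and the maps $\pi(X^{\alpha})\colon H^{\infty}_{\pi}\to H_{\pi}$ are continuous and linear, and such maps commute with the integral, we get $(V_{\pi}\phi_{\epsilon})\xi\in H^{\infty}_{\pi}$ and
\[
\pi(X^{\alpha})(V_{\pi}\phi_{\epsilon})\xi=\int_{G}\phi_{\epsilon}(g)\,\pi(X^{\alpha})\pi(g)\xi\,dg .
\]

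To conclude, I would use $\int_{G}\phi_{\epsilon}=1$ to write $\pi(X^{\alpha})\xi=\int_{G}\phi_{\epsilon}(g)\,\pi(X^{\alpha})\xi\,dg$, subtract, and estimate
\[
\bigl\|\pi(X^{\alpha})(V_{\pi}\phi_{\epsilon})\xi-\pi(X^{\alpha})\xi\bigr\|_{H_{\pi}}\leqslant\int_{G}\phi_{\epsilon}(g)\,\bigl\|\pi(X^{\alpha})\pi(g)\xi-\pi(X^{\alpha})\xi\bigr\|_{H_{\pi}}\,dg .
\]
Because $g\mapsto\pi(X^{\alpha})\pi(g)\xi$ is continuous at $g=1_{G}$, the integrand tends to $0$ uniformly on $\supp\phi_{\epsilon}$ as $\epsilon\downarrow 0$; since $\phi_{\epsilon}\geqslant 0$, $\int_{G}\phi_{\epsilon}=1$, and these supports shrink to $\{1_{G}\}$, the right-hand side tends to $0$. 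This gives convergence in each seminorm $\|\cdot\|_{\pi,n}$, hence in the Fr\'echet topology, as required.

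I expect the main obstacle to be the main input above --- that the orbit map is continuous into $H^{\infty}_{\pi}$, and, equivalently, that $(V_{\pi}\phi_{\epsilon})\xi$ is a smooth vector whose derivatives are obtained by differentiating inside the integral. This belongs to the classical theory of smooth vectors, but in a self-contained treatment the one genuine computation is the polynomial growth (and grading-invariance) of $\Ad$ on the nilpotent group $G$; the remaining steps are a routine approximate-identity argument together with standard functoriality of the Gelfand--Pettis integral.
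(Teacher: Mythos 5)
Your argument has the same structure as the paper's: establish continuity of the orbit map $g\mapsto\pi(g)\xi$ in the Fr\'echet topology of $H^\infty_\pi$, then push the approximate-identity estimate through the seminorms $\|\cdot\|_{\pi,n}$. The paper is terser, simply asserting $\lim_{g\to 1_G}\|\pi(g)\xi-\xi\|_{\pi,n}=0$ and interchanging the seminorms with the Bochner integral; you fill these two steps in via the conjugation identity $\pi(X_j)\pi(g)=\pi(g)\,\pi(\Ad(g^{-1})X_j)$ and the Gelfand--Pettis integral, which is a reasonable way to make the argument self-contained.

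There is, however, one small error in your filling-in. $\Ad$ does \emph{not} preserve the grading of $\gf$: already on the Heisenberg group, $\Ad(\exp(tX))Y=Y+tZ$ pushes a degree-$1$ element out of $V_1$. What is true, since $[V_a,V_b]\subseteq V_{a+b}$ with $a,b\geqslant 1$, is that $\ad_Y$ maps $V_j$ into $\bigoplus_{k>j}V_k$ for every $Y\in\gf$, so $\Ad(g^{-1})$ respects the decreasing filtration $\bigoplus_{k\geqslant j}V_k$ rather than the direct-sum decomposition. Consequently $\pi(\Ad(g^{-1})X^\alpha)\xi$ is a polynomially-weighted combination of $\pi(X^\gamma)\xi$ with $\len(\gamma)\geqslant\len(\alpha)$ (with $\len(\gamma)$ bounded above, since $\gf$ has finite step), and your estimate should read $\|\pi(X^\alpha)\pi(g)\xi\|_{H_\pi}\leqslant C_\alpha(1+|g|_{\hom})^{N_\alpha}\|\xi\|_{\pi,M}$ for some $M$ which in general strictly exceeds $\len(\alpha)$. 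Since $\xi\in H^\infty_\pi$ has all seminorms finite, this does not affect your conclusion, but the claim ``$\Ad$ preserves the grading'' and the identity $\len(\gamma)=\len(\alpha)$ as you wrote them are false and should be corrected.
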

\begin{proof}
Since the map $g\mapsto \pi(g)\xi$ is continuous from $G$ to $H_{\pi}$ for $\xi \in H_{\pi}^\infty,$ we have
$$\lim_{g\to 1_G} \|\pi(g)\xi-\xi\|_{\pi,n}=0,\quad n\geqslant 1.$$
Let $\{\phi_{\varepsilon}\}_{\varepsilon>0}\subset C^\infty_c(G)$ be an approximate identity, where $\phi_{\varepsilon}$ is compactly supported. Given $\xi \in H_{\pi}^\infty,$ as $\varepsilon\to 0$ we have
$$\|\xi-(V_{\pi}\phi_{\varepsilon})\xi\|_{\pi,n} \leqslant \int_{G} |\phi_{\varepsilon}(g)|\|\xi-\pi(g)\xi\|_{\pi,n}\,dg\to 0.$$
So that $\pi(\phi_{\varepsilon})\xi$ converges to $\xi$ in the topology of $H^{\infty}_{\pi}.$
\end{proof}

\begin{lemma}\label{nigel computaitonal lemma} In the assumptions of Theorem \ref{thm-abstract-rockland-general-g}, we have
$$V_{\lambda}(u^{\#}\ast Du)\geqslant0,\quad u\in\Sc(G).$$
\end{lemma}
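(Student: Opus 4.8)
The plan is to write $V_{\lambda}(u^{\#}\ast Du)$ as $V_{\lambda}(u)^{\ast}V_{\lambda}(Du)$ and thereby reduce its positivity to the hypothesis $\langle v,Dv\rangle_{L_2(G)}\geqslant 0$ of Theorem \ref{thm-abstract-rockland-general-g}, applied to the particular vectors $v=u\ast w$.

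The one structural fact that has to be checked first is the commutation identity
\[
(Df)\ast g = D(f\ast g),\qquad f,g\in\Sc(G),\ D\in\Uc(\gf),
\]
where on the right $D$ denotes the corresponding right-invariant differential operator. By linearity and by stripping the letters of a defining word of $D$ one at a time, it suffices to prove this for a single generator $X_j$. There it is a one-line computation: substituting $k\mapsto\exp(-sX_j)k$ (permissible by left-invariance of the Haar measure) in the integral defining $(X_jf)\ast g$ and differentiating at $s=0$ yields
\[
\bigl((X_jf)\ast g\bigr)(h)=\frac{d}{ds}\Big|_{s=0}\int_G f(k)\,g\bigl(k^{-1}\exp(-sX_j)h\bigr)\,dk=\bigl(X_j(f\ast g)\bigr)(h),
\]
the last equality being differentiation under the integral sign in the formula for $f\ast g$.

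Next I would invoke the two properties of $V_{\lambda}$ already recorded: it is an algebra $\ast$-representation of $(L_1(G),\ast)$, so $V_{\lambda}(f\ast g)=V_{\lambda}(f)V_{\lambda}(g)$ and $V_{\lambda}(f^{\#})=V_{\lambda}(f)^{\ast}$. Since $u\in\Sc(G)$ forces $u^{\#},Du\in\Sc(G)$ and $\Sc(G)$ is stable under convolution, we have $u^{\#}\ast Du\in\Sc(G)\subset L_1(G)$, so $V_{\lambda}(u^{\#}\ast Du)$ is a bounded operator on $L_2(G)$ and equals $V_{\lambda}(u)^{\ast}V_{\lambda}(Du)$. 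Consequently, for every $w\in\Sc(G)$,
\[
\langle w,\,V_{\lambda}(u^{\#}\ast Du)w\rangle_{L_2(G)}=\langle V_{\lambda}(u)w,\,V_{\lambda}(Du)w\rangle_{L_2(G)}=\langle u\ast w,\,(Du)\ast w\rangle_{L_2(G)}=\langle u\ast w,\,D(u\ast w)\rangle_{L_2(G)},
\]
where the last step uses the commutation identity above. Since $u\ast w\in\Sc(G)$, the hypothesis of Theorem \ref{thm-abstract-rockland-general-g} makes this quantity nonnegative. Thus $\langle w,V_{\lambda}(u^{\#}\ast Du)w\rangle\geqslant 0$ for all $w$ in the dense subspace $\Sc(G)$, and by boundedness of $V_{\lambda}(u^{\#}\ast Du)$ this persists for all $w\in L_2(G)$, which is exactly the assertion.

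I expect the commutation identity $(Df)\ast g=D(f\ast g)$ to be the only point requiring genuine care, together with the bookkeeping that keeps every operation (forming $Du$, convolving, differentiating under the integral, passing to $f\mapsto f^{\#}$) inside the class $\Sc(G)$, on which all of them are legitimate; the concluding density-and-boundedness step is routine.
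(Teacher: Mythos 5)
Your proof is correct and is essentially the paper's proof read in reverse: the paper starts from $\langle u\ast v, D(u\ast v)\rangle\geqslant 0$ and rewrites it, via $D(u\ast v)=(Du)\ast v$ and the $\ast$-representation identities $(V_\lambda u)^\ast=V_\lambda(u^\#)$, $V_\lambda(f)V_\lambda(g)=V_\lambda(f\ast g)$, into $\langle v, V_\lambda(u^\#\ast Du)v\rangle\geqslant 0$, then passes to $v\in L_2(G)$ by density; you traverse the same chain of equalities from the target expression back to the hypothesis. The only genuine addition on your part is the careful one-generator verification of the commutation identity $(Df)\ast g=D(f\ast g)$, which the paper simply asserts — that check is correct and a reasonable thing to spell out.
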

\begin{proof} By assumption, 
$$\langle w, Dw\rangle \geqslant 0,\quad w\in\Sc(G).$$
Replacing $w$ with $u\ast v$ and using the equality $D(u\ast v) =Du\ast v,$ we obtain
$$\langle u\ast v, Du\ast v\rangle \geqslant 0,\quad u,v\in\Sc(G).$$
Taking into account that
$$u\ast v=(V_{\lambda}u)v,\quad Du\ast v=(V_{\lambda}(Du))v,$$
we write
$$\langle (V_{\lambda}u)v,(V_{\lambda}(Du))v\rangle\geqslant 0,\quad u,v\in\Sc(G).$$
Hence,
$$\langle v,(V_{\lambda}u)^{\ast}(V_{\lambda}(Du))v\rangle\geqslant 0,\quad u,v\in\Sc(G).$$
Since 
$$(V_{\lambda}u)^{\ast}(V_{\lambda}(Du))=V_{\lambda}(u^{\#})(V_{\lambda}(Du))=V_{\lambda}(u^{\#}\ast Du),$$
it follows that
$$\langle v,V_{\lambda}(u^{\#}\ast Du)v\rangle \geqslant 0,\quad u,v\in\Sc(G).$$
By continuity,
$$\langle v,V_{\lambda}(u^{\#}\ast Du)v\rangle \geqslant 0,\quad u\in\Sc(G),\quad v\in L_2(G).$$
In other words,
$$V_{\lambda}(u^{\#}\ast Du)\geqslant0,\quad u\in\Sc(G).$$
\end{proof}

\begin{proof}[Proof of Theorem \ref{thm-abstract-rockland-general-g}] Let $\pi\in\widehat{G}.$ By Lemma \ref{nigel computaitonal lemma} and Theorem \ref{group_positivity_theorem},
$$(V_{\pi}u)^{\ast}\cdot V_{\pi}(Du)=V_{\pi}(u^{\#}\ast Du)\geqslant0,\quad u\in\Sc(G).$$
In other words,
$$\langle (V_{\pi}u)\xi,(V_{\pi}(Du))\xi\rangle\geqslant0,\quad u\in\Sc(G),\quad \xi\in H_{\pi}.$$
In particular,
$$\langle (V_{\pi}u)\xi,(V_{\pi}(Du))\xi\rangle\geqslant0,\quad u\in\Sc(G),\quad \xi\in H_{\pi}^{\infty}.$$
Since
$$(V_{\pi}(Du))\xi=\pi(D)((V_{\pi}u)\xi),\quad u\in\Sc(G),\quad \xi\in H^{\infty}_{\pi},$$
it follows that
$$\langle (V_{\pi}u)\xi,\pi(D)((V_{\pi}u)\xi)\rangle\geqslant0,\quad u\in\Sc(G),\quad \xi\in H_{\pi}^{\infty}.$$

Let $(\phi_{\epsilon})_{\epsilon>0}$ be an approximate identity on $G$. By the preceding paragraph,
$$\langle (V_{\pi}\phi_{\epsilon})\xi,\pi(D)((V_{\pi}\phi_{\epsilon})\xi)\rangle\geqslant0,\quad u\in\Sc(G),\quad \xi\in H_{\pi}^{\infty}.$$
By Lemma \ref{poor_mans_dixmier_malliavin},
$$\langle (V_{\pi}\phi_{\epsilon})\xi,\pi(D)((V_{\pi}\phi_{\epsilon})\xi)\rangle\to \langle \xi,\pi(D)\xi\rangle,\quad \epsilon\downarrow 0.$$
This completes the proof.
\end{proof}

\bibliographystyle{alpha}
\bibliography{bibliography.bib}

@book {Shubin-psido-2001,
    AUTHOR = {Shubin, M. A.},
     TITLE = {Pseudodifferential operators and spectral theory},
   EDITION = {Second},
      NOTE = {Translated from the 1978 Russian original by Stig I.
              Andersson},
 PUBLISHER = {Springer-Verlag, Berlin},
      YEAR = {2001},
     PAGES = {xii+288},
      ISBN = {3-540-41195-X},
   MRCLASS = {47G30 (35Sxx 58J40)},
  MRNUMBER = {1852334},
       URL = {https://doi.org/10.1007/978-3-642-56579-3},
}

@book {FischerRuzhansky2016,
    AUTHOR = {Fischer, Veronique and Ruzhansky, Michael},
     TITLE = {Quantization on nilpotent {L}ie groups},
    SERIES = {Progress in Mathematics},
    VOLUME = {314},
 PUBLISHER = {Birkh\"{a}user/Springer, [Cham]},
      YEAR = {2016},
     PAGES = {xiii+557},
      ISBN = {978-3-319-29557-2; 978-3-319-29558-9},
   MRCLASS = {22E25 (22E30 35R03 35S05 43A80 46L05)},
  MRNUMBER = {3469687},
MRREVIEWER = {Antoni Wawrzy\'{n}czyk},
       DOI = {10.1007/978-3-319-29558-9},
       URL = {https://doi-org.wwwproxy1.library.unsw.edu.au/10.1007/978-3-319-29558-9},
}

@book {FollandStein1982,
    AUTHOR = {Folland, G. B. and Stein, Elias M.},
     TITLE = {Hardy spaces on homogeneous groups},
    SERIES = {Mathematical Notes},
    VOLUME = {28},
 PUBLISHER = {Princeton University Press, Princeton, N.J.; University of
              Tokyo Press, Tokyo},
      YEAR = {1982},
     PAGES = {xii+285},
      ISBN = {0-691-08310-X},
   MRCLASS = {43A85 (22E45 42B30)},
  MRNUMBER = {657581},
MRREVIEWER = {Daryl Geller},
}

@article {Folland1975,
    AUTHOR = {Folland, G. B.},
     TITLE = {Subelliptic estimates and function spaces on nilpotent {L}ie
              groups},
   JOURNAL = {Ark. Mat.},
  FJOURNAL = {Arkiv f\"{o}r Matematik},
    VOLUME = {13},
      YEAR = {1975},
    NUMBER = {2},
     PAGES = {161--207},
      ISSN = {0004-2080},
   MRCLASS = {58G05 (35H05 44A25)},
  MRNUMBER = {494315},
MRREVIEWER = {A. S. Dynin},
       DOI = {10.1007/BF02386204},
       URL = {https://doi-org.wwwproxy1.library.unsw.edu.au/10.1007/BF02386204},
}

@book {BealsGreiner1988,
    AUTHOR = {Beals, Richard and Greiner, Peter},
     TITLE = {Calculus on {H}eisenberg manifolds},
    SERIES = {Annals of Mathematics Studies},
    VOLUME = {119},
 PUBLISHER = {Princeton University Press, Princeton, NJ},
      YEAR = {1988},
     PAGES = {x+194},
      ISBN = {0-691-08500-5; 0-691-08501-3},
   MRCLASS = {35S05 (32F20 35-02 47G05 58G15)},
  MRNUMBER = {953082},
MRREVIEWER = {Kenneth G. Miller},
       DOI = {10.1515/9781400882397},
       URL = {https://doi-org.wwwproxy1.library.unsw.edu.au/10.1515/9781400882397},
}

@article {vanErpYuncken2019,
    AUTHOR = {van Erp, Erik and Yuncken, Robert},
     TITLE = {A groupoid approach to pseudodifferential calculi},
   JOURNAL = {J. Reine Angew. Math.},
  FJOURNAL = {Journal f\"{u}r die Reine und Angewandte Mathematik. [Crelle's
              Journal]},
    VOLUME = {756},
      YEAR = {2019},
     PAGES = {151--182},
      ISSN = {0075-4102},
   MRCLASS = {58J40 (22A22 58H05)},
  MRNUMBER = {4026451},
MRREVIEWER = {Elmar Schrohe},
       DOI = {10.1515/crelle-2017-0035},
       URL = {https://doi.org/10.1515/crelle-2017-0035},
}

@article {ChristGellerGlowacki1992,
    AUTHOR = {Christ, Michael and Geller, Daryl and G\l owacki, Pawe\l  and
              Polin, Larry},
     TITLE = {Pseudodifferential operators on groups with dilations},
   JOURNAL = {Duke Math. J.},
  FJOURNAL = {Duke Mathematical Journal},
    VOLUME = {68},
      YEAR = {1992},
    NUMBER = {1},
     PAGES = {31--65},
      ISSN = {0012-7094},
   MRCLASS = {35S05 (22E30 47G30 58G15)},
  MRNUMBER = {1185817},
MRREVIEWER = {Xue Bo Luo},
       DOI = {10.1215/S0012-7094-92-06802-5},
       URL = {https://doi.org/10.1215/S0012-7094-92-06802-5},
}

@article {HelfferNourrigat1979,
    AUTHOR = {Helffer, B. and Nourrigat, J.},
     TITLE = {Caracterisation des op\'{e}rateurs hypoelliptiques homog\`enes
              invariants \`a gauche sur un groupe de {L}ie nilpotent gradu\'{e}},
   JOURNAL = {Comm. Partial Differential Equations},
  FJOURNAL = {Communications in Partial Differential Equations},
    VOLUME = {4},
      YEAR = {1979},
    NUMBER = {8},
     PAGES = {899--958},
      ISSN = {0360-5302},
   MRCLASS = {35H05 (22E27 58G05)},
  MRNUMBER = {537467},
MRREVIEWER = {Aroldo Kaplan},
       DOI = {10.1080/03605307908820115},
       URL = {https://doi.org/10.1080/03605307908820115},
}

@misc{AMY-Helffer-Nourrigat-conjecture,
      title={A pseudodifferential calculus for maximally hypoelliptic operators and the Helffer-Nourrigat conjecture}, 
      author={Iakovos Androulidakis and Omar Mohsen and Robert Yuncken},
      year={2022},
      eprint={2201.12060},
      archivePrefix={arXiv},
      primaryClass={math.AP},
      url={https://arxiv.org/abs/2201.12060}, 
      NOTE = {Preprint, \url{http://arxiv.org/abs/2201.12060}},
}

@book {HelfferNourrigat1985,
    AUTHOR = {Helffer, Bernard and Nourrigat, Jean},
     TITLE = {Hypoellipticit\'{e} maximale pour des op\'{e}rateurs polyn\^{o}mes de
              champs de vecteurs},
    SERIES = {Progress in Mathematics},
    VOLUME = {58},
 PUBLISHER = {Birkh\"{a}user Boston, Inc., Boston, MA},
      YEAR = {1985},
     PAGES = {x+278},
      ISBN = {0-8176-3310-3},
   MRCLASS = {35H05 (22E25 32F20 58G05)},
  MRNUMBER = {897103},
MRREVIEWER = {Kenneth G. Miller},
}

@article {RothschildStein1976,
    AUTHOR = {Rothschild, Linda Preiss and Stein, E. M.},
     TITLE = {Hypoelliptic differential operators and nilpotent groups},
   JOURNAL = {Acta Math.},
  FJOURNAL = {Acta Mathematica},
    VOLUME = {137},
      YEAR = {1976},
    NUMBER = {3-4},
     PAGES = {247--320},
      ISSN = {0001-5962},
   MRCLASS = {58G05 (35H05)},
  MRNUMBER = {436223},
MRREVIEWER = {Gerald B. Folland},
       DOI = {10.1007/BF02392419},
       URL = {https://doi.org/10.1007/BF02392419},
}

@book {Triebel-2,
    AUTHOR = {Triebel, Hans},
     TITLE = {Theory of function spaces. {II}},
    SERIES = {Monographs in Mathematics},
    VOLUME = {84},
 PUBLISHER = {Birkh\"{a}user Verlag, Basel},
      YEAR = {1992},
     PAGES = {viii+370},
      ISBN = {3-7643-2639-5},
   MRCLASS = {46Exx (46-02)},
  MRNUMBER = {1163193},
MRREVIEWER = {P.\ Szeptycki},
       DOI = {10.1007/978-3-0346-0419-2},
       URL = {https://doi.org/10.1007/978-3-0346-0419-2},
}

@book {Zimmer1990,
    AUTHOR = {Zimmer, Robert J.},
     TITLE = {Essential results of functional analysis},
    SERIES = {Chicago Lectures in Mathematics},
 PUBLISHER = {University of Chicago Press, Chicago, IL},
      YEAR = {1990},
     PAGES = {x+157},
      ISBN = {0-226-98337-4; 0-226-98338-2},
   MRCLASS = {46-01 (47-01)},
  MRNUMBER = {1045444},
MRREVIEWER = {J.\ Horv\'{a}th},
}

@article {Rockland1978,
    AUTHOR = {Rockland, Charles},
     TITLE = {Hypoellipticity on the {H}eisenberg
              group-representation-theoretic criteria},
   JOURNAL = {Trans. Amer. Math. Soc.},
  FJOURNAL = {Transactions of the American Mathematical Society},
    VOLUME = {240},
      YEAR = {1978},
     PAGES = {1--52},
      ISSN = {0002-9947,1088-6850},
   MRCLASS = {22E30 (35H05 58G05)},
  MRNUMBER = {486314},
MRREVIEWER = {S.\ G.\ Gindikin},
       DOI = {10.2307/1998805},
       URL = {https://doi.org/10.2307/1998805},
}

@article {MSZ-stratified-23,
    AUTHOR = {McDonald, E. and Sukochev, F. and Zanin, D.},
     TITLE = {Spectral estimates and asymptotics for stratified {L}ie
              groups},
   JOURNAL = {J. Funct. Anal.},
  FJOURNAL = {Journal of Functional Analysis},
    VOLUME = {285},
      YEAR = {2023},
    NUMBER = {10},
     PAGES = {Paper No. 110105, 64},
      ISSN = {0022-1236,1096-0783},
   MRCLASS = {43A85 (47B10)},
  MRNUMBER = {4628890},
       DOI = {10.1016/j.jfa.2023.110105},
       URL = {https://doi.org/10.1016/j.jfa.2023.110105},
}

@article {Schauder1934,
    AUTHOR = {Schauder, J.},
     TITLE = {\"{U}ber lineare elliptische {D}ifferentialgleichungen zweiter
              {O}rdnung.},
   JOURNAL = {Math. Z.},
  FJOURNAL = {Mathematische Zeitschrift},
    VOLUME = {38},
      YEAR = {1934},
    NUMBER = {1},
     PAGES = {257--282},
      ISSN = {0025-5874,1432-1823},
   MRCLASS = {99-04},
  MRNUMBER = {1545448},
       DOI = {10.1007/BF01170635},
       URL = {https://doi.org/10.1007/BF01170635},
}

@book {Hormander-II,
    AUTHOR = {H{\"o}rmander, Lars},
     TITLE = {The analysis of linear partial differential operators. {II}},
    SERIES = {Classics in Mathematics},
      NOTE = {Differential operators with constant coefficients,
              Reprint of the 1983 original},
 PUBLISHER = {Springer-Verlag, Berlin},
      YEAR = {2005},
     PAGES = {viii+392},
      ISBN = {3-540-22516-1},
   MRCLASS = {35-02 (35Exx 42B10)},
  MRNUMBER = {2108588},
       DOI = {10.1007/b138375},
       URL = {https://doi.org/10.1007/b138375},
}

@article {AgmonDouglisNirenberg1959,
    AUTHOR = {Agmon, S. and Douglis, A. and Nirenberg, L.},
     TITLE = {Estimates near the boundary for solutions of elliptic partial
              differential equations satisfying general boundary conditions.
              {I}},
   JOURNAL = {Comm. Pure Appl. Math.},
  FJOURNAL = {Communications on Pure and Applied Mathematics},
    VOLUME = {12},
      YEAR = {1959},
     PAGES = {623--727},
      ISSN = {0010-3640,1097-0312},
   MRCLASS = {35.43},
  MRNUMBER = {125307},
MRREVIEWER = {E.\ Magenes},
       DOI = {10.1002/cpa.3160120405},
       URL = {https://doi.org/10.1002/cpa.3160120405},
}

@book {HLMSZ,
  AUTHOR = {Higson, Nigel and Liu, Shiqi and McDonald, Edward and Sukochev, Fedor and Zanin, Dmitriy},
  TITLE = {Lecture Notes on the Hypoelliptic Laplacian},
  JOURNAL = {(in preparation)},
 }

@book {Davidson,
	AUTHOR = {Davidson, Kenneth R.},
	TITLE = {{$C^*$}-algebras by example},
	SERIES = {Fields Institute Monographs},
	VOLUME = {6},
	PUBLISHER = {American Mathematical Society, Providence, RI},
	YEAR = {1996},
	PAGES = {xiv+309},
	ISBN = {0-8218-0599-1},
	MRCLASS = {46Lxx (46-01)},
	MRNUMBER = {1402012},
	MRREVIEWER = {Robert\ S.\ Doran},
	DOI = {10.1090/fim/006},
	URL = {https://doi.org/10.1090/fim/006},
}

@book {Helgason-book-1978,
    AUTHOR = {Helgason, Sigurdur},
     TITLE = {Differential geometry, {L}ie groups, and symmetric spaces},
    SERIES = {Pure and Applied Mathematics},
    VOLUME = {80},
 PUBLISHER = {Academic Press, Inc. [Harcourt Brace Jovanovich, Publishers],
              New York-London},
      YEAR = {1978},
     PAGES = {xv+628},
      ISBN = {0-12-338460-5},
   MRCLASS = {53C35 (22E10 22E46 22E60 53C21)},
  MRNUMBER = {514561},
MRREVIEWER = {Rolf\ Sulanke},
}

@incollection {terElstRobinson97,
    AUTHOR = {ter Elst, A. F. M. and Robinson, Derek W.},
     TITLE = {Spectral estimates for positive {R}ockland operators},
 BOOKTITLE = {Algebraic groups and {L}ie groups},
    SERIES = {Austral. Math. Soc. Lect. Ser.},
    VOLUME = {9},
     PAGES = {195--213},
 PUBLISHER = {Cambridge Univ. Press, Cambridge},
      YEAR = {1997},
      ISBN = {0-521-58532-5},
   MRCLASS = {22E30 (35H05 35S99 43A65 47A60 47B99 58G03)},
  MRNUMBER = {1635682},
MRREVIEWER = {A.\ H.\ Dooley},
}

@article {HS90,
    AUTHOR = {Hebisch, Waldemar and Sikora, Adam},
     TITLE = {A smooth subadditive homogeneous norm on a homogeneous group},
   JOURNAL = {Studia Math.},
  FJOURNAL = {Polska Akademia Nauk. Instytut Matematyczny. Studia
              Mathematica},
    VOLUME = {96},
      YEAR = {1990},
    NUMBER = {3},
     PAGES = {231--236},
      ISSN = {0039-3223,1730-6337},
   MRCLASS = {22E25 (17B30 22E15 43A85)},
  MRNUMBER = {1067309},
MRREVIEWER = {Hidenori\ Fujiwara},
       DOI = {10.4064/sm-96-3-231-236},
       URL = {https://doi.org/10.4064/sm-96-3-231-236},
}

@article {Hormander1967,
    AUTHOR = {H{\"o}rmander, Lars},
     TITLE = {Hypoelliptic second order differential equations},
   JOURNAL = {Acta Math.},
  FJOURNAL = {Acta Mathematica},
    VOLUME = {119},
      YEAR = {1967},
     PAGES = {147--171},
      ISSN = {0001-5962,1871-2509},
   MRCLASS = {35.48 (47.00)},
  MRNUMBER = {222474},
MRREVIEWER = {Joel\ Smoller},
       DOI = {10.1007/BF02392081},
       URL = {https://doi.org/10.1007/BF02392081},
}

@article {Folland_Stein_complex_Heisenberg1974,
    AUTHOR = {Folland, G. B. and Stein, E. M.},
     TITLE = {Estimates for the {$\bar \partial \sb{b}$} complex and
              analysis on the {H}eisenberg group},
   JOURNAL = {Comm. Pure Appl. Math.},
  FJOURNAL = {Communications on Pure and Applied Mathematics},
    VOLUME = {27},
      YEAR = {1974},
     PAGES = {429--522},
      ISSN = {0010-3640,1097-0312},
   MRCLASS = {35N15 (22E30 32K15 47G05)},
  MRNUMBER = {367477},
MRREVIEWER = {S.\ G.\ Gindikin},
       DOI = {10.1002/cpa.3160270403},
       URL = {https://doi.org/10.1002/cpa.3160270403},
}

@article {Folland_Applications_Nilpotent_to_PDE_1977,
    AUTHOR = {Folland, G. B.},
     TITLE = {Applications of analysis on nilpotent groups to partial
              differential equations},
   JOURNAL = {Bull. Amer. Math. Soc.},
  FJOURNAL = {Bulletin of the American Mathematical Society},
    VOLUME = {83},
      YEAR = {1977},
    NUMBER = {5},
     PAGES = {912--930},
      ISSN = {0002-9904},
   MRCLASS = {35H05 (58G05)},
  MRNUMBER = {457928},
MRREVIEWER = {Makhlouf\ Derridj},
       DOI = {10.1090/S0002-9904-1977-14326-7},
       URL = {https://doi.org/10.1090/S0002-9904-1977-14326-7},
}

@article {Rothschild_Criterion_1979,
    AUTHOR = {Rothschild, Linda Preiss},
     TITLE = {A criterion for hypoellipticity of operators constructed from
              vector fields},
   JOURNAL = {Comm. Partial Differential Equations},
  FJOURNAL = {Communications in Partial Differential Equations},
    VOLUME = {4},
      YEAR = {1979},
    NUMBER = {6},
     PAGES = {645--699},
      ISSN = {0360-5302},
   MRCLASS = {58G05 (35H05)},
  MRNUMBER = {532580},
MRREVIEWER = {Yu. V. Egorov},
       DOI = {10.1080/03605307908820107},
       URL = {https://doi.org/10.1080/03605307908820107},
}

@misc{Fischer2024calculus_filtered_manifolds,
      title={A microlocal calculus on filtered manifolds}, 
      author={Clotilde Fermanian-Kammerer and V\'eronique Fischer and Steven Flynn},
      year={2024},
      eprint={2412.17448},
      archivePrefix={arXiv},
      primaryClass={math.FA},
      url={https://arxiv.org/abs/2412.17448}, 
}

@article {Kohn05_los,
    AUTHOR = {Kohn, J. J.},
     TITLE = {Hypoellipticity and loss of derivatives},
      NOTE = {With an appendix by Makhlouf Derridj and David S. Tartakoff},
   JOURNAL = {Ann. of Math. (2)},
  FJOURNAL = {Annals of Mathematics. Second Series},
    VOLUME = {162},
      YEAR = {2005},
    NUMBER = {2},
     PAGES = {943--986},
      ISSN = {0003-486X,1939-8980},
   MRCLASS = {35H10 (35H20)},
  MRNUMBER = {2183286},
MRREVIEWER = {Fabio\ Nicola},
       DOI = {10.4007/annals.2005.162.943},
       URL = {https://doi.org/10.4007/annals.2005.162.943},
}

@book {Street-maximal-subellipticity-2023,
    AUTHOR = {Street, Brian},
     TITLE = {Maximal subellipticity},
    SERIES = {De Gruyter Studies in Mathematics},
    VOLUME = {93},
 PUBLISHER = {De Gruyter, Berlin},
      YEAR = {[2023] \copyright2023},
     PAGES = {x+756},
      ISBN = {978-3-11-108517-3; 978-3-11-108564-7},
   MRCLASS = {35-02 (35B65 35G20 35H20 35R03 42Bxx 53C17)},
  MRNUMBER = {4649120},
}

@book {Goodman-lnm-1976,
    AUTHOR = {Goodman, Roe W.},
     TITLE = {Nilpotent {L}ie groups: structure and applications to
              analysis},
    SERIES = {Lecture Notes in Mathematics},
    VOLUME = {Vol. 562},
 PUBLISHER = {Springer-Verlag, Berlin-New York},
      YEAR = {1976},
     PAGES = {x+210},
   MRCLASS = {22E25 (22E30 22E45 32M15 35H05)},
  MRNUMBER = {442149},
MRREVIEWER = {G.\ L.\ Litvinov},
}

@book{BismutShuWei-RRG23,
    AUTHOR = {Bismut, Jean-Michel and Shen, Shu and Wei, Zhaoting},
     TITLE = {Coherent sheaves, superconnections, and
              {R}iemann-{R}och-{G}rothendieck},
    SERIES = {Progress in Mathematics},
    VOLUME = {347},
 PUBLISHER = {Birkh\"auser/Springer, Cham},
      YEAR = {[2023] \copyright 2023},
     PAGES = {x+184},
      ISBN = {978-3-031-27233-2; 978-3-031-27234-9},
   MRCLASS = {19L10 (18G80 35H10 53C05)},
  MRNUMBER = {4696610},
MRREVIEWER = {Byungdo\ Park},
       DOI = {10.1007/978-3-031-27234-9},
       URL = {https://doi-org.wwwproxy1.library.unsw.edu.au/10.1007/978-3-031-27234-9},
}

@book {Bismut-Orbit-Int2011,
    AUTHOR = {Bismut, Jean-Michel},
     TITLE = {Hypoelliptic {L}aplacian and orbital integrals},
    SERIES = {Annals of Mathematics Studies},
    VOLUME = {177},
 PUBLISHER = {Princeton University Press, Princeton, NJ},
      YEAR = {2011},
     PAGES = {xii+330},
      ISBN = {978-0-691-15130-4},
   MRCLASS = {58J35 (35H10 58J20 58J65)},
  MRNUMBER = {2828080},
MRREVIEWER = {Maria\ Gordina},
       DOI = {10.1515/9781400840571},
       URL = {https://doi-org.wwwproxy1.library.unsw.edu.au/10.1515/9781400840571},
}

\end{document}